\documentclass{amsart}
\usepackage{graphicx} 
\usepackage[utf8]{inputenc}
\usepackage{amsmath}
\usepackage{xcolor}
\usepackage{amssymb}
\usepackage{amsmath,stackengine}
\usepackage{amsfonts}
\usepackage{amstext}
\usepackage{amsthm}
\usepackage[english]{babel}
\usepackage[autostyle]{csquotes}
\usepackage[super]{nth}
\usepackage{hyperref}
\usepackage{biblatex} 
\newtheorem{theorem}{Theorem}[section]
\newtheorem{definition}[theorem]{Definition}
\newtheorem{lemma}[theorem]{Lemma}
\newtheorem{remark}[theorem]{Remark}
\newtheorem{corollary}[theorem]{Corollary}
\newtheorem{proposition}[theorem]{Proposition}

\newcommand{\R}{\mathbb{R}}

\newcommand{\slap}{(-\Delta)^{s}}
\newcommand{\rn}{\mathbb{R}^{n}}
\newcommand{\g}[1]{\mathcal{G}_{#1}}
\newcommand{\gk}[1]{\mathbb{G}_{#1}}
\newcommand{\Ds}{D_{s}}

\newcommand{\C}{\mathcal{C}}
\newcommand{\ul}{u_{\ell}}

\hypersetup{
    colorlinks=true,
    linkcolor= blue,
    citecolor=cyan,
    pdfpagemode=FullScreen,
    }

\usepackage{geometry}
{%
\setcounter{enumi}{0}

\begin{enumerate}}%
{\end{enumerate} }  
\makeatletter
\newcommand{\myitem}[1]{%
\medskip \item[#1]\protected@edef\@currentlabel{#1}%
} 
\title{Large Solutions for Fractional Laplacian on Infinite Cylindrical Domains}
\author[I. Chowdhury]{Indranil Chowdhury}
\address{\parbox{.8\linewidth}
{{\textbf{I. Chowdhury}}\medskip \\
Indian Institute of Technology - Kanpur, India \medskip}}
\curraddr{}
\email{indranil@iitk.ac.in}

\author[N.  Dattatreya]{N N Dattatreya}
\address{\parbox{.8\linewidth}
{{\textbf{N.  Dattatreya}}\medskip \\
Indian Institute of Technology - Kanpur, India \medskip}}
\curraddr{}
\email{dattatreya21@iitk.ac.in}
\date{}

\keywords{\ Fractional Laplacian, Large solution, Boundary blow-up solution, Cylindrical domains, Linear equations, Semi-linear equations, Green's potential}
\smallskip

\subjclass{ 35R11 
35B40 
35B44 
35C15 
35J61 
45K05 
} 
\begin{document}
\begin{abstract}

We investigate large solutions of linear and semi-linear equations involving the \emph{fractional Laplacian} on domains that are becoming unbounded in some, but not all, directions. Solutions that blow up on the boundary of a domain are commonly called large solutions. For local operators, such behaviour arises only in the presence of lower-order nonlinear terms.  However, for nonlocal operators, the existence of such solutions is more subtle. In contrast to the local case, the nonlocal nature of the operator gives rise to different boundary blow-up phenomena even in the case of linear equations. In this article, we study the existence and qualitative behaviour of boundary blow-up solutions to fractional linear and semi-linear equations on finite cylindrical domains and employ these properties to construct \emph{large solutions} on \emph{infinite cylinders}.

\end{abstract}
\maketitle

\section{Introduction}
The primary objective of this work is to study large solutions of linear and semi-linear equations involving the fractional Laplacian, on the domain that is unbounded in some, but not all, directions. The solutions that exhibit blow-up phenomena near the boundary of domains are called as \textit{large} solutions.
Let $n\geq 2$, $1\leq m<n$, and $\Omega_{\infty}:=B^{m}_{1}(0)\times \R^{n-m} \subset \R^{n}$ be the infinite cylinder. We consider
\begin{equation}\label{semi-linear prob on infinite cylinder}
    \begin{cases}
         \slap u=-f(x,u)\quad &\text{in } \Omega_{\infty},\\
         u=g &\text{in }\R^{n}\setminus\overline{\Omega}_{\infty}, \\
         E_{\Omega_{\infty}}(u)=h &\text{on }\partial\Omega_{\infty}.
    \end{cases}
 \end{equation}
Here,  $E_{\Omega_{\infty}}$ is a singular boundary trace-like operator defined in \eqref{dfn E}. 
We recall that 
\begin{equation*}
    \slap u(x)=\mathcal{A}(n,s) \text{ p.v} \int_{\R^{n}}\frac{u(x)-u(y)}{|x-y|^{n+2s}}\ dy,\quad \mathcal{A}(n,s):=\frac{\Gamma(n/2+s)}{\pi^{n/2}\Gamma(2-s)}s(1-s).
\end{equation*}
Unlike local differential operators, the value of $\slap u(x)$ depends on the behaviour of $u$ throughout $\rn$, reflecting the nonlocal nature of the operator.

The \textit{large} solutions are also known as \textit{boundary blow-up} solutions. Study of these solutions in the fractional setting has received significant attention over the past decade, due to their rich behaviour and the variety of blow-up occurrences that may arise compared with the classical local theory.

The study of large solutions for local operators, such as the Laplacian, dates back more than a century, see \cite{biberbach}. Let $\Omega\subset \rn$. Consider the equation 
\begin{equation*}
    \begin{cases}
        \Delta u=f(u) \quad &\text{in }\Omega\\
        u(x)\to +\infty  &\text{as }d(x,\partial\Omega)\to 0.
    \end{cases}
\end{equation*}
The existence of a solution to the above equation is guaranteed only when the nonlinear term satisfies the well-known \textit{Keller-Osserman condition}, derived independently in \cite{Keller1957} and \cite{Osserman1957}. The study in \cite{Keller1957} is motivated by problems in electrodynamics, whereas \cite{Osserman1957} is motivated by problems in geometry. For  the connection between conformal geometry and boundary blow-up solutions, see the seminal work of Loewner and Nirenberg \cite{LoewnerNirenberg1974}. The literature regarding large solutions for various operators in the local setting is extensive, addressing contexts such as existence, boundary behaviour, uniqueness, and gradient estimates; we refer to \cite{bandlemarcus4, bandlemarcus3, Lieberman, diaz1993,DumontDupaigneOlivierRuadulescu2007} and the references therein. A related development connecting large solutions to stochastic control can be found in \cite{lions}.


\medskip 

The nonlocal setting is remarkably different. 
One even exhibits a function solving $(-\triangle)^{s} u =0$ in the unit ball $B_{1}(0) \subset \mathbb{R}^{n}$, that blows up on the boundary $\partial B_{1}(0)$ from inside and vanishes identically outside $\overline{B_{1}(0)}$. We refer to \cite{Abatangelo2015LargesHarmonic, Bogdan-book} for further discussions.
Unlike local operators, the existence of a large solution requires careful understanding even in the case of a linear equation, including the boundary regularity of the corresponding solutions in the nonlocal setting. For further reading in this direction, we refer to \cite{ROSOTON2014275, Grubb2015, RosSerra2016, AbatangeloVazquez2023} and the references therein.

The first work on boundary blow-up solutions involving nonlocal operators is by Felmer and Quaas in \cite{FelmerQuass2012}. They studied welposedness of large solutions for a semi-linear equation with power type nonlinearity, in the viscosity solution framework, see also \cite{ChenFelmerQuass2015} for related results.
The existence of $L^{1}$ weak solutions to linear and semi-linear equations involving the fractional Laplacian is carried out in \cite{Abatangelo2015LargesHarmonic}. In \cite{Abatangelo2015LargesHarmonic}, the problem is posed with a singular trace-like operator $E_{\Omega}$ on the boundary and is given by
\begin{equation}\label{eqn main semilinear omega}
\begin{cases}
\slap u=-f(x,u) \quad &\text{in }\Omega\\
u(x)=g(x) &\text{in }\R^{n}\setminus\overline{\Omega}\\
E_{\Omega}u(\theta)=h(\theta) &\text{on }\partial\Omega.
\end{cases}
\end{equation} 
This boundary condition is also studied in \cite{Grubb2014}. It is also important to notice that in \cite{BogdanKulczyckiMateusz2008} a representation of s-harmonic function is obtained via Martin kernel, which amounts to the trace operator $E_{\Omega}$. For the linear case with $-f(x,t)=f(x)$, the author \cite{Abatangelo2015LargesHarmonic} studied well-posedness of \eqref{eqn main semilinear omega} along with its representation via Green's, Poisson, and Martin kernels. These large solutions can be generated independently by each of the three data sets appearing in the problem. More precisely:

\begin{enumerate}
    \item[--] The positive boundary trace data $h$: The corresponding solution is necessarily large, regardless of values of $f$ and $g$. Even if they both are zero.
    \item[--] When $f(x)=\delta_{\Omega}^{-\beta}$, $2s<\beta<1+s$: The linear equation admits a large solution even when $g=0$ and $h=0$.
    \item[--] When $g$ blows up near the boundary: The corresponding solution of the linear equation is large even if $f=0$ and $h=0$.
\end{enumerate}
 Further, for the semi-linear equation, the existence of a large solution for a suitable class of $f(x,t)$ is established when $h>0$.

In continuation of \cite{Abatangelo2015LargesHarmonic}, the weak dual notion of solution to a linear equation, for a general class of nonlocal operators, is studied in \cite{AbatangeloVazquez2023} for the case $g=0$. Moreover, the authors show that boundary blow-up arises naturally as a limiting phenomenon of the corresponding interior theory, justifying that the observations in \cite{Abatangelo2015LargesHarmonic}, for the fractional Laplacian, are a byproduct of nonlocal phenomena. Therefore, these types of large solutions are not related to Keller-Osserman type condition on nonlinear force terms which is a necessary and sufficient condition to get a large solution for local operators. 
Boundary blow-up solutions arising from a Keller-Osserman type condition, referred to as `very large' solutions, are studied in \cite{ChenFelmerQuass2015, Abaratgelo2017,Chowdhury2026}. We do not study the latter type of large solution in this article. The work relating to representation of large solution by Green's, Poisson and Martin kernels, to measure data can be found in \cite{BogdanKulczyckiMateusz2008, BogdanJarohsKania2020, HansenBogdan2025}, where the domains considered are more general in nature.  A further interesting work regarding the Neumann boundary value problem for large solutions for nonlocal equations can be found in \cite{Ros-OtonWeidner2026}, where the normal derivative of the third condition present in the above equation is considered. Indeed, on bounded domains, the operator $E_{\Omega}$ behaves like the limit of $\delta_{\Omega}^{1-s} u$ at the boundary of $\Omega$, see Lemma \ref{lemma equivalent E} below. 

\medskip

Motivated by these developments in bounded domain, we investigate \emph{boundary blow-up} solutions on \emph{cylindrical domains} that are \emph{unbounded} in certain directions.  The aim of studying PDEs on a sequence of finite cylindrical domains is to establish that the resulting sequence of solutions approximates a solution of a similar equation on an infinite cylinder. These studies have been carried out for various local and nonlocal operators in different contexts such as convergence of solutions, eigenvalues, and energies of given equations. In this direction, we refer to \cite{Chipotbook, ChipotRougirel2008, ChipotYeressian2008, ChowdhuryRoy2017} and the references therein. Work related to large solutions in this setting is very limited.
For instance, \cite{bandle2013large} studies large solutions for a semi-linear equation involving the Laplacian, and \cite{IndroDatta2025} explores large solutions of a second-order quasilinear equation  including the 
p-Laplacian. In both works, the solutions on finite cylinders are shown to converge to a solution on the infinite cylinder. Furthermore, the limiting solution does not depend on the variable along which the cylinder is unbounded. It coincides with the solution of the problem defined on the cross-section of the infinite cylinder.

\medskip

In this article, we consider ``weak dual'' notion of solution defined in Definition \ref{Dfn weak dual solution with f,g,h} and Definition \ref{dfn weak dual for semilinear} for linear and semi-linear equations, respectively. This notion of solution was employed in \cite{AbatangeloVazquez2023} for studying a linear equation with data $(f,0,h)$, and in \cite{BonforteFigalliVazquez2018} for investigating a semi-linear equation with data $(u^{p},0,0)$, $0<p<1$.

In both the linear and semi-linear equations, we establish that a large solution can be constructed on the infinite cylinder $\Omega_{\infty}$ as the pointwise limit of a sequence of solutions defined on a class of finite cylinders, where these solutions blow up only on the lateral part of the boundary of these finite cylinders. For the linear case, we investigate large solutions generated by different sets of data. These include large solutions generated by the operator with data $(0,0,h)$ where $h > 0$; large solutions generated by the forcing term given by the data $(\delta^{-\beta}_{\Omega_{\infty}}, 0, 0)$ for $2s < \beta < 1+s$; and large solutions generated by the complement data $(0, g, 0)$. For a semi-linear equation, we first notice that the nonlinearity alone cannot produce large solutions whenever $h\equiv 0$, i.e., there is no large solution to the problem \eqref{semi-linear prob on infinite cylinder} as well as to the problem \eqref{eqn main semilinear omega}  with data $(f(x,t), 0, 0)$. We further provide the existence of large solutions to \eqref{eqn main semilinear omega}, for data $(f(x,t),g,h)$ when $h>0$, regardless of the choice of $g$, on a bounded domain $\Omega$. This result is then used in the construction of a large solution on the infinite cylinder for a similar data.

\medskip

 Let us define the distance from the boundary as $\delta_{\Omega} (x) := \text{dist}(x,\partial\Omega)$. For a generic domain $\Omega \subset \R^n$ we take the following assumptions on the complement data and the boundary data:

 \begin{enumerate}
    \myitem{$\textbf{(G)}$} \label{G}\textit{ $g:\R^{n}\setminus\overline{\Omega} \to \R$ is a measurable function such that $g\in L^{1}(\R^{n}\setminus\overline{\Omega},\delta_{\Omega}^{-s})$.}
\end{enumerate}
\begin{enumerate}
    \myitem{$\textbf{(H)}$}\label{H} \textit{ $h\in L^{1}(\partial\Omega)$ is a function  such that $h\geq 0$.} 
\end{enumerate}

\subsection{Linear equation}
We consider the following linear equation on a domain $\Omega$ whose complement has non-zero Lebesgue measure,
\begin{equation}\label{eqn main linear omega}
\begin{cases}
\slap u=f(x) \quad &\text{in }\Omega\\
u(x)=g(x) &\text{in }\R^{n}\setminus\overline{\Omega}\\
E_{\Omega}u(\theta)=h(\theta) &\text{on }\partial\Omega.
\end{cases}
\end{equation}
For a generic domain $\Omega \subset \R^n$ we assume that the force function satisfies
\begin{enumerate}
    \myitem{$\textbf{(F1)}$}\label{F1} 
    $f:\Omega\to \R$ is in $L^{1}(\Omega,\delta^{s})\cap L^{q}_{loc}(\Omega)$ for $q>n/2s$.
\end{enumerate}
\medskip

Firstly,  we prove the following representation results of solutions:
\begin{theorem}
   Let $\Omega$ be a domain with $C^{1,1}$ boundary and $|\rn\setminus \Omega|>0$. Assume that $f\in L^{1}(\Omega,\delta^{s}_{\Omega})\cap L^{q}_{loc}(\Omega)$ for $q>n/2s$, $g$ satisfies \ref{G}, and $h\in L^{1}(\partial\Omega)$. The function defined by
\begin{equation*}\label{representation}
 u(x)=\begin{cases}
        \int_{\Omega}\gk{\Omega}(x,y)f(y)\ dy-\int_{\R^{n}\setminus\overline{\Omega}}\slap \gk{\Omega}(x,y)g(y)\ dy+\int_{\partial\Omega}\Ds \gk{\Omega}(x,\theta)h(\theta)\ d\mathcal{H}(\theta) \quad &\text{for } x\in \Omega\\
        g(x) &\text{for }x\in \overline{\Omega}^{c}.
    \end{cases}
\end{equation*}
is the unique solution of the problem \eqref{eqn main linear omega}.
\end{theorem}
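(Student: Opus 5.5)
The plan is to work in the weak dual framework of Definition \ref{Dfn weak dual solution with f,g,h}. There $u$ is a solution when $u=g$ in $\R^n\setminus\overline{\Omega}$ and, for every test function $\psi$ in the admissible class (compactly supported and bounded in $\Omega$), setting $\phi=\int_\Omega \gk{\Omega}(\cdot,y)\psi(y)\,dy$ (so $\slap\phi=\psi$ in $\Omega$, $\phi=0$ outside), one has
\begin{equation*}
\int_\Omega u\psi = \int_\Omega f\phi - \int_{\R^n\setminus\overline{\Omega}} g\,\slap\phi + \int_{\partial\Omega} h\, \Ds\phi\, d\mathcal{H}.
\end{equation*}
I would split $u=u_1+u_2+u_3$ according to the three integrals in the representation formula and treat each separately, using linearity.

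\emph{Step 1 (well-definedness).} First I check that each term is in $L^1(\Omega,\delta^s_\Omega)$, so that $\int_\Omega u\psi$ makes sense. This uses the standard two-sided Green function bounds on $C^{1,1}$ domains,
\begin{equation*}
\gk{\Omega}(x,y)\asymp \min\Big\{|x-y|^{2s-n},\ \frac{\delta^s(x)\delta^s(y)}{|x-y|^n}\Big\},
\end{equation*}
which give $\int_\Omega \gk{\Omega}(x,y)\delta^s(x)\,dx\lesssim \delta^s(y)$. The same bounds give the Poisson kernel estimate $-\slap\gk{\Omega}(x,y)\lesssim \delta^s(x)\delta(y)^{-s}(1+\delta(y))^{-s}|x-y|^{-n}$ and the Martin kernel estimate $\Ds\gk{\Omega}(x,\theta)\lesssim \delta^s(x)|x-\theta|^{-n}$. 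Combined with Tonelli, \ref{G} and $h\in L^1(\partial\Omega)$, these yield the integrability of $u_1,u_2,u_3$. If $\Omega$ is unbounded, so that the weights at infinity matter, I would invoke $|\R^n\setminus\Omega|>0$ to obtain decay of $\gk{\Omega}$ and of $\phi$.

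\emph{Step 2 (verification).} Next I verify the identity term by term via Fubini and the symmetry $\gk{\Omega}(x,y)=\gk{\Omega}(y,x)$.
\begin{itemize}
\item For $u_1$: $\int_\Omega u_1\psi=\int_\Omega f\phi$ directly.
\item For $u_2$: $\int_\Omega\psi(x)\slap\gk{\Omega}(x,y)\,dx=\slap\phi(y)$ for $y$ outside $\overline{\Omega}$. Since $\phi$ vanishes there, this is $-c\int_\Omega \phi(x)|x-y|^{-n-2s}\,dx$, and the exchange is justified by absolute convergence from Step 1.
\item For $u_3$: since $\Ds\gk{\Omega}(x,\theta)=\lim \gk{\Omega}(x,z)/\delta^s(z)$ as $z\to\theta$, dominated convergence gives $\int_\Omega \psi\,\Ds\gk{\Omega}(\cdot,\theta)=\Ds\phi(\theta)$. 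Domination comes from $\gk{\Omega}(x,z)/\delta^s(z)\lesssim \delta^s(x)|x-z|^{-n}$ with $\psi$ compactly supported.
\end{itemize}
The $L^q_{loc}$ assumption with $q>n/2s$ is used for local boundedness and continuity of $u_1$, so that the equation also holds pointwise or distributionally in $\Omega$. I also confirm $E_\Omega u=h$ by appealing to the definition \eqref{dfn E} (or Lemma \ref{lemma equivalent E}): the $f$ and $g$ parts have zero trace, and the Martin part reproduces $h$.

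\emph{Step 3 (uniqueness).} The difference $w$ of two solutions satisfies $\int_\Omega w\psi=0$ for all admissible $\psi$. Since this class is dense enough (it contains $C_c^\infty(\Omega)$), $w=0$ a.e. in $\Omega$, and $w=0$ outside by the complement condition.

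The main obstacle is Step 1 together with the Fubini justifications in Step 2, especially the $g$ term. There the weight $\delta^{-s}$ in \ref{G} is exactly critical near $\partial\Omega$, and for unbounded $\Omega$ one must control behaviour at infinity. I would handle this by first proving the result for bounded $g$ with compact support and then passing to the limit by monotone convergence on $g^\pm$, using the Poisson kernel bound above.
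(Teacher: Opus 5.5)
Your skeleton is the paper's: split $u$ by linearity, verify each term of the weak dual identity by Fubini and the symmetry of $\gk{\Omega}$, and get uniqueness by testing (the paper uses $\psi=\mathrm{sign}(u-v)\chi_K$). For bounded $\Omega$ your argument goes through. The gap is that the statement allows unbounded $\Omega$, which is the point of the paper (e.g.\ $\Omega_\infty$), and every estimate you use to justify Fubini is a bounded-domain fact. This includes the two-sided Chen--Song bound for $\gk{\Omega}$, the Poisson bound with the factor $(1+\delta_\Omega(y))^{-s}$, the domination $\gk{\Omega}(x,z)/\delta^s_\Omega(z)\lesssim\delta^s_\Omega(x)|x-z|^{-n}$, and $\int_\Omega\gk{\Omega}(x,y)\delta^s_\Omega(x)\,dx\lesssim\delta^s_\Omega(y)$. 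These fail in general. In a half-space the Poisson kernel is $c\,\delta^s_\Omega(x)\delta^{-s}_\Omega(y)|x-y|^{-n}$, with no extra decay in $\delta_\Omega(y)$. Moreover $\int_\Omega\gk{\Omega}(x,y)\delta^s_\Omega(x)\,dx=+\infty$ there, so for positive $f\in L^1(\Omega,\delta^s_\Omega)$ the term $u_1$ is \emph{not} in $L^1(\Omega,\delta^s_\Omega)$. So Step 1 aims at something false in the generality of the theorem, and ``use $|\R^n\setminus\Omega|>0$ to obtain decay'' does not supply a replacement.

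What is missing has two parts. First, no global weighted integrability is needed: the definition only asks $u\in L^1_{loc}(\Omega)$, and $\psi$ is compactly supported. Second, you need a one-sided kernel bound valid on unbounded domains. The paper compares $\Omega$ with the complement of an exterior ball $B_r(a_x)$ at the nearest boundary point, which gives $\gk{\Omega}(x,y)\le c\,|y-a_x|^s\delta^s_\Omega(x)|x-y|^{s-n}$ (Proposition \ref{prop greens kernal continuity}(vi)). From this it derives $\g{\Omega}(\psi)\le C\delta^s_\Omega$ on all of $\Omega$ for $\psi\in L^\infty_c(\Omega)$ (Theorem \ref{theorem greens poteantial continuity on unbounded domains}). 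That bound does three things: it makes $\int_\Omega f\,\g{\Omega}(\psi)$ finite, it gives $\Ds\g{\Omega}(\psi)\in L^\infty(\partial\Omega)$, and it yields $-\slap\g{\Omega}(\psi)(y)\le C\int_{|z-y|>\delta_\Omega(y)}|y-z|^{-n-s}\,dz=C\delta^{-s}_\Omega(y)$. The last is exactly the weight in \ref{G}, so Tonelli on $g^\pm$ works directly and no truncation of $g$ is needed. The same estimate also gives bounds uniform for $x$ in a compact $K$:
\begin{itemize}
\item $-\slap\gk{\Omega}(x,y)\le C(K)\delta^{-s}_\Omega(y)$ (Theorem \ref{theorem poission behaviour});
\item $\Ds\gk{\Omega}(x,\theta)\le C\delta^{2s-n}_\Omega(x)$ \eqref{eqn martin behaviour};
\item the domination \eqref{eqn fractional normal derivative behaviour}, needed to move the limit defining $\Ds$ inside the integral;
\item $\g{\Omega}(f)\in L^\infty_{loc}$.
\end{itemize}
Together these give $u\in L^1_{loc}(\Omega)$ and absolute convergence on $\mathrm{supp}\,\psi\times(\cdot)$. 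Finally, drop the pointwise check of $E_\Omega u=h$. It is not part of the weak dual definition, Lemma \ref{lemma equivalent E} is only for bounded domains, and pointwise recovery of $h$ needs $h$ continuous.
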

 The result when $g=0$ is proved in \cite{AbatangeloVazquez2023}. The result for s-harmonic functions on a general domain, that has an associated Green's kernel, is obtained in \cite{BogdanKulczyckiMateusz2008}. We consider $C^{1,1}$ domains in order to use some estimates on the Green's kernel given in \cite{ChenSong1998}.

 Let us assume \ref{F1},\ref{G} and \ref{H} on $\Omega=\Omega_{\infty}$,  and consider 
\begin{equation}\label{linear eqn on infinite cylinder}
\begin{cases}
     \slap u=f(x) \quad &\text{in } \Omega_{\infty} \\
     u=g &\text{in }\R^{n}\setminus\overline{\Omega}_{\infty}\\
    E_{\Omega_{\infty}}u=h &\text{on }\partial\Omega_{\infty}.
\end{cases}
\end{equation}
We further consider $\Omega_{\ell}$ to be the $C^{2}$ perturbation of the $B_{1}^{m}(0)\times [-\ell,\ell]^{n-m}$ defined in \eqref{dfn of omegaell} and define 
\begin{equation}\label{dfn of g ell and h ell}
  f_{\ell}(x)=f|_{\Omega_{\ell}} (x),\quad  g_{\ell}(y):=\begin{cases}
        g(y) \quad &\text{in } \R^{n}\setminus\overline{\Omega}_{\infty}\\
        0 &\text{in }\Omega_{\infty}\setminus\Omega_{\ell},
    \end{cases} 
    \quad\text{and}\quad 
    h_{\ell}(\theta):=\begin{cases}
        h(\theta)\quad &\text{on }\partial\Omega_{\ell}\cap\partial\Omega_{\infty}\\
        0 &\text{on }\partial\Omega_{\ell}\setminus(\partial\Omega_{\ell}\cap\partial\Omega_{\infty}).
    \end{cases}
\end{equation}
For each $\ell>0$, we consider the problem on the finite cylinders $\Omega_{\ell}$:
\begin{equation}\label{solution on omega ell}
    \begin{cases}
         \slap \ul =f_{\ell}(x) \quad &\text{in }\Omega_{\ell}\\
        \ul=g_{\ell} &\text{in }\R^{n}\setminus\overline{\Omega}_{\ell}\\
        E_{\Omega_{\ell}}\ul(x)= h_{\ell}(\theta) &\text{in }  \partial\Omega_{\ell}.
    \end{cases}
\end{equation}
Then, we prove that $\{\ul\}_{\ell}$ is locally uniformly bounded and increasing sequence. Further, the following result holds
\begin{theorem}
    Assume \ref{F1}, \ref{G} and \ref{H} for $\Omega=\Omega_{\infty}$. Then there exists a function $u\in L_{loc}^{1}(\Omega_{\infty})$ such that $u_{\ell}\to u$ pointwise and $u$ solves \eqref{linear eqn on infinite cylinder}.
\end{theorem}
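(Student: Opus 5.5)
The plan is to use the representation theorem on each $\Omega_\ell$, prove monotonicity and local bounds for $\{u_\ell\}$, and then pass to the limit in the weak dual formulation on $\Omega_\infty$.

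\emph{Representation and monotonicity.} By the representation theorem applied on $\Omega_\ell$,
\begin{equation*}
u_\ell(x)=\int_{\Omega_\ell}\gk{\Omega_\ell}(x,y)f(y)\,dy-\int_{\R^n\setminus\overline{\Omega}_\ell}\slap\gk{\Omega_\ell}(x,y)g_\ell(y)\,dy+\int_{\partial\Omega_\ell}\Ds\gk{\Omega_\ell}(x,\theta)h_\ell(\theta)\,d\mathcal{H}(\theta).
\end{equation*}
I would split $f=f^+-f^-$ and $g=g^+-g^-$ and treat each sign separately. This reduces the claim to the case $f,g,h\ge0$ by linearity, with monotonicity holding separately for each piece.

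For nonnegative data, the domains $\Omega_\ell$ increase with $\ell$. Green functions are monotone in the domain, so $\gk{\Omega_\ell}\le\gk{\Omega_{\ell'}}\le\gk{\Omega_\infty}$ for $\ell<\ell'$. Monotonicity of the other two terms is cleaner to obtain by comparison than from kernel formulas: $w=u_{\ell'}-u_\ell$ satisfies $\slap w=f\chi_{\Omega_{\ell'}\setminus\Omega_\ell}\ge0$ in $\Omega_\ell$. Outside $\Omega_\ell$ we have $w=u_{\ell'}-0\ge0$ on $\Omega_{\ell'}\setminus\Omega_\ell$, since $u_{\ell'}\ge0$ by the positivity of the representation. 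We also have $w=0$ on the complement of $\Omega_\infty$. Finally, $E_{\Omega_\ell}w\ge0$, because $E_{\Omega_\ell}u_{\ell'}\ge0$ on the part of $\partial\Omega_\ell$ interior to $\Omega_{\ell'}$, where $u_{\ell'}$ is bounded and the trace is therefore zero. The maximum principle built into the uniqueness part of the representation theorem then gives $w\ge0$.

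\emph{Local bounds.} The key step is a uniform bound $u_\ell\le U$, where $U$ is the representation of $u$ on $\Omega_\infty$, that is, the same formula with $\gk{\Omega_\infty}$, $\slap\gk{\Omega_\infty}$ and $\Ds\gk{\Omega_\infty}$. Obtaining it requires three facts:
\begin{enumerate}
\item a comparison $\gk{\Omega_\ell}\le\gk{\Omega_\infty}$;
\item a comparison of the Poisson kernels $-\slap\gk{\Omega_\ell}(x,y)\le-\slap\gk{\Omega_\infty}(x,y)$ for $y\notin\overline{\Omega}_\infty$;
\item a bound on the Martin terms by $\Ds\gk{\Omega_\infty}$, using $h_\ell=0$ off $\partial\Omega_\infty$.
\end{enumerate}

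I expect the \textbf{main obstacle} to be showing that $U$ is finite and locally integrable. This needs the Chen--Song type estimates \cite{ChenSong1998} $\gk{\Omega_\infty}(x,y)\lesssim \min\{|x-y|^{2s-n},\delta^s(x)\delta^s(y)|x-y|^{-n}\}$, together with the matching Poisson and Martin kernel bounds $\delta^s(x)\delta^{-s}(y)|x-y|^{-n}$ and $\delta^s(x)|x-\theta|^{-n}$. These must hold on the unbounded but uniformly $C^{1,1}$ cylinder, with decay in the unbounded directions. The assumptions \ref{F1}, \ref{G} and \ref{H} are then exactly what makes these integrals converge locally. If global estimates on $\Omega_\infty$ are unavailable, I would instead derive uniform-in-$\ell$ estimates on $\Omega_\ell$: the $C^2$ perturbations have uniformly bounded $C^{1,1}$ character, and I would use a barrier for the Martin term in the spirit of $\delta^{s-1}$.

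\emph{Passage to the limit.} Monotone convergence gives $u_\ell\uparrow u\le U$ pointwise and in $L^1_{loc}$. To verify that $u$ solves \eqref{linear eqn on infinite cylinder} in the weak dual sense, I would test with $\psi=\gk{\Omega_\infty}[\phi]$ for $\phi\in C_c^\infty(\Omega_\infty)$. For large $\ell$ I would use $\psi_\ell=\gk{\Omega_\ell}[\phi]$ in the identity for $u_\ell$ and let $\ell\to\infty$: $\psi_\ell\uparrow\psi$, and the boundary terms $\Ds\psi_\ell$ converge on compact portions of the lateral boundary. Dominated convergence, with the dominating functions supplied by the kernel estimates above, passes each term to the limit. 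The contributions of the artificial ends $\partial\Omega_\ell\setminus\partial\Omega_\infty$ vanish because $h_\ell=0$ and $g_\ell=0$ there. Uniqueness is not claimed, so the proof ends here.
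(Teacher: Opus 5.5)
Your skeleton is the paper's: monotonicity in $\ell$, a local bound uniform in $\ell$, monotone convergence, then dominated convergence in the weak dual identity. The paper runs this separately for the data $(f,0,0)$, $(0,g,0)$ and $(0,0,h)$.

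Two of your choices differ from the paper to your advantage:
\begin{itemize}
\item Splitting $f=f^+-f^-$, $g=g^+-g^-$ is actually needed. Lemma \ref{comparing u ell} and the paper's claims $\ul^{f},\ul^{g}\ge0$ tacitly use $f,g\ge0$, which \ref{F1} and \ref{G} do not provide.
\item Dominating $\ul\le U$ by the $\Omega_\infty$-representation is a tidier route to the local bounds than the paper's three separate estimates.
\end{itemize}
What you call the main obstacle, however, is not one. Finiteness and local boundedness of $U$ follow from Theorem \ref{theorem greens poteantial continuity on unbounded domains}, Theorem \ref{theorem poission behaviour} and \eqref{eqn martin behaviour}; this is exactly what underlies Theorem \ref{thrm rep of u on any domain} on $\Omega_\infty$. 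These rest only on $\gk{\Omega}\le\Gamma_s$ and the one-sided exterior-ball bound \eqref{eqn greens estimatte on unbounded domain}. No two-sided Chen--Song estimates on the cylinder are needed, and the paper does not have them: its bound carries a single factor $\delta^{s}$.

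The monotonicity step does not go through as written. Incidentally, $\slap w=0$ in $\Omega_\ell$, since $f\chi_{\Omega_{\ell'}\setminus\Omega_\ell}$ vanishes there. The problems are these:
\begin{itemize}
\item With $h$ only in $L^1$, the boundary condition holds only in the weak dual sense, so there is no pointwise inequality $E_{\Omega_\ell}w\ge0$ to feed into a maximum principle.
\item $E_{\Omega_\ell}$ and $E_{\Omega_{\ell'}}$ are normalised by different Martin integrals, so you do not even know $E_{\Omega_\ell}u_{\ell'}=h$ on the shared lateral boundary.
\item The uniqueness part of the representation theorem is proved by testing with sign functions; it is not a maximum principle. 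The comparison tool is Lemma \ref{lemma comparison principle}. To use it you must first show that $u_{\ell'}|_{\Omega_\ell}$ is a weak dual supersolution on $\Omega_\ell$, which is the nontrivial content of Lemma \ref{lemma restriction of a solution to subdomains}.
\end{itemize}
The easy repair is what Lemma \ref{comparing u ell} does. For $0\le\psi\in L^{\infty}_{c}(\Omega_\ell)$, compare the right-hand sides of the two weak dual identities term by term, using
\begin{itemize}
\item $\g{\Omega_\ell}(\psi)\le\g{\Omega_{\ell'}}(\psi)$;
\item $-\slap\g{\Omega_\ell}(\psi)\le-\slap\g{\Omega_{\ell'}}(\psi)$ off $\overline{\Omega}_\infty$;
\item $\Ds[\g{\Omega_\ell}(\psi)]\le\Ds[\g{\Omega_{\ell'}}(\psi)]$ on $\partial\Omega_\ell\cap\partial\Omega_\infty$.
\end{itemize}
These are your facts (1)--(3) with $\Omega_{\ell'}$ in place of $\Omega_\infty$.

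Second, in the limit step you assert, but do not prove, the convergence $\Ds[\g{\Omega_\ell}(\phi)]\to\Ds[\g{\Omega_\infty}(\phi)]$ on the lateral boundary. Interior convergence of $\g{\Omega_\ell}(\phi)$ does not give it, because it requires exchanging $\ell\to\infty$ with $z\to\theta$. That interior convergence (Lemma \ref{Lemma local uniform convergence of solution operator}) is all the paper invokes here. Monotonicity yields the limit and the inequality ``$\le$''. For equality, consider $D_\ell:=\g{\Omega_\infty}(\phi)-\g{\Omega_\ell}(\phi)$. It is $s$-harmonic in $\Omega_\ell$, decreases to $0$, and vanishes outside $\Omega_\infty$. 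It suffices to show $\sup D_\ell/\delta^{s}_{\Omega_\infty}\to0$ on a neighbourhood of $\theta$, e.g.\ from a boundary regularity estimate combined with Dini's theorem.

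For the $f$- and $g$-parts you can bypass the weak formulation entirely. Monotone convergence of the kernels (Lemma \ref{lemma convergence of greens kernel} and Proposition \ref{prop greens kernal continuity}(iii)) gives $u=U$ pointwise, and Theorem \ref{thrm rep of u on any domain} finishes. This is what the paper does for the datum $\delta^{-\beta}$.
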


Further in view of different types of large solutions given in \cite{Abatangelo2015LargesHarmonic}, we have the following three results:

\begin{theorem}[Driven by the force function] 
Let $2s<\beta<1+s$ and $f(x)=1/\delta^{\beta}_{\Omega_{\infty}}(x)$ for all $x\in \Omega_{\infty}$. If $u_{\ell}^{f}$ denotes the solution of \eqref{solution on omega ell} with the data $(1/\delta_{\Omega_\ell}^{\beta},0,0)$, then there exists a function $u^{f}$ such that $\ul^{f}\to u^{f}$ point-wise. Moreover, $u^{f}$ is a solution of \eqref{linear eqn on infinite cylinder} with the data $(f,0,0)$. There exist two positive constants $c, C$ such that 
      \begin{equation*}
          c\delta_{\Omega_{\infty}}^{-\beta+2s}(x)\leq u^{f}(x)\leq C \delta_{\Omega_{\infty}}^{-\beta+2s}.
      \end{equation*}
\end{theorem}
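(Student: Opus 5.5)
By the representation theorem, each $u^f_\ell$ is the Green potential
\[
u^f_\ell(x)=\int_{\Omega_\ell}\gk{\Omega_\ell}(x,y)\,\delta_{\Omega_\ell}^{-\beta}(y)\,dy ,
\]
since $g=h=0$. The plan has three steps: obtain two-sided bounds of order $\delta^{2s-\beta}$ on each finite cylinder with constants independent of $\ell$, prove monotonicity in $\ell$, and pass to the limit using the previous convergence theorem.

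\textbf{Monotonicity.} For $\ell<\ell'$ we have $\Omega_\ell\subset\Omega_{\ell'}$, hence $\delta_{\Omega_\ell}\le\delta_{\Omega_{\ell'}}$. The perturbation in \eqref{dfn of omegaell} may be arranged so that $\delta_{\Omega_\ell}\le \delta_{\Omega_\infty}$. On the other hand, the datum $\delta_{\Omega_\ell}^{-\beta}$ \emph{decreases} as $\ell$ grows, while the Green kernel increases. So monotonicity of $u^f_\ell$ is not automatic.

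I would therefore first compare $u^f_\ell$ with the solution $v_\ell$ having data $(f_\ell,0,0)$, where $f=\delta_{\Omega_\infty}^{-\beta}$. By the previous convergence theorem, $v_\ell$ increases to a solution $u^f$ of \eqref{linear eqn on infinite cylinder}. For that theorem to apply, I need $f\in L^1(\Omega_\infty,\delta^s)$, which requires $\beta<1+s$. The local integrability in $L^q_{loc}$ is clear, since $f$ is bounded on compact subsets.

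The key observation is that $\delta_{\Omega_\ell}$ and $\delta_{\Omega_\infty}$ coincide on the region $\{|x'|\le \ell-1\}$ near the lateral boundary, where $x'$ denotes the unbounded variables. They differ only near the caps, and there $\delta_{\Omega_\ell}\le\delta_{\Omega_\infty}$. This gives $v_\ell\le u^f_\ell$. For the reverse comparison I would show that the difference $w_\ell=u^f_\ell-v_\ell=\int \gk{\Omega_\ell}(x,y)\,[\delta_{\Omega_\ell}^{-\beta}-\delta_{\Omega_\infty}^{-\beta}](y)\,dy$ tends to $0$ locally uniformly. The integrand is supported near the caps, at distance about $\ell$ from any fixed compact set $K$. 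The Chen–Song bound
\[
\gk{\Omega_\ell}(x,y)\le C\,\frac{\delta_{\Omega_\ell}^s(y)}{|x-y|^{n}}\quad (x\in K),
\]
together with $\int \delta^{s-\beta}<\infty$ over a cap region of bounded size, gives $w_\ell=O(\ell^{-n})\to0$. Hence $u^f_\ell\to u^f$ pointwise. This also uses the uniformity in $\ell$ of the Chen–Song constants for the $C^{1,1}$ domains $\Omega_\ell$, which holds because they are uniformly $C^{1,1}$.

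\textbf{Two-sided estimate.} It suffices to prove the bounds for the $v_\ell$ uniformly in $\ell$, or directly for $u^f$, at points $x$ with $\delta_{\Omega_\infty}(x)=d<1/2$; away from the boundary the bounds follow from local boundedness and positivity.

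For the lower bound, I restrict the integral to the ball $B_{d/2}(x)$. There the interior estimate $\gk{}(x,y)\ge c|x-y|^{2s-n}$ holds, and $\delta^{-\beta}\simeq d^{-\beta}$. This yields $u^f\ge c\,d^{-\beta}d^{2s}$. The estimate transfers to $\Omega_\ell$ uniformly once $\ell$ is large relative to $x$.

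For the upper bound, I use the sharp estimate
\[
\gk{}(x,y)\le C\min\!\Big(|x-y|^{2s-n},\ \frac{\delta^s(x)\delta^s(y)}{|x-y|^n}\Big).
\]
Flattening the lateral boundary locally, the integral becomes $\int \min(\cdot)\,\delta(y)^{-\beta}\,dy$, which I split into three regions. The region $|x-y|<d/2$ contributes $Cd^{2s-\beta}$. The region where $|x-y|\ge d/2$ and $y$ lies in a unit neighbourhood is bounded by
\[
d^s\int_{d/2}^{1} r^{-n}\,\Big(\int_0^{r} t^{s-\beta}\,dt\Big)\,r^{n-2}\,dr \;\lesssim\; d^{s}\,d^{s-\beta}.
\]
Here the inner integral is finite because $\beta<1+s$, and the power $1+s-\beta-2<-1$, so the $r$-integral is dominated by its lower limit; this requires $\beta>2s-1$, which holds. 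Far away, the contribution is bounded by $Cd^s$ using the decay $|x-y|^{-n}$ and $f\in L^1(\delta^s)$. Since $d^s\le d^{2s-\beta}$ when $\beta\ge s$, which follows from $\beta>2s$ only when $s\le\beta$, I need to check that case; indeed $\beta>2s>s$, so the bound holds.

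\textbf{Main obstacle.} The hardest part is the uniformity in $\ell$ of the Green kernel estimates for the perturbed cylinders $\Omega_\ell$. For the infinite cylinder $\Omega_\infty$ itself, the global Chen–Song bounds are stated for bounded domains. My first attempt would be to use the bounded domain $\Omega_\ell$ with constants depending only on the $C^{1,1}$ character, restricted to pairs with $|x-y|\le 1$. For $|x-y|>1$ I would use the crude bound $\gk{}\le \gk{\rn}$, together with $\delta\le 1$, to recover the needed decay.
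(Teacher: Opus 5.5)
Your proof has a genuine gap: it relies on $f=\delta_{\Omega_\infty}^{-\beta}$ lying in $L^1(\Omega_\infty,\delta^s_{\Omega_\infty})$, and this is false. Several parts of your plan are sound. The splitting $u^f_\ell=v_\ell+w_\ell$ correctly addresses the fact that $\chi_{\Omega_\ell}\gk{\Omega_\ell}(x,y)\,\delta_{\Omega_\ell}^{-\beta}(y)$ need not be monotone in $\ell$, which the paper's appeal to monotone convergence passes over. Your lower bound and the near and intermediate parts of the upper bound are essentially fine.

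\textbf{The false integrability claim.} Write $x=(x',x'')\in\R^m\times\R^{n-m}$. The claim fails for every $\beta$, because $\delta_{\Omega_\infty}(x)=1-|x'|$ does not depend on $x''$:
\[
\int_{\Omega_\infty}\delta_{\Omega_\infty}^{s-\beta}\,dx=\int_{\R^{n-m}}\Big(\int_{B_1^m}(1-|x'|)^{s-\beta}\,dx'\Big)\,dx''=+\infty .
\]
The condition $\beta<1+s$ only makes the cross-sectional integral finite. This has three consequences:
\begin{itemize}
\item \textbf{(F1)} fails, so the general convergence theorem cannot give $v_\ell\uparrow u^f$ with $u^f$ finite and a weak dual solution.
\item Your step ``far away, the contribution is bounded by $Cd^s$ using \ldots\ $f\in L^1(\delta^s)$'' has no basis.
\item Everything hinges on showing that $\sup_\ell v_\ell(x)=\int_{\Omega_\infty}\gk{\Omega_\infty}(x,y)\,\delta_{\Omega_\infty}^{-\beta}(y)\,dy$ is finite. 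Tonelli then gives the weak dual identity. This is exactly the long-range kernel control you defer to your ``main obstacle''.
\end{itemize}
The paper's own short argument uses the $L^1(\Omega,\delta^s)\cap L^q_{loc}\to L^\infty_{loc}$ continuity of $\g{\Omega}$ at this point. It also uses the bounded-domain two-sided estimate with constants tacitly independent of $\ell$. So you cannot borrow the missing ingredient from it.

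\textbf{What is actually needed.} The cross-sectional integral of $\delta^{s-\beta}$ is finite and the ends of the cylinder are $(n-m)$-dimensional. So you need a bound $\gk{\Omega_\ell}(x,y)\le C\,\delta^s(y)\,|x-y|^{-\gamma}$ for $|x-y|\ge1$, uniform in $\ell$, with $\gamma>n-m$. The tail is then controlled by $\int_1^\infty r^{-\gamma}r^{n-m-1}\,dr$.

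Your fallback $\gk{\Omega_\ell}(x,y)\le\Gamma_s(x-y)$ with $\delta\le1$ does not deliver this, for two reasons:
\begin{itemize}
\item It carries no factor $\delta^s(y)$. So it is not integrable against $\delta^{-\beta}$ near $\partial\Omega_\infty$ when $\beta\ge1$, which is allowed.
\item Its decay $\gamma=n-2s$ is too slow when $m\le2s$.
\end{itemize}
A uniform $C^{1,1}$ character also does not make the constants in \eqref{eqn behaviour of greens kernel} uniform in $\ell$. Those constants are asserted for one fixed bounded domain, and uniform regularity only controls short distances.

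What is uniform in $\ell$ is the exterior-ball estimate \eqref{eqn greens estimatte on unbounded domain}. It gives $\gk{\Omega_\ell}(x,y)\lesssim\delta^s_{\Omega_\ell}(y)\,|x-y|^{2s-n}$ for $|x-y|\ge1$. This suffices for two things:
\begin{itemize}
\item $w_\ell\to0$, at rate $\ell^{2s-m-1}$. Note that when $n-m\ge2$ the set where $\delta_{\Omega_\ell}\ne\delta_{\Omega_\infty}$ has volume of order $\ell^{n-m-1}$, not bounded size.
\item The tail, but only when $m>2s$.
\end{itemize}
For $m=1$, $s\ge\frac12$ the resulting tail integral diverges, and a cylinder-specific argument is required.

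\textbf{One way to close the gap.} You can avoid long-range kernel bounds by comparing with the cross-sectional solution.
\begin{enumerate}
\item Let $U$ solve the problem on $B_1^m$ with datum $(1-|x'|)^{-\beta}$, and set $\tilde U(x):=U(x')$.
\item For functions independent of $x''$, the $n$-dimensional operator reduces to the $m$-dimensional one. Hence $\slap\tilde U=\delta_{\Omega_\infty}^{-\beta}$ in $\Omega_\infty$.
\item Then $\tilde U-v_\ell$ is $s$-harmonic in $\Omega_\ell$, nonnegative outside $\Omega_\ell$, and $o(\delta_{\Omega_\ell}^{s-1})$ at $\partial\Omega_\ell$. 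Hence it is nonnegative.
\end{enumerate}
This gives finiteness of $u^f$. It also reduces the upper bound to the bounded $m$-dimensional estimate $U\lesssim(1-|x'|)^{2s-\beta}$. The bound $u^f\le\tilde U$ holds on all of $\Omega_\infty$, including the non-compact set $\{\delta_{\Omega_\infty}\ge\frac12\}$. On that set your appeal to ``local boundedness'' alone would not suffice.
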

\begin{theorem}[Driven by the operator]
Assume \ref{H} on $\Omega=\Omega_{\infty}$ and $h>0$.  If $u_{\ell}^{h}$ denotes the solution of \eqref{solution on omega ell} with the data $(0,0,h_{\ell})$, then $u_{\ell}^{h}(x)\to \infty$ as $x\to \theta \in \partial\Omega_{\infty}\cap \partial\Omega_{\ell}$. Further, there exists a function $u^{h}$ such that $u_{\ell}^{h}(x)\to u^{h}(x)$ point-wise, which is a solution of \eqref{linear eqn on infinite cylinder} with data $(0,0,h)$. Moreover, $u^{h}(x)\to \infty$ as $x\to \theta\in \partial\Omega_{\infty}$.
\end{theorem}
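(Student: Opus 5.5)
We plan to work throughout with the representation theorem above. For data $(0,0,h_\ell)$ it gives
\[
u_\ell^h(x)=\int_{\partial\Omega_\ell}\Ds\gk{\Omega_\ell}(x,\theta)\,h_\ell(\theta)\,d\mathcal{H}(\theta),\qquad x\in\Omega_\ell ,
\]
with $u_\ell^h\equiv 0$ outside $\Omega_\ell$. The argument has three steps: blow-up at the lateral boundary for each fixed $\ell$, monotone convergence to a limit $u^h$ that solves the problem on $\Omega_\infty$, and blow-up of $u^h$ on $\partial\Omega_\infty$.

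\textbf{Step 1: blow-up on each finite cylinder.} Since $\Omega_\ell$ is a bounded $C^{1,1}$ (indeed $C^2$) domain, the Chen--Song Green function estimates \cite{ChenSong1998} give the two-sided Martin kernel bound
\[
\Ds\gk{\Omega_\ell}(x,\theta)\asymp \delta_{\Omega_\ell}(x)^{s}\,|x-\theta|^{-n},
\]
with constants depending on $\ell$. Fix $\theta_0\in\partial\Omega_\infty\cap\partial\Omega_\ell$ and let $x\to\theta_0$ nontangentially first. Restricting the integral to the surface ball $B_r(\theta_0)\cap\partial\Omega_\ell$, where $h>0$, gives a lower bound of order $\delta^{s}\cdot\delta^{-n}\cdot\delta^{n-1}=\delta^{s-1}$ times $\inf_{B_r}h$. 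To avoid assuming $h$ is bounded below, we instead use Fatou's lemma on the approximate identity $\delta^{1-s}\Ds\gk{}(x,\cdot)\,d\mathcal{H}$. Either way, $u_\ell^h(x)\gtrsim \delta_{\Omega_\ell}(x)^{s-1}\to\infty$, which is the content of Lemma~\ref{lemma equivalent E}, namely $E_{\Omega_\ell}u\approx\lim\delta^{1-s}u$ with $h(\theta_0)>0$. For general (tangential) approach we use positivity of the kernel together with a Harnack chain, or more simply the fact that $\delta^{1-s}u_\ell^h\to h$ in the trace sense and $h>0$.

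\textbf{Step 2: monotonicity and the limit.} For $\ell<\ell'$, the difference $w=u_{\ell'}^h-u_\ell^h$ is $s$-harmonic in $\Omega_\ell$. Outside $\Omega_\ell$ we have $w=u_{\ell'}^h\ge0$, because $u^h_{\ell'}\ge0$ as a positive Martin integral. Its trace satisfies $E_{\Omega_\ell}w\ge0$: on the lateral part $w$ carries the same trace $h$, and on the caps $u_\ell^h$ has zero trace while $u_{\ell'}^h$ is bounded there. By the comparison principle (uniqueness plus positivity of the representation kernels) we get $w\ge0$, so $u_\ell^h\uparrow$. For local upper bounds, fix $K\Subset\Omega_\infty$ and compare $u_\ell^h$ with a single barrier uniform in $\ell$. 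For this we use the general existence theorem above with data $(0,0,h)$ on $\Omega_\infty$, which yields $u_\ell\le u$ via the same comparison, so the family is locally bounded. Monotone convergence gives $u^h=\lim u_\ell^h\in L^1_{loc}$, and the general convergence theorem identifies $u^h$ as the solution with data $(0,0,h)$.

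\textbf{Step 3: blow-up of the limit, and the main obstacle.} Blow-up of $u^h$ on $\partial\Omega_\infty$ follows from $u^h\ge u_\ell^h$. Given $\theta\in\partial\Omega_\infty$, choose $\ell$ large so that $\theta$ lies in the interior of the lateral part $\partial\Omega_\ell\cap\partial\Omega_\infty$. Step 1 then gives $u^h(x)\ge u_\ell^h(x)\to\infty$ as $x\to\theta$. The main obstacle is Step 1's pointwise blow-up along arbitrary approach, since $h$ is only $L^1$ and positive. I would handle it using the lower Martin kernel bound together with the fact that $\delta_{\Omega_\ell}\le|x-\theta|$. Concretely, for $x$ near $\theta_0$,
\[
u_\ell^h(x)\ge c\,\delta(x)^s\int_{\partial\Omega_\ell\cap B_{2|x-\theta_0|}(\theta_0)} |x-\theta|^{-n}h\,d\mathcal{H},
\]
and if necessary I would strengthen the hypothesis to $h$ locally bounded below by a positive constant, which is the reading of ``$h>0$'' I would adopt.
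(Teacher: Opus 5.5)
Your overall architecture matches the paper: blow-up on each $\Omega_\ell$, a monotone limit with a local bound, and blow-up of the limit via $u^h\ge u_\ell^h$. However, the central claim, that the pointwise limit $u^h$ solves \eqref{linear eqn on infinite cylinder} with data $(0,0,h)$, is not actually proved. You hand it to ``the general convergence theorem''. In this paper that theorem (the second theorem of the introduction) is obtained from the three data-driven cases by linearity. Its $h$-part is exactly Theorem~\ref{thrm for operator blowup}, i.e.\ the statement you are proving, so the appeal is circular. Your barrier does not close the gap: $u_\ell^h\le u$, with $u$ the representation-formula solution on $\Omega_\infty$ from Theorem~\ref{thrm rep of u on any domain}, gives only $u^h\le u$. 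What is needed is to pass to the limit in the weak dual identity
\[
\int_{\Omega_\infty}u_\ell^h\,\psi\,dx=\int_{\partial\Omega_\infty}\chi_{\partial\Omega_\ell}(\theta)\,\Ds[\g{\Omega_\ell}(\psi)](\theta)\,h(\theta)\,d\mathcal{H}(\theta),\qquad \psi\in L^\infty_c(\Omega_\infty),\ \ell\ \text{large}.
\]
The paper does this by dominated convergence. The exterior-ball estimate \eqref{eqn well definedness of frac normal derivative on unbounded domain}, whose constant depends only on $n$ and $s$, combined with \eqref{eqn triangle inequality}, bounds $\Ds[\g{\Omega_\ell}(\psi)](\theta)$ by a constant $C(\psi)$ independent of $\ell$ and $\theta$. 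So $C(\psi)h\in L^1(\partial\Omega_\infty)$ is a majorant. The convergence $\g{\Omega_\ell}(\psi)\to\g{\Omega_\infty}(\psi)$ from Lemma~\ref{Lemma local uniform convergence of solution operator} is then used to identify the limit of the $s$-normal derivatives. Monotonicity alone does not give this. For $\psi\ge0$, $\Ds[\g{\Omega_\ell}(\psi)]$ increases in $\ell$ and stays below $\Ds[\g{\Omega_\infty}(\psi)]$, which only makes $u^h$ a subsolution. Equality requires interchanging $\ell\to\infty$ with $y\to\theta$ in the definition of $\Ds$, and that is the step you have to supply.

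Two smaller points. First, in Step~1, reading $h>0$ as ``$h$ is bounded below by a positive constant near each point'' is not a weakness relative to the paper. The paper's only argument is Lemma~\ref{lemma equivalent E}, which tacitly needs $E_{\Omega_\ell}u_\ell^h(\theta)=h(\theta)$ as a genuine pointwise limit, known only for continuous $h$. For an $L^1$ datum that is positive everywhere but, off a null set, vanishes to high order at $\theta_0$, the solution does not blow up along nontangential approach to $\theta_0$. Do drop the Fatou and ``trace sense'' alternatives, though. The kernel $\delta^{1-s}\Ds\gk{\Omega_\ell}(x,\cdot)$ concentrates at $\theta_0$, so Fatou yields only $\liminf\ge0$, and weak convergence of traces says nothing pointwise. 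For arbitrary approach, integrate over the surface ball of radius $\delta_{\Omega_\ell}(x)$ about the nearest boundary point, where $|x-\theta|\le2\delta_{\Omega_\ell}(x)$. With $h\ge m>0$ there, the lower Martin bound gives $u_\ell^h(x)\ge c\,m\,\delta_{\Omega_\ell}^{s-1}(x)$.

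Second, in Step~2, a pointwise statement about $E_{\Omega_\ell}w$ does not by itself make $w$ a weak dual supersolution on $\Omega_\ell$. The paper proves monotonicity in Lemma~\ref{comparing u ell} through Lemma~\ref{lemma restriction of a solution to subdomains}. More simply, $\gk{\Omega_\ell}\le\gk{\Omega_{\ell'}}\le\gk{\Omega_\infty}$, together with the coincidence of the distance functions near interior lateral points, gives $\Ds\gk{\Omega_\ell}(x,\theta)\le\Ds\gk{\Omega_{\ell'}}(x,\theta)\le\Ds\gk{\Omega_\infty}(x,\theta)$. This yields both monotonicity and your barrier directly from the representation formula. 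By \eqref{eqn martin behaviour} the barrier is at most $C\|h\|_{L^1(\partial\Omega_\infty)}\delta_{\Omega_\infty}^{2s-n}$, which is the same local bound the paper derives.
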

And
\begin{theorem}[Driven by the complement data] 
Assume \ref{G} on $\Omega=\Omega_{\infty}$ and 
      \begin{equation*}
          \lim_{\substack{x \to x_{0} \\ x\in\overline{\Omega}^{c}_{\infty}}} g(x)=+\infty, \quad \text{for all } x_{0} \in \partial\Omega_{\infty}.
      \end{equation*}
      If $u^{g}_{\ell}$ is solution of \eqref{solution on omega ell} with the data $(0,g_\ell,0)$, then $u_{\ell}^{g}(x)\to \infty$ as $x\to x_{0}\in \partial\Omega_{\infty}\cap \partial\Omega_{\ell}$. Further,  there exists a function $u^{g}$ such that $u_{\ell}^{g}\to u^{g}$ point-wise, which is solution to \eqref{linear eqn on infinite cylinder} with the data $(0,g,0)$.
\end{theorem}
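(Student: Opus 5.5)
By the representation theorem for \eqref{eqn main linear omega}, applied on $\Omega_\ell$ with data $(0,g_\ell,0)$, the solution is
\begin{equation*}
u^{g}_{\ell}(x)=-\int_{\R^{n}\setminus\overline{\Omega}_{\ell}}\slap\gk{\Omega_\ell}(x,y)\,g_\ell(y)\,dy ,\qquad x\in\Omega_\ell .
\end{equation*}
The kernel $P_\ell(x,y):=-\slap\gk{\Omega_\ell}(x,y)$ is the fractional Poisson kernel of $\Omega_\ell$. It is positive and satisfies the two-sided estimate (from \cite{ChenSong1998} for $C^{1,1}$ domains)
\begin{equation*}
P_\ell(x,y)\asymp \frac{\delta_{\Omega_\ell}^{s}(x)}{\delta_{\Omega_\ell}^{s}(y)\,(1+\delta_{\Omega_\ell}(y))^{s}\,|x-y|^{n}} .
\end{equation*}
The proof has three steps: blow-up on each $\Omega_\ell$, monotone convergence in $\ell$, and identification of the limit as a solution.

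\textbf{Step 1 (blow-up for fixed $\ell$).} Fix $x_0\in\partial\Omega_\infty\cap\partial\Omega_\ell$ lying in the flat lateral part, and let $M>0$. By hypothesis there is $r>0$ with $g\ge M$ on $B_r(x_0)\setminus\overline{\Omega}_\infty$, and for small $r$ this set coincides with $B_r(x_0)\setminus\overline{\Omega}_\ell$. Since $g$ may change sign, split $g=g^+-g^-$. The part $\int P_\ell\,g_\ell^-$ stays bounded near $x_0$, at least away from the support near $x_0$: the kernel bound gives $P_\ell(x,y)\lesssim \delta^s_{\Omega_\ell}(x)\,\delta^{-s}_{\Omega_\ell}(y)$ uniformly for $y$ far from $x$, and \ref{G} then controls the integral. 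Because $g\to+\infty$, we have $g^-=0$ near $x_0$. For the positive part, use the key identity: $\int_{\R^n\setminus\overline{\Omega}_\ell}P_\ell(x,y)\,dy=1$, which is the solution with $g\equiv1$. Together with the kernel estimate this shows that the Poisson mass concentrates, namely
\begin{equation*}
\int_{B_r(x_0)\setminus\overline{\Omega}_\ell}P_\ell(x,y)\,dy\to1 \quad\text{as } x\to x_0 ,
\end{equation*}
since the mass outside $B_r(x_0)$ is $O(\delta^s_{\Omega_\ell}(x))$. Hence $\liminf_{x\to x_0}u^g_\ell(x)\ge M$ for every $M$, which gives the blow-up.

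\textbf{Step 2 (convergence).} I will use the earlier result on $\{\ul\}$: for data $(f,g,h)$ the sequence is increasing and locally uniformly bounded, and converges pointwise to a solution of \eqref{linear eqn on infinite cylinder}. Applied with $f=0$, $h=0$, it yields $u^g$ with $u^g_\ell\to u^g$ pointwise, and $u^g$ solves \eqref{linear eqn on infinite cylinder} with data $(0,g,0)$. If monotonicity in that result needs a sign condition on $g$, I will handle it by splitting $g=g^+-g^-$ and applying the result to each part, then using linearity. Monotonicity itself should come from $g_\ell=0$ on $\Omega_\infty\setminus\Omega_\ell$ together with the comparison principle. For $\ell<\ell'$, the function $u^g_{\ell'}$ is a solution on $\Omega_\ell$ whose exterior data agrees with $g$ outside $\Omega_\infty$ and is $u^g_{\ell'}$ on $\Omega_\infty\setminus\Omega_\ell$. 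The latter is $\ge0$ when $g\ge0$, so $u^g_{\ell'}\ge u^g_\ell$.

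\textbf{Main obstacle.} The main obstacle is Step 1 when $g$ has a sign-changing negative part. The concentration estimate for the Poisson kernel near $x_0$ must be checked carefully, because the estimate in \cite{ChenSong1998} depends on the geometry of $\Omega_\ell$. Since $x_0$ sits on the flat lateral boundary at positive distance from the perturbed caps, the local $C^{1,1}$ character is uniform, and I will first try to reduce to a local comparison there. Concretely, compare $u^g_\ell$ with the solution on a fixed ball $B\subset\Omega_\ell$ tangent at $x_0$. This comparison is justified by the maximum principle once $u^g_\ell$ is shown to be bounded below on $\Omega_\ell\setminus B$. The ball's Poisson kernel is explicit, so the concentration property holds there directly.
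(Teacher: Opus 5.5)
Your Step 1 is sound and uses the same mechanism as the paper's Theorems \ref{thrm continuity up to the boundary} and \ref{thrm blow up of solution where g is}. The Poisson kernel of the bounded domain $\Omega_\ell$ has unit mass. By \eqref{asym of poison on bounded domain}, the contribution from outside $B_r(x_0)$ is $O(\delta^s_{\Omega_\ell}(x))$, because $|g_\ell|\,\delta^{-s}_{\Omega_\ell}\le |g|\,\delta^{-s}_{\Omega_\infty}$ is integrable by \ref{G}. You estimate directly rather than truncating $g$ and comparing, which is fine. Since $\ell$ is fixed in this step, constants may depend on the geometry of $\Omega_\ell$, so the ball comparison in your last paragraph is unnecessary. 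The real difficulty is elsewhere.

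The gap is Step 2. The ``earlier result'' you invoke is the summary theorem of the introduction: pointwise convergence of $\ul$ to a solution on $\Omega_\infty$ for general data $(f,g,h)$. Its case $(0,g,0)$ is exactly the second assertion you must prove. The paper never proves that theorem independently; it assembles it from the three special cases, one of which is Theorem \ref{thrm gell convergeng to g}. Your argument is therefore circular, and the missing content is an $\ell$-independent bound on compacts together with the identification of the limit.

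Both follow from the representation you already wrote down. Since $g_\ell=0$ on $\Omega_\infty\setminus\Omega_\ell$,
\[
u^g_\ell(x)=\int_{\R^n\setminus\overline{\Omega}_\infty}P_\ell(x,y)\,g(y)\,dy,\qquad P_\ell(x,y)=\mathcal{A}(n,s)\int_{\Omega_\ell}\frac{\gk{\Omega_\ell}(x,z)}{|y-z|^{n+2s}}\,dz .
\]
Lemmas \ref{lemma greens seq is decreasing} and \ref{lemma convergence of greens kernel}, with monotone convergence, give $P_\ell(x,y)\uparrow -\slap\gk{\Omega_\infty}(x,y)$ by Proposition \ref{prop greens kernal continuity}(iii). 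Theorem \ref{theorem poission behaviour} bounds this limit kernel by $C(K,n,s)\,\delta^{-s}_{\Omega_\infty}(y)$ for $x\in K$, which is the $\ell$-uniform majorant. Apply dominated convergence to $g^{+}$ and $g^{-}$ separately, so no sign condition is needed. This yields $u^g_\ell(x)\to -\int_{\R^n\setminus\overline{\Omega}_\infty}\slap\gk{\Omega_\infty}(x,y)\,g(y)\,dy$, which is the unique solution by Theorem \ref{thrm linear existence with g}. The paper instead passes to the limit in the weak dual identity, dominating $g\,\slap\g{\Omega_\ell}(\psi)$ via \eqref{eqn complement integral behaviour}. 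Either way, this step has to be supplied, not cited.
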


\subsection{Semi-linear equation}
Consider \eqref{eqn main semilinear omega}, where
\begin{enumerate}
    \myitem{$\textbf{(F2)}$}\label{F2} \textit{$f:\Omega\times \R\to\R$ be such that \begin{enumerate}
    \myitem{\textit{f1)}}\label{f.1} For any $x\in \Omega$, $f(x,0)=0$, $f(x,t)\geq 0$ for all $t>0$, $f\in C(\Omega\times \R)$ and $f(x,t)$ is increasing in the $t$ variable.
    \myitem{\textit{f2)}}\label{f.2} There exist $a_{1},a_{2}\geq 0$ and $p\in [0,\frac{1+s}{1-s})$ such that 
        \begin{equation*}
            f(x,t)\leq a_{1}+a_{2}t^{p}\quad \text{for all } t>0 \text{ and }x\in \Omega, 
        \end{equation*}
\end{enumerate}}
\end{enumerate}
We first prove a comparison-type results on bounded domains in Lemma \ref{lemma restriction of a solution to subdomains} and Lemma \ref{lemma comparison principle for semilinear eqn}. Then prove the existence of a weak dual solution to the semi-linear equation \eqref{eqn main semilinear omega} on a bounded domain with $C^{1,1}$ boundary in Theorem \ref{theorem existence} using the sub and super solution method proved in Theorem \ref{thrm sub and super solution method}. 
In \cite{BogdanJarohsKania2020}, a semi-linear equation with $f(x,t)=mF(x,t)$ is considered in general domains, and the existence is obtained for small parameters $m$.

Unlike the linear case, the semi-linear equation does not admit any large solution to the data $(f(x,\cdot),0,0)$. The proof when $f(x,\cdot)$ is monotonic is due to the uniqueness since $f(x,0)=0$. However a similar result is true if we drop the assumption of monotonicity. We establish this in Theorem \ref{thrm no large solution}.

In the semi-linear case, the concern is to compare solutions on two nested cylinders. To this end, we establish a result regarding the restriction of a solution to a subdomain (see Lemma \ref{lemma restriction of a solution to subdomains}), analogous to \cite[Corollary 1 ($L^{1}$ weak solution)]{Abatangelo2015LargesHarmonic} in the weak dual case. In doing so, we generalize \cite[Corollary 1]{Abatangelo2015LargesHarmonic} in the following ways. First, we establish the result for both sub- and super-solutions. Second, the subdomains are not required to be pre-compact. Moreover, the result also holds for linear equations.

Moving on to the infinite cylinder setting, consider the equation \eqref{semi-linear prob on infinite cylinder}, and for each $\ell>0$,
 \begin{equation}\label{semi-linear prob on finite cylinder}
    \begin{cases}
         \slap \ul=-f(x,\ul)\quad &\text{in } \Omega_{\ell}\\
         \ul=g_{\ell} &\text{in }\R^{n}\setminus\overline{\Omega}_{\ell}\\
         E_{\Omega_{\ell}}(\ul)=h_{\ell} &\text{on }\partial\Omega_{\ell},
    \end{cases}
 \end{equation}
 where, $g_{\ell}$ and $h_{\ell}$ are defined in \eqref{dfn of g ell and h ell}.
The sequence $\{u_{\ell}\}_{\ell}$ of solutions is locally uniformly bounded (Lemma \ref{local uniform bound for semi-linear eqn on bounded domain}) and is increasing point-wise, which follows by the use of comparison principle.
 Then we prove the following result
 \begin{theorem}\label{semilinear theorem}
     Assume \ref{F2}, \ref{G} and \ref{H} on $\Omega=\Omega_{\infty}$. Assume additionally that $h\in L^{\infty}(\partial\Omega_{\infty})$. Then, there exists a function $u\in L_{loc}^{1}(\Omega)$ and $u$ is the unique solution of \eqref{semi-linear prob on infinite cylinder}.
 \end{theorem}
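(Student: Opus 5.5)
The plan is to build $u$ as the monotone limit of the finite-cylinder solutions $u_\ell$ of \eqref{semi-linear prob on finite cylinder}. The passage to the limit will be carried out in the weak dual formulation (Definition \ref{dfn weak dual for semilinear}) using the Green, Poisson and Martin representations. Uniqueness will come from the comparison principle.

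\textbf{Step 1: the approximating solutions.} For each $\ell$, Theorem \ref{theorem existence} gives a weak dual solution $u_\ell$ on $\Omega_\ell$ with data $(f(x,\cdot),g_\ell,h_\ell)$. The hypotheses are inherited: $g_\ell\in L^1(\R^n\setminus\overline\Omega_\ell,\delta_{\Omega_\ell}^{-s})$ since $g_\ell=0$ in $\Omega_\infty\setminus\Omega_\ell$, and $h_\ell\ge 0$, $h_\ell\in L^1\cap L^\infty(\partial\Omega_\ell)$.

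\textbf{Step 2: monotonicity and local bounds.} Let $\ell<\ell'$. Restrict $u_{\ell'}$ to $\Omega_\ell$ via Lemma \ref{lemma restriction of a solution to subdomains}. On $\Omega_\ell$ it solves the same semilinear equation, with exterior datum $u_{\ell'}$ on $\Omega_{\ell'}\setminus\Omega_\ell$ and $g$ outside $\Omega_\infty$, and with boundary trace $h$ on $\partial\Omega_\ell\cap\partial\Omega_\infty$ and $0$ on the caps.
\begin{itemize}
\item On $\Omega_{\ell'}\setminus\Omega_\ell$ we have $u_{\ell'}\ge 0 = g_\ell$. This holds provided $u_{\ell'}$ dominates the solution with zero data, which is $0$ by uniqueness, since $f(x,0)=0$.
\item Lemma \ref{lemma comparison principle for semilinear eqn} then gives $u_\ell\le u_{\ell'}$ in $\Omega_\ell$.
\end{itemize}
Caution: the sign of $u_{\ell'}$ in the interior requires $g\ge 0$. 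If $g$ changes sign, I would instead compare with the restriction, whose exterior data $u_{\ell'}$ I bound below via the linear solution with data $(0,g_\ell,0)$. The simpler route is to note that the monotonicity statement asserted before the theorem is what the paper relies on.

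Lemma \ref{local uniform bound for semi-linear eqn on bounded domain} gives local uniform bounds, independent of $\ell$, on compacts of $\Omega_\infty$. The bound I expect is
\[
|u_\ell|\le v_\ell,
\]
where $v_\ell$ is the linear solution with data $(a_1,g_\ell,h_\ell)$. By the linear infinite-cylinder theorem, $v_\ell$ increases to a finite function. Hence $u_\ell\uparrow u$ pointwise, with $u\in L^1_{loc}(\Omega_\infty)$.

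\textbf{Step 3: passing to the limit.} This is the main obstacle. The weak dual formulation requires, for test functions $\psi$ compactly supported in $\Omega_\infty$ (so that $\psi$ lives in $\Omega_\ell$ for large $\ell$),
\[
\int u\,\psi=-\int f(x,u)\,\mathbb G_{\Omega_\infty}[\psi]-\int g\,(-\Delta)^s\mathbb G_{\Omega_\infty}[\psi]+\int h\, D_s\mathbb G_{\Omega_\infty}[\psi].
\]
The boundary and exterior terms are linear and pass to the limit exactly as in the linear infinite-cylinder theorem, using $\mathbb G_{\Omega_\ell}\uparrow\mathbb G_{\Omega_\infty}$ and the convergence of the Poisson and Martin kernels. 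For the nonlinear term, $f(x,u_\ell)\to f(x,u)$ pointwise by continuity. Domination will use (f2):
\[
f(x,u_\ell)\le a_1+a_2 v^p,
\]
where $v$ is the limit linear solution. The integrability of $v^p\,\mathbb G_{\Omega_\infty}[\psi]$ near $\partial\Omega_\infty$ is where the restrictions $p<\frac{1+s}{1-s}$ and $h\in L^\infty$ enter:
\begin{itemize}
\item $h\in L^\infty$ gives $v\lesssim\delta^{s-1}$ uniformly along the cylinder;
\item $\mathbb G[\psi]\lesssim\delta^s$;
\item therefore $\delta^{-(1-s)p+s}$ is integrable across the boundary precisely when $p<(1+s)/(1-s)$.
\end{itemize}
Integrability at infinity along the axis follows from the decay of $\mathbb G_{\Omega_\infty}[\psi]$ away from $\operatorname{supp}\psi$, which is exponential in the cylinder. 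Dominated convergence then concludes the limit.

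\textbf{Step 4: uniqueness.} If $u,w$ are two solutions, apply the comparison principle on $\Omega_\infty$, mirroring Lemma \ref{lemma comparison principle for semilinear eqn}: the difference solves a linear problem with potential $c(x)=\frac{f(x,u)-f(x,w)}{u-w}\ge 0$ and zero data. The Kato-type inequality in the weak dual setting gives $u\le w$ and, by symmetry, $u=w$. The delicate point is justifying the duality on the unbounded domain. I would handle it by testing against $\mathbb G_{\Omega_\infty}[\psi]$ with $\psi\ge 0$ compactly supported and using the integrability established in Step 3.
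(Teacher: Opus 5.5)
Your architecture matches the paper's proof. You obtain $u_\ell$ from Theorem~\ref{theorem existence}, get monotonicity in $\ell$ from Lemma~\ref{lemma restriction of a solution to subdomains} plus comparison, set $u=\sup_\ell u_\ell$, pass to the limit term by term in the weak dual formulation using $h\in L^\infty$ and $p<\frac{1+s}{1-s}$, and conclude uniqueness by comparison.

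\textbf{The gap.} It is in Step~3, at exactly the point you rightly single out. The paper passes over this point: its majorant $a_2\delta_{\Omega_\infty}^{(s-1)p+s}$ depends only on the cross-sectional variable, so it is not integrable on $\Omega_\infty$. Your justification, exponential decay of $\mathcal{G}_{\Omega_\infty}[\psi]$ along the axis, is false for $\slap$. A one-jump (L\'evy system) argument already gives $\mathbb{G}_{\Omega_\infty}(x,y)\gtrsim |x-y|^{-n-2s}$ for $x,y$ on the axis with $|x-y|\ge 2$, so the decay is only polynomial. Decay of order $n+2s$ would suffice, but nothing in the paper provides it. The only available bound, \eqref{eqn greens estimatte on unbounded domain}, gives $\mathcal{G}_{\Omega_\infty}(|\psi|)(x)\lesssim \delta_{\Omega_\infty}^{s}(x)\,|x''|^{-(n-2s)}$ for large $|x''|$. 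This is integrable over $\R^{n-m}$ only if $m>2s$, so it fails, e.g., for $m=1$, $s\ge\frac12$.

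\textbf{A repair needing no decay estimate.} Since $\delta_{\Omega_\infty}\le 1$, fix $\beta\in(\max\{2s,(1-s)p\},1+s)$. Then $a_1+a_2C\delta_{\Omega_\infty}^{-(1-s)p}\le C'\delta_{\Omega_\infty}^{-\beta}$, hence $|f(x,u_\ell)\,\mathcal{G}_{\Omega_\ell}(\psi)|\le C'\delta_{\Omega_\infty}^{-\beta}\,\mathcal{G}_{\Omega_\infty}(|\psi|)$. By Tonelli and the symmetry of $\mathbb{G}_{\Omega_\infty}$,
\[
\int_{\Omega_\infty}\delta_{\Omega_\infty}^{-\beta}\,\mathcal{G}_{\Omega_\infty}(|\psi|)\,dx=\int_{\Omega_\infty}|\psi|\,\mathcal{G}_{\Omega_\infty}\big(\delta_{\Omega_\infty}^{-\beta}\big)\,dy<\infty .
\]
This is finite because the force-driven theorem of Section~\ref{section linear} gives $\mathcal{G}_{\Omega_\infty}(\delta_{\Omega_\infty}^{-\beta})\le c_2\,\delta_{\Omega_\infty}^{2s-\beta}$, which is bounded on $\operatorname{supp}\psi$.

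\textbf{Smaller points.} First, in Step~2 the datum $a_1$ is not in $L^{1}(\Omega_\infty,\delta^{s}_{\Omega_\infty})$, so the linear infinite-cylinder theorem does not apply to $v_\ell$ as stated. Simply drop it. When $g\ge 0$ (which the monotonicity step needs anyway, as you observe), $0$ is a subsolution and $f(x,u_\ell)\ge 0$. Lemma~\ref{lemma comparison principle} then places $u_\ell$ between $0$ and the linear solution with data $(0,g_\ell,h_\ell)$. Alternatively, $\mathcal{G}_{\Omega_\infty}(1)\le\mathcal{G}_{\Omega_\infty}(\delta_{\Omega_\infty}^{-\beta})<\infty$ by the device above.

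Second, $h\in L^\infty$ makes only the Martin part of $v$ of order $\delta_{\Omega_\infty}^{s-1}$. Under \ref{G} alone, the Poisson part generated by $g$ need not be: a bump of unit $L^{1}(\delta^{-s})$-mass at distance $d$ outside the cylinder produces a value of order $d^{s-n}$ at distance $d$ inside. So some extra control of $g$ near $\partial\Omega_\infty$ is needed; the paper makes the same tacit assumption.

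Third, the ``Kato-type inequality in the weak dual setting'' is not available in the paper, and you do not prove it on the unbounded cylinder. The paper instead restricts to $\{w<u\}$ via Lemma~\ref{lemma restriction of a solution to subdomains}. There $u-w$ is a subsolution with nonpositive data, and the linear comparison principle, Lemma~\ref{lemma comparison principle}, concludes.
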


 \begin{remark}
     Since there is no assumption on the sign of $f:\Omega\to \R$, the linear and semi-linear equations remain consistent. Further, by our definition, $\slap$ is a positive operator; therefore, the negative sign on the right-hand side of the semi-linear equation is justified. This convention is consistent with the local problems.
 \end{remark}
 \subsection{Outline}
In Section \ref{section prilims}, we introduce fundamental definitions and key results concerning Fractional Green's kernel, Green's potential, the $s$-normal derivative, and the boundary trace operator $E_{\Omega}$. Additionally, we construct upper barriers for the Green's kernel and Poisson kernel on unbounded domains. Subsequently, we provide basic definitions concerning linear and semi-linear equations and state some results for the same on bounded domains. Further, we establish some results regarding Green's kernel and Green's potential on a sequence of finite cylinders.

In Section \ref{section linear}, We establish the existence and uniqueness theorem for linear equations on domains that are not necessarily bounded. Followed by a discussion on comparison principle. At the end if this section, we prove the results regarding linear equations on the infinite cylinder.

In Section  \ref{section semilinear}, we first establish results for solutions of the semi-linear equation on bounded domains, including a comparison principle. In this context, we state the existence theorem on bounded domain whose proof is presented in the Appendix. Subsequently, we derive the existence and uniqueness result for the semi-linear equation on the infinite cylinder.

In Section \ref{section appendix}, we present the proof of sub-super solution method for establishing the existence of a bounded solution to a semi-linear equation on bounded domains as well as the proof of existence of a large solution to semi-linear equations on bounded domains.

\section{Preliminaries}\label{section prilims}
This section introduces the basic concepts underlying our analysis.

\subsection{Notation}
 The following notations are used throughout the article.
\begin{enumerate}
    \item Let $\Omega\subset \rn$. We denote by $\delta_{\Omega}(x)=$dist$(x,\partial\Omega)$, the distance of the point $x$ from the boundary of $\Omega$.
    \item $\gk{\Omega}(x,y)$ denotes the fractional Green's kernel on $\Omega$.
    \item $\g{\Omega}(\cdot)$ denotes the Green's potential of $\slap$.
    \item Arbitrary positive constants are denoted by $c$ and $C$.
    \item For any two functions $f$ and $h$, $f\asymp h$  means there exist two positive constants $c,C$ such that $ch\leq f\leq Ch$.
\item $L^{1}(\Omega,\delta^{s}_{\Omega
})$ denotes the weighted $L^{1}$ space with weight $\delta_{\Omega
}^{s}$.
\item We write $f\in \delta_{\Omega}^{s}C(\overline{\Omega})$, to indicate that $f$ can be represented by $f=\delta_{\Omega}^{s}g$ where $g\in C(\overline{\Omega})$.
\item The notation $\delta_{x}$ denotes the Dirac delta at $x$.
\item The notation $B_{1}^{n}$ and $S^{n-1}$ denote unit ball and sphere in $\rn$ respectively.

\end{enumerate}
\subsection{Fractional Green's Kernel and Potential}  
We refer to \cite{Lankoff1972} and the references therein for a comprehensive discussion of the fractional Green's kernel. For completeness, we introduce the following definitions and discuss several results relevant to our setting.
\begin{definition}
   The Green's kernel associated with a domain $\Omega$ is a collection of functions $\gk{\Omega}:\Omega\times\rn \to\R$ given by 
    \begin{equation*}
        \gk{\Omega}(x,y)=\Gamma_{s}(x-y)-H_{\Omega}(x,y),
    \end{equation*}
    where $\Gamma_{s}$ is the fundamental solution associated with $\slap$, which is given by 
    \begin{equation*}
        \Gamma_{s}(x):= \frac{ a(n,s)  }{|x|^{n-2s}}\quad \text{ where,}\quad a(n,s):=\frac{\Gamma(n/2-s)}{\pi^{n/2}\Gamma(2+s)}s(1+s),
    \end{equation*}
    and $H_{\Omega}(x,\cdot)$ is the s-harmonic function solving
    \begin{equation*}\label{dfn of H}
        \begin{cases}
            \slap H_{\Omega}(x,\cdot)=0 \quad &\text{on }\Omega\\
            H_{\Omega}(x,y)=\Gamma(x-y) &\text{in } \R^{n}\setminus\overline{\Omega},
        \end{cases}
    \end{equation*}
    pointwise.
\end{definition}

The following result can also be found in \cite{Abatangelo2015LargesHarmonic}. 
\begin{proposition}
    Let $x\in \Omega$, then $H_{\Omega}(x,\cdot)\in C^{2s+\epsilon}(\Omega)\cap C(\overline{\Omega})$.
\end{proposition}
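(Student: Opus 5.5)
The plan is to treat $H_{\Omega}(x,\cdot)$, for fixed $x\in\Omega$, as the solution of an exterior-data problem whose datum $\Gamma_s(x-\cdot)$ is smooth and bounded in a neighbourhood of $\rn\setminus\Omega$. Interior regularity will then come from the regularity theory for $s$-harmonic functions. Continuity up to the boundary will come from boundary regularity for solutions with continuous exterior data.

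\emph{Setup.} Fix $x\in\Omega$ and set $r:=\delta_{\Omega}(x)/2>0$. Then $\Gamma_s(x-\cdot)$ is $C^\infty$ on $\rn\setminus B_r(x)$, bounded there by $a(n,s)r^{2s-n}$, and decays like $|y|^{2s-n}$ at infinity. In particular $\Gamma_s(x-\cdot)\in L^1(\rn,(1+|y|)^{-n-2s})$, so the nonlocal tail is finite. I will pick a smooth cutoff $\eta$ with $\eta\equiv 1$ outside $B_{r}(x)$ and $\eta\equiv 0$ on $B_{r/2}(x)$, and put $\phi:=\eta\,\Gamma_s(x-\cdot)\in C^\infty(\rn)\cap L^\infty(\rn)$. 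Then $\phi=\Gamma_s(x-\cdot)$ on $\rn\setminus\Omega$, and $w:=H_{\Omega}(x,\cdot)-\phi$ solves
\[
\slap w=-\slap\phi\ \text{in }\Omega,\qquad w=0\ \text{in }\rn\setminus\overline{\Omega}.
\]
Since $\phi$ is smooth and bounded with a controlled tail, $\slap\phi$ is $C^\infty$ and bounded on $\Omega$. Equivalently, and avoiding the cutoff, $H_{\Omega}(x,y)=\int_{\rn\setminus\Omega}P_{\Omega}(y,z)\Gamma_s(x-z)\,dz$ via the Poisson kernel. By the maximum principle $0\le H_{\Omega}(x,\cdot)\le a(n,s)r^{2s-n}$ on $\Omega$, so $H_{\Omega}(x,\cdot)$ is bounded.

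\emph{Interior regularity.} For $K\Subset\Omega$, $H_{\Omega}(x,\cdot)$ is a bounded $s$-harmonic function with a finite weighted tail. The standard interior estimate (Silvestre's regularity, or the explicit Poisson kernel of balls together with a covering argument) gives $H_{\Omega}(x,\cdot)\in C^\infty(K)$. In particular it lies in $C^{2s+\epsilon}(K)$, which is exactly what makes $\slap H_{\Omega}(x,\cdot)$ defined pointwise, as the definition requires.

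\emph{Continuity up to the boundary.} This is the step I expect to be the main obstacle. I would use the decomposition $H_{\Omega}(x,\cdot)=\phi+w$ above. Since $\Omega$ is $C^{1,1}$ and $\slap\phi\in L^\infty(\Omega)$ (at least locally near $\partial\Omega$), the Ros-Oton--Serra boundary regularity gives $w\in C^{s}$ near $\partial\Omega$ with $w=0$ on $\partial\Omega$. Hence $H_{\Omega}(x,\cdot)=\phi+w$ is continuous on $\overline{\Omega}$ and equals $\Gamma_s(x-\cdot)$ on $\partial\Omega$.

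If $\Omega$ is unbounded, the global $L^\infty$ bound on $\slap\phi$ requires care. In that case I would localize: apply the estimate in $\Omega\cap B_R(z_0)$ for each $z_0\in\partial\Omega$, and treat the part of $H_{\Omega}(x,\cdot)$ away from $B_R(z_0)$ as a tail term. That tail is controlled by the boundedness of $H_{\Omega}(x,\cdot)$ and the decay of $\Gamma_s$.

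As an alternative that avoids regularity theory, I would use barriers. For $z_0\in\partial\Omega$, the exterior ball condition together with the explicit $s$-harmonic barriers on complements of balls gives $|H_{\Omega}(x,y)-\Gamma_s(x-z_0)|\le\omega(|y-z_0|)$ for some modulus $\omega$, uniformly near $z_0$.
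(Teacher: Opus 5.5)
Your proposal follows essentially the same route as the paper: the paper also writes $H_{\Omega}(x,\cdot)=\gamma_s(x,\cdot)+h(x,\cdot)$, where $\gamma_s=\kappa\,\Gamma_s(x-\cdot)$ is a smooth bounded cutoff that agrees with $\Gamma_s(x-\cdot)$ off $\Omega$; it uses $\slap\gamma_s\in L^\infty\cap C^\infty$, then Ros-Oton--Serra for $h\in C^{s}(\rn)$ (giving continuity up to $\partial\Omega$) and Silvestre for $h\in C^{2s+\epsilon}(\Omega)$. The differences are minor: you get interior smoothness directly from the $s$-harmonicity of $H_{\Omega}(x,\cdot)$ rather than from $h$, and you flag the localization needed to use the (bounded-domain) boundary estimate when $\Omega$ is unbounded, a point the paper does not address.
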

\begin{proof}
    Let $r=r(x)>0$ such that $\overline{B_{r}(x)}\subset \Omega$ and $\kappa(=\kappa_{x})\in C^{\infty}(\rn)$ with $0\leq \kappa\leq 1$, $\kappa=1$ in $\R^{n}\setminus\overline{\Omega}$ and $\kappa=0$ in $\overline{B_{r}(x)}$. Define 
    \begin{equation}\label{dfn small gamma s}
       \gamma_{s}(x,y)=\kappa_y\Gamma_{s}(x-y).
    \end{equation}
    Note that $\gamma_{s}(x,\cdot)\in C^{\infty}(\rn) \cap L^{\infty}(\rn)$, $\gamma_{s}(x,y)=\Gamma_{s}(x-y)$ in $\R^{n}\setminus\overline{\Omega}$ and $\gamma_{s}(x,y)=0$ for $y\in B_{r}(x)$. Further by \cite[Proposition 2.7]{Silvestre2007}, $\slap \gamma_{s}\in L^{\infty}(\rn)\cap C^{\infty}(\rn)$. Set 
    \begin{equation}\label{dfn of small h}
        h(x,y)=H_{\Omega}(x,y)-\gamma_{s}(x,y),
    \end{equation} which solves 
    \begin{equation*}
    \begin{cases}
      \slap h(x,\cdot)= -\slap \gamma_{s}(x,\cdot) \quad &\text{in } \Omega\\
      h(x,y)=0 &\text{in }\overline{\Omega}^{c}.
    \end{cases}
    \end{equation*}
    Then by \cite[Proposition 1.1]{ROSOTON2014275}, $h(x,\cdot)\in C^{s}(\rn)$ and by \cite[Proposition 2.8]{Silvestre2007} $h(x,\cdot)\in C^{2s+\epsilon}(\Omega)$. Hence, the result follows from \eqref{dfn of small h}.
\end{proof}
Further, we derive a local uniform bound for $H_{\Omega}(\cdot,\cdot)$. 
   \begin{lemma}\label{lemma uniform bound for H}
        Let $\Omega$ be a domain not necessarily bounded, and $K$ be a compact subset of  $\Omega$. For any $x\in K$ and $y\in \Omega$, we have
        \begin{equation*}
            |H_{\Omega}(x,y)|\leq C(n,s,K)\left(1+\delta_{\Omega}^{s}(y)\right).
        \end{equation*}
\end{lemma}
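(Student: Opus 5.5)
The plan is to show that $H_{\Omega}(x,\cdot)$ is bounded, uniformly in $x\in K$, by a constant. The stated estimate then follows trivially, since $C\le C(1+\delta_{\Omega}^{s}(y))$. I will combine the decomposition $H_{\Omega}(x,y)=\gamma_{s}(x,y)+h(x,y)$ used in the preceding Proposition with a maximum principle. For the latter I use the Poisson representation of the exterior problem, since $\Omega$ may be unbounded.

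\textbf{Step 1 (uniform geometry).} Set $d:=\operatorname{dist}(K,\partial\Omega)>0$, which is positive because $K$ is compact. For every $x\in K$ and every $z\in\R^{n}\setminus\overline{\Omega}$ we have $|x-z|\ge d$. Since $n\ge 2>2s$, this gives
\begin{equation*}
0< \Gamma_{s}(x-z)=\frac{a(n,s)}{|x-z|^{n-2s}}\le a(n,s)\,d^{2s-n}=:M(n,s,K).
\end{equation*}
So the exterior datum of $H_{\Omega}(x,\cdot)$ is nonnegative and bounded by $M$, uniformly in $x\in K$.

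\textbf{Step 2 (maximum principle / Poisson representation).} For $y\in\Omega$, write the s-harmonic function with exterior datum $\Gamma_{s}(x-\cdot)$ as
\begin{equation*}
H_{\Omega}(x,y)=\int_{\R^{n}\setminus\overline{\Omega}}P_{\Omega}(y,z)\,\Gamma_{s}(x-z)\,dz,
\qquad
P_{\Omega}(y,z)=-\slap\gk{\Omega}(y,z)\ge 0 .
\end{equation*}
Combined with $\int_{\R^{n}\setminus\overline{\Omega}}P_{\Omega}(y,z)\,dz\le 1$, which is the harmonic measure of the exterior, this yields $0\le H_{\Omega}(x,y)\le M$ for all $x\in K$ and $y\in\Omega$.

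If one prefers to avoid kernel representations, the same conclusion follows from the weak maximum principle applied to $M-H_{\Omega}(x,\cdot)$ and to $H_{\Omega}(x,\cdot)$. Each is s-harmonic in $\Omega$ and nonnegative in $\R^{n}\setminus\overline{\Omega}$. One must additionally check that they do not become negative "at infinity" inside $\Omega$.

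\textbf{Step 3 (fallback via the decomposition).} If the maximum principle at infinity is problematic, I would instead estimate the two pieces separately. The piece $\gamma_{s}(x,\cdot)$ is controlled directly: choosing the cut-off radius $r=d/2$ uniformly in $x\in K$ gives $|\gamma_{s}(x,y)|\le a(n,s)(d/2)^{2s-n}$. The piece $h(x,\cdot)$ solves $\slap h=-\slap\gamma_{s}$ in $\Omega$ with zero exterior data. The right-hand side is bounded uniformly in $x\in K$, by \cite[Proposition 2.7]{Silvestre2007} and the smoothness of $\kappa$ at a fixed scale $d$. The boundary regularity \cite[Proposition 1.1]{ROSOTON2014275} then gives $|h(x,y)|\le C\delta_{\Omega}^{s}(y)$ near $\partial\Omega$ and a bound of the same type in the interior. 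This is exactly where the term $\delta_{\Omega}^{s}(y)$ in the statement comes from, and it shows that the weaker, unbounded form of the estimate suffices.

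\textbf{Main obstacle.} The hard part is the unboundedness of $\Omega$. The Ros-Oton--Serra estimate, and the maximum principle in Step 2, are stated for bounded domains. I would handle this by exhausting $\Omega$ with bounded smooth domains $\Omega_{k}\uparrow\Omega$ and applying the estimates there with constants depending only on $n$, $s$, $d$ and the bound on $\slap\gamma_{s}$. I would then pass to the limit using the monotonicity of the corresponding Green kernels in the domain. The growth of $\delta^{s}_{\Omega}$ in the estimate gives enough room so that no decay at infinity needs to be proved.
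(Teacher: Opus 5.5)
Your argument is correct, and its core (Step 2) takes a genuinely different route from the paper's. The paper follows what you list as the fallback in Step 3. It writes $H_{\Omega}(x,\cdot)=\gamma_{s}(x,\cdot)+h(x,\cdot)$ with a cut-off at a radius $r(K)$, bounds $\gamma_{s}$ and $\slap\gamma_{s}$ uniformly for $x\in K$, and then invokes the Ros-Oton--Serra estimate $|h(x,y)|\le C\|\slap\gamma_{s}(x,\cdot)\|_{L^{\infty}(\Omega)}\delta_{\Omega}^{s}(y)$. That estimate is where the $\delta_{\Omega}^{s}$ term in the statement comes from.

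You instead use the Poisson representation, i.e.\ Proposition \ref{prop greens kernal continuity}(iv) read through the symmetry $H_{\Omega}(x,y)=H_{\Omega}(y,x)$. The kernel $-\slap\gk{\Omega}(y,\cdot)$ then carries $y$ and the datum is $\Gamma_{s}(x-\cdot)$. You have this order right: the literal form of (iv), with datum $\Gamma_{s}(y-\cdot)$, is not bounded uniformly as $y\to\partial\Omega$. Combined with the fact that the exterior harmonic measure has mass at most one, this gives the stronger explicit bound $0\le H_{\Omega}(x,\cdot)\le a(n,s)\,\text{dist}(K,\partial\Omega)^{2s-n}$ on all of $\R^{n}$. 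Your route needs no boundary regularity theory. It also avoids applying a bounded-domain estimate with domain-dependent constants to a possibly unbounded $\Omega$, which the paper does without comment. In particular, the $\delta_{\Omega}^{s}(y)$ term is superfluous.

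The paper's route gives in exchange that the regular part $H_{\Omega}(x,\cdot)-\gamma_{s}(x,\cdot)$ vanishes like $\delta_{\Omega}^{s}$ at $\partial\Omega$. This is not used later: Lemma \ref{lemma convergence of greens kernel} and Lemma \ref{Lemma local uniform convergence of solution operator} only need boundedness of $H_{\Omega_{\ell}}(x,\cdot)$.

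One point you should still justify is $\int_{\R^{n}\setminus\overline{\Omega}}\bigl(-\slap\gk{\Omega}(y,z)\bigr)\,dz\le 1$ when $\Omega$ is unbounded. This integral is the probability that the stable process started at $y$ jumps out of $\Omega$ in finite time (Ikeda--Watanabe formula). For the cylinders of the paper you can bypass it:
\begin{itemize}
\item prove your bound on the bounded $\Omega_{\ell}$, where the mass equals one;
\item pass to the limit along the decreasing sequence $H_{\Omega_{\ell}}(x,\cdot)\downarrow H_{\Omega_{\infty}}(x,\cdot)$, with $\ell$ so large that $\text{dist}(K,\partial\Omega_{\ell})=\text{dist}(K,\partial\Omega_{\infty})$.
\end{itemize}
This is not circular, since the bounded-domain bound is all that Lemma \ref{lemma convergence of greens kernel} needs.
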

\begin{proof}
   Choose $r=r(K)>0$ such that $B_{r}(x)\subset \Omega$ for all $x\in K$, and consider $\gamma_{s}(x,\cdot)$ as in \eqref{dfn small gamma s}. By \eqref{dfn of small h}, $H_{\Omega}(x,y)=h(x,y)+\gamma_{s}(x,y)$. By \cite[Lemma 2.7]{ROSOTON2014275}, for a fixed $x\in \Omega$ 
    \begin{equation*}
        |h(x,y)|\leq C ||\slap \gamma_{s}(x,\cdot)||_{L^{\infty}(\Omega)}\delta^{s}(y).
    \end{equation*}
    To show that $H_{\Omega}$ is bounded locally uniformly with respect to the first variable, we compute the bounds with respect to $x$, for $\gamma_{s}$ and $\slap \gamma_{s}$. For $x\in K$ and $y\in \Omega\setminus B_{r}(x)$
    \begin{equation*}
        \gamma_{s}(x,y)\leq \Gamma_{s}(x-y)=\frac{a(n,s)}{|x-y|^{n-2s}}\leq \frac{a(n,s)}{r^{n-2s}}.
    \end{equation*}
 For $x\in K$ and $y\in B_{r}(x)$, we note that $\gamma_{s}(x,y)=0$. Then for $x\in K$ and $y\in \Omega\setminus B_{r}(x)$ we get
    \begin{equation*}
    \begin{split}
        |\slap \gamma_{s}(x,y)| 
          &\leq \mathcal{A}(n,s) \Big\{\int_{B_{1}(y)}\frac{\left|\gamma_s(x,y)-\gamma_{s}(x,z) - (y-z)\cdot D\gamma_s(x,y)\right|}{|y-z|^{n+2s-2}}\ dz \\
          & \hspace{3cm}+\int_{\rn\setminus B_{1}(y)}\frac{\left|\gamma_s(x,y)-\gamma_{s}(x,z)\right|}{|y-z|^{n+2s}}\ dz\Big\}\\
        &\leq \mathcal{A}(n,s) \left\{\int_{B_{1}(y)}\frac{\|D^{2} \gamma_{s}(x,\cdot)\|_{L^\infty(B_1(y))}}{|y-z|^{n+2s-2}}\ dz+ \frac{2a(n,s)}{r^{n-2s}}\int_{\rn\setminus B_{1}(y)}\frac{1}{|y-z|^{n+2s}}\ dz\right\}\\
        &\leq \mathcal{A}(n,s)\omega_{n-1}\left\{\sup_{K\times \overline{B_{1}(y)}}\left|D^{2} \gamma_{s}\right| +\frac{2a(n,s)}{r^{n-2s}}\right\}\\
        &\leq C(n,s,K). 
    \end{split}
    \end{equation*}
    Further, for $x\in K$ and $y\in B_{r}(x)$ we have $ \displaystyle |\slap \gamma_{s}(x,y)|\leq \mathcal{A}(n,s)\int_{\rn\setminus B_{r}(x)} \frac{a(n,s)}{r^{n-2s}}\frac{1}{|y-z|^{n+2s}}\ dz.$
    Thus 
    \begin{equation*}\label{eqn local uniform bound for H}
        |H_{\Omega}(x,y)|\leq C(n,s,K) \delta^{s}(y)+\frac{a(n,s)}{r^{n-2s}}\leq C(n,s,K,)(1+\delta^{s}(y)).\qedhere
    \end{equation*} 
\end{proof}
Next, we include some known properties of the fractional Green's kernel. 
\begin{proposition}\label{prop greens kernal continuity}
   Let $\Omega\subset \R^{n}$ be a domain such that $|\R^{n}\setminus\Omega|>0$, the Green's kernel $\gk{\Omega}(x,y)$ has the following properties
   \begin{itemize}
       \item[(i)] $\gk{\Omega}(\cdot,\cdot)\in C(\Omega\times\rn\setminus\{(x,x)~|~x\in \Omega\})$ and $\gk{\Omega}(x,y)=\gk{\Omega}(y,x)$ for all $x,y\in\Omega$, with $x\neq y$. 
       \smallskip 
       
       \item[(ii)] $\gk{\Omega}(x,y)\geq 0$ a.e. on $\Omega\times\rn$. 
       \smallskip 
       
       \item[(iii)] For any $x\in\Omega$ and $y\in \R^{n}\setminus\overline{\Omega}$, $\slap\gk{\Omega}(x,y)$ is well defined and is given by 
       \begin{equation*}
           \slap\gk{\Omega}(x,y)=-\mathcal{A}(n,s)\int_{\Omega}\frac{\gk{\Omega}(x,z)}{|y-z|^{n+2s}}\ dz.
       \end{equation*} 
       Thus, $\slap\gk{\Omega}(x,y)\leq 0$ for $x\in \Omega, y\in \R^{n}\setminus\overline{\Omega}$. In addition for $x\in \Omega$, $\slap \gk{\Omega}(x,\cdot)\in L^{1}(\R^{n}\setminus\overline{\Omega})$. 
       \smallskip
       
       \item[(iv)] The following formulas hold 
     \begin{equation*}
           \Gamma_{s}(x-y)=-\int_{\R^{n}\setminus\overline{\Omega}}\Gamma_{s}(y-z)\slap\gk{\Omega}(x,z)\ dz, \quad ~ \text{for }x\in \Omega\text{ and } y\in \R^{n}\setminus\overline{\Omega},
       \end{equation*} 
       and 
       \begin{equation*}
           H_{\Omega}(x,y)=-\int_{\R^{n}\setminus\overline{\Omega}}\Gamma_{s}(y-z)\slap\gk{\Omega}(x,z)\ dz, \quad ~\text{for }x\in \Omega\text{ and }y\in \Omega.
       \end{equation*}
       \smallskip 
       
       \item[(v)] If $\Omega$ is bounded and has $C^{1,1}$ boundary, then there exist  constants $C,c>0$ depending only on $\Omega$, $n$, and $s$ such that for $x,y\in \Omega $ the following holds:
       \begin{equation}\label{eqn behaviour of greens kernel}
           \frac{1}{C|x-y|^{n-2s}}\min\left\{\frac{\delta^{s}_{\Omega}(x)\delta^{s}_{\Omega}(y)}{|x-y|^{2s}},1\right\}\leq \gk{\Omega}(x,y)\leq \frac{C}{|x-y|^{n-2s}}\min\left\{\frac{\delta^{s}_{\Omega}(x)\delta^{s}_{\Omega}(y)}{|x-y|^{2s}},1\right\}.
       \end{equation}
For $x\in \Omega$ and $y\in \R^{n}\setminus\overline{\Omega}$ the following holds:
\begin{equation}\label{asym of poison on bounded domain}
    \frac{\delta_{\Omega}^{s}(x)}{c\delta_{\Omega}^{s}(y)\left(1+\delta_{\Omega}(y)\right)^{s}|x-y|^{n}}\leq -\slap\gk{\Omega}(x,y)\leq \frac{c\delta_{\Omega}^{s}(x)}{\delta_{\Omega}^{s}(y)\left(1+\delta_{\Omega}(y)\right)^{s}|x-y|^{n}}.
\end{equation}
\smallskip 

\item[(vi)] When $\Omega\subset \R^{n}$ is a domain with $C^{1,1}$ boundary such that $|\R^{n}\setminus \Omega|>0$. Then for any $x\in \Omega$, there exists $a_x\in \R^{n}\setminus\Omega$ and a constant $C(n,s)$ such that for any $y\in \Omega$
\begin{equation}\label{eqn greens estimatte on unbounded domain}
    \gk{\Omega}(x,y)\leq C(n,s)|y-a_x|^{s}\frac{\delta_{\Omega}^{s}(x)}{|x-y|^{n-s}}.
\end{equation}
   \end{itemize}
\end{proposition}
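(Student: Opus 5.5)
Most of the items in Proposition \ref{prop greens kernal continuity} are standard facts about the fractional Green's function. I would collect them from the literature and check only the points where unboundedness of $\Omega$ matters. The plan is as follows.

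For (i), write $\gk{\Omega}(x,y)=\Gamma_s(x-y)-H_\Omega(x,y)$. Continuity away from the diagonal follows from continuity of $\Gamma_s$ together with joint continuity of $H_\Omega$. The latter I would obtain from the decomposition $H_\Omega=h+\gamma_s$ in the proof of Lemma \ref{lemma uniform bound for H}: the cutoff $\kappa$ can be chosen uniformly for $x$ in a small ball, and the $C^s$ estimate of \cite{ROSOTON2014275} depends continuously on $\slap\gamma_s(x,\cdot)$ in $L^\infty$, which is continuous in $x$. Symmetry is classical, either through the probabilistic representation (killed stable process, \cite{Bogdan-book}) or by testing the equations for $\gk{\Omega}(x,\cdot)$ and $\gk{\Omega}(y,\cdot)$ against each other on exhaustions by bounded smooth domains.

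For (ii), I would apply the maximum principle to $\gk{\Omega}(x,\cdot)$. This function is $s$-harmonic in $\Omega\setminus\{x\}$, vanishes outside $\Omega$, and is $+\infty$ at $x$. On unbounded $\Omega$ I would instead obtain it as the monotone limit of Green's functions of bounded exhausting domains, which are nonnegative and increase with the domain by the comparison principle.

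For (iii), if $y\notin\overline\Omega$ then $\gk{\Omega}(x,\cdot)$ vanishes near $y$. The principal value therefore reduces to $-\mathcal A\int_\Omega \gk{\Omega}(x,z)|y-z|^{-n-2s}\,dz$, and this is nonpositive by (ii). Integrability over $\overline\Omega^c$ follows from Tonelli once one has $\int_\Omega \gk{\Omega}(x,z)\delta_\Omega(z)^{-2s}\min\{1,\delta_\Omega(z)^{-n}\}\cdots dz<\infty$. Near $\partial\Omega$ this uses the decay $\gk{\Omega}(x,z)\lesssim\delta^s(z)$ from (vi), which makes $\delta^{s-2s}=\delta^{-s}$ integrable. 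At infinity it uses the decay in (vi) combined with the kernel.

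For (iv), I would use the representation of $s$-harmonic functions by the Poisson kernel $P(x,z)=-\slap\gk{\Omega}(x,z)$. The identity $\phi(x)=\int_{\overline\Omega^c}P(x,z)\phi(z)\,dz$ holds for $\phi=\Gamma_s(\cdot-y)$, and by symmetry and $y$-dependence this yields both formulas. For $y\notin\overline\Omega$ the function $\Gamma_s(\cdot-y)$ is itself $s$-harmonic in $\Omega$; for $y\in\Omega$ the relevant function is $H_\Omega(x,\cdot)$. The point to verify is that these functions lie in the class where Poisson representation holds uniquely, i.e.\ they are bounded or decay at infinity when $\Omega$ is unbounded.

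Item (v) is exactly the sharp two-sided estimate of Chen--Song \cite{ChenSong1998} for bounded $C^{1,1}$ domains, together with the corresponding Poisson kernel estimate obtained by integrating (v) in the formula from (iii). I would only cite these.

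The main obstacle is (vi), the estimate on unbounded domains, where Chen--Song does not apply directly. Here I would pick $a_x$ to be the nearest boundary point pushed slightly outside, and compare with the Green's function of the complement of a ball: by the exterior ball condition (from $C^{1,1}$), $\Omega\subset \R^n\setminus \overline{B_\rho(a_x)}$ for some ball tangent at the boundary point nearest $x$. Domain monotonicity from (ii) then gives $\gk{\Omega}\le \gk{\overline{B}^c}$, and the explicit Blumenthal–Getoor–Ray formula for the ball complement gives a bound of the form $C|y-a_x|^s\delta^s(x)|x-y|^{s-n}$ after elementary estimates. The delicate point is controlling the dependence on $\rho$ uniformly, so that the constant stays $C(n,s)$.
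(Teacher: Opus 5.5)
For (v) and (vi) your plan is the paper's. The paper cites the literature for (i)--(v): Abatangelo, Lemma 3.2 for (i)--(iv), and \cite{ChenSong1998} for (v). It proves (vi) exactly as you propose: exterior ball tangent at the nearest point $\theta_x$, domain monotonicity $\gk{\Omega}\le\gk{\R^n\setminus\overline{B}}$, $\delta_B(x)=\delta_\Omega(x)$, and a ball-complement bound. That last bound is quoted from \cite[Lemma 2.5]{ChenSong1998} rather than derived from the explicit formula. Two steps of your sketch do not work as written.

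\textbf{The ``delicate point'' in (vi) cannot be resolved as you hope.} No argument makes the constant independent of the radius. By scaling, $\gk{\lambda D}(x,y)=\lambda^{2s-n}\gk{D}(x/\lambda,y/\lambda)$. Consequently, the bound $\gk{\R^n\setminus\overline{B_\rho(a)}}(x,y)\le C|y-a|^{s}\delta^{s}_{B_\rho(a)}(x)|x-y|^{s-n}$ holds at radius $\rho$ with constant $C$ if and only if it holds at radius $1$ with constant $C\rho^{s}$. The optimal constant is therefore exactly $c_*(n,s)\rho^{-s}$, which blows up as $\rho\to0$.

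What you should do instead is fix $\rho=R$, the radius of a uniform exterior ball of the $C^{1,1}$ domain. This gives (vi) with $C=c_*(n,s)R^{-s}$. The same scaling applied to $\lambda\Omega$ shows that no constant depending only on $n$ and $s$ works for all $C^{1,1}$ domains. So the ``$C(n,s)$'' in (vi) must be read as depending on $\Omega$ through $R$. The paper's proof, which quotes Chen--Song with a radius-free $c(n,s)$, glosses over exactly this point.

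\textbf{The $L^1$ claim in (iii) does not follow from the pointwise bounds you propose on unbounded domains.} After Tonelli you need
$\int_\Omega \gk{\Omega}(x,z)\,\kappa_\Omega(z)\,dz<\infty$, where $\kappa_\Omega(z)=\int_{\R^n\setminus\overline\Omega}|y-z|^{-n-2s}\,dy$. At infinity, (vi) together with $\gk{\Omega}\le\Gamma_s$ only yields $\gk{\Omega}(x,z)\lesssim \delta^s_\Omega(z)|x-z|^{2s-n}$.

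Take the slab $(-1,1)\times\R^{n-1}$, which is $\Omega_\infty$ for $m=1$. There $\kappa_\Omega\asymp\delta_\Omega^{-2s}$, so your majorant is $\delta_\Omega^{-s}(z)|x-z|^{2s-n}$. Its integral behaves like $\int^\infty\rho^{2s-2}\,d\rho$, which diverges for $s\ge 1/2$.

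Use instead the exhaustion you already need for (ii).
\begin{itemize}
\item For bounded smooth domains $\Omega_k\uparrow\Omega$ one has $\gk{\Omega_k}\uparrow\gk{\Omega}$.
\item By the formula in (iii), $-\slap\gk{\Omega_k}(x,y)\uparrow-\slap\gk{\Omega}(x,y)$ for every $y\in\R^n\setminus\overline\Omega$.
\item Each Poisson kernel $-\slap\gk{\Omega_k}(x,\cdot)$ has total mass $1$ on $\R^n\setminus\overline{\Omega_k}$.
\item Monotone convergence then gives $\int_{\R^n\setminus\overline\Omega}-\slap\gk{\Omega}(x,y)\,dy\le 1$.
\end{itemize}
This argument needs no regularity of $\partial\Omega$, whereas your route through (vi) requires $C^{1,1}$.
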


The proofs of these results, except for the last two, can be found in  \cite[Lemma 3.2]{Abatangelo2015LargesHarmonic} (see also \cite{Bucur2016, Lankoff1972}), and the last two in \cite[Corollary 1.3, Theorem1.5, Proof of (1.4)]{ChenSong1998}. For completeness, we outline the proof of \eqref{eqn greens estimatte on unbounded domain}, as this inequality serves as an essential tool in several arguments used throughout this article.

\begin{proof}[Proof of Proposition \ref{prop greens kernal continuity} (vi)]
   Before proceeding with the proof, we consider an estimate of Green's kernel on the complement of a ball, given in \cite{ChenSong1998}. Let $B_{r}(a)\subset \R^{n}$, then by \cite[Lemma 2.5]{ChenSong1998} there exists a positive constant $c=c(n,s)$ such that the Green's kernel on the complement of the ball has the following bound:
   \begin{equation*}
       \gk{B_{r}^{C}(a)}(x,y)\leq c|y-a|^{s}\frac{\delta_{B_{r}(a)}^{s}(x)}{|x-y|^{n-s}}, \quad \text{ for all }x,y\in B^C_{r}(a).
   \end{equation*}
   For $x\in \Omega$, consider $\theta_x\in\partial\Omega$ such that $|x-\theta_x|=\delta_{\Omega}(x)$. Let $B:=B_{r}(a_x)$, with center at $a_x\in \R^{n}\setminus\overline\Omega$, be an exterior ball associated with $\theta_x$, i.e. $B\subset \R^{n}\setminus\Omega$ and $B\cap \partial\Omega=\{\theta_x\}$. As $\Omega\subset \R^{n}\setminus B$, by Lemma \ref{lemma greens seq is decreasing} below, $\gk{\Omega}(z,y)\leq \gk{B^{C}}(z,y)$ for all $z,y\in \Omega$. In particular 
   \begin{equation}\label{eqn behaviour of greens on unbounded domains}
           \gk{\Omega}(x,y)\leq \gk{B^{C}}(x,y) \leq c|y-a_x|^{s}\frac{\delta_{B}^{s}(x)}{|x-y|^{n-s}}=c|y-a_x|^{s}\frac{\delta_{\Omega}^{s}(x)}{|x-y|^{n-s}}. \qedhere
   \end{equation}
   \end{proof}

\begin{remark}
    When $\Omega$ is unbounded such that $|\R^{n}\setminus \Omega|>0$, then for $y \in B_1^C(x)\cap \Omega$  we get a constant $c = c(n,s)$ such that
    \begin{equation*}
        \gk{\Omega}(x,y)\leq c\frac{\delta_{\Omega}^{s}(x)}{|x-y|^{n-2s}}.
    \end{equation*}
\end{remark}

To define the operator $E_{\Omega}$, we first introduce the fractional normal derivative associated with the domain $\Omega$. The following definition can also be found in \cite{AbatangeloVazquez2023}.
\begin{definition}[s-normal derivative]\label{frac normal derivative}
    Let $\Omega$ be a domain in $\rn$. The \textit{s-normal derivative}  of a function $v:\Omega\to\R$, associated  with $\partial \Omega$ is 
    \begin{equation*}
        D_{s}v(\theta):=\lim_{y\in \Omega\to \theta}\frac{v(y)}{\delta^{s}_{\Omega}(y)},\quad \theta\in \partial\Omega.
    \end{equation*}
\end{definition}
Take $x\in \Omega$ and $\theta \in \partial \Omega$. Note that $ D_s \gk{\Omega}(x,\theta)$ is well-defined by \eqref{eqn behaviour of greens kernel} when $\Omega$ is bounded (see Lemma \ref{lemma equivalent E}) and by \eqref{eqn greens estimatte on unbounded domain} otherwise. Indeed, by using the symmetry of the Green's kernel and \eqref{eqn greens estimatte on unbounded domain} one has 
\begin{equation}\label{eqn fractional normal derivative behaviour}
     \frac{\gk{\Omega}(x,y)}{\delta_{\Omega}^{s}(y)}\leq c(n,s)\frac{|x-a_y|^{s}}{|x-y|^{n-s}}.
\end{equation}
Let $\rho>0$, for any $y\in B_{\rho/2}(\theta)$, take an exterior ball $B_{r_y} (a_y)$, associated with corresponding $\theta_y \in \partial\Omega$ ( such that $|y-\theta_y|=\delta_{\Omega}(y)$), which is contained in $B_{\rho}(\theta)$. Choose a point $a_{\theta}\in  \partial B_{2\rho}(\theta)\cap (\R^{n}\setminus\Omega)$, then $|x-a_y|\leq |x-a_{\theta}|$, which implies 
\begin{equation}\label{eqn well definedness of frac normal derivative on unbounded domain}
    0\leq \Ds \gk{\Omega}(x,\theta)\leq c(n,s)\frac{|x-a_{\theta}|^{s}}{|x-\theta|^{n-s}}< +\infty.
\end{equation}
Moreover, using $\frac{|x-a_{\theta}|}{|x-\theta|}\leq 1+\frac{|\theta-a_{\theta}|}{|x-\theta|}$, $|x-\theta|\geq \delta_{\Omega}(x)$ and taking $\rho = \delta_{\Omega}(x)$ we obtain 

\begin{equation}\label{eqn martin behaviour}
    \Ds \gk{\Omega}(x,\theta)\leq\frac{C(n,s)}{\delta_{\Omega}^{n-2s}(x)}.
\end{equation}
Now, we define the operator $E_{\Omega}$ associated with $\partial\Omega$  as 
\begin{equation}\label{dfn E}
    E_{\Omega}v(\theta):=\lim _{\substack{x\in \Omega \\ x \to \theta}}\frac{v(x)}{\int_{\partial\Omega}\Ds\gk{\Omega}(x,\sigma)\ d\mathcal{H}(\sigma)},
\end{equation}
where $\mathcal{H}$ is the $n-1$ dimensional measure on the boundary.
For $h\in L^{1}(\partial\Omega)$, if we denote 
\begin{equation*}
    v(x)=\int_{\partial\Omega}\Ds\gk{\Omega}(\theta,x)h(\theta)\ d\mathcal{H}(\theta).
\end{equation*} 
Then, for a bounded domain $\Omega$, we have  $E_{\Omega}v(\theta)=h(\theta)$ provided $h\in C(\partial \Omega)$. This can be found in \cite[Lemma 3.4]{Abatangelo2015LargesHarmonic}.

\begin{remark}
    In general, if $h\in L^{1}(\partial\Omega)$ then the limit is replaced by weak limit \cite[Theorem 4.15]{AbatangeloVazquez2023}. 
\end{remark}
The operator $E_{\Omega}$ defined above can be seen as a singular trace on the boundary, as shown in the following lemma.
\begin{lemma}\label{lemma equivalent E}
    Let $\Omega$ be a bounded domain in $\R^{n}$, then $E_{\Omega}u(\theta)\asymp \displaystyle\lim_{\substack{x\in \Omega\\ x\to \theta}}\delta_{\Omega}^{1-s}(x)u(x)$.
\end{lemma}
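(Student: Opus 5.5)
The plan is to show that the denominator in \eqref{dfn E} is comparable to $\delta_{\Omega}^{s-1}(x)$. Set
\begin{equation*}
M(x):=\int_{\partial\Omega}\Ds\gk{\Omega}(x,\sigma)\ d\mathcal{H}(\sigma).
\end{equation*}
We will prove that $M(x)\asymp \delta_{\Omega}^{s-1}(x)$ for $x\in\Omega$ near $\partial\Omega$. Once this holds, the quotient $v(x)/M(x)$ is comparable to $\delta_\Omega^{1-s}(x)v(x)$ with constants independent of $x$. Hence the two limits (or the liminf/limsup if only one exists) are comparable, which is the meaning of $\asymp$ in the statement. Throughout we assume $\Omega$ is $C^{1,1}$, as needed for \eqref{eqn behaviour of greens kernel}.

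First we compute $\Ds\gk{\Omega}(x,\theta)$ from the two-sided estimate \eqref{eqn behaviour of greens kernel}. Fix $x\in\Omega$ and let $y\to\theta\in\partial\Omega$. Eventually $\delta^s(x)\delta^s(y)/|x-y|^{2s}<1$, so the minimum is attained by that term. Dividing by $\delta^s(y)$ and letting $y\to\theta$ gives
\begin{equation*}
\frac{1}{C}\frac{\delta_\Omega^{s}(x)}{|x-\theta|^{n}}\le \Ds\gk{\Omega}(x,\theta)\le C\frac{\delta_\Omega^{s}(x)}{|x-\theta|^{n}}.
\end{equation*}
This uses only the bounds, understanding $\Ds$ via limsup/liminf, since the limit exists by \cite{Abatangelo2015LargesHarmonic}.

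Next we integrate over $\partial\Omega$. It suffices to show $\int_{\partial\Omega}|x-\sigma|^{-n}\,d\mathcal{H}(\sigma)\asymp \delta_\Omega^{-1}(x)$ for $\delta_\Omega(x)$ small.

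For the upper bound, let $\theta_x$ be the nearest boundary point and $d=\delta_\Omega(x)$. We have $|x-\sigma|\ge \max\{d, c|\sigma-\theta_x|\}$; the second inequality holds because $|\sigma-\theta_x|\le |\sigma-x|+d\le 2|\sigma-x|$. Then $C^{1,1}$ regularity (in fact Lipschitz suffices) gives $\mathcal{H}(\partial\Omega\cap B_r(\theta_x))\le Cr^{n-1}$. Splitting into $B_d(\theta_x)$ and dyadic annuli $B_{2^{k+1}d}\setminus B_{2^kd}$ yields
\begin{equation*}
\int_{\partial\Omega}|x-\sigma|^{-n}\,d\mathcal{H}(\sigma)\le Cd^{-1}\sum_k 2^{-k}\le Cd^{-1}.
\end{equation*}

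For the lower bound, restrict to $\sigma\in\partial\Omega\cap B_d(\theta_x)$. There $|x-\sigma|\le 2d$, and for small $d$ the measure of this patch is at least $cd^{n-1}$ because the boundary is locally a $C^{1,1}$ graph. This gives at least $c\,d^{n-1}d^{-n}=c\,d^{-1}$. Combining the two bounds, $M(x)\asymp \delta^{s}_\Omega(x)\,\delta_\Omega^{-1}(x)=\delta_\Omega^{s-1}(x)$ for $\delta_\Omega(x)<r_0$. That range is all the limit $x\to\theta$ requires.

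The main obstacle is the passage from \eqref{eqn behaviour of greens kernel} to the pointwise two-sided bound on $\Ds\gk{\Omega}$. One must check that the constants are uniform in $x$ and that the switch in the minimum happens for $y$ sufficiently close to $\theta$. The boundary-integral estimate is routine once uniform local graph bounds for $\partial\Omega$ are fixed.
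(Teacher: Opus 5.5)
Your proposal is correct and follows the same route as the paper: the two-sided estimate \eqref{eqn behaviour of greens kernel} gives $\Ds\gk{\Omega}(x,\theta)\asymp\delta_{\Omega}^{s}(x)|x-\theta|^{-n}$, and integrating with $\int_{\partial\Omega}|x-\theta|^{-n}\,d\mathcal{H}(\theta)\asymp\delta_{\Omega}^{-1}(x)$ shows that the denominator in \eqref{dfn E} is comparable to $\delta_{\Omega}^{s-1}(x)$. Your additions are a proof of that boundary-integral comparability, which the paper only asserts, and an explicit statement of the $C^{1,1}$ hypothesis that the paper's use of \eqref{eqn behaviour of greens kernel} tacitly requires.
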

\begin{proof}
   For $y\in \Omega$ close enough to $\partial\Omega$, we have by \eqref{eqn behaviour of greens kernel} that
\begin{equation*}
    \frac{\delta_{\Omega}(x)^{s}\delta_{\Omega}(y)^{s}}{c|x-y|^{n}}\leq \gk{\Omega}(x,y)\leq \frac{c\delta_{\Omega}(x)^{s}\delta_{\Omega}(y)^{s}}{|x-y|^{n}}, \quad \text{ hence} \quad  \frac{\delta_{\Omega}(x)^{s}}{c|x-y|^{n}}\leq \frac{\gk{\Omega}(x,y)}{\delta_{\Omega}^{s}(y)}\leq \frac{c\delta_{\Omega}(x)^{s}}{|x-y|^{n}}.
\end{equation*}
This implies $\displaystyle \frac{\delta_{\Omega}(x)^{s}}{c|x-\theta|^{n}}\leq \Ds \gk{\Omega}(x,\theta)\leq \frac{c\delta_{\Omega}(x)^{s}}{|x-\theta|^{n}}$. 
Therefore 
\begin{equation*}\label{eqn bdy asym of trace}
    \frac{\delta_{\Omega}(x)^{s}}{c}\int_{\partial\Omega}\frac{d\mathcal{H}(\theta)}{|x-\theta|^{n}}\leq \int_{\partial\Omega}\Ds \gk{\Omega}(x,\theta)\ d\mathcal{H}(\theta)\leq c\delta_{\Omega}(x)^{s}\int_{\partial\Omega}\frac{d\mathcal{H}(\theta)}{|x-\theta|^{n}}.
\end{equation*}
Further,  since $\displaystyle \int_{\partial\Omega}\frac{d\mathcal{H}(\theta)}{|x-\theta|^{n}} \asymp \delta^{-1}(x),$ 
we get 
\begin{equation*}
    \int_{\partial\Omega}\Ds \gk{\Omega}(x,\theta)\ d\mathcal{H}(\theta)\asymp \delta_{\Omega}^{s-1}(x).
\end{equation*}
The result follows by plugging this in the definition of $E_{\Omega}$.
\end{proof}
Consider the linear equation \eqref{eqn main linear omega} on a domain $\Omega$ in $\R^{n}$, not necessarily bounded. The Green's potential associated with $\Omega$, informally, the inverse of the fractional Laplacian on $\Omega$, is defined as follows:
 \begin{definition}\label{defining the operator G}
    Let $\Omega\subset \rn$ be a domain and $f\in L^{1}(\Omega,\delta_{\Omega}^{s})$. The Green's potential $\g{\Omega}(f)$ of $f$ is a solution of \eqref{eqn main linear omega} for the data $(f,0,0)$, and is given by
    \begin{align*}
        \g{\Omega}(f)(x)=
        \begin{cases}
             \displaystyle \int_{\Omega}\gk{\Omega}(x,y)f(y) dy & \quad \text{for } x\in\Omega \\
             0 &\quad \text{for }x\in \rn \setminus \Omega.
        \end{cases}           
    \end{align*}
\end{definition}

Green's potential $\g{\Omega}$ is a continuous operator between the following spaces:
\begin{proposition}[\cite{AbatangeloVazquez2023}]
    For a bounded domain $\Omega$ with $C^{1,1}$ boundary, the potential $\g{\Omega}$ is continuous as a map
    \begin{equation}\label{eqn continuity of G 1}
            L^{\infty}(\Omega)\to C(\overline{\Omega}),
            \end{equation}
             \begin{equation}\label{eqn continuity of G 2}
                L^{\infty}_{c}(\Omega)\to \delta_{\Omega}^{s}C(\overline{\Omega}),
            \end{equation}
     \begin{equation}\label{eqn continuity of G 3}
         L^{1}(\Omega,\delta_{\Omega}^{s})\to L_{loc}^{1}(\Omega). 
     \end{equation}
     Further, for any $f\in L_{c}^{1}(\Omega)$, $\g{\Omega}(f)$ is defined point wise on $\Omega\setminus\text{Supp}(f)$, and that
     \begin{equation}\label{bound near the boundary for f compactly supported}
         |\g{\Omega}(f)(x)|\leq C\delta_{\Omega}^{s}(x)\, \text{dist}(x,\text{Supp}(f))^{s-n}\int_{\Omega}f(y)\delta_{\Omega}^{s}(y) \ dy, \quad\text{for } x\in \Omega\setminus\text{Supp}(f).
     \end{equation}
\end{proposition}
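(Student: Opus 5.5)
We prove the four assertions of the Proposition in turn, using only the two-sided Green's kernel estimate \eqref{eqn behaviour of greens kernel}, valid because $\Omega$ is bounded and $C^{1,1}$. Write $\g{\Omega}(f)(x)=\int_\Omega \gk{\Omega}(x,y)f(y)\,dy$.

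For \eqref{eqn continuity of G 1}, note that the upper bound $\gk{\Omega}(x,y)\le C|x-y|^{2s-n}$ gives
\[
\sup_x\int_\Omega \gk{\Omega}(x,y)\,dy\le C(\Omega).
\]
Hence $\|\g{\Omega}(f)\|_\infty\le C\|f\|_\infty$. For continuity in $x$ we split the integral into $y\in B_\varepsilon(x)$ and its complement. The near part is $O(\varepsilon^{2s})\|f\|_\infty$ uniformly. On the far part, $\gk{\Omega}(\cdot,y)$ is continuous off the diagonal by Proposition \ref{prop greens kernal continuity}(i), so dominated convergence applies. Near $\partial\Omega$ we use instead the $\min$ form of the estimate, which gives $\int_\Omega\gk{\Omega}(x,y)\,dy\le C\delta^s_\Omega(x)$ up to logarithmic factors, so the function extends by $0$ continuously to $\overline\Omega$.

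For \eqref{eqn continuity of G 2}, let $f$ be supported in a compact set $K\subset\Omega$. For $x$ with $\delta_\Omega(x)<\tfrac12\operatorname{dist}(K,\partial\Omega)$ we have $|x-y|\ge c$ for all $y\in K$, and the kernel estimate gives $\gk{\Omega}(x,y)\le C\delta^s_\Omega(x)\delta_\Omega^s(y)$. Thus $\g{\Omega}(f)/\delta^s_\Omega$ is bounded by $C\|f\|_{L^1}$ there. Continuity of this quotient up to $\partial\Omega$ follows from the existence of the limit $D_s\gk{\Omega}(\theta,y)$ (a limit which is uniform for $y\in K$) together with dominated convergence. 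The same argument yields \eqref{bound near the boundary for f compactly supported}: for $x\notin\operatorname{Supp} f$, the kernel bound gives $\gk{\Omega}(x,y)\le C\delta_\Omega^s(x)\delta_\Omega^s(y)|x-y|^{-n}$, and $|x-y|\ge \operatorname{dist}(x,\operatorname{Supp} f)$. The stated bound then follows after using $\delta_\Omega^{s}(x)\lesssim |x-y|^{s}$-type comparisons (or bounding $|x-y|^{-n}$ by $\operatorname{dist}^{s-n}$ times bounded factors on the bounded domain).

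For \eqref{eqn continuity of G 3}, take a compact $K\subset\Omega$. By Fubini,
\[
\int_K|\g{\Omega}(f)|\,dx\le\int_\Omega|f(y)|\Big(\int_K\gk{\Omega}(x,y)\,dx\Big)dy,
\]
so it suffices to show $\int_K\gk{\Omega}(x,y)\,dx\le C_K\delta^s_\Omega(y)$. When $\delta_\Omega(y)$ is small, $|x-y|\ge c_K$ for $x\in K$, and the kernel bound gives $C\delta^s_\Omega(y)$ directly. Otherwise $\delta_\Omega(y)\ge c$, and local integrability of $|x-y|^{2s-n}$ gives a uniform constant, which is comparable to $\delta_\Omega^s(y)$ there.

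The main obstacle I expect is the boundary continuity in \eqref{eqn continuity of G 1}, specifically the uniform smallness of $\int_\Omega \gk{\Omega}(x,y)\,dy$ as $\delta_\Omega(x)\to0$. The cleanest route is to bound $\gk{\Omega}(x,y)\le C\delta^s_\Omega(x)\,\delta_\Omega^s(y)|x-y|^{-n}$ where $|x-y|>\delta_\Omega(x)$, and $|x-y|^{2s-n}$ otherwise, and then integrate over dyadic shells around $x$.
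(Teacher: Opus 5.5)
Your arguments for \eqref{eqn continuity of G 1} and \eqref{eqn continuity of G 3} are fine. The paper gives no proof of its own and only quotes \cite[Theorems 2.1, 2.10]{AbatangeloVazquez2023}. No logarithm actually appears in your boundary estimate: for $|x-y|\ge\delta_\Omega(x)$ one has $\delta_\Omega(y)\le 2|x-y|$, which gives $\int_\Omega\gk{\Omega}(x,y)\,dy\le C\delta_\Omega^s(x)$.

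For \eqref{eqn continuity of G 2}, the existence of $\Ds\gk{\Omega}(\theta,y)$ does not follow from the two-sided bound \eqref{eqn behaviour of greens kernel}. It is an extra input, for example the boundary regularity of \cite{ROSOTON2014275}, which directly gives $\g{\Omega}(f)/\delta_\Omega^s\in C^{\alpha}(\overline\Omega)$ for every $f\in L^\infty(\Omega)$. Pointwise existence for each $y$ is enough, and the uniformity in $y$ you invoke is not needed. Indeed, your domination plus dominated convergence yields a limit at every boundary point, and a continuous function on $\Omega$ with this property extends continuously to $\overline\Omega$.

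The genuine gap is the last step of \eqref{bound near the boundary for f compactly supported}. Write $d:=\operatorname{dist}(x,\operatorname{Supp}f)$. Your kernel bound yields
\[
|\g{\Omega}(f)(x)|\le C\,\delta_\Omega^s(x)\,d^{-n}\int_\Omega|f|\,\delta_\Omega^s\,dy ,
\]
and the passage to $d^{s-n}$ fails for two reasons:
\begin{itemize}
\item We have $d^{-n}=d^{s-n}d^{-s}$, and $d^{-s}$ is unbounded as $d\to0$. On a bounded domain $|x-y|^{-s}$ is bounded below, not above.
\item The comparison $\delta_\Omega(x)\lesssim|x-y|$ is false in general. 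Even where it holds, it would consume the factor $\delta_\Omega^s(x)$ that must survive.
\end{itemize}
No other argument can close this step, because the printed inequality fails for any constant independent of $f$. Take $\theta\in\partial\Omega$ with inner normal $\nu$, a small $\epsilon>0$, $y_0=\theta+\epsilon\nu$, $x=\theta+2\epsilon\nu$, and $f=|B_{\epsilon/4}|^{-1}\chi_{B_{\epsilon/4}(y_0)}$. For $y\in\operatorname{Supp}f$ one has $\delta_\Omega(y),\,|x-y|\in[\tfrac{3}{4}\epsilon,\tfrac{5}{4}\epsilon]$ and $\delta_\Omega(x)=2\epsilon$. The lower bound in \eqref{eqn behaviour of greens kernel} then gives $\g{\Omega}(f)(x)\ge c\,\epsilon^{2s-n}$. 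The right-hand side of \eqref{bound near the boundary for f compactly supported}, however, is of order $\epsilon^{s}\cdot\epsilon^{s-n}\cdot\epsilon^{s}=\epsilon^{3s-n}$.

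Two versions are true. One is the $d^{-n}$ bound above, which is exactly what your computation proves. The other is $|\g{\Omega}(f)(x)|\le C\delta_\Omega^s(x)\,d^{s-n}\|f\|_{L^1(\Omega)}$, which follows from $\gk{\Omega}(x,y)\le C\delta_\Omega^s(x)|x-y|^{s-n}$. The printed combination of exponent and weight is presumably a misprint.

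This does not hurt the paper. There the estimate is only applied with $x$ in a compact set $K$ and $\operatorname{Supp}f_i$ in a thin boundary layer, so $d\ge c_K$ and the two exponents agree up to a constant. You should nevertheless state and prove the $d^{-n}$ version rather than claim the printed one.
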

The proof can be found in \cite[Theorem 2.1, Theorem 2.10]{AbatangeloVazquez2023}. 
Further, we show how additional local integrability of $f$ improves the pointwise definition of the Green's potential.
\begin{theorem}
    Let $\Omega$ be bounded and $f\in L^{1}(\Omega,\delta_{\Omega}^{s})\cap L^{q}_{loc}(\Omega)$, then $\g{\Omega}(f)\in L^{\infty}_{loc}(\Omega)$.
\end{theorem}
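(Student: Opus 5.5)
Fix a compact set $K\subset\Omega$ and choose $r>0$ with $2r<\operatorname{dist}(K,\partial\Omega)$. Let $K_{2r}:=\{y:\operatorname{dist}(y,K)\le 2r\}$, which is compact in $\Omega$. For $x\in K$ we split
\begin{equation*}
\g{\Omega}(f)(x)=\int_{B_{r}(x)}\gk{\Omega}(x,y)f(y)\,dy+\int_{\Omega\setminus B_{r}(x)}\gk{\Omega}(x,y)f(y)\,dy=:I_1(x)+I_2(x),
\end{equation*}
and bound each piece uniformly in $x\in K$.

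For the local term $I_1$ we use $0\le \gk{\Omega}(x,y)\le \Gamma_s(x-y)=a(n,s)|x-y|^{2s-n}$. This follows from $\gk{\Omega}=\Gamma_s-H_\Omega$ together with $H_\Omega\ge 0$, which in turn follows from the maximum principle since $H_\Omega(x,\cdot)$ is $s$-harmonic with positive exterior data. Alternatively, one can use the upper bound in \eqref{eqn behaviour of greens kernel}. Since $B_r(x)\subset K_{2r}$, Hölder's inequality with $q'=q/(q-1)$ gives
\begin{equation*}
|I_1(x)|\le a(n,s)\,\|f\|_{L^{q}(K_{2r})}\Big(\int_{B_r(0)}|z|^{(2s-n)q'}\,dz\Big)^{1/q'}.
\end{equation*}
The last integral is finite exactly when $(n-2s)q'<n$, which is equivalent to $q>n/2s$. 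This is the one place the hypothesis on $q$ is used, and the bound is uniform in $x\in K$.

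For the far term $I_2$, $y$ stays at distance at least $r$ from $x$, so the upper bound in \eqref{eqn behaviour of greens kernel} gives
\begin{equation*}
\gk{\Omega}(x,y)\le C|x-y|^{-n}\delta_\Omega^s(x)\delta_\Omega^s(y)\le C r^{-n}\operatorname{diam}(\Omega)^s\,\delta^s_\Omega(y).
\end{equation*}
Hence $|I_2(x)|\le C(r,\Omega)\|f\|_{L^1(\Omega,\delta^s_\Omega)}$. If one prefers to avoid the $C^{1,1}$ estimate (the theorem only says $\Omega$ is bounded), there is an alternative. Write $\gk{\Omega}(x,y)\le\Gamma_s(x-y)\le a(n,s)r^{2s-n}$ on the set $\{|x-y|\ge r\}\cap K_{2r}$, where $\delta_\Omega$ is bounded below. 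Near the boundary, use the fact that $y\mapsto \gk{\Omega}(x,y)$ vanishes outside $\Omega$ and is $s$-harmonic in $\Omega\setminus \overline{B_r(x)}$ with values bounded by $a(n,s)r^{2s-n}$ on the bounded set $\overline{B_{2r}(x)}\setminus B_r(x)$. This yields a bound $\gk{\Omega}(x,y)\le C\delta_\Omega^s(y)$, for instance through barrier arguments as in \cite[Lemma 2.7]{ROSOTON2014275}, or by comparison with the $\delta^s$ behaviour.

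The main obstacle is this far-field step: it requires an estimate $\gk{\Omega}(x,y)\lesssim \delta^s_\Omega(y)$ that is uniform for $x\in K$ and $|x-y|\ge r$. Under the standing $C^{1,1}$ assumption I would simply invoke \eqref{eqn behaviour of greens kernel}, and mention the barrier argument as the route for less regular boundaries. Adding the two bounds gives $\sup_K|\g{\Omega}(f)|\le C(K,q,\Omega)\big(\|f\|_{L^q(K_{2r})}+\|f\|_{L^1(\Omega,\delta^s_\Omega)}\big)$, hence $\g{\Omega}(f)\in L^\infty_{loc}(\Omega)$.
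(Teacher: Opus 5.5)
Your proof is correct and follows the paper's argument. Both split $\g{\Omega}(f)(x)$ over $B_r(x)$ and its complement, control the local piece by H\"older against $|x-y|^{(2s-n)q'}$ (integrable precisely when $q>n/2s$), and control the far piece through the $C^{1,1}$ Green's kernel bound \eqref{eqn behaviour of greens kernel}, which yields a multiple of $\|f\|_{L^1(\Omega,\delta^s_\Omega)}$. Your version is slightly more careful: you use the correct decay $|x-y|^{-n}$ in the far field, and the single norm $\|f\|_{L^q(K_{2r})}$ makes the bound visibly uniform in $x\in K$.
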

\begin{proof}
    Let $K$ be a compact subset of $\Omega$. For any $x\in K$, choose $r>0$ such that $B_{r}(x)\subset \Omega$, then by \eqref{eqn behaviour of greens kernel} followed by H\"older inequality,
    \begin{equation*}
    \begin{split}
        |\g{\Omega}(f)(x)|&\leq C\int_{B_{r}(x)}\frac{f(y)}{|x-y|^{n-2s}}\ dy+C\delta^{s}_{\Omega}(x)\int_{\Omega\setminus B_{r}(x)} \frac{\delta_{\Omega}^{s}(y)f(y)}{|x-y|^{n-2s}}\ dy\\
        &\leq C \|f\|_{L^{q}(B_{r}(x))}\left\{\int_{B_{r}(x)}\frac{dy}{|x-y|^{q'(n-2s)}}\right\}^{1/q'}+\frac{C\delta^{s}_{\Omega}(x)}{r^{n-2s}}\|f\|_{L^{1}(\Omega, \delta^{s}_{\Omega})}.
    \end{split}
    \end{equation*}
    The result follows since $q>n/2s$.
\end{proof}
\begin{theorem}\label{theorem greens poteantial continuity on unbounded domains}
    Let $\Omega$ be a domain in $\R^{n}$ such that $|\R^{n}\setminus \Omega|>0$, and has $C^{1,1}$ boundary. The map $\g{\Omega}$ is continuous between
    \begin{equation*}
    \begin{split}
        L^{\infty}_{c}(\Omega)\to \delta_{\Omega}^{s}L^{\infty}(\Omega),\quad \text{and} \quad L^{1}(\Omega,\delta_{\Omega}^{s})\to L_{loc}^{1}(\Omega). 
    \end{split}    
    \end{equation*}
Further, for $q>n/2s$, it is a continuous map between 
\begin{equation*}
    L^{1}(\Omega,\delta_{\Omega}^{s})\cap L_{loc}^{q}(\Omega)\to L^{\infty}_{loc}(\Omega).
\end{equation*}
\end{theorem}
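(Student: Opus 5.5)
The plan is to use only the pointwise bound \eqref{eqn greens estimatte on unbounded domain} from Proposition \ref{prop greens kernal continuity}(vi), together with the Remark after it, in place of the two-sided estimate \eqref{eqn behaviour of greens kernel}, which is unavailable for unbounded domains. Since $\gk{\Omega}\ge 0$, it suffices to bound $\int_\Omega \gk{\Omega}(x,y)|f(y)|\,dy$, and all three statements follow by splitting this integral into a near region $|x-y|<1$ and a far region $|x-y|\ge 1$.

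\textbf{First map: $L^\infty_c(\Omega)\to\delta^s_\Omega L^\infty(\Omega)$.} Let $K=\mathrm{Supp}(f)$ be compact. Apply \eqref{eqn greens estimatte on unbounded domain} with the roles of $x$ and $y$ chosen so that the factor $\delta^s_\Omega(x)$ appears; symmetry of $\gk{\Omega}$ allows this. This gives
\[
\gk{\Omega}(x,y)\le C\,\delta^s_\Omega(x)\,\frac{|y-a_x|^{s}}{|x-y|^{n-s}},
\]
with $a_x$ the centre of an exterior ball at the projection $\theta_x$. We may take the radius of this ball at most $1$, so that $|\theta_x-a_x|\le 1$.
\begin{itemize}
\item \emph{Near region} $|x-y|<1$: we have $|y-a_x|\le |y-x|+\delta_\Omega(x)+1$. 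For $x$ within distance $2$ of $K$, $\delta_\Omega(x)$ is bounded by a constant depending on $K$, and $\int_{|x-y|<1}|x-y|^{s-n}\,dy<\infty$.
\item \emph{Far region} $|x-y|\ge 1$: use the Remark, which gives $\gk{\Omega}(x,y)\le c\,\delta^s_\Omega(x)|x-y|^{2s-n}\le c\,\delta^s_\Omega(x)$.
\end{itemize}
Both pieces are bounded by $C(K)\,\delta^s_\Omega(x)\,\|f\|_\infty$.

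The main obstacle is points $x$ far from $K$, where $\delta_\Omega(x)$ may be large. There $|x-y|\ge 1$ for every $y\in K$, so only the far-region bound is used, and it is already uniform in $x$. Hence $\|\g{\Omega}(f)/\delta^s_\Omega\|_{L^\infty(\Omega)}\le C(K)\|f\|_{L^\infty}$, which is continuity by linearity.

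\textbf{Second map: $L^1(\Omega,\delta^s_\Omega)\to L^1_{loc}(\Omega)$.} Fix a compact $K'\subset\Omega$ and apply Tonelli:
\[
\int_{K'}|\g{\Omega}(f)|\,dx\le\int_\Omega |f(y)|\Big(\int_{K'}\gk{\Omega}(x,y)\,dx\Big)dy.
\]
It remains to show $\int_{K'}\gk{\Omega}(x,y)\,dx\le C(K')\,\delta^s_\Omega(y)$ for all $y\in\Omega$. Use symmetry and \eqref{eqn greens estimatte on unbounded domain} centred at $y$, namely $\gk{\Omega}(x,y)\le C\,\delta^s_\Omega(y)|x-a_y|^s|x-y|^{s-n}$, and split according to $|x-y|<1$ or $|x-y|\ge 1$.
\begin{itemize}
\item \emph{Far part:} the Remark bounds the integrand by $C\,\delta^s_\Omega(y)$, and $|K'|<\infty$.
\item \emph{Near part:} this occurs only when $y$ lies in the compact $1$-neighbourhood of $K'$. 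There $\delta_\Omega(y)$ is bounded, hence $|x-a_y|$ is bounded, and the singularity $|x-y|^{s-n}$ is integrable.
\end{itemize}

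\textbf{Third map: $L^1(\Omega,\delta^s_\Omega)\cap L^q_{loc}(\Omega)\to L^\infty_{loc}(\Omega)$.} Mimic the preceding bounded-domain theorem. For $x\in K$, choose $r\le 1$ with $B_r(x)\subset\Omega$ uniformly in $x\in K$.
\begin{itemize}
\item \emph{Inside the ball:} bound $\gk{\Omega}\le\Gamma_s(x-y)$ (since $H_\Omega\ge 0$) and apply Hölder with $q>n/2s$. This gives $C\|f\|_{L^q(K_r)}$, where $K_r$ is the compact $r$-neighbourhood of $K$.
\item \emph{On $\Omega\setminus B_r(x)$:} the task is to show $\gk{\Omega}(x,y)\le C(K)\,\delta^s_\Omega(y)$. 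Use \eqref{eqn greens estimatte on unbounded domain} centred at $y$. For $r\le|x-y|\le 1$ the factors $|x-a_y|^s$ and $|x-y|^{s-n}$ are bounded. For $|x-y|\ge1$, use the Remark with roles swapped, giving $\le c\,\delta^s_\Omega(y)$.
\end{itemize}
This yields $\sup_K|\g{\Omega}(f)|\le C(K)\big(\|f\|_{L^q(K_r)}+\|f\|_{L^1(\Omega,\delta^s_\Omega)}\big)$, which is the stated continuity.

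The delicate point throughout is making sure the constants coming from $|y-a_x|$ stay uniform. This is handled by choosing exterior balls of radius at most $1$, which the $C^{1,1}$ condition permits.
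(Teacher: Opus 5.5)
Your argument is correct and uses the same three ingredients as the paper's proof:
\begin{itemize}
\item the exterior-ball bound \eqref{eqn greens estimatte on unbounded domain}, together with a triangle inequality for $|y-a_x|$;
\item the free-space bound $\gk{\Omega}\le\Gamma_s$;
\item H\"older with $q>n/2s$ near the diagonal.
\end{itemize}
What differs is the partition.

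The paper splits according to whether the relevant point lies in the boundary strip $\Omega_\rho=\{\delta_\Omega<\rho\}$, with $\rho$ chosen so that $K\subset\subset\Omega\setminus\Omega_\rho$. Off the strip it uses only $\gk{\Omega}\le\Gamma_s$ and gets the weight from $\delta_\Omega\ge\rho$. On the strip it uses (vi) with $|y-a_x|/|x-y|\le 1+(\rho+R)/d(K,\Omega_\rho)$. This mirrors the bounded-domain proof and never invokes the Remark.

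You split instead by $|x-y|<1$ versus $|x-y|\ge 1$. This gives kernel bounds $\gk{\Omega}(x,y)\le C(K)\,\delta_\Omega^s(x)$ (or $\delta_\Omega^s(y)$) with no choice of $\rho$. In the second part you isolate the dual estimate $\int_{K'}\gk{\Omega}(x,y)\,dx\le C(K')\,\delta_\Omega^s(y)$ for every $y\in\Omega$, after which Tonelli is immediate. Your arrangement makes the uniformity at points far from $K$, where $\delta_\Omega$ is unbounded, more transparent.

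The price is reliance on the Remark, which the paper states without proof. Its proof reproduces the paper's dichotomy anyway: for $\delta_\Omega(x)\ge 1$ it follows from $\gk{\Omega}\le\Gamma_s$, and for $\delta_\Omega(x)<1$ it follows from (vi) and $|y-a_x|\le(2+r)|x-y|$.

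One small correction to your closing sentence: uniformity needs a \emph{fixed} exterior radius $r_0=\min\{R_0,1\}$, where $R_0$ is the uniform exterior-ball radius, not merely a radius at most $1$. The constant in (vi) comes from the Green function of the complement of a ball of radius $r$. The scaling $\gk{\lambda D}(\lambda x,\lambda y)=\lambda^{2s-n}\gk{D}(x,y)$ shows this constant is $c(n,s)\,r^{-s}$, so arbitrarily small balls would destroy it. The paper tacitly uses the same fixed radius $R$.
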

\begin{proof}
\textit{First part:}
Let $f\in L^{\infty}_{c}(\Omega)$. Denote $K=$ Supp$(f)$. Choose $\rho>0$ such that $K\subset\subset\Omega\setminus\Omega_{\rho}=\{x\in \Omega~|~\delta_{\Omega}(x)\geq\rho\}$. Then for any $x\in \Omega\setminus \Omega_{\rho}$ 
using $\gk{\Omega}(x,y)\leq \Gamma(x-y)$ that
\begin{equation*}
        \g{\Omega}(f)(x)\leq \|f\|_{L^{\infty}(\Omega)}\int_{K}\gk{\Omega}(x,y)\ dy
        \leq  c(n,s)\|f\|_{L^{\infty}(\Omega)}\int_{K}\frac{dy}{|x-y|^{n-2s}}\leq C(K,\rho,n,s)\|f\|_{L^{\infty}(\Omega)} \delta_{\Omega}^{s}(x).
    \end{equation*}
    Indeed, if $B_{1}(x)\cap K\neq \emptyset$ then the integral over $K$ is bounded by integral on a ball of radius $1+$diam$(K)$. Otherwise the integral is bounded by $|K|$.

    For $x\in \Omega_{\rho}$, using \eqref{eqn behaviour of greens on unbounded domains}
     \begin{equation}\label{eqn greens potential for supported f}
        \g{\Omega}(f)(x)\leq \|f\|_{L^{\infty}(\Omega)}\int_{K}\gk{\Omega}(x,y)\ dy
        \leq  c(n,s)\|f\|_{L^{\infty}(\Omega)}\int_{K}|y-a_x|^{s}\frac{\delta_{\Omega}^{s}(x)}{|x-y|^{n-s}}\ dy.
    \end{equation}
Further, by triangle inequality
    \begin{equation}\label{eqn triangle inequality}
        \frac{|y-a_x|}{|x-y|}\leq 1+\frac{|x-a_x|}{|x-y|}\leq 1+\frac{|x-a_x|}{d(K,\Omega_{\rho})}.
    \end{equation}
     Due to the uniform ball condition, $|x-a_x|\leq \rho+R$. Using these in \eqref{eqn greens potential for supported f}
    \begin{equation*}
       \g{\Omega}(f)(x)\leq C(K,\rho,n,s)\|f\|_{L^{\infty}(\Omega)}\delta_{\Omega}^{s}(x)\int_{K}\frac{dy}{|x-y|^{n-2s}}\leq C(K,\rho,n,s)\|f\|_{L^{\infty}(\Omega)} \delta_{\Omega}^{s}(x).
    \end{equation*}
    
    \textit{Second part:}
    Let $f\in L^{1}_{loc}(\Omega,\delta_{\Omega}^{s})$ and let $K$ be any compact subset of $\Omega$. Choose $\rho>0$ such that $K\subset\subset \Omega\setminus\Omega_{\rho}$. Then by using $\gk{\Omega}(x,y)\leq \Gamma(x-y)$ and \eqref{eqn greens estimatte on unbounded domain}, we obtain
    \begin{equation*}
        \begin{split}
            \int_{K}|\g{\Omega}(f)(x)|\ dx&=\int_{K}\left\{\int_{\Omega\setminus\Omega_{\rho}}+\int_{\Omega_{\rho}}\right\}|f(y)|\gk{\Omega}(x,y)\ dy\ dx\\
            &\leq \int_{\Omega\setminus\Omega_{\rho}}\int_{K}\frac{|f(y)|}{|x-y|^{n-2s}}\ dx\ dy+\int_{\Omega_{\rho}}|f(y)|\delta_{\Omega}^{s}(y)\int_{K}\frac{|x-a_y|^{s}}{|x-y|^{n-s}}\ dx \ dy.
        \end{split}
    \end{equation*}
    By using the triangle inequality as in \eqref{eqn triangle inequality}, for the second inequality, we infer that
    \begin{equation*}
        \int_{K}|\g{\Omega}(f)(x)|\ dx\leq C(K,n,s,\rho)\|f\|_{L^{1}(\Omega,\delta_{\Omega}^{s})}+\Tilde{C}(K)\|f\|_{L^{1}(\Omega,\delta_{\Omega}^{s})}\leq C(K,n,s)\|f\|_{L^{1}(\Omega,\delta_{\Omega}^{s})}.
    \end{equation*}

    \textit{Third part:} Let $f\in L^{1}(\Omega,\delta_{\Omega}^{s})\cap L_{loc}^{q}(\Omega)$, for some $q>n/2s$, and $K$ be a compact subset of $\Omega$. For $\rho>0$ as in the previous case, then, as in the last part
    \begin{equation*}
        |\g{\Omega}(f)(x)| \leq \int_{\Omega\setminus\Omega_{\rho}}\frac{|f(y)|}{|x-y|^{n-2s}}\ dy+\int_{\Omega_{\rho}}|f(y)|\delta_{\Omega}^{s}(y)\frac{|x-a_y|^{s}}{|x-y|^{n-s}} \ dy,
    \end{equation*}
    Again, by the triangle inequality as in \eqref{eqn triangle inequality} 
    , we have
    \begin{equation*}
        \int_{\Omega_{\rho}}|f(y)|\delta_{\Omega}^{s}(y)\frac{|x-a_y|^{s}}{|x-y|^{n-s}} \ dy\leq C(K,\rho)\|f\|_{L^{1}(\Omega,\delta^{s}_{\Omega})}.
    \end{equation*}
    Choose $r>0$ such that for all $x\in K$, $B_{r}(x)\subset\Omega$, by H\"older inequality
    \begin{equation*}
        \int_{(\Omega\setminus\Omega_{\rho})\cap B_{r}(x)}\frac{|f(y)|}{|x-y|^{n-2s}}\ dy\leq \|f\|_{L^{q}(B_{r}(x))}\left\{\int_{B_{r}(x)}\frac{dy}{|x-y|^{q'(n-2s)}}\right\}^{1/q'},
    \end{equation*}
    and is finite owing to $q>n/2s$. The result follows by combining the above equations with the integral on $(\Omega\setminus\Omega_{\rho})\setminus B_{r}(x)$, which is bounded by $\left(1/(r^{n-2s}\rho^{s})\right)\|f\|_{L^{1}(\Omega,\delta_{\Omega}^{s})}$.
\end{proof}
We next construct a barrier for the Poisson kernel on a domain in $\R^{n}$.
\begin{theorem}\label{theorem poission behaviour}
    Let $\Omega$ be a domain in $\R^{n}$ such that $|\R^{n}\setminus \Omega|>0$, then for some $C(x,n,s)>0$ we have
    \begin{equation}\label{eqn poison kernel estimate}
        -\slap \gk{\Omega}(x,y)\leq C(x,n,s) \delta^{-s}_{\Omega}(y).
    \end{equation}
    In particular, for any compact subset $K$ of $\Omega$, \eqref{eqn poison kernel estimate} holds for all $x\in K$, with a constant $C=C(K,n,s)$.
\end{theorem}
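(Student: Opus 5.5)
We start from the representation in Proposition \ref{prop greens kernal continuity}(iii):
\begin{equation*}
-\slap \gk{\Omega}(x,y)=\mathcal{A}(n,s)\int_{\Omega}\frac{\gk{\Omega}(x,z)}{|y-z|^{n+2s}}\,dz,\qquad x\in\Omega,\ y\in\R^{n}\setminus\overline{\Omega}.
\end{equation*}
We fix $x\in\Omega$ and set $r=\delta_{\Omega}(x)/2$. We split $\Omega$ into $B_{r}(x)$ and $\Omega\setminus B_{r}(x)$, and bound $\gk{\Omega}(x,\cdot)$ differently on the two pieces.

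On $B_{r}(x)$ we use $\gk{\Omega}(x,z)\leq\Gamma_{s}(x-z)$, which holds because $H_{\Omega}\geq 0$. Since $y\notin\Omega$, we have $|y-z|\geq r$ for $z\in B_{r}(x)$. Hence this piece is at most $C r^{-n-2s}\int_{B_{r}(x)}|x-z|^{2s-n}\,dz\leq C(x,n,s)$.

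On $\Omega\setminus B_{r}(x)$ we use the estimate \eqref{eqn greens estimatte on unbounded domain} together with the symmetry of the kernel, in the form
\begin{equation*}
\gk{\Omega}(x,z)\leq C\,\delta_{\Omega}^{s}(z)\frac{|x-a_z|^{s}}{|x-z|^{n-s}}.
\end{equation*}
Here we must control $|x-a_z|$. The exterior ball at the projection of $z$ may be chosen with radius at most $1$, so $|x-a_z|\leq |x-z|+\delta_{\Omega}(z)+1$. For $|x-z|\geq r$ this gives
\begin{equation*}
\gk{\Omega}(x,z)\leq C(x)\,\delta_{\Omega}^{s}(z)\,\frac{(1+\delta_\Omega(z))^{s}}{|x-z|^{n-2s}}.
\end{equation*}
A simpler route for the far region is the Remark after (vi), which gives $\gk{\Omega}(x,z)\leq c\,\delta_{\Omega}^{s}(x)|x-z|^{2s-n}$. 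It does not, however, provide the decay factor $\delta_{\Omega}^{s}(z)$ near $\partial\Omega$, and that factor is what we need.

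With this bound the remaining integral is controlled by
\begin{equation*}
\int_{\Omega}\frac{\delta_{\Omega}^{s}(z)(1+\delta_{\Omega}(z))^{s}}{|x-z|^{n-2s}|y-z|^{n+2s}}\,dz .
\end{equation*}
We split this into $z$ near $y$ and $z$ far from $y$.

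For $|z-y|\geq 1$, the weights $|x-z|^{2s-n}|y-z|^{-n-2s}$ together with $\delta_{\Omega}(z)\leq|z-y|$ make the integrand integrable. The bound is $C(x)$, which is $\leq C(x)\delta_{\Omega}^{-s}(y)$ whenever $\delta_{\Omega}(y)\leq 1$. If instead $\delta_{\Omega}(y)>1$, the whole integral is at most $C\,\delta_{\Omega}(y)^{-2s}$ times a constant, and this is $\leq C\delta_{\Omega}(y)^{-s}$.

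For $|z-y|<1$, we use $\delta_{\Omega}(z)\leq|z-y|$ and $|z-y|\geq\delta_{\Omega}(y)$ for $z\in\Omega$. The integral is then bounded by
\begin{equation*}
C(x)\int_{\{|w|\geq\delta_{\Omega}(y)\}}|w|^{s-n-2s}\,dw=C(x)\,\delta_{\Omega}^{-s}(y).
\end{equation*}
Here the factor $|x-z|^{2s-n}$ is bounded because either $|x-z|\geq r$ or $z$ lies in the first piece.

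The main obstacle is making the estimate on $|x-a_z|$ uniform. This needs a uniform exterior ball radius, available from the $C^{1,1}$ assumption as in the proof of Theorem \ref{theorem greens poteantial continuity on unbounded domains}. It also needs the growth in $z$ to be absorbed by $|y-z|^{-n-2s}$ at infinity. Finally, all constants depend on $x$ only through $\delta_{\Omega}(x)$ and $|x|$, which are bounded above and below on a compact set $K$. This gives the uniform constant $C(K,n,s)$.
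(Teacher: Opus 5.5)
Your argument is correct and uses the same ingredients as the paper: the representation in Proposition~\ref{prop greens kernal continuity}(iii), and the bound $\gk{\Omega}(x,z)\le\Gamma_{s}(x-z)$ on a ball around $x$. It also uses the estimate \eqref{eqn greens estimatte on unbounded domain}, with symmetry and a uniform exterior ball, to gain $\delta_{\Omega}^{s}(z)\le|y-z|^{s}$, and finally $\int_{|w|\ge\delta_{\Omega}(y)}|w|^{-n-s}\,dw\asymp\delta_{\Omega}^{-s}(y)$.

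The difference is the decomposition, and the paper's is more economical. The paper splits $\Omega\setminus B_{r}(x)$ by distance to $\partial\Omega$:
\begin{itemize}
\item on $(\Omega\setminus\Omega_{r})\setminus B_{r}(x)$ it uses only $\gk{\Omega}(x,z)\le a(n,s)r^{2s-n}$ and $|y-z|^{-n-2s}\le r^{-s}|y-z|^{-n-s}$;
\item it applies \eqref{eqn greens estimatte on unbounded domain} only on the strip $\Omega_{r}=\{\delta_{\Omega}<r\}$, where $\delta_{\Omega}(z)<r$ gives $|x-a_{z}|\le C|x-z|$ with no growth factor;
\item on $B_{r}(x)$ it uses $|y-z|^{-s}\le\delta_{\Omega}^{-s}(y)$.
\end{itemize}
So each piece is $\le C\delta_{\Omega}^{-s}(y)$ for all $y$ at once.

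Using \eqref{eqn greens estimatte on unbounded domain} on all of $\Omega\setminus B_{r}(x)$ costs you the factor $(1+\delta_{\Omega}(z))^{s}$. That is why you need the split at $|z-y|=1$ and the separate regime $\delta_{\Omega}(y)>1$; the latter is also needed to upgrade your $B_{r}(x)$ bound $C(x,n,s)$ to $C\delta_{\Omega}^{-s}(y)$. It also leaves two claims you state without proof. The first is
\[
\sup_{y}\int_{\{|x-z|\ge r,\ |y-z|\ge 1\}}\frac{dz}{|x-z|^{n-2s}\,|y-z|^{n}}<\infty .
\]
The second is the bound $C\delta_{\Omega}(y)^{-2s}$ for the whole integral when $\delta_{\Omega}(y)>1$.

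Both claims hold because $n>2s$. For the first, split into $|z-x|<|x-y|/2$, $|z-y|<|x-y|/2$ and the rest; the middle region contributes $|x-y|^{2s-n}\log|x-y|$, which is bounded. For the second, use $|y-z|^{-n-2s}\le\delta_{\Omega}(y)^{-2s}|y-z|^{-n}$ together with $\gk{\Omega}\le\Gamma_{s}$ and the same splitting. So your route closes once these computations are supplied, but the boundary-strip decomposition makes them unnecessary.

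You are right to flag the uniform exterior ball, which the statement does not assume. Note that its radius must be bounded below, not just above. By scaling, the constant in the Green's bound for the complement of a ball of radius $\rho$ grows like $\rho^{-s}$. So choosing radius at most $1$ is harmless only if you take radius $\min\{R_{0},1\}$ with $R_{0}>0$ the uniform radius.
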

\begin{proof}
Let $r>0$ be such that $B_{2r}(x)\subset \subset \Omega$. Denote $\Omega_{r}=\{x\in\Omega~|~\delta_{\Omega}(x)<r\}$. For any $y\in \R^{n}\setminus\overline{\Omega}$, one can write
    \begin{equation*}
        -\slap\gk{\Omega}(x,y)=\int_{\Omega}\frac{\gk{\Omega}(x,z)}{|y-z|^{n+2s}}\ dz=\left\{\int_{B_{r}(x)}+\int_{(\Omega\setminus\Omega_{r})\setminus B_{r}(x))}+\int_{\Omega_{r}}\right\}\frac{\gk{\Omega}(x,z)}{|y-z|^{n+2s}}\ dz.
    \end{equation*}
    We estimate the first integral using $\gk{\Omega}(x,z)\leq \Gamma(x-z)$, $|z-y|^{n+s}>d(B_{r}(x),\partial\Omega)^{n+s}$, $|z-y|^{s}>\delta_{\Omega}^{s}(y)$ and by changing the variables to polar coordinate, to get
    \begin{equation*}
        \int_{B_{r}(x)}\frac{\gk{\Omega}(x,z)}{|y-z|^{n+2s}}\ dz\leq \frac{a(n,s)\omega_{n-1}}{d(B_{r}(x),\partial\Omega)^{n+s}\delta^{s}_{\Omega}(y)}\int_{0}^{r}\rho^{2s-1}\ d\rho=\frac{C(n,s)r^{2s}}{d(B_{r}(x),\partial\Omega)^{n+s}\delta^{s}_{\Omega}(y)}.
    \end{equation*}
    Further, by using $|x-z|>r$ for all $z\in \Omega\setminus B_{r}(x)$ and $|y-z|>r$ for all $z\in \Omega\setminus\Omega_{r}$, one can infer 
    \begin{equation*}
      \int_{(\Omega\setminus\Omega_{r})\setminus B_{r}(x))}  \frac{\gk{\Omega}(x,z)}{|y-z|^{n+2s}}\ dz\leq \frac{a(n,s)}{r^{n-s}}\int_{(\Omega\setminus\Omega_{r})\setminus B_{r}(x))}\frac{dz}{|y-z|^{n+s}}.
    \end{equation*}
    Next, by the triangle inequality as in \eqref{eqn triangle inequality} for $z\in \Omega_{r}$, we have $|x-a_{z}|\leq C(r,n,s) |x-z|$. Therefore using \eqref{eqn greens estimatte on unbounded domain} and $\delta_{\Omega}(z)<|y-z|$, we get
    \begin{equation*}
    \begin{split}
      \int_{\Omega_{r}} \frac{\gk{\Omega}(x,z)}{|y-z|^{n+2s}}\ dz&=C(r,n,s)\int_{\Omega_{r}}\frac{|x-a_{z}|^{s}\delta_{\Omega}^{s}(z)}{|x-z|^{n-s}}\frac{1}{|y-z|^{n+2s}}\ dz\\
      &\leq C(r,n,s)\int_{\Omega_{r}}\frac{1}{|x-z|^{n-2s}}\frac{1}{|y-z|^{n+s}}\ dz 
      \leq \frac{C(r,n,s)}{r^{n-2s}}\int_{\Omega_{r}}\frac{1}{|y-z|^{n+s}}\ dz.
      \end{split}
    \end{equation*}
   In the case of both preceding integrals, since $\Omega_{r}, (\Omega\setminus\Omega_{r})\setminus B_{r}(x) \subset B_{\delta_{\Omega}(y)}^{C}(y)$, by changing the variables to polar coordinate we get
    \begin{equation*}
        \begin{split}
        \left\{\int_{(\Omega\setminus\Omega_{r})\setminus B_{r}(x))}+\int_{\Omega_{r}}\right\}\frac{\gk{\Omega}(x,z)}{|y-z|^{n+2s}}\ dz&\leq 2C(r,n,s)\int_{B_{\delta_{\Omega}(y)}^{C}(y)}\frac{1}{|y-z|^{n+s}}\ dz 
         \leq C(r,n,s) \omega_{n-1} \delta_{\Omega}^{-s}(y).
        \end{split}
    \end{equation*}
    With this, one can infer the point wise  estimate.

    For the local uniform estimate, Let $K  \subset  \Omega$ be compact, choose $r>0$ such that for all $x\in K$, $B_{r}(x)\subset \Omega$, then by the above calculations
    \begin{equation*}
         -\slap\gk{\Omega}(x,y)\leq \left\{\frac{C(n,s)r^{2s}}{d(K,\partial\Omega)^{n+s}}+C(r,n,s)\right\}\delta_{\Omega}^{-s}(y)=C(K,n,s)\delta_{\Omega}^{-s}(y).\qedhere
    \end{equation*}
\end{proof}
\begin{remark}
    The preceding theorem justifies the assumption \ref{G}.
\end{remark}

 Unlike local operators, the fractional Laplacian exhibits a different behavior. For example, when $h>0$, Lemma \ref{lemma equivalent E} implies that any solution of \eqref{eqn main linear omega} blows up at the boundary, even when $f=0$ and $g=0$, which accounts for the s-harmonic function $u_{1-s}$ on the unit ball as discussed in the introduction. In addition, it is shown in \cite{AbatangeloVazquez2023, Abaratgelo2017} that large solutions can be generated with the data $(\delta^{-\beta}_{\Omega},0,0)$ for some $2s<\beta<1+s$.
 \begin{proposition}[{\cite[Theorem 3.4]{AbatangeloVazquez2023}}]\label{prop boundary behaviour of the solution on bounded domain}
     Let $\Omega$ be a bounded domain and $u$ be the solution of \eqref{eqn main linear omega} with data $(1/\delta_{\Omega}^{\beta}(x), 0,0)$. 
     Then, there are positive constants $c_{1}$ and $c_{2}$ such that 
     \begin{equation*}
         \begin{split}
             c_{1}\delta_{\Omega}^{s}(x)\leq &u(x)\leq c_{2}\delta_{\Omega}^{s}(x) \hspace{2.5cm} \text{for }0<\beta<s,\\
             c_{1}\delta_{\Omega}^{s}(x)\log \frac{1}{\delta_{\Omega}(x)}\leq &u(x)\leq c_{2}\delta_{\Omega}^{s}(x)\log \frac{1}{\delta_{\Omega}(x)}\hspace{1cm} \text{for } \beta=s,\\
             c_{1}\delta_{\Omega}^{-\beta+2s}(x)\leq &u(x)\leq c_{2}\delta_{\Omega}^{-\beta+2s}(x) \hspace{1.8cm}\text{for } s<\beta<1+s.
         \end{split}
     \end{equation*}
\end{proposition}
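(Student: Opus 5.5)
Since $u=\g{\Omega}(\delta_{\Omega}^{-\beta})$, the plan is to estimate $u(x)=\int_{\Omega}\gk{\Omega}(x,y)\,\delta_{\Omega}^{-\beta}(y)\,dy$ from above and below using the two-sided Green's kernel bound \eqref{eqn behaviour of greens kernel} from Proposition \ref{prop greens kernal continuity}(v). Finiteness holds because $\delta^{-\beta}\in L^{1}(\Omega,\delta^{s})$ exactly when $\beta<1+s$, using that $\Omega$ is bounded and $C^{1,1}$. Away from the boundary, i.e. for $\delta_\Omega(x)\ge\rho_0$, the potential is bounded above by Theorem~2.? ($L^1(\Omega,\delta^s)\cap L^q_{loc}\to L^\infty_{loc}$). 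It is bounded below by a positive constant, since the kernel $\gk{\Omega}(x,\cdot)$ is bounded below on a fixed interior ball. So only points near $\partial\Omega$ matter.

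Fix $x$ with $d:=\delta_\Omega(x)$ small and flatten the boundary locally. The $C^{1,1}$ regularity makes $\delta_\Omega$ comparable to the normal coordinate and lets us treat $\Omega$ near $x$ as a half-space, up to constants. Write
\[
\frac{\delta^s(x)\delta^s(y)}{|x-y|^{2s}}\wedge 1\;\asymp\;\frac{d^s\,\delta^s(y)}{(|x-y|+d)^{s}\,(|x-y|+\delta(y))^{s}}
\]
(a standard equivalent form). Then split the integral into three regions. On $A=B_{d/2}(x)$ we have $\delta(y)\asymp d$, and the contribution is $\asymp d^{-\beta}\int_{B_{d/2}}|x-y|^{2s-n}\,dy\asymp d^{2s-\beta}$. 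This already gives the lower bound in the case $s<\beta<1+s$, because $\gk{\Omega}(x,y)\ge c|x-y|^{2s-n}$ on $A$.

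For the upper bound we need the dyadic annuli $|x-y|\sim 2^k d$ with $2^kd\lesssim 1$. On such an annulus the kernel is at most $C\,d^s\,\delta^s(y)\,(2^kd)^{-n}$. Integrating $\delta(y)^{s-\beta}$ over the annulus, which is a cylinder of tangential size $(2^kd)^{n-1}$ and normal extent $2^kd$, gives $(2^kd)^{n-1}(2^kd)^{1+s-\beta}$, using $\beta<1+s$. Hence the $k$-th term is
\[
\lesssim d^{s}(2^kd)^{s-\beta}.
\]
Summing over $k$:
\begin{itemize}
\item if $\beta>s$, the series is geometric, dominated by $k=0$, and gives $d^{2s-\beta}$;
\item if $\beta=s$, each term is $d^s$ and there are $\sim\log(1/d)$ terms, giving $d^s\log(1/d)$;
\item if $\beta<s$, the terms grow with $k$ and are dominated by $k\sim\log(1/d)$, giving $d^s$.
\end{itemize}
The far region $|x-y|\gtrsim 1$ contributes $\lesssim d^s\|\delta^{-\beta}\|_{L^1(\delta^s)}$.

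The matching lower bounds for $\beta\le s$ come from the same annuli, now with the lower kernel estimate restricted to the portion of each annulus where $\delta(y)\asymp 2^kd$. On that portion the kernel is $\ge c\,d^s(2^kd)^{s-n}$ and the integrand is $\asymp(2^kd)^{-\beta}$, so each annulus contributes $\gtrsim d^s(2^kd)^{s-\beta}$. Summing then reproduces $d^s\log(1/d)$ for $\beta=s$. For $\beta<s$, the lower bound $d^s$ follows simply from one fixed interior ball. For $\beta<s$ one could alternatively invoke \eqref{bound near the boundary for f compactly supported} together with the Hopf-type lower bound.

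The main obstacle is the bookkeeping in the annular integration. Specifically, we must justify that $\int_{\{|x-y|\sim r\}}\delta^{s-\beta}(y)\,dy\asymp r^{n+s-\beta}$ uniformly in $x$ near the boundary. This uses the uniform local graph representation of $\partial\Omega$ and the coarea formula in the normal variable. It also requires $s-\beta>-1$ for local integrability across $\partial\Omega$, which is exactly where $\beta<1+s$ enters.
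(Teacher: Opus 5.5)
Your argument is correct, and it is a genuine proof where the paper has none. The paper simply imports the result from \cite[Theorem 3.4]{AbatangeloVazquez2023}, where it is obtained for a general class of nonlocal operators with two-sided Green kernel bounds. Your route uses only the two-sided estimate \eqref{eqn behaviour of greens kernel} together with the layer bound $\int_{B_r(x)\cap\Omega}\delta_\Omega^{s-\beta}\,dy\lesssim r^{n+s-\beta}$ for $r\le r_0$, which is where $\beta<1+s$ enters. The dyadic decomposition makes the trichotomy transparent, since the annular contributions are $d^s(2^kd)^{s-\beta}$:
\begin{itemize}
\item for $\beta>s$ they decay geometrically in $k$;
\item for $\beta=s$ they are all of the same size;
\item for $\beta<s$ they are dominated by the outermost scale.
\end{itemize}
The lower bounds are also fine: the ball $B_{d/2}(x)$ for $\beta>s$, interior-ball pieces of the annuli where $\delta_\Omega\asymp 2^kd$ for $\beta=s$, and a fixed interior ball for $\beta<s$. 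What the citation buys instead is generality (other operators and Green exponents) at no cost.

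Two points should be made explicit. First, you use that $\Omega$ is $C^{1,1}$, although the statement only says ``bounded''. This is genuinely needed, both for \eqref{eqn behaviour of greens kernel} and for the uniform layer estimate. Second, for $\beta=s$ the bound with $\log\frac{1}{\delta_\Omega}$ can only hold near $\partial\Omega$. If $\Omega$ has inradius at least $1$, the upper bound forces $u\le 0$ where $\delta_\Omega=1$, whereas $u>0$. Your reduction ``only points near $\partial\Omega$ matter'' silently assumes the comparison function is bounded above and below by positive constants on $\{\delta_\Omega\ge\rho_0\}$. So either state the estimate for $\delta_\Omega(x)<d_0$, or replace $\log\frac{1}{\delta_\Omega}$ by $1+|\log\delta_\Omega|$. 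Finally, the aside invoking \eqref{bound near the boundary for f compactly supported} is not needed, and it does not apply directly since $\delta_\Omega^{-\beta}$ is not compactly supported.
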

 If the exterior data $g$ diverges at the boundary, the corresponding solution with data $(0, g, 0)$ likewise diverges at the boundary as stated below,
\begin{proposition}\cite{Abatangelo2015LargesHarmonic}
    Let $g$ satisfy \ref{G1} and $g(y)\to +\infty$ as $y\to \partial\Omega$ for $y\in \rn\setminus\overline{\Omega}$. If $u$ is a solution of \eqref{eqn main linear omega} with the data $(0,g,0)$, then $u(x)\to +\infty$ as $x\in \Omega$ and $x\to \partial\Omega$.
\end{proposition}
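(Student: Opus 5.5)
The statement to prove is the last proposition: if $g$ satisfies \ref{G}, $g\ge 0$ near $\partial\Omega$ and $g(y)\to+\infty$ as $y\to\partial\Omega$ from outside, then the solution $u$ with data $(0,g,0)$ blows up at $\partial\Omega$.

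I will work with the representation formula from the first theorem. With $f=0$ and $h=0$ it reads, for $x\in\Omega$,
\begin{equation*}
u(x)=\int_{\R^{n}\setminus\overline{\Omega}}\bigl(-\slap\gk{\Omega}(x,y)\bigr)g(y)\,dy .
\end{equation*}
The kernel $P(x,y):=-\slap\gk{\Omega}(x,y)$ is nonnegative by Proposition \ref{prop greens kernal continuity}(iii). The idea is to use it as an approximate identity concentrating at $\theta\in\partial\Omega$ as $x\to\theta$.

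\textbf{Splitting $g$.} Fix $\theta\in\partial\Omega$ and $M>0$. By hypothesis there is $\rho>0$ such that $g\ge M$ on $(\R^{n}\setminus\overline{\Omega})\cap B_{\rho}(\theta)$. Write $g=g\chi_{B_\rho(\theta)}+g\chi_{B_\rho(\theta)^c}$.

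\textbf{The far part.} This part stays bounded as $x\to\theta$. I will use the upper bound in \eqref{asym of poison on bounded domain}. For $x\in B_{\rho/2}(\theta)$ and $|y-\theta|\ge\rho$ we have $|x-y|\ge\rho/2$, so
\begin{equation*}
P(x,y)\le C\rho^{-n}\delta_\Omega^{-s}(y)(1+\delta_\Omega(y))^{-s}.
\end{equation*}
Since $g\in L^{1}(\R^n\setminus\overline\Omega,\delta_\Omega^{-s})$ by \ref{G}, the far contribution is bounded in absolute value by $C(\rho)\|g\|_{L^1(\delta^{-s})}$, uniformly for $x$ near $\theta$. In fact it even tends to $0$, because of the factor $\delta_\Omega^s(x)$ in the bound.

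\textbf{The near part.} This part is at least $M\int_{B_\rho(\theta)\setminus\overline\Omega}P(x,y)\,dy$, so the key step is to show
\begin{equation*}
\liminf_{x\to\theta}\int_{B_\rho(\theta)\setminus\overline\Omega}P(x,y)\,dy\ge c_0>0,
\end{equation*}
with $c_0$ independent of $M$. I will use the lower bound in \eqref{asym of poison on bounded domain}, $P(x,y)\ge c\,\delta^s(x)\delta^{-s}(y)|x-y|^{-n}$ for $y$ near the boundary. Using $C^{1,1}$ flattening near $\theta$, write $x$ at height $t=\delta_\Omega(x)$ and restrict the integral to the cone-like region $\{y:\ \delta_\Omega(y)\in(0,t),\ |y-\theta_x|<t\}$. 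There $|x-y|\asymp t$, which gives
\begin{equation*}
\int P\,dy\ \ge\ c\,t^{s}t^{-n}\,t^{n-1}\int_0^t\tau^{-s}\,d\tau\ =\ c'>0.
\end{equation*}
(Alternatively, $\int_{\R^n\setminus\Omega}P(x,y)\,dy=1$, the probability that the killed process exits, and the far mass tends to $0$; this gives $c_0$ close to $1$.)

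\textbf{Conclusion.} Combining the two parts, $\liminf_{x\to\theta}u(x)\ge c_0M-C$. Since $M$ is arbitrary, $u(x)\to+\infty$.

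The main obstacle is the uniform lower bound on the near-boundary mass of the Poisson kernel. I expect to handle it with the two-sided estimate \eqref{asym of poison on bounded domain} plus the flattening computation above. The far part is routine.
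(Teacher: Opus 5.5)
Your argument is correct, and it is more direct than the paper's. The paper defers this proposition to Abatangelo, but proves the localized version, Theorem \ref{thrm blow up of solution where g is}, in three steps:
\begin{enumerate}
\item It truncates the data to $g_k=\min\{k,g\}$.
\item Via Theorem \ref{thrm continuity up to the boundary}, it shows that the solution $u_k$ is continuous up to the blow-up set, with boundary value $k$. This step uses exactly the ingredients of your option (2): the identity $\int_{\R^n\setminus\overline\Omega}P(x,y)\,dy=1$, and the far-mass bound $O(\delta_\Omega^s(x))$ coming from \eqref{asym of poison on bounded domain}.
\item It uses $u_k\le u$ to conclude $\liminf u\ge k$.
\end{enumerate}
You skip the truncation and the intermediate continuity statement and bound $u$ from below in one step. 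Your option (1) goes further: it replaces the mass-one identity by the lower half of \eqref{asym of poison on bounded domain} plus a local flattening computation. That computation is self-contained and gives an explicit $c_0$, although not one close to $1$. The paper's route yields a stronger by-product, namely boundary continuity for data that are continuous near $T$; yours is shorter. Both arguments are local in $\theta$, so yours also covers the case where blow-up occurs only on a relatively open $T\subset\partial\Omega$.

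Two small points need tidying.
\begin{itemize}
\item The proposition assumes \ref{G1}, not \ref{G}, and on a bounded domain \ref{G1} is the weaker hypothesis. In the far part, keep the factor $|x-y|^{-n}$ and use $|x-y|\ge\delta_\Omega(y)$, which holds for $x\in\Omega$ and $y\notin\overline\Omega$. This gives $P(x,y)\le C\,\delta_\Omega^s(x)\min\{\rho^{-n}\delta_\Omega^{-s}(y),\delta_\Omega^{-n-2s}(y)\}$, which is integrable against $|g|$ under \ref{G1}. The paper handles the far part the same way in Theorem \ref{thrm continuity up to the boundary}.
\item In your conclusion, the constant $C=C(\rho)$ behaves like $\rho^{-n}$, and $\rho$ depends on $M$. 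So the step ``$\liminf u\ge c_0M-C$ with $M$ arbitrary'' does not close as written. You need the observation you already made: for fixed $\rho$, the far part tends to $0$ as $x\to\theta$. That gives $\liminf_{x\to\theta}u(x)\ge c_0M$ directly.
\end{itemize}
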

 For a proof we refer to \cite[Remark 3 and subsection 3.6.2]{Abatangelo2015LargesHarmonic}.

\subsection{Green's kernels in nested domains} When considering a sequence of finite cylinders, the domains are nested, necessitating a comparison of their respective Green's kernels. Accordingly, we analyze the behavior of the Green's kernel on two domains, where one domain is contained within the other.
\begin{lemma}\label{lemma greens seq is decreasing}
    Let $\Omega_{1}\subset \Omega_{2}$ be two bounded domains, then $\gk{\Omega_{1}}(x,y)\leq \gk{\Omega_{2}}(x,y)$ for all $x,y\in \Omega_{1}$. 
\end{lemma}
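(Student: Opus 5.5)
Fix $x\in\Omega_{1}$ and compare the two Green's kernels through their regular parts. By definition $\gk{\Omega_i}(x,y)=\Gamma_{s}(x-y)-H_{\Omega_i}(x,y)$, so the claim is equivalent to
\begin{equation*}
w(y):=H_{\Omega_{1}}(x,y)-H_{\Omega_{2}}(x,y)\geq 0\quad\text{for } y\in\Omega_{1}.
\end{equation*}
The plan is to show that $w$ is $s$-harmonic in $\Omega_{1}$ with nonnegative exterior data, and then apply the maximum principle for $\slap$ on the bounded domain $\Omega_{1}$.

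\textbf{Step 1 (interior equation).} Both $H_{\Omega_{1}}(x,\cdot)$ and $H_{\Omega_{2}}(x,\cdot)$ are $s$-harmonic in $\Omega_{1}$: the first by definition, the second because $\Omega_{1}\subset\Omega_{2}$. Hence $\slap w=0$ pointwise in $\Omega_{1}$. The regularity $C^{2s+\epsilon}(\Omega_i)\cap C(\overline{\Omega_i})$ from the proposition on $H_{\Omega}$ makes the pointwise operator meaningful.

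\textbf{Step 2 (exterior data).} For $y\in\R^{n}\setminus\overline{\Omega_{1}}$ we have $H_{\Omega_{1}}(x,y)=\Gamma_{s}(x-y)$, so
\begin{equation*}
w(y)=\Gamma_{s}(x-y)-H_{\Omega_{2}}(x,y).
\end{equation*}
There are two cases.
\begin{itemize}
\item If $y\in\R^{n}\setminus\overline{\Omega_{2}}$, then $H_{\Omega_{2}}(x,y)=\Gamma_{s}(x-y)$, so $w(y)=0$.
\item If $y\in\Omega_{2}\setminus\overline{\Omega_{1}}$, then $w(y)=\gk{\Omega_{2}}(x,y)$, which is $\geq0$ by Proposition \ref{prop greens kernal continuity}(ii).
\end{itemize}
The set $\partial\Omega_{1}$ has measure zero, and $w$ is continuous up to it, so it is harmless. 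Thus $w\geq0$ a.e. in $\R^{n}\setminus\Omega_{1}$.

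\textbf{Step 3 (maximum principle).} Suppose $\min_{\overline{\Omega_{1}}}w<0$. By continuity on $\overline{\Omega_{1}}$, the minimum is attained at some $y_0\in\Omega_{1}$, and $w(y_0)$ is a global minimum of $w$ on $\rn$ since $w\geq0$ outside. Then
\begin{equation*}
\slap w(y_0)=\mathcal{A}(n,s)\,\text{p.v.}\int_{\rn}\frac{w(y_0)-w(z)}{|y_0-z|^{n+2s}}\,dz<0,
\end{equation*}
the inequality being strict because $w(z)\geq 0>w(y_0)$ on the positive-measure set $\R^{n}\setminus\Omega_{1}$. This contradicts $\slap w(y_0)=0$, so $w\geq0$ in $\Omega_{1}$, which is the claim.

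\textbf{Main obstacle.} The delicate point is justifying the pointwise evaluation of $\slap w$ at the minimum. This needs $w\in C^{2s+\epsilon}$ near $y_0$ (available from the regularity proposition) and integrability of $w$ at infinity. Integrability holds because $\Gamma_{s}$ decays like $|y|^{2s-n}$ and $H_{\Omega_{2}}$ is bounded outside a neighbourhood of $x$ by the argument of Lemma \ref{lemma uniform bound for H}. If this pointwise route proves awkward, I would instead use the weak maximum principle for bounded $s$-harmonic functions in bounded domains (e.g.\ from \cite{Silvestre2007}).
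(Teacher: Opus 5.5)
Your argument is correct and is essentially the paper's proof. Both reduce the claim to $H_{\Omega_{1}}(x,\cdot)\geq H_{\Omega_{2}}(x,\cdot)$, use $\gk{\Omega_{2}}(x,\cdot)\geq 0$ to get $H_{\Omega_{2}}(x,\cdot)\leq \Gamma_{s}(x-\cdot)$ outside $\Omega_{1}$, and conclude by the maximum principle for the $s$-harmonic difference on $\Omega_{1}$. The only difference is that you prove the maximum principle directly by evaluating $\slap w$ at a negative interior minimum, whereas the paper simply invokes it (and the integrability at infinity you worry about is immediate, since $w\equiv 0$ outside $\overline{\Omega_{2}}$).
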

\begin{proof}
    Since $\gk{\Omega_{2}}(x,y)$ is nonnegative, for $x\in \Omega_{1}$ and $y\in \Omega_{2}\setminus\Omega_{1}$, $\Gamma_{s}(x-y)\geq H_{\Omega_{2}}(x,y)$. Therefore, $H_{\Omega_{2}}(x,\cdot)$ solves 
    \begin{equation*}
        \begin{cases}
          \slap H_{\Omega_{2}}(x,\cdot)=0 \quad &\text{on } \Omega_{1}\\
          H_{\Omega_{2}}(x-y)\leq \Gamma_{s}(x-y) &\text{on }\R^{n}\setminus\overline{\Omega}_{1}.
        \end{cases}
    \end{equation*}
  Then $H_{\Omega_{1}}(x,y)\geq H_{\Omega_{2}}(x,y)$ on $\rn$ by maximum principle. From this we infer $\gk{\Omega_{1}}(x,y)\leq \gk{\Omega_{2}}(x,y)$ in $\rn$. 
\end{proof}
\begin{lemma}
    Let $\Omega_{1}\subset\Omega_{2}$ be two bounded domains, $\psi\in L_{c}^{\infty}(\Omega_{1})$ such that $\psi\geq 0$. Then for $z\in \R^{n}\setminus\overline{\Omega}_{2}$, $-\slap \g{\Omega_{1}}(\psi)(z) \leq -\slap\g{\Omega_{2}}(\psi)(z)$. 
\end{lemma}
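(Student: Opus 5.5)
The plan is to compute $\slap \g{\Omega_{i}}(\psi)$ at an exterior point directly from the definition, and then reduce the claim to the monotonicity of Green's kernels in Lemma \ref{lemma greens seq is decreasing}.

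Fix $z\in \R^{n}\setminus\overline{\Omega}_{2}$. The function $w_i:=\g{\Omega_i}(\psi)$ vanishes identically outside $\Omega_i$, so in particular $w_i(z)=0$. Moreover, $z$ has positive distance from $\overline{\Omega}_2\supset\overline{\Omega}_1$, so the singular integral defining $\slap w_i(z)$ is not singular near $z$. Hence no principal value is needed and
\begin{equation*}
\slap w_i(z)=\mathcal{A}(n,s)\int_{\rn}\frac{w_i(z)-w_i(y)}{|z-y|^{n+2s}}\ dy=-\mathcal{A}(n,s)\int_{\Omega_i}\frac{w_i(y)}{|z-y|^{n+2s}}\ dy .
\end{equation*}
Finiteness of the right-hand side follows from the boundedness of $\Omega_i$, the bound $|z-y|\geq \text{dist}(z,\Omega_2)>0$, and $w_i\in L^{1}(\Omega_i)$. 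The last fact holds, for instance, because $w_i\in C(\overline{\Omega}_i)$ by \eqref{eqn continuity of G 1}. So
\begin{equation*}
-\slap w_i(z)=\mathcal{A}(n,s)\int_{\Omega_i}\frac{w_i(y)}{|z-y|^{n+2s}}\ dy.
\end{equation*}

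Next, compare $w_1$ and $w_2$ pointwise on $\Omega_1$. For $y\in\Omega_1$ we have $w_1(y)=\int_{\Omega_1}\gk{\Omega_1}(y,x)\psi(x)\,dx$. Since $\psi\geq 0$ and $\gk{\Omega_1}(y,x)\leq \gk{\Omega_2}(y,x)$ for $x,y\in\Omega_1$ by Lemma \ref{lemma greens seq is decreasing}, and since $\operatorname{Supp}\psi\subset\Omega_1$,
\begin{equation*}
w_1(y)\leq \int_{\Omega_1}\gk{\Omega_2}(y,x)\psi(x)\,dx=\int_{\Omega_2}\gk{\Omega_2}(y,x)\psi(x)\,dx=w_2(y).
\end{equation*}
In addition, $w_2\geq 0$ on $\Omega_2\setminus\Omega_1$, because $\gk{\Omega_2}\geq 0$ by Proposition \ref{prop greens kernal continuity}(ii) and $\psi\geq0$.

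Combining the two steps,
\begin{equation*}
-\slap w_1(z)=\mathcal{A}\int_{\Omega_1}\frac{w_1(y)}{|z-y|^{n+2s}}dy\leq \mathcal{A}\int_{\Omega_1}\frac{w_2(y)}{|z-y|^{n+2s}}dy\leq \mathcal{A}\int_{\Omega_2}\frac{w_2(y)}{|z-y|^{n+2s}}dy=-\slap w_2(z),
\end{equation*}
which is the claim.

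There is no real obstacle here. The only care points are justifying the pointwise evaluation of $\slap w_i$ at an exterior point (absence of singularity and integrability of $w_i$) and the use of Fubini/symmetry of the kernel. The symmetry and the order of integration only matter if one prefers to write $-\slap w_i(z)=\int\psi(x)\,(-\slap\gk{\Omega_i}(x,z))\,dx$ and apply Proposition \ref{prop greens kernal continuity}(iii) instead; both routes reduce to Lemma \ref{lemma greens seq is decreasing}.
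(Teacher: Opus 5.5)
Your proof is correct and follows the paper's argument. Both use Lemma \ref{lemma greens seq is decreasing} to get $0\le \g{\Omega_1}(\psi)\le \g{\Omega_2}(\psi)$ pointwise, with $\g{\Omega_1}(\psi)=0$ off $\Omega_1$. Both then compare the exterior formula $-\slap \g{\Omega_i}(\psi)(z)=\mathcal{A}(n,s)\int \g{\Omega_i}(\psi)(y)\,|z-y|^{-n-2s}\,dy$; you are simply more explicit about why no principal value is needed.

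One small caveat: \eqref{eqn continuity of G 1} is stated for $C^{1,1}$ domains, which the lemma does not assume. You do not need it, though. Boundedness of $\g{\Omega_i}(\psi)$ follows from $\gk{\Omega_i}\le\Gamma_s$ and $\psi\in L^\infty_c$, and since the integrands are nonnegative, the comparison holds in $[0,\infty]$ in any case.
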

\begin{proof}
    By Definition \ref{defining the operator G} and Lemma \ref{lemma greens seq is decreasing} we get $0\leq \g{\Omega_{1}}(\psi)(x)\leq \g{\Omega_{2}}(\psi)(x)$ for all  $x\in \rn.$  Then for $z\in \R^{n}\setminus\overline{\Omega}_{2}$,
    \begin{equation*}
        \begin{split}
            -\slap \g{\Omega_{1}}(\psi)(z)&= \mathcal{A}(n,s)\text{ p.v} \int_{\rn}\frac{\g{\Omega_{1}}(\psi)(y)}{|z-y|^{n+
            2s}}\ dy
            \leq \mathcal{A}(n,s)\text{ p.v} \int_{\rn}\frac{\g{\Omega_{2}}(\psi)(y)}{|z-y|^{n+2s}}\ dy =-\slap \g{\Omega_{2}}(\psi)(z) .
        \end{split}
    \end{equation*}
    Thus the result follows.
\end{proof}

\subsection{Green's Kernel and Potential on finite cylinders}
We examine properties of the Green’s kernel and potential for the fractional Laplacian on finite cylindrical domains, and study their behavior as the cylinder length approaches infinity. First, define  $\Omega_{\ell}$ as follows:
\begin{equation}\label{dfn of omegaell}
    \Omega_{\ell}:=\{x\in\rn:\sum_{1}^{m}x_{i}^{2}<1, -\ell<x_{j}<\ell, m<j\leq n \}\cup\{x\in\rn:\sum_{1}^{m}x_{i}^{4}+\sum_{m+1}^{n}(x_{j}\pm \ell)^{4}<1\}.
\end{equation}
This is a $C^{2}$ domain.
\begin{lemma}\label{lemma convergence of greens kernel}
  For any $x,y \in\Omega_{\infty}$, one has $\gk{\Omega_{\ell}}(x,y)\to \gk{\Omega_{\infty}}(x,y)$ as $\ell \to \infty$. Further, for $x\in \Omega_{\infty}$, the convergence is locally uniform up to a subsequence.
\end{lemma}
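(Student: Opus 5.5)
We will work with the decomposition $\gk{\Omega}(x,y)=\Gamma_{s}(x-y)-H_{\Omega}(x,y)$, so it suffices to prove that $H_{\Omega_{\ell}}(x,y)\to H_{\Omega_{\infty}}(x,y)$.

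\textbf{Monotonicity.} The domains $\Omega_{\ell}$ increase in $\ell$ and exhaust $\Omega_{\infty}$, so every $x,y\in\Omega_\infty$ lie in $\Omega_\ell$ for $\ell$ large. The maximum principle argument of Lemma \ref{lemma greens seq is decreasing} applies verbatim to the pair $\Omega_{\ell}\subset\Omega_{\infty}$, since it only uses nonnegativity of the Green's kernel and not boundedness of the larger domain. Hence the sequence $\ell\mapsto\gk{\Omega_{\ell}}(x,y)$ is nondecreasing and bounded above by $\gk{\Omega_{\infty}}(x,y)\le\Gamma_s(x-y)$. It therefore has a pointwise limit $G^{*}(x,y)\le\gk{\Omega_\infty}(x,y)$. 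Equivalently, $H_{\Omega_\ell}(x,\cdot)$ decreases to some $H^{*}(x,\cdot)\ge H_{\Omega_\infty}(x,\cdot)$.

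\textbf{Identifying the limit.} We will show $H^{*}=H_{\Omega_\infty}$.

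First, the exterior values converge. For $y\notin\Omega_\infty$ we have $H_{\Omega_\ell}(x,y)=\Gamma_s(x-y)$ for every $\ell$. For $y\in\Omega_\infty\setminus\Omega_\ell$ we also have $H_{\Omega_\ell}(x,y)=\Gamma_s(x-y)$, but such a point eventually lies in $\Omega_\ell$. So the exterior data $\Gamma_s(x-\cdot)\chi_{\R^n\setminus\Omega_\ell}$ decreases to $\Gamma_s(x-\cdot)\chi_{\R^n\setminus\Omega_\infty}$.

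Second, the limit is $s$-harmonic. Fix a compact $K'\subset\Omega_\infty$. For $\ell$ large, $H_{\Omega_\ell}(x,\cdot)$ is $s$-harmonic on a neighbourhood of $K'$ and is bounded by $\Gamma_s(x-\cdot)$, which is uniformly in $L^1(\R^n,(1+|y|)^{-n-2s})$ (tail integrable since $n-2s>0$). Interior H\"older/$C^{2s+\epsilon}$ estimates for $s$-harmonic functions (Silvestre) then give equicontinuity on $K'$. Passing to the limit in the weak formulation $\int H_{\Omega_\ell}(x,y)\slap\varphi(y)\,dy=0$ for $\varphi\in C_c^\infty(\Omega_\infty)$, using dominated convergence with the majorant $\Gamma_s(x-\cdot)|\slap\varphi|$, we conclude that $H^{*}(x,\cdot)$ is $s$-harmonic in $\Omega_\infty$ with exterior datum $\Gamma_s(x-\cdot)$.

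\textbf{Uniqueness (main obstacle).} It remains to show $H^{*}=H_{\Omega_\infty}$. The difference $w=H^{*}(x,\cdot)-H_{\Omega_\infty}(x,\cdot)\ge0$ is $s$-harmonic in the unbounded domain $\Omega_\infty$ and vanishes outside it, so the maximum principle alone does not settle it. The plan is to control $w$ at infinity:
\begin{itemize}
\item By Lemma \ref{lemma uniform bound for H}, both $H$'s are bounded by $C(1+\delta^s)$ on the cylinder, where $\delta\le1$, and $w\le\Gamma_s(x-\cdot)\to0$ as $|y|\to\infty$.
\item A nonnegative $s$-harmonic function vanishing outside $\Omega_\infty$ and tending to $0$ at infinity is zero: compare $w$ on $\Omega_\infty\cap B_R$ with $\sup_{\partial B_R\cap\Omega_\infty} w+\sup_{\R^n\setminus B_R}w$, which tends to $0$ as $R\to\infty$.
\end{itemize}
Because the exterior includes the region outside $B_R$, the comparison on $\Omega_\infty\cap B_R$ is a bounded-domain maximum principle with exterior datum at most $\sup_{|y|\ge R}\Gamma_s(x-y)$. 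Hence $w\le C R^{2s-n}\to0$, which forces $w\equiv0$.

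\textbf{Local uniform convergence.} For fixed $x$, the family $\{\gk{\Omega_\ell}(x,\cdot)\}$ is equicontinuous on compacts of $\Omega_\infty\setminus\{x\}$ by the interior estimates above. Arzel\`a--Ascoli then yields a locally uniformly convergent subsequence, and its limit must equal the pointwise limit $\gk{\Omega_\infty}(x,\cdot)$. Monotonicity together with Dini's theorem upgrades this to convergence of the full sequence, locally uniformly in $y$. Local uniformity in $x$ follows by the same argument, using the symmetry in Proposition \ref{prop greens kernal continuity}(i).
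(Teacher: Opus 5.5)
Your proof is correct and has the same skeleton as the paper's. The harmonic parts $H_{\Omega_\ell}(x,\cdot)$ decrease by Lemma \ref{lemma greens seq is decreasing}. The limit is $s$-harmonic because one can pass to the limit in a linear identity: the paper uses the mean value formula with majorant $H_{\Omega_{\ell_0}}(x,\cdot)$, while you use the distributional identity with majorant $\Gamma_s(x-\cdot)|\slap\varphi|$. Equicontinuity comes from Silvestre's interior estimates. You genuinely differ in the last two steps. The paper simply writes ``we infer that $H(x,y)=H_{\Omega_\infty}(x,y)$'', tacitly assuming that the $s$-harmonic function in the unbounded set $\Omega_\infty$ with exterior datum $\Gamma_s(x-\cdot)$ is unique; you correctly isolate this as the real issue and prove it. Your use of Dini's theorem (monotone continuous functions with continuous limit on a compact set) removes the ``up to a subsequence'' in the statement, so you get local uniform convergence of the full sequence.

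One point in your uniqueness step needs tightening. The claim ``a nonnegative $s$-harmonic function vanishing outside $\Omega_\infty$ and tending to $0$ at infinity is zero'' is false as stated. The large $s$-harmonic functions studied in this paper are counterexamples, e.g.\ $y\mapsto \Ds\gk{\Omega_\infty}(y,\theta)$, which decays like $|y|^{2s-n}$ by \eqref{eqn well definedness of frac normal derivative on unbounded domain}. Likewise the ``bounded-domain maximum principle'' fails for functions such as $(1-|y|^2)_+^{s-1}$ on the ball. What makes your argument work is that $w$ has no boundary singularity. You do record boundedness in your first bullet, but you should say explicitly that this is where it is used.

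The cleanest justification is to note that $0\le w=\gk{\Omega_\infty}(x,\cdot)-\lim_\ell\gk{\Omega_\ell}(x,\cdot)\le \gk{\Omega_\infty}(x,\cdot)$. By \eqref{eqn greens estimatte on unbounded domain} and symmetry, this bound tends to $0$ both at $\partial\Omega_\infty$ and at infinity. Hence $w$ is continuous on $\R^n$, and a positive supremum would be attained at some $y_0\in\Omega_\infty$. There $\slap w(y_0)>0$, because $w=0$ on the positive-measure set $\R^n\setminus\Omega_\infty$, contradicting $s$-harmonicity. This also makes the truncation to $\Omega_\infty\cap B_R$ unnecessary.
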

\begin{proof}
 It suffices to show that the sequence of corresponding s-harmonic part of the Green's kernel converges to a function $H$ which is s-harmonic in $\Omega_\infty$. $H$ is s-harmonic by the mean value property \cite[Proposition 2.2]{Silvestre2007}. 
\smallskip
 
From the proof of Lemma \ref{lemma greens seq is decreasing}, the sequence $\{H_{\Omega_{\ell}}(x,y)\}_{\ell}$ is decreasing pointwise, and by the maximum principle $H_{\Omega_{\ell}}(x,y)\geq 0$. For any $x\in \Omega_{\infty}$ and $y\in \rn$, let 
    \begin{equation*}
        H(x,y):=\lim_{\ell\to\infty}H_{\Omega_{\ell}}(x,y).
    \end{equation*}
    Clearly,  $H(x,y)\geq 0$ and $H(x,y)=\Gamma_{s}(x-y)$ for $y\in \R^{n}\setminus\overline{\Omega}_{\infty}$. Let $x,y\in \Omega_{\infty}$ and $r>0$ such that $B_{r}(y)\in\Omega_{\infty}$. Choose $\ell_{0}$ such that $x\in \Omega_{\ell_{0}}$ and $B_{r}(y)\subset \Omega_{\ell_{0}}$. Then by the mean value formula for the fractional Laplacian, we obtain
    \begin{equation*}
        H_{\Omega_{\ell}}(x,y)=\int_{\R^{n}\setminus B_{r}(y)}\frac{c(n,s)r^{2s}}{|y-z|^{n}\left(|y-z|^{2}-r^{2}\right)^{s}}H_{\Omega_{\ell}}(x,z)\ dz.
    \end{equation*}
    As $\ell\to\infty$, the integral converges by dominated convergence theorem, since Lemma \ref{lemma greens seq is decreasing} implies $H_{\Omega_{\ell}}(x,z)\leq H_{\Omega_{\ell_{0}}}(x,z)$. Thus $H(x,\cdot)$ satisfies the mean value formula. 
    \smallskip 

     We next prove that the convergence is locally uniform.
    For any compact set $K\subset \Omega_{\infty}$, with out loss of generality let $\ell_{0}$ be such that $K\subset \Omega_{\ell_{0}}$. The inequality in the previous line and Lemma \ref{lemma uniform bound for H} implies that $\{H_{\Omega_{\ell}}(x,\cdot)\}_{\ell\geq \ell_{0}}$ is uniformly bounded on $K$. Also, for some $\alpha<2s$ if $2s\leq 1$ or $\alpha=1$ if $2s>1$, we have by \cite[Proposition 2.9]{Silvestre2007} that
    \begin{equation*}
       [H_{\Omega_{\ell}}(x,\cdot)]_{C^{\alpha}(\Omega_{\ell})}\leq [H_{\Omega_{\ell}}(x,\cdot)]_{C^{\alpha}(\rn)}\leq C ||H_{\Omega_{\ell}}(x,\cdot)||_{L^{\infty}(\rn)}\leq C ||H_{\Omega_{\ell_{0}}}(x,\cdot)||_{L^{\infty}(\rn)},
    \end{equation*}
Thus, the class is equicontinuous with respect to the second variable. Hence, up to a subsequence, the convergence of $\{H_{\Omega_{\ell}}(x,\cdot)\}_{\ell\geq \ell_{0}}$ to $H(x,\cdot)$ is uniform on $K$ and for fixed $x\in \Omega_{\infty}$. In particular, $H(x,\cdot)$ is continuous. This implies that $H(x,\cdot)$ is s-harmonic.  We infer that $H(x,y)=H_{\Omega_{\infty}}(x,y)$ in $\Omega_{\infty}$. 
    \end{proof}
    \begin{lemma}\label{Lemma local uniform convergence of solution operator}
        Let $\psi\in L^{\infty}_{c}(\Omega_{\infty})$, then $\g{\Omega_{\ell}}(\psi)(x)\to\g{\Omega_{\infty}}(\psi)(x)$ for all $x\in \Omega_{\infty}$. Moreover, the convergence is uniform up to a subsequence outside a compact set.
    \end{lemma}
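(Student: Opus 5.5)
The plan is to argue pointwise through the Green's kernels, using monotonicity in $\ell$ together with Lemma \ref{lemma convergence of greens kernel}, and then obtain uniformity from equicontinuity.

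\emph{Pointwise convergence.} Split $\psi=\psi^{+}-\psi^{-}$; by linearity of $\g{\Omega_\ell}$ it suffices to treat $\psi\geq 0$. Let $K=\mathrm{Supp}(\psi)$, which is a compact subset of $\Omega_\infty$, and choose $\ell_0$ with $K\subset\Omega_{\ell_0}$. Fix $x\in\Omega_\infty$ and take $\ell\geq\ell_0$ large enough that $x\in\Omega_\ell$. By Lemma \ref{lemma greens seq is decreasing}, applied to the nested domains $\Omega_\ell\subset\Omega_{\ell'}\subset\Omega_\infty$ (for $\Omega_\infty$ one uses the same maximum principle argument, or the monotone limit $H_{\Omega_\ell}\downarrow H_{\Omega_\infty}$ from the proof of Lemma \ref{lemma convergence of greens kernel}), the kernels $\gk{\Omega_\ell}(x,y)$ increase in $\ell$ and are bounded above by $\gk{\Omega_\infty}(x,y)$. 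Lemma \ref{lemma convergence of greens kernel} gives $\gk{\Omega_\ell}(x,y)\to\gk{\Omega_\infty}(x,y)$ for every $y$. Since $\gk{\Omega_\infty}(x,y)\leq\Gamma_s(x-y)$ and $\Gamma_s(x-\cdot)\psi\in L^1(K)$, the monotone (or dominated) convergence theorem yields
\[
\g{\Omega_\ell}(\psi)(x)=\int_K\gk{\Omega_\ell}(x,y)\psi(y)\,dy\to\int_K\gk{\Omega_\infty}(x,y)\psi(y)\,dy=\g{\Omega_\infty}(\psi)(x).
\]

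\emph{Uniform convergence away from the support.} Write $\g{\Omega_\ell}(\psi)(x)=\int_K\Gamma_s(x-y)\psi(y)\,dy-\int_K H_{\Omega_\ell}(y,x)\psi(y)\,dy$, using the symmetry of the kernel. The first term does not depend on $\ell$. For the second term we invoke the equicontinuity from the proof of Lemma \ref{lemma convergence of greens kernel}. For $y\in K$ we have $[H_{\Omega_\ell}(y,\cdot)]_{C^\alpha(\rn)}\leq C\|H_{\Omega_{\ell_0}}(y,\cdot)\|_{L^\infty}$, and $\|H_{\Omega_{\ell_0}}(y,\cdot)\|_{L^\infty}$ is bounded uniformly in $y\in K$ by $\sup_{z\notin\Omega_{\ell_0}}\Gamma_s(y-z)\leq C\,\mathrm{dist}(K,\partial\Omega_{\ell_0})^{2s-n}$ together with the maximum principle. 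Hence the functions $x\mapsto\int_K H_{\Omega_\ell}(y,x)\psi(y)\,dy$ are uniformly bounded and uniformly $C^\alpha$ on $\rn$. Arzelà–Ascoli on compact sets, combined with the pointwise limit already identified and a diagonal argument over an exhaustion, then gives locally uniform convergence along a subsequence.

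The main obstacle is the phrase ``uniform outside a compact set'', which asks for uniformity on the unbounded set $\Omega_\infty\setminus K'$. There I would use the tail bound \eqref{eqn behaviour of greens on unbounded domains} with \eqref{eqn triangle inequality}, which gives
\[
0\leq \g{\Omega_\infty}(\psi)-\g{\Omega_\ell}(\psi)\leq \g{\Omega_\infty}(\psi)(x)\leq C\|\psi\|_\infty\,\mathrm{dist}(x,K)^{2s-n},
\]
and this is small once $x$ lies outside a large ball. Uniform convergence on the bounded remainder then follows from the equicontinuity step.
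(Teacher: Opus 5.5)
Your proposal is correct and follows essentially the same route as the paper. For the pointwise limit you both use the monotone increase $\mathbb{G}_{\Omega_\ell}(x,\cdot)\uparrow\mathbb{G}_{\Omega_\infty}(x,\cdot)$ with dominated convergence, and for the uniform statement you both use symmetry of the kernel, the H\"older bound on $H_{\Omega_\ell}$, a uniform bound and Arzel\`a--Ascoli. Your two refinements are points the paper's argument passes over: splitting off the $\ell$-independent $\Gamma_s$ part before applying the H\"older estimate, and using the decay $\mathcal{G}_{\Omega_\infty}(\psi)(x)\le C\|\psi\|_{\infty}\,\mathrm{dist}(x,K)^{2s-n}$ far out. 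The decay bound matters because Arzel\`a--Ascoli alone only gives locally uniform convergence on the unbounded set $\Omega_\infty\setminus K$, so it is what makes the ``uniform outside a compact set'' claim actually hold.
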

\begin{proof}
By Lemma \ref{lemma greens seq is decreasing} and Lemma \ref{lemma convergence of greens kernel}, $\gk{\Omega_{\ell}}(x,\cdot)\uparrow \gk{\Omega_{\infty}}(x,\cdot)$. Thus, by the dominated convergence theorem, we obtain as $\ell \to \infty$, that
\begin{equation*}
    \g{\Omega_{\ell}}(\psi)(x)=\int_{\text{Supp}(\psi)}\gk{\Omega_{\ell}}(x,y)\psi(y)\ dy\to \int_{\text{Supp}(\psi)}\gk{\Omega_{\infty}}(x,y)\psi(y)\ dy=\g{\Omega_{\infty}}(\psi)(x).
\end{equation*}
    To obtain the uniform convergence, we show that the sequence $\{\g{\Omega_{\ell}}(\psi)\}_{\ell}$, for $\ell$ large, is equicontinuous and uniformly bounded outside a compact set. Let $x_{1},x_{2}\in \Omega_{\infty}$, choose $\ell_{0}>0$ such that Supp$(\psi)\subset \Omega_{\ell_{0}}$ and $x_{1},x_{2}\in \Omega_{\ell_{0}}$. For any $\ell\geq \ell_{0}$, by symmetry of $\gk{\Omega_{\ell}}$ and, for some $\alpha<2s$ if $2s\leq 1$ or $\alpha=1$ if $2s>1$ as in the Lemma \ref{lemma convergence of greens kernel}, we get
    \begin{equation*}
    \begin{split}
         \left|\g{\Omega_{\ell}}(\psi)(x_{1})-\g{\Omega_{\ell}}(\psi)(x_{2})\right| &\leq \int_{\Omega_{\ell}}|\gk{\Omega_{\ell}}(y,x_{1})-\gk{\Omega_{\ell}}(y,x_{2})| |\psi(y)|\ dy\\
       &\leq C\int_{\text{supp}(\psi)}\|H_{\Omega_{\ell_{0}}}(y,\cdot)\|_{L^{\infty}(\rn)}|x_{1}-x_{2}|^{\alpha}|\psi(y)|\ dy\\
       &\leq C(n,s,\text{supp}(\psi)) |x_{1}-x_{2}|^{\alpha} \|\psi\|_{L^{1}(\text{Supp}(\psi))},
    \end{split}
    \end{equation*}
    where the last inequality follows by taking the supremum with respect to both the variables in Lemma \ref{lemma uniform bound for H}.  Therefore, $\{\g{\Omega_{\ell}}(\psi)\}_{\ell>\ell_{0}}$ is equicontinuous. Next for $x\notin ~$Supp$(\psi)$ and 
    \begin{equation*}
        \begin{split}
          \left | \g{\Omega_{\ell}}(\psi)(x)\right|& \leq \int_{\text{Supp}(\psi)}\gk{\Omega_{\ell}}(x,y)|\psi(y)|\ dy\leq \int_{\text{Supp}(\psi)}\frac{a(n,s)}{|x-y|^{n-2s}}|\psi(y)|\ dy \leq \frac{a(n,s)}{d(x,\text{Supp}(\psi))^{n-2s}} ||\psi||_{L^{1}(\text{Supp}(\psi))}.  
           \end{split}
    \end{equation*}
    Choose $K\subset \Omega_{\ell_{0}}$ such that Supp$(\psi)\subset\subset K$, then for all $x\in \Omega_{\infty}\setminus K$, $\left | \g{\Omega_{\ell}}(\psi)(x)\right|\leq C(n,s,\psi, K)$.

    Thus the uniform convergence outside $K$ follows by the Arzelà–Ascoli theorem.
     \end{proof}

\section{Linear Equation}\label{section linear}
This section addresses the analysis of the linear equation. Initially, we establish the existence and uniqueness of solutions on bounded domains, extend these results to domains with non-zero Lebesgue measure in their complement, and present a comparison principle for a fixed domain. Subsequently, we examine linear equations on sequences of finite cylinders, proving a comparison principle and demonstrating the convergence of solutions to a large solution on the infinite cylinder ,with different sets of data.

\subsection{On a Domain in $\rn$}
In \cite{AbatangeloVazquez2023}, the existence and uniqueness for the linear equation with the data $(f,0,h)$ is established on bounded domains. In this regard, we begin by proving existence and uniqueness of a solution to \eqref{eqn main linear omega} on a bounded domain for the data $(0,g,0)$, and subsequently extend these results to a domain that is not necessarily bounded.

On a bounded domain, the optimal assumption of the complement data is
\begin{enumerate}
    \myitem{$\textbf{(G1)}$} \label{G1}\textit{ $g:\R^{n}\setminus\overline{\Omega} \to \R$ is a measurable function such that 
\begin{equation}\label{eqn condition on g}
    \int_{\R^{n}\setminus\overline{\Omega}}|g(y)|\min\{\delta_{\Omega}^{-s}(y),\delta_{\Omega}^{-n-2s}(y)\}\ dy <+\infty.
\end{equation}}
\end{enumerate}

 \begin{definition}\label{Dfn weak dual solution with f,g,h}
      Let $f\in L^{1}(\Omega,\delta^{s}_{\Omega})$, $h\in L^{1}(\partial\Omega)$ and let $g$ satisfy \ref{G1} if $\Omega$ is bounded, \ref{G} otherwise.  A function $u\in L^{1}_{loc}(\Omega)$ is said to be a weak dual solution of \eqref{eqn main linear omega}
        if for any $\psi\in L^{\infty}_{c}(\Omega)$ one has
    \begin{equation}\label{weak dual dfn with f}
        \int_{\Omega}u(x)\psi(x)\ dx=\int_{\Omega}\g{\Omega}(\psi)(x) f(x)\ dx-\int_{\R^{n}\setminus\overline{\Omega}
        }\slap \g{\Omega}(\psi)(y) g(y) \ dy+\int_{\partial\Omega
        } \Ds [\g{\Omega}(\psi)](\theta) h(\theta) \ d\mathcal{H}(\theta).
    \end{equation}
    Or a supersolution (subsolution), if  for any $\psi\in L^{\infty}_{c}(\Omega)$ and  $\psi\geq 0$, one has
    \begin{equation*}
         \int_{\Omega}u(x)\psi(x)\ dx\geq (\leq)\int_{\Omega}\g{\Omega}(\psi)(x) f(x)\ dx-\int_{\R^{n}\setminus\overline{\Omega}
        }\slap \g{\Omega}(\psi)(y) g(y) \ dy+\int_{\partial\Omega
        } \Ds [\g{\Omega}(\psi)](\theta) h(\theta) \ d\mathcal{H}(\theta).
    \end{equation*}
    \end{definition}
    We first show that the quantity on the right-hand side of \eqref{weak dual dfn with f} is finite.
\medskip 

    \textit{The first integral:}
    By Theorem \ref{theorem greens poteantial continuity on unbounded domains}, $\g{\Omega}(\psi)=\Tilde{\psi} \, \delta_{\Omega}^{s}$ for some $\Tilde{\psi}\in L^{\infty}(\Omega)$. Hence the first integral is finite.
\smallskip 

    \textit{The second integral:} First we show that $\slap \g{\Omega}(\psi)$ is well defined. Let $\psi\in L_{c}^{\infty}(\Omega)$, the function $\phi=\g{\Omega}(\psi)$ solves \eqref{eqn main linear omega} with data $(\psi(x),0,0)$.
 For $x\in \R^{n}\setminus\overline{\Omega}$, consider
 \begin{equation*}
 \begin{split}
     \slap \phi(x) &=\mathcal{A}(n,s) \lim_{\epsilon\to 0} \int_{\Omega\setminus B_{\epsilon}(x)}\frac{-\phi(y)}{|x-y|^{n+2s}}\ dy 
     =-\mathcal{A}(n,s)  \int_{\Omega}\frac{1}{|x-y|^{n+2s}}\int_{\Omega}\psi(z) \gk{\Omega}(y,z)\ dz\ dy\\
     &= \mathcal{A}(n,s)  \int_{\Omega}\psi(z)  \int_{\Omega}\frac{-\gk{\Omega}(z,y)}{|x-y|^{n+2s}}\ dy \ dz
     = \int_{\Omega}\psi(z) \slap \gk{\Omega}(z,x)\ dz.
      \end{split}
 \end{equation*}
 We now consider the second integral on the right hand side of \eqref{weak dual dfn with f}.
 \begin{equation*}
     -\int_{\R^{n}\setminus\overline{\Omega}
        }\slap \g{\Omega}(\psi)(y) g(y) \ dy=\int_{\R^{n}\setminus\overline{\Omega}
        }\left\{\mathcal{A}(n,s)\int_{\Omega}\frac{\g{\Omega}(\psi)(z)}{|y-z|^{n+2s}}\ dz\right\}g(y)\ dy.
 \end{equation*}
 By Theorem \ref{theorem greens poteantial continuity on unbounded domains} and since $\delta_{\Omega}(z)\leq |y-z|$ whenever $z\in \Omega$ and $y\in \R^{n}\setminus\Omega$, we have
 \begin{equation*}
     \int_{\Omega}\frac{\g{\Omega}(\psi)(z)}{|y-z|^{n+2s}}\ dz\leq C\int_{\Omega}\frac{\delta_{\Omega}^{s}(z)}{|y-z|^{n+2s}}\ dz\leq C\int_{\Omega}\frac{dz}{|y-z|^{n+s}}\leq C\int_{\R^{n}\setminus B_{\delta_{\Omega}(y)}(y)}\frac{dz}{|y-z|^{n+s}}.
 \end{equation*}
 Here, the last integral is over the complement of the ball centered at $y$ and radius $\delta_{\Omega}(y)$. By polar coordinates we obtain
 \begin{equation*}
     \int_{\R^{n}\setminus B_{\delta_{\Omega}(y)}(y)}\frac{dz}{|y-z|^{n+s}}=\omega_{n-1}\int_{\delta_{\Omega}(y)}^{\infty}\frac{d\rho}{\rho^{1+s}}=\frac{\omega_{n-1}}{s}\delta_{\Omega}^{-s}(y).
 \end{equation*}
 That is 
 \begin{equation}\label{eqn complement integral behaviour}
    -\int_{\R^{n}\setminus\overline{\Omega}
        }\slap \g{\Omega}(\psi)(y) g(y) \ dy\leq  C \int_{\R^{n}\setminus\overline{\Omega}}g(y)\delta_{\Omega}^{-s}(y)\ dy.
        \end{equation}
   \smallskip 
   
 \textit{The third integral:} It is enough if $\Ds\g{\Omega}(\psi)\in L^{\infty}(\partial\Omega)$. Which is so as
 \begin{equation*}
     |\Ds\g{\Omega}(\psi)(\theta)|=\lim_{\substack{y\to \theta\\ y\in \Omega}}\frac{|\g{\Omega}(\psi)(y)|}{\delta_{\Omega}^{s}(y)}=\lim_{\substack{y\to \theta\\ y\in \Omega}}|\Tilde{\psi}(y)|\leq \|\Tilde{\psi}\|_{L^{\infty}(\Omega)}.
     \end{equation*}
The above weak dual notion of a solution is studied in \cite{AbatangeloVazquez2023} for data $(f,0,h)$, and in \cite{BonforteFigalliVazquez2018} for data $(f,0,0)$ with $f\in L^{q}(\Omega)$ for some $ q>n/2s$.

\begin{theorem}\label{thrm linear existence with g}
   Let $\Omega$ be a domain in $\R^{n}$, with $|\R^{n}\setminus \Omega|>0$, and $g$ satisfy \ref{G}. Then the function, given by
\begin{equation}\label{representation with g}
 u(x)=\begin{cases}
        -\int_{\R^{n}\setminus\overline{\Omega}}\slap \gk{\Omega}(x,y)g(y)\ dy. \quad &\text{for } x\in \Omega\\
        g(x) &\text{for }x\in \R^{n}\setminus\overline{\Omega},
    \end{cases}
\end{equation}
is the unique solution of the problem \eqref{eqn main linear omega} when $f=0$ and $h=0$.
\end{theorem}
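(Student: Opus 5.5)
The plan has four steps: check that $u$ is well defined, verify the weak dual identity by Fubini, then prove uniqueness by duality. The solution concept is Definition \ref{Dfn weak dual solution with f,g,h} with $f=0$, $h=0$. So I must show that $u\in L^{1}_{loc}(\Omega)$ and that for every $\psi\in L^{\infty}_{c}(\Omega)$
\begin{equation*}
\int_{\Omega}u(x)\psi(x)\,dx=-\int_{\R^{n}\setminus\overline{\Omega}}\slap\g{\Omega}(\psi)(y)\,g(y)\,dy .
\end{equation*}

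\emph{Well-posedness of the formula.} By Proposition \ref{prop greens kernal continuity}(iii), $-\slap\gk{\Omega}(x,y)\geq 0$. Theorem \ref{theorem poission behaviour} gives, for any compact $K\subset\Omega$, the bound $-\slap\gk{\Omega}(x,y)\leq C(K,n,s)\delta_{\Omega}^{-s}(y)$, uniformly in $x\in K$. Hence
\begin{equation*}
|u(x)|\leq C(K,n,s)\int_{\R^{n}\setminus\overline{\Omega}}|g(y)|\delta_{\Omega}^{-s}(y)\,dy
\end{equation*}
for $x\in K$. This is finite by \ref{G}, so $u\in L^{\infty}_{loc}(\Omega)\subset L^{1}_{loc}(\Omega)$.

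\emph{Verification of the identity.} Fix $\psi\in L^{\infty}_{c}(\Omega)$ with support $K$. The computation preceding the theorem shows that for $y\in\R^{n}\setminus\overline{\Omega}$,
\begin{equation*}
\slap\g{\Omega}(\psi)(y)=\int_{\Omega}\psi(z)\,\slap\gk{\Omega}(z,y)\,dz .
\end{equation*}
Here the symmetry of $\gk{\Omega}$ lets me write $\gk{\Omega}(z,x)$ as $\gk{\Omega}(x,z)$. Substituting this into the right-hand side and exchanging the order of integration gives exactly $\int_{\Omega}u\psi$. Fubini is justified because
\begin{equation*}
\int_{K}\int_{\R^{n}\setminus\overline{\Omega}}|\psi(z)|\,|\slap\gk{\Omega}(z,y)|\,|g(y)|\,dy\,dz\leq C(K)\|\psi\|_{L^{1}}\|g\|_{L^{1}(\delta_{\Omega}^{-s})},
\end{equation*}
which again follows from the local uniform estimate of Theorem \ref{theorem poission behaviour}. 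On $\R^{n}\setminus\overline{\Omega}$ the function $u$ equals $g$ by definition.

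\emph{Uniqueness.} Let $u_{1},u_{2}$ be two weak dual solutions with the same data and set $w=u_{1}-u_{2}\in L^{1}_{loc}(\Omega)$. Subtracting the two identities gives $\int_{\Omega}w\psi=0$ for every $\psi\in L^{\infty}_{c}(\Omega)$. Choosing $\psi=\mathrm{sign}(w)\chi_{K}$ for compact $K$ shows $w=0$ a.e. in $\Omega$, and outside $\Omega$ both functions equal $g$.

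The main obstacle is the Fubini step on unbounded $\Omega$. It requires a bound on $-\slap\gk{\Omega}(z,y)$ that is integrable against $|g|$ and uniform over $z$ in the support of $\psi$, with no decay assumption on $g$ at infinity beyond \ref{G}. This is precisely what Theorem \ref{theorem poission behaviour} provides, and it is why the condition \ref{G} (rather than \ref{G1}) is assumed.
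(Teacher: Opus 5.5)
Your proposal is correct and matches the paper's proof: Fubini is justified by the local uniform Poisson-kernel bound of Theorem \ref{theorem poission behaviour}, combined with the identity $\slap\g{\Omega}(\psi)(y)=\int_{\Omega}\psi(z)\slap\gk{\Omega}(z,y)\,dz$, and uniqueness follows by testing with $\psi=\mathrm{sign}(u_{1}-u_{2})\chi_{K}$. Your explicit check that $u\in L^{1}_{loc}(\Omega)$ is a small addition that the paper only records afterwards in Lemma \ref{lemma bound of the sln ug}.
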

\begin{proof}
For any $\psi\in L_{c}^{\infty}(\Omega)$, by the definition of Green's potential, using Theorem \ref{theorem poission behaviour} to employ Fubini's theorem, one can write
\begin{equation*}
    \begin{split}
        \int_{\Omega}u(x)\psi(x)\ dx 
        =-\int_{\R^{n}\setminus\overline{\Omega}}g(y)\left\{\int_{\Omega}\psi(x) \slap \gk{\Omega}(x,y)\ dx \right\}\ dy = -\int_{\R^{n}\setminus\overline{\Omega}}g(y) \slap \g{\Omega}(\psi)(y)\ dy.
    \end{split}
\end{equation*}
Therefore, $u$ is a weak dual solution as desired. 
For the uniqueness, if $u$ and $v$ are two solutions, then 
\begin{equation*}
    \int_{\Omega}u\psi=\int_{\Omega}v\psi \quad \psi\in L_{c}^{\infty}(\Omega).
\end{equation*}
By taking $\psi=$sign$(u(x)-v(x))\chi_{K}$ for any compact set $K$ of $\Omega$, $u=v$ a.e., in $\Omega$.
\end{proof}
Further, in view of Theorem \ref{theorem poission behaviour}, the solution obtained above is locally uniformly bounded. 

\begin{lemma}\label{lemma bound of the sln ug}
  Let $u$ be as in \eqref{representation with g} and $\Omega$ with $|\rn\setminus\Omega|>0$. If $K\subset \Omega$ be compact, then for any $x\in K$  one has 
    \begin{equation*}\label{eqn l infty bound for solution with g}
        |u(x)|\leq C(n,s,K)\|g\delta_{\Omega}^{-s}\|_{L^{1}(\R^{n}\setminus\overline{\Omega})}.
    \end{equation*}  
\end{lemma}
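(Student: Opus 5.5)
The plan is to insert the local uniform Poisson-kernel bound of Theorem \ref{theorem poission behaviour} directly into the representation \eqref{representation with g}. Fix a compact set $K\subset\Omega$ and take $x\in K$. By Proposition \ref{prop greens kernal continuity}(iii), $-\slap\gk{\Omega}(x,y)\geq 0$ for $y\in\R^{n}\setminus\overline{\Omega}$. Moving the absolute value inside the integral therefore gives
\begin{equation*}
|u(x)|\leq \int_{\R^{n}\setminus\overline{\Omega}}\bigl(-\slap\gk{\Omega}(x,y)\bigr)|g(y)|\ dy .
\end{equation*}

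Next, apply the second assertion of Theorem \ref{theorem poission behaviour}. It gives a constant $C=C(K,n,s)$, uniform for $x\in K$, such that $-\slap\gk{\Omega}(x,y)\leq C\delta_{\Omega}^{-s}(y)$ for every $y\in\R^{n}\setminus\overline{\Omega}$. Substituting this bound yields
\begin{equation*}
|u(x)|\leq C(n,s,K)\int_{\R^{n}\setminus\overline{\Omega}}|g(y)|\delta_{\Omega}^{-s}(y)\ dy = C(n,s,K)\|g\delta_{\Omega}^{-s}\|_{L^{1}(\R^{n}\setminus\overline{\Omega})},
\end{equation*}
and the right-hand side is finite by \ref{G}.

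The only delicate point is the uniformity of the constant in $x\in K$. In the proof of Theorem \ref{theorem poission behaviour} this is handled by choosing a single radius $r$ with $B_{2r}(x)\subset\subset\Omega$ for all $x\in K$; one can take $r<\tfrac12\,\mathrm{dist}(K,\partial\Omega)$. The three pieces of that estimate then depend on $x$ only through $r$ and $d(K,\partial\Omega)$. I would briefly recheck that the use of \eqref{eqn greens estimatte on unbounded domain} on $\Omega_r$, via the triangle-inequality bound $|x-a_z|\leq C|x-z|$, also holds uniformly in $x\in K$. This should follow because $|x-z|\geq r$ there and $|z-a_z|$ is bounded by the uniform exterior ball radius.

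No other obstacle is expected, since the lemma is essentially a corollary of Theorem \ref{theorem poission behaviour}.
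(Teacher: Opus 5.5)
Your proposal is correct and matches the paper's argument: the paper gives no separate proof and only remarks that local uniform boundedness follows from Theorem \ref{theorem poission behaviour}, i.e.\ by inserting the bound $-\slap\gk{\Omega}(x,y)\le C(K,n,s)\,\delta_{\Omega}^{-s}(y)$ into \eqref{representation with g}, exactly as it later does for $u_\ell^g$ on the finite cylinders. Your check of uniformity in $x\in K$ is apt, and, as in the paper, it implicitly relies on the uniform exterior ball condition from the $C^{1,1}$ boundary used in \eqref{eqn greens estimatte on unbounded domain}.
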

\begin{remark}
    For a bounded domain $\Omega$, combining Theorem \ref{thrm linear existence with g} and the \cite[Theorem 2.5, Theorem 4.6]{AbatangeloVazquez2023} implies that the unique solution to \eqref{eqn main linear omega} is of the form
    \begin{equation}\label{eqn representation of a solution on omega}
    u(x)=\int_{\Omega}\gk{\Omega}(x,y)f(y)\ dy-\int_{\R^{n}\setminus\overline{\Omega}}\slap \gk{\Omega}(x,y)g(y)\ dy+\int_{\partial\Omega}\Ds\gk{\Omega}(\theta,x)h(\theta)\ d\mathcal{H}(\theta).  \end{equation}
    Given $f$ satisfying \ref{F1}, $g$ satisfying \ref{G1} and $h\in L^{1}(\partial\Omega)$.
\end{remark}
For a domain that is not necessarily bounded, we have the following.
\begin{theorem}\label{thrm rep of u on any domain}
    Let $\Omega$ be a domain in $\R^{n}$, not necessarily bounded and $|\R^{n}\setminus \Omega|>0$. Also, let $f$ satisfy \ref{F1}, $g$ satisfy \ref{G} and $h\in L^{1}(\partial\Omega)$, then the unique solution of \eqref{eqn main linear omega} is of the form \eqref{eqn representation of a solution on omega}.
\end{theorem}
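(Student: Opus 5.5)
By linearity I would split the data $(f,g,h)$ into the three pieces $(f,0,0)$, $(0,g,0)$ and $(0,0,h)$. Each piece is treated separately, and the results are summed. The weak dual formulation \eqref{weak dual dfn with f} is linear in $(u,f,g,h)$, so the sum of the three weak dual solutions solves the full problem. Uniqueness is proved exactly as in Theorem \ref{thrm linear existence with g}. If $u,v$ are two solutions, then $\int_\Omega (u-v)\psi=0$ for all $\psi\in L^\infty_c(\Omega)$. Testing with $\psi=\mathrm{sign}(u-v)\chi_K$ for compact $K\subset\Omega$ gives $u=v$ a.e. The complement piece $(0,g,0)$ is exactly Theorem \ref{thrm linear existence with g}, so it remains to handle the first and third terms of \eqref{eqn representation of a solution on omega}.

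\emph{Forcing term.} Set $u_f(x)=\int_\Omega\gk{\Omega}(x,y)f(y)\,dy=\g{\Omega}(f)(x)$. By Theorem \ref{theorem greens poteantial continuity on unbounded domains}, $u_f\in L^1_{loc}(\Omega)$, and in fact $u_f\in L^\infty_{loc}(\Omega)$ under \ref{F1}. For $\psi\in L^\infty_c(\Omega)$, I would apply Fubini and the symmetry $\gk{\Omega}(x,y)=\gk{\Omega}(y,x)$ from Proposition \ref{prop greens kernal continuity}(i) to get $\int_\Omega u_f\psi=\int_\Omega \g{\Omega}(\psi)f$. Fubini is justified by applying the same computation to $|f|$ and $|\psi|$. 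Indeed, $\g{\Omega}(|\psi|)\le C\delta_\Omega^s$ by the first part of Theorem \ref{theorem greens poteantial continuity on unbounded domains}, and $f\in L^1(\Omega,\delta^s_\Omega)$, so $\int\!\!\int \gk{\Omega}|f||\psi|<\infty$.

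\emph{Boundary term.} Set $u_h(x)=\int_{\partial\Omega}\Ds\gk{\Omega}(\theta,x)h(\theta)\,d\mathcal H(\theta)$. I must show $\int_\Omega u_h\psi=\int_{\partial\Omega}\Ds[\g{\Omega}(\psi)](\theta)h(\theta)\,d\mathcal H(\theta)$. This needs two facts:
\begin{enumerate}
\item[(a)] the interchange $\Ds\int_\Omega\gk{\Omega}(\cdot,x)\psi(x)\,dx=\int_\Omega \Ds\gk{\Omega}(\cdot,x)\psi(x)\,dx$ at every $\theta\in\partial\Omega$;
\item[(b)] Fubini in $(x,\theta)$.
\end{enumerate}
For (a), I would write $\gk{\Omega}(y,x)/\delta^s_\Omega(y)$ for $y$ near $\theta$ and $x\in K=\mathrm{Supp}\,\psi$. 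By \eqref{eqn fractional normal derivative behaviour} this is bounded by $c|x-a_y|^s/|x-y|^{n-s}$. Since $\mathrm{dist}(K,\partial\Omega)>0$ and $y$ stays near $\theta$, the triangle-inequality trick \eqref{eqn triangle inequality} bounds this uniformly by a constant $C(K,\theta)$. Dominated convergence then lets the limit defining $\Ds$ pass inside the integral. For (b), \eqref{eqn martin behaviour} gives $\Ds\gk{\Omega}(x,\theta)\le C\delta_\Omega^{2s-n}(x)\le C(K)$ for $x\in K$, uniformly in $\theta$. Since $h\in L^1(\partial\Omega)$, the double integral converges absolutely. The same bound shows $u_h\in L^\infty_{loc}(\Omega)\subset L^1_{loc}(\Omega)$.

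The main obstacle is step (a) on an unbounded domain. There the two-sided estimate \eqref{eqn behaviour of greens kernel} is unavailable, and one must check that the one-sided bound \eqref{eqn greens estimatte on unbounded domain} gives a dominating function that is uniform as $y\to\theta$. The point is to choose the exterior ball centres $a_y$ inside $B_{2\rho}(\theta)$, as in the argument preceding \eqref{eqn well definedness of frac normal derivative on unbounded domain}, so that $|x-a_y|$ stays bounded for $x\in K$. Existence of the limit defining $\Ds\gk{\Omega}(x,\theta)$ itself I would take from the discussion preceding \eqref{eqn well definedness of frac normal derivative on unbounded domain}, following \cite{ChenSong1998}.
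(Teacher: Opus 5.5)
Your proposal is correct and follows the paper's argument. You reduce the exterior datum to Theorem \ref{thrm linear existence with g}, verify the weak dual identity for the $f$- and $h$-terms by Fubini using Theorem \ref{theorem greens poteantial continuity on unbounded domains} and the Martin kernel bound \eqref{eqn martin behaviour}, and obtain uniqueness by testing with $\mathrm{sign}(u-v)\chi_K$. You also fill in details the paper leaves implicit: the dominated-convergence interchange of the limit defining $\Ds$ with the integral against $\psi$, and the bound $\g{\Omega}(|\psi|)\le C\delta_\Omega^{s}$, which is what actually justifies Fubini for $f\in L^{1}(\Omega,\delta^{s}_\Omega)$ (the paper cites the $L^\infty_{loc}$ part instead).
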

\begin{proof}
    In view of the Theorem \ref{thrm linear existence with g}, we assume that $g=0$. By the third part of Theorem \ref{theorem greens poteantial continuity on unbounded domains} and \eqref{eqn martin behaviour}, using Fubini's theorem, the function $u$ defined in \eqref{eqn representation of a solution on omega} is a solution of the problem \eqref{eqn main linear omega}. For the uniqueness, the proof is the same as that in Theorem \ref{thrm linear existence with g}.
    \end{proof}
 
Next, we prove a comparison principle for \eqref{eqn main linear omega}.

\begin{lemma}(Comparison principle)\label{lemma comparison principle}
  Let $f_{i}\in L^{1}(\Omega,\delta_{\Omega}^{s})$, $g_{i}$ satisfies \ref{G1} and $h_{i}\in L^{1}(\partial\Omega)$ for $i=1,2$. Let $u_{1}, u_{2}\in L^{1}_{loc}(\Omega)$ be supersolution and subsolution of \eqref{eqn main linear omega} respectively. If $f_{1}\geq f_{2}$, $g_{1}\geq g_{2}$ and $h_{1}\geq h_{2}$ then $u_{1}\geq u_{2}$ a.e. in $\Omega$. 
\end{lemma}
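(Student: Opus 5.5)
We argue directly from the weak dual formulation in Definition \ref{Dfn weak dual solution with f,g,h}, testing both inequalities with a single nonnegative test function. Let $\psi\in L^{\infty}_{c}(\Omega)$ with $\psi\geq 0$. Since $u_{1}$ is a supersolution with data $(f_{1},g_{1},h_{1})$ and $u_{2}$ is a subsolution with data $(f_{2},g_{2},h_{2})$, subtracting the two inequalities gives
\begin{equation*}
\int_{\Omega}(u_{1}-u_{2})\psi\ dx\geq \int_{\Omega}\g{\Omega}(\psi)(f_{1}-f_{2})\ dx-\int_{\R^{n}\setminus\overline{\Omega}}\slap\g{\Omega}(\psi)(y)(g_{1}-g_{2})(y)\ dy+\int_{\partial\Omega}\Ds[\g{\Omega}(\psi)](\theta)(h_{1}-h_{2})(\theta)\ d\mathcal{H}(\theta).
\end{equation*}
All terms are finite by the finiteness discussion following Definition \ref{Dfn weak dual solution with f,g,h}. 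Under \ref{G1} on a bounded domain, the decay of the Poisson kernel \eqref{asym of poison on bounded domain} handles the far-field part.

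The next step is to check that each integral on the right-hand side is nonnegative.

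\begin{enumerate}
\item[(a)] By Proposition \ref{prop greens kernal continuity}(ii), $\gk{\Omega}\geq 0$, so $\g{\Omega}(\psi)\geq 0$ in $\Omega$. Together with $f_{1}\geq f_{2}$, the first integral is nonnegative.
\item[(b)] For $y\in\R^{n}\setminus\overline{\Omega}$, the computation preceding Theorem \ref{thrm linear existence with g} gives $\slap\g{\Omega}(\psi)(y)=\int_{\Omega}\psi(z)\slap\gk{\Omega}(z,y)\ dz$. This is $\leq 0$ by Proposition \ref{prop greens kernal continuity}(iii), or directly from $\slap\g{\Omega}(\psi)(y)=-\mathcal{A}(n,s)\int_{\Omega}\g{\Omega}(\psi)(z)|y-z|^{-n-2s}\,dz$. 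Hence $-\slap\g{\Omega}(\psi)(g_{1}-g_{2})\geq 0$.
\item[(c)] $\Ds[\g{\Omega}(\psi)](\theta)$ is a limit of the nonnegative quotients $\g{\Omega}(\psi)(y)/\delta^{s}_{\Omega}(y)$, so it is nonnegative. Since $h_{1}\geq h_{2}$, the boundary integral is nonnegative.
\end{enumerate}

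It follows that $\int_{\Omega}(u_{1}-u_{2})\psi\geq 0$ for every nonnegative $\psi\in L^{\infty}_{c}(\Omega)$.

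To finish, take a compact $K\subset\Omega$ and set $\psi=\chi_{K\cap\{u_{1}<u_{2}\}}$, which is an admissible nonnegative test function. This yields $\int_{K\cap\{u_{1}<u_{2}\}}(u_{1}-u_{2})\geq 0$. The integrand is strictly negative on that set, so the set has measure zero. Exhausting $\Omega$ by compact sets gives $u_{1}\geq u_{2}$ a.e.

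The only step needing care is the sign of $\slap\g{\Omega}(\psi)$ outside $\overline{\Omega}$ and the justification of Fubini there. This is handled by $\psi\geq 0$ and the nonnegativity of the Green kernel; the remaining steps are routine.
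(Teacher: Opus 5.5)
Your proof is correct and follows the paper's argument. You test the super- and subsolution inequalities with a nonnegative $\psi\in L^{\infty}_{c}(\Omega)$ and compare the right-hand sides term by term, using $\g{\Omega}(\psi)\geq 0$, $\slap\g{\Omega}(\psi)\leq 0$ outside $\overline{\Omega}$ and $\Ds[\g{\Omega}(\psi)]\geq 0$. Your final choice $\psi=\chi_{K\cap\{u_{1}<u_{2}\}}$ is a slightly cleaner ending than the paper's $\psi=\chi_{K}$: the latter only yields $\int_{K}u_{1}\geq\int_{K}u_{2}$ for every compact $K$, and needs an implicit inner-regularity step to conclude $u_{1}\geq u_{2}$ a.e.
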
 
\begin{proof}
Let $K$ be a compact set in $\Omega$, take $\psi=\chi_{K}\in L_{c}^{\infty}(\Omega)$. Using $\gk{\Omega}(x,y)\geq 0$ in Definition \ref{defining the operator G} with $f=\psi$, $\g{\Omega}(\psi)(x)\geq 0$ for all $x\in \Omega$. For $y\in \R^{n}\setminus\overline{\Omega}$, by the definition of $\slap$, $\slap \g{\Omega}(\psi) (y)\leq 0$.  Further, Definition \ref{frac normal derivative} implies $\Ds[\g{\Omega}(\psi)](\theta)\geq 0$ for all $\theta\in \partial\Omega$.  Using the definition of sub and supersolution,
       \begin{equation*}
        \begin{split}
            \int_{K}u_{1}\psi&=\int_{\Omega}u_{1}\psi \\
            &\geq \int_{\Omega}\g{\Omega}(\psi)(x)f_{1}(x)\ dx-\int_{\R^{n}\setminus\overline{\Omega}}\slap \g{\Omega}(\psi)(y)g_{1}(y)\ dy+\int_{\partial\Omega}\Ds[\g{\Omega}(\psi)](\theta)h_{1}(\theta)\ d\mathcal{H}(\theta)\\
            &\geq \int_{\Omega}\g{\Omega}(\psi)(x)f_{2}(x)\ dx-\int_{\R^{n}\setminus\overline{\Omega}}\slap\g{\Omega}(\psi)(y)g_{2}(y)\ dy+\int_{\partial\Omega}\Ds[\g{\Omega}(\psi)](\theta)h_{2}(\theta)\ d\mathcal{H}(\theta)\\
            &\geq \int_{\Omega}u_{2}\psi=\int_{K}u_{2}\psi
        \end{split}
    \end{equation*}
    Thus, $u_{1}\geq u_{2}$ a.e on $K$. Since $K$ is arbitrary, the result follows. 
\end{proof}

\begin{remark}
    When $f_{1}=f_{2}$, we get back the classical comparison principle.
\end{remark}

\subsection{On the Infinite Cylinder}

We examine solutions $\{\ul\}_{\ell}$ of \eqref{solution on omega ell} on a sequence of finite cylinders. For each $x\in \Omega_{\infty}$ we demonstrate that the sequence $\{\ul(x)\}_{\ell}$ is increasing and bounded, and that $\sup_{\ell}\ul(x)$ is the solution on the infinite cylinder $\Omega_{\infty}$. 
Analogous to the bounded domain case, we consider three types of data
\begin{itemize}
    \item $(\delta_{\Omega_{\infty}}^{-\beta},0,0)$.
    \item $(0,g,0)$ such that $g$ blows up on the boundary of $\Omega_{\infty}$.
    \item $(0,0,h)$ with $h\in  L^{1}(\partial\Omega_{\infty})$ and $h>0$.
\end{itemize}
For each such data, we show that the corresponding solutions $\ul$ with any data $(f_{\ell},0,0),(0,g_{\ell},0)$ or $(0,0,h_{\ell})$ where $f_{\ell},g_{\ell}$ and $h_{\ell}$ are defined in \eqref{dfn of g ell and h ell}, converges to a large solution on the infinite cylinder.
\begin{lemma}\label{comparing u ell}
     Assume \ref{F1}, \ref{G} and \ref{H} on $\Omega=\Omega_{\infty}$. Let $f_{\ell},g_{\ell}$ and $h_{\ell}$ be as in \eqref{dfn of g ell and h ell} and $\ell_{1}<\ell_{2}$. If $u_{i}\in L^{1}_{loc}(\Omega_{\ell_{i}})$ are the solution of \eqref{solution on omega ell} with the data $(f_{\ell_{i}},g_{\ell_{i}},h_{\ell_{i}})$, for $i=1,2$ respectively. Then $u_{1}\leq u_{2}$ a.e. in $\Omega_{\ell_{1}}$.
\end{lemma}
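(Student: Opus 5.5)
The plan is to show that the restriction of $u_{2}$ to $\Omega_{\ell_{1}}$ is a weak dual supersolution of \eqref{solution on omega ell} on $\Omega_{\ell_{1}}$ with data $(f_{\ell_{1}},g_{\ell_{1}},h_{\ell_{1}})$. Then Lemma \ref{lemma comparison principle}, applied on the bounded $C^{2}$ domain $\Omega_{\ell_{1}}$ with equal forcing terms, gives $u_{1}\leq u_{2}$ a.e. in $\Omega_{\ell_{1}}$.

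\textbf{Step 1: $u_{2}$ on $\Omega_{\ell_{1}}$ solves a problem with modified exterior data.} Since $\Omega_{\ell_{2}}$ is bounded with $C^{1,1}$ boundary, $u_{2}$ is given by \eqref{eqn representation of a solution on omega} on $\Omega_{\ell_{2}}$. Define the exterior datum
\[
\hat g:=u_{2}\chi_{\Omega_{\ell_{2}}\setminus\overline{\Omega}_{\ell_{1}}}+g_{\ell_{2}}\chi_{\R^{n}\setminus\overline{\Omega}_{\ell_{2}}}.
\]
I will show that for every $\psi\in L^{\infty}_{c}(\Omega_{\ell_{1}})$, $\int u_{2}\psi$ equals the right-hand side of \eqref{weak dual dfn with f} on $\Omega_{\ell_{1}}$ with data $(f_{\ell_{1}},\hat g,\hat h)$. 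Here $\hat h=h$ on $\partial\Omega_{\ell_{1}}\cap\partial\Omega_{\infty}$ and $\hat h=0$ elsewhere, so $\hat h=h_{\ell_{1}}$.

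The argument is as follows. Write $\phi_{1}=\g{\Omega_{\ell_{1}}}(\psi)$ and $\phi_{2}=\g{\Omega_{\ell_{2}}}(\psi)$. The difference $w=\phi_{2}-\phi_{1}$ is $s$-harmonic in $\Omega_{\ell_{1}}$ and equals $\phi_{2}$ outside. Hence, by Theorem \ref{thrm linear existence with g},
\[
w(x)=-\int_{\R^{n}\setminus\overline{\Omega}_{\ell_{1}}}\slap\gk{\Omega_{\ell_{1}}}(x,y)\,\phi_{2}(y)\,dy .
\]
I will insert $\psi$ into the weak dual identity for $u_{2}$ on $\Omega_{\ell_{2}}$ and split $\phi_{2}=\phi_{1}+w$. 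Using the symmetry of the Green's kernel and Fubini's theorem (justified by Theorem \ref{theorem poission behaviour} and \eqref{eqn martin behaviour}), the $w$-contributions recombine into $-\int_{\Omega_{\ell_{2}}\setminus\Omega_{\ell_{1}}}\slap\phi_{1}\,u_{2}$. This is the standard restriction argument, i.e. the weak dual analogue of \cite[Corollary 1]{Abatangelo2015LargesHarmonic}.

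I also need to check two further points. First, the boundary term on $\partial\Omega_{\ell_{1}}\cap\partial\Omega_{\infty}$ is unchanged, since near those points the two domains coincide, so $\Ds\phi_{1}$ and $\Ds\phi_{2}$ are related through $w$. Second, $\hat g$ satisfies \ref{G1}: the new part $u_{2}$ lives on $\Omega_{\ell_{2}}\setminus\Omega_{\ell_{1}}$, where it is $L^{1}$ against $\delta_{\Omega_{\ell_{1}}}^{-s}$ away from $\partial\Omega_{\infty}$. Near the common lateral boundary this set is thin, and Lemma \ref{lemma equivalent E} gives $u_{2}\lesssim\delta^{s-1}$ there.

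\textbf{Step 2: comparison of exterior data.} Outside $\Omega_{\infty}$ we have $\hat g=g=g_{\ell_{1}}$. On $\Omega_{\infty}\setminus\Omega_{\ell_{2}}$ both $\hat g$ and $g_{\ell_{1}}$ vanish. On $\Omega_{\ell_{2}}\setminus\Omega_{\ell_{1}}$ we need $\hat g=u_{2}\geq 0=g_{\ell_{1}}$. Once this holds, $u_{2}$ is a supersolution (indeed a solution with larger data) on $\Omega_{\ell_{1}}$, and Lemma \ref{lemma comparison principle} concludes.

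\textbf{Main obstacle.} The hard point is the sign $u_{2}\geq0$ on $\Omega_{\ell_{2}}\setminus\Omega_{\ell_{1}}$. From \eqref{eqn representation of a solution on omega}, together with $\gk{\Omega}\geq0$, $-\slap\gk{\Omega}\geq0$, $\Ds\gk{\Omega}\geq0$ and $h\geq0$ from \ref{H}, this holds as soon as $f\geq0$ and $g\geq0$. Since \ref{F1} and \ref{G} allow sign changes, I will carry out the argument under nonnegative $f$ and $g$, which covers the three large-solution data of interest. For signed data I would split each datum into positive and negative parts and treat them separately: by linearity and uniqueness (Theorem \ref{thrm rep of u on any domain}), the solution for the positive parts increases in $\ell$ and the one for the negative parts decreases. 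The statement for genuinely signed data may therefore need to be read as monotonicity of these two components rather than of $u_{\ell}$ itself. A secondary technical point is the Fubini justification near the common lateral boundary $\partial\Omega_{\ell_{1}}\cap\partial\Omega_{\infty}$, where I will rely on \eqref{eqn fractional normal derivative behaviour}.
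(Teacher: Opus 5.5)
Your proof is correct when $f\geq 0$ and $g\geq 0$ (see the last paragraph), but it takes a different route from the paper's.

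\textbf{Your route.} You derive an exact identity for $u_{2}$ on $\Omega_{\ell_{1}}$ with the enlarged exterior datum $\hat g$. This is the linear case of Lemma \ref{lemma restriction of a solution to subdomains}, so it can simply be cited. You then conclude from $u_{2}\geq 0$ on $\Omega_{\ell_{2}}\setminus\Omega_{\ell_{1}}$ and Lemma \ref{lemma comparison principle}.

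\textbf{The paper's route.} The paper never identifies what $u_{2}$ solves on $\Omega_{\ell_{1}}$. It takes one $\psi\geq 0$ supported in $\Omega_{\ell_{1}}$, inserts it into the weak dual identities of both $u_{2}$ and $u_{1}$, and compares the right-hand sides term by term. It uses $\g{\Omega_{\ell_{1}}}(\psi)\leq\g{\Omega_{\ell_{2}}}(\psi)$ (Lemma \ref{lemma greens seq is decreasing}), and $-\slap\g{\Omega_{\ell_{1}}}(\psi)\leq-\slap\g{\Omega_{\ell_{2}}}(\psi)$ outside $\overline{\Omega}_{\ell_{2}}$ (the lemma right after it). It also uses the same inequality for $\Ds$ on the common lateral boundary, where the two distance functions agree locally. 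The nonnegative contributions of $\Omega_{\ell_{2}}\setminus\Omega_{\ell_{1}}$ and $\partial\Omega_{\ell_{2}}\setminus\partial\Omega_{\ell_{1}}$ are dropped, and $\psi=\chi_{K}$ finishes the proof. The paper cites Lemma \ref{lemma restriction of a solution to subdomains} at this point, but its displayed computation is this direct comparison.

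\textbf{What each buys.} The direct comparison is lighter. It needs nothing about $u_{2}$ beyond its defining identity. It also needs no Fubini or \ref{G1} analysis in the thin region where the caps of $\Omega_{\ell_{1}}$ meet $\partial\Omega_{\infty}$. That region is where your sketch is weakest: Lemma \ref{lemma equivalent E} gives $u_{2}\lesssim\delta^{s-1}$ pointwise only for bounded $h$. For $h\in L^{1}$, the integrability of $\hat g\,\delta_{\Omega_{\ell_{1}}}^{-s}$ there has to come from integrating the kernel bounds. On the other hand, your route still works when the forcing depends on $u$. That is exactly how the paper compares nested cylinders in the semilinear section.

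\textbf{Sign caveat.} Your caveat is genuine and applies equally to the paper's proof, whose inequality step silently uses $f\geq 0$ and $g\geq 0$. For example, if $f\leq 0$ and $g=h=0$, then $u_{\ell}=\g{\Omega_{\ell}}(f_{\ell})$ is nonincreasing in $\ell$. So the statement as written fails for signed data. Splitting the data into positive and negative parts, as you propose, is the right repair.
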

\begin{proof}
    Let $\psi\in L_{c}^{\infty}(\Omega_{\ell_{1}})$ and $\psi\geq 0$, as $\Omega_{\ell_{1}}\subset\Omega_{\ell_{2}}$, by the definition of the weak dual solution and Lemma \ref{lemma restriction of a solution to subdomains}
    \begin{equation*}
    \begin{split}
    \int_{\Omega_{\ell_{1}}}u_{2}\psi &= \int_{\Omega_{\ell_{2}}}u_{2}\psi
    =\Big\{\int_{\Omega_{\ell_{1}}}+\int_{\Omega_{\ell_{2}}\setminus\Omega_{\ell_{1}}}\Big\} \, \g{\Omega_{\ell_{2}}} (\psi)(x)f_{\ell_{2}}(x)\ dx -\int_{\R^{n}\setminus\overline{\Omega}_{\infty}} \slap \g{\Omega_{\ell_{2}}}(\psi)(x) g(x) \ dx\\
&\hspace{3cm}+\Big\{\int_{\partial\Omega_{\ell_{1}}\cap\partial\Omega_{\infty}}+\int_{(\partial\Omega_{\ell_{2}}\setminus\partial\Omega_{\ell_{1}})\cap \partial\Omega_{\infty}}\Big\} \Ds[\g{\Omega_{\ell_{2}}}(\psi)](\theta)h(\theta)\ d\mathcal{H}(\theta)\\
       &\geq  \int_{\Omega_{\ell_{1}}}\g{\Omega_{\ell_{1}}}(\psi)(x) f_{\ell_{1}}(x)\ dx-\int_{\R^{n}\setminus\overline{\Omega}_{\ell_{1}}}\slap \g{\Omega_{\ell_{2}}}(\psi)(x) g_{\ell_{1}}(x) \ dx+\int_{\partial\Omega_{\ell_{1}}}\Ds[\g{\Omega_{\ell_{1}}}(\psi)](\theta)h_{\ell_{1}}(\theta)\ d\mathcal{H}(\theta)\\
       &=\int_{\Omega_{\ell_{1}}}u_{1}\psi.
        \end{split}
    \end{equation*}
    Now by choosing $\psi=\chi_{K}$ for arbitrary compact set $K$ in $\Omega_{\ell_{1}}$, it follows that $u_{1}\leq u_{2}$ a.e. in $\Omega_{\ell_{1}}$.
\end{proof}

\subsubsection{\textbf{Blow up by the operator}}\label{subsection_by operator}
 Here,  we focus on the large s-harmonic functions generated by the fractional Laplacian. These solutions are characterised by the operator $E_{\Omega}$. Accordingly, we restrict to the data to $(0,0,h)$. 

Let $u_{\ell}^{h}$ denote the solution to \eqref{solution on omega ell} with the data $(0,0,h_{\ell})$. Here, $h_{\ell}$ is defined in \eqref{dfn of g ell and h ell}. For any $x\in \Omega_{\infty}$, choose $\ell_{0}=\ell_{0}(x)>0$ such that $x\in \Omega_{\ell_{0}}$. Then by Lemma \ref{comparing u ell} the sequence $\{u^{h}_{\ell}(x)\}_{\ell\geq \ell_{0}}$ is non-decreasing.  Next, we prove that the sequence is locally uniformly bounded.
\begin{lemma}
    Assume \ref{H} on $\Omega=\Omega_{\infty}$. Let $K\subset\subset\Omega_{\infty}$. Then $|u_{\ell}^{h}(x)|\leq C(K,n,s)\|h\|_{L^{1}(\partial\Omega_{\infty})}$ for some positive constant $C(K,n,s)$ and for all $x\in K$. 
\end{lemma}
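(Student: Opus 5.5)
We will use the representation \eqref{eqn representation of a solution on omega} on the bounded $C^{2}$ domain $\Omega_{\ell}$ with data $(0,0,h_{\ell})$. Since $h_\ell$ vanishes off $\partial\Omega_\ell\cap\partial\Omega_\infty$, this gives, for $x\in K$ (and $\ell$ large enough that $K\subset\subset\Omega_\ell$),
\begin{equation*}
u_{\ell}^{h}(x)=\int_{\partial\Omega_{\ell}\cap\partial\Omega_{\infty}}\Ds\gk{\Omega_{\ell}}(x,\theta)h(\theta)\ d\mathcal{H}(\theta).
\end{equation*}
Because $h\geq 0$ and $\Ds\gk{\Omega_\ell}\geq 0$, we have $u_\ell^h\ge 0$. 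It therefore suffices to bound $\Ds\gk{\Omega_{\ell}}(x,\theta)$ uniformly in $\ell$, $x\in K$ and $\theta\in\partial\Omega_\ell\cap\partial\Omega_\infty$. The bound then follows from $\|h_\ell\|_{L^1}\le\|h\|_{L^{1}(\partial\Omega_\infty)}$.

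\emph{Step 1: comparison of normal derivatives.} By Lemma \ref{lemma greens seq is decreasing} (in the form with $\Omega_\ell\subset\Omega_\infty$, which is also how the paper obtains \eqref{eqn greens estimatte on unbounded domain}), we have $\gk{\Omega_\ell}(x,y)\le\gk{\Omega_\infty}(x,y)$ for $x,y\in\Omega_\ell$. Near a point $\theta\in\partial\Omega_\ell\cap\partial\Omega_\infty$, the two boundaries agree locally on the lateral part. Hence $\delta_{\Omega_\ell}(y)=\delta_{\Omega_\infty}(y)$ for $y$ close to $\theta$, at least for interior points of the flat lateral part. On the remaining part, the perturbed caps, we use $\delta_{\Omega_\ell}\le\delta_{\Omega_\infty}$ together with the direct exterior-ball argument. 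Dividing by $\delta^{s}$ and letting $y\to\theta$ gives $\Ds\gk{\Omega_\ell}(x,\theta)\le\Ds\gk{\Omega_\infty}(x,\theta)$. If this comparison is delicate at the junction of the caps, a cleaner route is to bypass $\Omega_\infty$ and apply the proof of \eqref{eqn greens estimatte on unbounded domain} directly in $\Omega_\ell$. The exterior ball of radius $R$ at $\theta_y$ can be taken uniform in $\ell$, because the cylinder and the $C^{2}$ caps have uniform curvature bounds independent of $\ell$. This yields \eqref{eqn fractional normal derivative behaviour} for $\Omega_\ell$ with constants $c(n,s)$ independent of $\ell$.

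\emph{Step 2: uniform bound.} From \eqref{eqn well definedness of frac normal derivative on unbounded domain}, we get $\Ds\gk{\Omega_\ell}(x,\theta)\le c(n,s)|x-a_\theta|^{s}|x-\theta|^{s-n}$. Here $|x-\theta|\ge d(K,\partial\Omega_\infty)=:d_K>0$, and $|\theta-a_\theta|\le 2\rho$ for a fixed $\rho$ (say $\rho=d_K$). Since $|x-a_\theta|\le |x-\theta|+2\rho$, this gives
\begin{equation*}
\Ds\gk{\Omega_\ell}(x,\theta)\le c(n,s)\Big(1+\frac{2\rho}{d_K}\Big)^{s}|x-\theta|^{2s-n}\le C(n,s)\,d_K^{2s-n},
\end{equation*}
using $n>2s$. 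Alternatively, one can apply \eqref{eqn martin behaviour} directly, which gives the bound $C(n,s)\delta^{2s-n}(x)\le C d_K^{2s-n}$. Integrating against $h$ then yields $|u_\ell^h(x)|\le C(K,n,s)\|h\|_{L^1(\partial\Omega_\infty)}$.

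\emph{Main obstacle.} The main obstacle is the uniformity in $\ell$ of the constant in the normal-derivative estimate. This requires checking that the exterior-ball radius (or the comparison $\Ds\gk{\Omega_\ell}\le\Ds\gk{\Omega_\infty}$) is independent of $\ell$ at boundary points near the caps of $\Omega_\ell$. The uniform $C^{2}$ character of the family \eqref{dfn of omegaell}, which consists of translated copies of the same cap geometry, is what I would use to settle this.
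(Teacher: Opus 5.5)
Your proposal is correct and follows essentially the paper's argument. Both use the representation on $\Omega_\ell$, then the exterior-ball bound \eqref{eqn greens estimatte on unbounded domain}/\eqref{eqn fractional normal derivative behaviour} applied directly in $\Omega_\ell$ (your ``cleaner route''), the triangle inequality, and $|x-\theta|\ge d(K,\partial\Omega_\infty)$ to reach $d(K,\partial\Omega_\infty)^{2s-n}\|h\|_{L^1(\partial\Omega_\infty)}$. Your explicit check that the exterior-ball radius, and hence the constant, is uniform in $\ell$ is a step the paper passes over silently, and it is the right thing to verify.
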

\begin{proof}
   For $x\in K$ we have by \eqref{eqn greens estimatte on unbounded domain} and \eqref{eqn fractional normal derivative behaviour}
   \begin{equation*}
   \begin{split}
      u^{h}_{\ell}(x)&\leq \int_{\partial\Omega_{\infty}\cap \partial\Omega_{\ell}}h(\theta) \Ds \gk{\Omega_{\ell}}(x,\theta)\ d\mathcal{H}(\theta) \\
      & \leq \int_{\partial\Omega_{\infty}\cap\partial\Omega_{\ell}}h(\theta)\lim_{\substack{y\to\theta \\ y\in\Omega_{\ell}}}\frac{\gk{\Omega_{\ell}}(x,y)}{\delta_{\ell}^{s}(y)}\ d\mathcal{H}(\theta)
      \leq \int_{\partial\Omega_{\infty}\cap\partial\Omega_{\ell}}h(\theta)\lim_{\substack{y\to\theta \\ y\in\Omega_{\ell}}}\frac{|x-a_y|^{s}}{|x-y|^{n-s}}\ d\mathcal{H}(\theta).
   \end{split}
 \end{equation*}
 Since $y$ close to the boundary and away from $K$, using \eqref{eqn triangle inequality} (notice that $\delta_{\infty}$ is bounded) in Theorem \ref{theorem greens poteantial continuity on unbounded domains}, we get 
 \begin{equation*}
     u^{h}_{\ell}(x) \leq \int_{\partial\Omega_{\infty}\cap\partial\Omega_{\ell}}\frac{h(\theta)}{|x-\theta|^{n-2s}}\ d\mathcal{H}(\theta)\leq \frac{\|h\|_{L^{1}(\partial\Omega_{\infty})}}{d(K,\partial\Omega_{\infty})^{n-2s}}.
     \qedhere
 \end{equation*}
\end{proof}
Define
\begin{equation}\label{dfn of uh in linear case}
    u^{h}(x)=\lim_{\ell\to\infty}\ul^{h}(x) \quad \text{for all }x\in \Omega_{\infty}.
\end{equation}
Then, we have the following
 \begin{theorem}\label{thrm for operator blowup}
    Assume \ref{H} on $\Omega=\Omega_{\infty}$. The function $u^{h}$ defined in \eqref{dfn of uh in linear case} is in $L^{1}_{loc}(\Omega_{\infty})$. Further, it is the solution of \eqref{linear eqn on infinite cylinder} with data $(0,0,h)$.
\end{theorem}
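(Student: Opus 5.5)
Since $u^{h}_{\ell}$ is the solution on the bounded domain $\Omega_{\ell}$, the representation \eqref{eqn representation of a solution on omega} gives
\begin{equation*}
u^{h}_{\ell}(x)=\int_{\partial\Omega_{\ell}\cap\partial\Omega_{\infty}}\Ds\gk{\Omega_{\ell}}(x,\theta)h(\theta)\,d\mathcal{H}(\theta),\qquad x\in\Omega_{\ell}.
\end{equation*}
The plan is to pass to the limit in this formula pointwise, then in the weak dual identity, and finally to use the uniqueness in Theorem \ref{thrm rep of u on any domain}.

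\emph{Step 1 (local integrability).} Fix $K\subset\subset\Omega_{\infty}$ and choose $\ell_{0}$ with $K\subset\Omega_{\ell_{0}}$. The sequence $\{u^{h}_{\ell}\}_{\ell\ge\ell_{0}}$ is nondecreasing on $K$ by Lemma \ref{comparing u ell}. By the preceding lemma it is bounded on $K$ by $C(K,n,s)\|h\|_{L^{1}(\partial\Omega_{\infty})}$. Hence the limit $u^{h}$ exists, is nonnegative, and satisfies the same bound on $K$. In particular $u^{h}\in L^{\infty}_{loc}(\Omega_{\infty})\subset L^{1}_{loc}(\Omega_{\infty})$.

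\emph{Step 2 (weak dual identity).} Fix $\psi\in L^{\infty}_{c}(\Omega_{\infty})$ with $\psi\ge0$; the general case follows by splitting $\psi=\psi^{+}-\psi^{-}$. For $\ell$ large, $\mathrm{Supp}\,\psi\subset\Omega_{\ell}$, and the weak dual formulation on $\Omega_{\ell}$ reads
\begin{equation*}
\int_{\Omega_{\infty}}u^{h}_{\ell}\psi\,dx=\int_{\partial\Omega_{\ell}\cap\partial\Omega_{\infty}}\Ds[\g{\Omega_{\ell}}(\psi)](\theta)h(\theta)\,d\mathcal{H}(\theta).
\end{equation*}
On the left-hand side, monotone (or dominated, using Step 1) convergence gives $\int u^{h}\psi$. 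For the right-hand side I would write, via Fubini,
\begin{equation*}
\Ds[\g{\Omega_{\ell}}(\psi)](\theta)=\int\psi(y)\Ds\gk{\Omega_{\ell}}(y,\theta)\,dy.
\end{equation*}
Then it remains to show, for each fixed $\theta\in\partial\Omega_{\infty}$ and $y\in\mathrm{Supp}\,\psi$,
\begin{equation*}
\Ds\gk{\Omega_{\ell}}(y,\theta)\uparrow\Ds\gk{\Omega_{\infty}}(y,\theta)\quad\text{as }\ell\to\infty.
\end{equation*}
Monotonicity comes from Lemma \ref{lemma greens seq is decreasing}. Indeed, $\delta_{\Omega_{\ell}}=\delta_{\Omega_{\infty}}$ near points of $\partial\Omega_{\ell}\cap\partial\Omega_{\infty}$ away from the caps, so the quotient $\gk{\Omega_{\ell}}(y,z)/\delta^{s}(z)$ increases in $\ell$. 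Domination comes from \eqref{eqn well definedness of frac normal derivative on unbounded domain}: $\Ds\gk{\Omega}(y,\theta)\le C|y-\theta|^{2s-n}$ uniformly in $\ell$ for $y$ in a compact set and $\theta$ on the lateral boundary, since $\delta$ is bounded there. This domination is what justifies exchanging the limit with the $\theta$- and $y$-integrals against $h\in L^{1}$.

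\emph{Main obstacle.} The hard part is identifying the limit of the $s$-normal derivatives, i.e.\ exchanging $\lim_{\ell}$ with $\lim_{z\to\theta}$ in the definition of $\Ds$. Monotonicity only yields the inequality $\lim_{\ell}\Ds\gk{\Omega_{\ell}}\le\Ds\gk{\Omega_{\infty}}$ directly. For the reverse inequality I would try to show that
\begin{equation*}
\Ds\gk{\Omega_{\infty}}(y,\theta)-\Ds\gk{\Omega_{\ell}}(y,\theta)=\Ds\bigl[\gk{\Omega_{\infty}}(y,\cdot)-\gk{\Omega_{\ell}}(y,\cdot)\bigr](\theta)\to0.
\end{equation*}
The difference $w_{\ell}=\gk{\Omega_{\infty}}(y,\cdot)-\gk{\Omega_{\ell}}(y,\cdot)$ is $s$-harmonic in $\Omega_{\ell}$ and vanishes outside $\Omega_{\infty}$. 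Near a fixed boundary point it should therefore satisfy a boundary Harnack/regularity estimate of the type in \cite{ROSOTON2014275}: $|w_{\ell}/\delta^{s}|$ near $\theta$ is controlled by $\sup|w_{\ell}|$ on a neighbourhood plus a tail term. That supremum tends to $0$ by the locally uniform convergence of Lemma \ref{lemma convergence of greens kernel}, and the tail by the uniform estimate \eqref{eqn greens estimatte on unbounded domain}.

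If this route fails, I would instead identify the limit weakly. Test the equation against $\psi$, use that $\g{\Omega_{\ell}}(\psi)\to\g{\Omega_{\infty}}(\psi)$ (Lemma \ref{Lemma local uniform convergence of solution operator}), and use Fatou together with the upper bound to pin down the boundary term. Finally, the resulting identity is exactly \eqref{weak dual dfn with f} with data $(0,0,h)$, so $u^{h}$ is a solution, and uniqueness follows from Theorem \ref{thrm rep of u on any domain}.
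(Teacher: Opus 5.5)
Your outline matches the paper's. Monotonicity and the local bound give $u^{h}\in L^{1}_{loc}(\Omega_{\infty})$. The weak dual identity on $\Omega_{\ell}$ is then passed to the limit by dominated convergence: $\Ds[\g{\Omega_{\ell}}(\psi)]$ is bounded on the lateral boundary uniformly in $\ell$, so $Ch\in L^{1}(\partial\Omega_{\infty})$ dominates.

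The difference is at the one delicate step. The paper obtains $\Ds[\g{\Omega_{\ell}}(\psi)](\theta)\to\Ds[\g{\Omega_{\infty}}(\psi)](\theta)$ simply by citing Lemma~\ref{Lemma local uniform convergence of solution operator}. That lemma only gives convergence of $\g{\Omega_{\ell}}(\psi)$ at points of $\Omega_{\infty}$, plus uniform convergence off a compact set up to a subsequence. This does not control the quotient $\g{\Omega_{\ell}}(\psi)/\delta^{s}$ at $\partial\Omega_{\infty}$, since uniform smallness of a difference says nothing about its $s$-normal derivative. You correctly isolate this interchange of $\lim_{\ell}$ with the boundary limit defining $\Ds$. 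Your argument through $w_{\ell}=\gk{\Omega_{\infty}}(y,\cdot)-\gk{\Omega_{\ell}}(y,\cdot)$, combined with a localized boundary regularity estimate of Ros-Oton--Serra type, supplies what the paper leaves implicit, at the price of importing that estimate.

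\textbf{Correction to your sketch.} Smallness of $\sup|w_{\ell}|$ on a neighbourhood of $\theta$ does not follow from Lemma~\ref{lemma convergence of greens kernel}, whose uniform convergence is stated only on compact subsets of $\Omega_{\infty}$. Use the maximum principle instead. The function $w_{\ell}=H_{\Omega_{\ell}}(y,\cdot)-H_{\Omega_{\infty}}(y,\cdot)$ has three properties:
\begin{itemize}
\item it is bounded and $s$-harmonic in the bounded set $\Omega_{\ell}$;
\item it vanishes outside $\Omega_{\infty}$;
\item it equals $\gk{\Omega_{\infty}}(y,\cdot)\le\Gamma_{s}(y-\cdot)$ on $\Omega_{\infty}\setminus\Omega_{\ell}$.
\end{itemize}
Hence, since $n\ge 2>2s$,
\[
0\le w_{\ell}\le a(n,s)\,\mathrm{dist}(y,\Omega_{\infty}\setminus\Omega_{\ell})^{2s-n}\longrightarrow 0\quad\text{on }\rn .
\]
This global bound controls both the neighbourhood term and the tail in the boundary estimate. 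For $\ell$ large, $\Omega_{\ell}$ and $\Omega_{\infty}$ coincide near a fixed lateral point $\theta$, so the constant is uniform in $\ell$. Therefore $\Ds w_{\ell}(\theta)\to 0$, which gives the missing inequality.

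Your fallback (Fatou plus the upper bound) would not close the gap. It only reproduces $\lim_{\ell}\Ds\gk{\Omega_{\ell}}(y,\theta)\le\Ds\gk{\Omega_{\infty}}(y,\theta)$, which you already have from monotonicity.
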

\begin{proof}
 Due to the representation, $\ul^{h}\geq0$ for all $\ell>0$ as $h\geq 0$ and hence $u^{h}\geq 0$. Let $K\subset \Omega_{\infty}$ be compact, choose $\ell_{0}$ such that $K\subset\subset \Omega_{\ell_{0}}$ and $d(K,\partial\Omega_{\ell_{0}})=d(K,\partial\Omega_{\infty})$, then for any $x\in K$ and $\ell\geq \ell_{0}$, by the previous theorem 
    \begin{equation*}
    \begin{split}
        \int_{K}\ul^{h}(x)\ dx&= \int_{K}\int_{\partial\Omega_{\ell}}\Ds\gk{\Omega_{\ell}}(\theta,x) h_{\ell}(\theta)\ d\mathcal{H}(\theta) \leq \frac{\|h\|_{L^{1}(\partial\Omega_{\infty})}|K|}{d(K,\partial\Omega_{\infty})^{n-2s}}.
        \end{split}
    \end{equation*}

        Applying the monotone convergence theorem, we have
        \begin{equation*}
            \int_{K}u^{h}(x)\ dx\leq \frac{\|h\|_{L^{1}(\partial\Omega_{\infty})}|K|}{d(K,\partial\Omega_{\infty})^{n-2s}}.
        \end{equation*}
Hence $u^{h}\in L^{1}_{loc}(\Omega_{\infty})$.

         For $\psi\in L^{\infty}_{c}(\Omega_{\infty})$ and $\ell_{0}>0$ such that Supp$(\psi)\subset \Omega_{\ell}$ for all $\ell\geq \ell_{0}$. Using the compact support of $\psi$, the dominated convergence theorem, 
         we obtain
\begin{equation*}
\begin{split}
    \int_{\Omega_{\infty}}u^{h}(x)\psi(x)\ dx=\lim_{\ell\to\infty}\int_{\Omega_{\ell}}\ul^{h}(x) \psi(x) 
=\lim_{\ell\to\infty}\int_{\partial\Omega_{\infty}} \chi_{\partial\Omega_{\ell}} \Ds [\g{\Omega_{\ell}}(\psi)](\theta)  h(\theta)  d\mathcal{H}(\theta).
    \end{split}
\end{equation*}
To use the dominated convergence theorem again, consider for $\ell$ large, choose a compact set $K\subset\Omega_{\ell_{0}}$ such that Supp$(\psi)\subset\subset K$, then by 
\eqref{eqn well definedness of frac normal derivative on unbounded domain} and \eqref{eqn triangle inequality}
\begin{equation*}
\begin{split}
     |\Ds [\g{\Omega_{\ell}}(\psi)](\theta)| h(\theta)
    &\leq c(n,s)\lim_{x\to\theta}\int_{\text{supp}(\psi)}\frac{|y-a_x|^{s}}{|x-y|^{n-s}}\psi(y)\ dy h(\theta) 
    \leq \lim_{ x\to\theta}\frac{C(K,n,s)}{d(K^{C},\text{supp}(\psi))^{n-2s}}||\psi||_{L^{1}(\Omega_{\infty})} h(\theta).
    \end{split}
\end{equation*}
The last term is in $L^{1}(\partial\Omega_{\infty})$ as $h\in L^{1}(\partial\Omega_{\infty})$.  Thus, by Lemma \ref{Lemma local uniform convergence of solution operator}
\begin{equation*}
   \int_{\Omega_{\infty}}u^{h}(x) \psi(x)\ dx =\int_{\partial\Omega_{\infty}}\Ds[\g{\Omega_{\infty}}(\psi)](\theta)h(\theta)\ d\mathcal{H}(\theta).\qedhere
\end{equation*}
\end{proof}

\subsubsection{\textbf{Blow up by the force function}}\label{subsection_by force term}
We construct a large solution on the infinite cylinder $\Omega_{\infty}$using data $(\delta_{\Omega_{\infty}}^{-\beta},0,0)$, analogous to Proposition \ref{prop boundary behaviour of the solution on bounded domain} on bounded domains. For completeness, we also demonstrate that for any data $(f,0,0)$, the corresponding solutions on the finite cylinders with the data $(f_{\ell},0,0)$, where $f_{\ell}=f|_{\Omega_{\ell}}$, converges to a solution on the infinite cylinder. 
 The choice of $f_{\ell}$ for the $f=\delta_{\Omega_{\infty}}^{-\beta}$ and general $f$ are different, and so are the approximating functions.

\begin{theorem}
    Let $2s<\beta<1+s$. If $u_{\ell}$ be the solution of \eqref{solution on omega ell} with data $(\delta_{\Omega_{\ell}}^{-\beta},0,0)$, then $u_{\ell}$ converges point wise to a solution $u$ of \eqref{linear eqn on infinite cylinder} with the data $(\delta_{\Omega_{\infty}}^{-\beta},0,0)$. Moreover 
    \begin{equation*}
        c_{1}\delta_{\Omega_{\infty}}^{-\beta+2s}(x)\leq u(x)\leq c_{2}\delta_{\Omega_{\infty}}^{-\beta+2s}(x),
    \end{equation*}
for some positive constants $c_{1}$ and $c_{2}$.
\end{theorem}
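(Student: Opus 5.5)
The plan is to compare the finite–cylinder solutions $u_{\ell}$ with an explicit candidate on $\Omega_{\infty}$ obtained by dimension reduction, rather than to rely on monotonicity. Monotonicity in $\ell$ is not available here: $\Omega_{\ell}$ increases, so $\gk{\Omega_{\ell}}$ increases by Lemma \ref{lemma greens seq is decreasing}, while $\delta_{\Omega_{\ell}}^{-\beta}$ decreases. Write $x=(x',x'')\in B^{m}_{1}\times\R^{n-m}$. The datum $\delta_{\Omega_{\infty}}^{-\beta}(x)=\delta_{B_{1}^{m}}^{-\beta}(x')$ depends only on $x'$.

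\emph{Step 1: the limit candidate.} I would set $U(x):=V(x')$, where $V$ is the solution on the $m$-dimensional ball with data $(\delta_{B^{m}_{1}}^{-\beta},0,0)$. To see that $U$ solves \eqref{linear eqn on infinite cylinder}, I would integrate the Green's kernel of $\Omega_{\infty}$ in the unbounded variables. Because $\slap$ of a function of $x'$ alone equals $(-\Delta_{x'})^{s}$ of that function, one expects
\begin{equation*}
\int_{\R^{n-m}}\gk{\Omega_{\infty}}(x,(y',y''))\,dy''=\gk{B_{1}^{m}}(x',y').
\end{equation*}
I would verify this identity at the level of the weak dual formulation, testing with $\psi\in L^{\infty}_{c}(\Omega_{\infty})$ and applying Fubini. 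The integrability needed for Fubini comes from Proposition \ref{prop greens kernal continuity}(vi) and the remark following it. If $m\le 2s$, which can only happen for $m=1$, the $m$-dimensional kernel is to be read as the Green's kernel of the interval; the argument is otherwise unchanged. Proposition \ref{prop boundary behaviour of the solution on bounded domain} then gives $V\asymp\delta_{B_{1}^{m}}^{2s-\beta}$, and hence $U\asymp\delta_{\Omega_{\infty}}^{2s-\beta}$, since the case $s<\beta<1+s$ covers $2s<\beta<1+s$. Uniqueness follows as in Theorem \ref{thrm rep of u on any domain}, with the sign-test argument. That theorem requires \ref{F1}, which fails globally here because $\delta^{-\beta}\notin L^{1}(\Omega_{\infty},\delta^{s})$ when the cylinder is infinite. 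So I would first check directly that $\g{\Omega_{\infty}}(\delta^{-\beta})$ is finite locally, by the same Fubini computation.

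\emph{Step 2: splitting the approximating datum.} On $\Omega_{\ell}$ I would write
\begin{equation*}
\delta_{\Omega_{\ell}}^{-\beta}=\delta_{\Omega_{\infty}}^{-\beta}\chi_{\Omega_{\ell}}+r_{\ell},\qquad r_{\ell}\ge 0 .
\end{equation*}
The remainder $r_{\ell}$ is supported in the cap regions $\{|x''|\ge \ell-2\}$, where the rounded ends differ from the cylinder, and there $r_{\ell}\le\delta_{\Omega_{\ell}}^{-\beta}$. The first part contributes $\int_{\Omega_{\ell}}\gk{\Omega_{\ell}}(x,y)\delta_{\Omega_{\infty}}^{-\beta}(y)\,dy$. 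This is monotone increasing in $\ell$, so by Lemma \ref{lemma convergence of greens kernel} and monotone convergence it tends to $U(x)$.

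\emph{Step 3 (main obstacle): the cap contribution vanishes.} I need $\int_{\Omega_{\ell}}\gk{\Omega_{\ell}}(x,y)r_{\ell}(y)\,dy\to0$ for fixed $x$. On the caps we have $|x-y|\gtrsim\ell$. By \eqref{eqn behaviour of greens kernel}, applied with constants uniform in $\ell$ (all $\Omega_{\ell}$ are uniformly $C^{1,1}$, and the capped parts are translates of a fixed set), I expect the bound
\begin{equation*}
\gk{\Omega_{\ell}}(x,y)\lesssim\delta^{s}(x)\,\delta^{s}_{\Omega_{\ell}}(y)\,|x-y|^{-n}.
\end{equation*}
The cap region has bounded volume, and $\int\delta_{\Omega_{\ell}}^{s-\beta}$ over it is finite uniformly in $\ell$ because $\beta-s<1$. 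Hence the contribution is $O(\ell^{-n})\to0$. The delicate point is the uniformity in $\ell$ of the Chen--Song constants. If that is not clean, I would instead bound $\gk{\Omega_{\ell}}\le\gk{\Omega_{\infty}}$ and use a two-sided $\delta^{s}\delta^{s}$ decay for $\Omega_{\infty}$, which I would obtain by localizing to a large bounded $C^{1,1}$ subdomain. Together with Step 2 this gives $u_{\ell}\to U$ pointwise, and the two-sided estimate is inherited from Step 1.
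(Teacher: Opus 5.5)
Your route is genuinely different from the paper's. The paper writes $u_\ell(x)=\int_{\Omega_\infty}\chi_{\Omega_\ell}(y)\gk{\Omega_\ell}(x,y)\delta_{\Omega_\ell}^{-\beta}(y)\,dy$. It then passes to the limit by monotone convergence, cites the last part of Theorem \ref{theorem greens poteantial continuity on unbounded domains} for finiteness, and transfers $c_1\delta_{\Omega_\ell}^{2s-\beta}\le u_\ell\le c_2\delta_{\Omega_\ell}^{2s-\beta}$ to the limit. Your objections to that argument are correct:
\begin{itemize}
\item the integrand is only eventually monotone, with a threshold depending on $y$;
\item $\delta_{\Omega_\infty}^{-\beta}\notin L^1(\Omega_\infty,\delta^s)$, so \ref{F1} fails;
\item the constants of Proposition \ref{prop boundary behaviour of the solution on bounded domain} depend on $\Omega_\ell$.
\end{itemize}
Setting $U(x)=V(x')$ gives $\ell$-independent constants, and the splitting isolates exactly the end contribution that the monotone-convergence step hides. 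As a strategy this is sounder than the paper's. However, the two steps that carry the weight are not established.

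\emph{Step 1.} Testing with $\psi$ and applying Fubini cannot prove $\int_{\R^{n-m}}\gk{\Omega_\infty}(x,(y',y''))\,dy''=\gk{B_1^m}(x',y')$. After the swap, the weak dual identity for $U$ on $\Omega_\infty$ reduces exactly to an integrated form of this identity, so the computation is circular. You need an independent uniqueness input. For instance, show that $\Psi(y'):=\int_{\R^{n-m}}\g{\Omega_\infty}(\psi)(y',y'')\,dy''$ solves the ball problem with data $(\bar\psi,0,0)$, where $\bar\psi(y')=\int\psi(y',y'')\,dy''$, and use uniqueness on $B_1^m$. Alternatively, project the stable process onto $\R^m$. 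The analytic route requires commuting $\slap$ with the $y''$-integration, and hence decay of $\gk{\Omega_\infty}(x,\cdot)$ in $|y''|$. The remark after Proposition \ref{prop greens kernal continuity} only gives $|x-y|^{2s-n}$, which is integrable over $\R^{n-m}$ iff $m>2s$. So for $m=1\le 2s$ the argument is not ``otherwise unchanged''. That decay is insufficient. Moreover, Proposition \ref{prop boundary behaviour of the solution on bounded domain} is stated in the paper's setting $n\ge2$, where \eqref{eqn behaviour of greens kernel} holds. On an interval with $2s\ge1$ the Green kernel has a different form, so $V\asymp\delta^{2s-\beta}$ needs a separate check.

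\emph{Step 3.} The constants in Proposition \ref{prop greens kernal continuity}(v) depend on $\Omega$, and nothing there makes them uniform as $\operatorname{diam}\Omega_\ell\to\infty$. Your $\ell$-independent bound is therefore unproved.

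Your fallback fails for $\beta\ge1$, which is a nonempty part of $(2s,1+s)$. Passing to $\gk{\Omega_\infty}$ trades $\delta^s_{\Omega_\ell}(y)$ for $\delta^s_{\Omega_\infty}(y)$. The latter is of order one on the end faces of $\Omega_\ell$, where $r_\ell\sim\delta_{\Omega_\ell}^{-\beta}$, so the majorant contains $\int\delta_{\Omega_\ell}^{-\beta}=\infty$. Also, localizing to a subdomain only bounds $\gk{\Omega_\infty}$ from below (Lemma \ref{lemma greens seq is decreasing}).

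What is missing is an $\ell$-uniform boundary decay of $\gk{\Omega_\ell}(x,\cdot)$ at the ends. One way to get it is as follows. Take a fixed-size $C^{1,1}$ domain $D_\ell\subset\Omega_\ell$ that shares the end of $\partial\Omega_\ell$ and contains $\operatorname{supp} r_\ell$ at distance $\ge1$ from $\Omega_\ell\cap\partial D_\ell$. For $\ell$ large, write $\gk{\Omega_\ell}(x,y)=-\int_{\R^n\setminus\overline{D_\ell}}\slap\gk{D_\ell}(y,z)\,\gk{\Omega_\ell}(x,z)\,dz$ for $y\in D_\ell$. Then \eqref{asym of poison on bounded domain} on $D_\ell$ supplies the factor $\delta^s_{D_\ell}(y)\le\delta^s_{\Omega_\ell}(y)$, with constants uniform for translates. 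The far field is controlled by $\gk{\Omega_\ell}\le\gk{\Omega_\infty}$ together with \eqref{eqn greens estimatte on unbounded domain}.

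Finally, the set where $r_\ell\ne0$ has bounded volume only if $n-m=1$. For $n-m\ge2$ its volume is of order $\ell^{n-m-1}$. This is still absorbed by the $\ell^{-n}$ decay, but ``translates of a fixed set'' must then be replaced by a uniform local covering.
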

\begin{proof}
  For $x\in \Omega_{\infty}$, choose $\ell_{0}$ large such that $x\in  \Omega_{\ell_{0}}$. Then
    \begin{equation*}
        u_{\ell}(x)=\int_{\Omega_{\ell}}\gk{\Omega_{\ell}}(x,y) \frac{1}{\delta_{\Omega_{\ell}}^{\beta}(y)}\ dy= \int_{\Omega_{\infty}}\chi_{\Omega_{\ell}}(y)\gk{\Omega_{\ell}}(x,y) \frac{1}{\delta_{\Omega_{\ell}}^{\beta}(y)}\ dy.
    \end{equation*}
    By the monotone convergence theorem and last part of Theorem \ref{theorem greens poteantial continuity on unbounded domains}
    \begin{equation*}
        u(x):=\lim_{\ell\to\infty}u_{\ell}(x)=\int_{\Omega_{\infty}}\gk{\Omega_{\infty}}(x,y) \frac{1}{\delta_{\Omega_{\infty}}^{\beta}(y)}\ dy.
    \end{equation*}
    Thus $u$ solves \eqref{linear eqn on infinite cylinder} with $(\delta_{\Omega_{\infty}}^{-\beta},0,0)$. Also, as $\delta_{\Omega_{\ell}}= \delta_{\Omega_{\infty}}$ for large $\ell$, using
    \begin{equation*}
        c_{1}\delta_{\Omega_{\ell}}^{-\beta+2s}(x)\leq u_{\ell}(x)\leq c_{2}\delta_{\Omega_{\ell}}^{-\beta+2s}(x).
    \end{equation*}
  We get the desired inequalities for $u$.
\end{proof}

Next, we establish the result for $f$ satisfying \ref{F1} on $\Omega=\Omega_{\infty}$. Note that, the corresponding solution with data $(f,0,0)$ may not necessarily blow up. Moreover, the proof in the general case differs slightly, and we present it here for completeness. Let $u^{f}_{\ell}$ denote the solution to \eqref{solution on omega ell} with data $(f_{\ell},0,0)$, where $f_{\ell}=f|_{\Omega_{\ell}}$. For any $x\in \Omega_{\infty}$, choose $\ell_{0}=\ell_{0}(x)>0$ such that $x\in \Omega_{\ell_{0}}$. Then by Lemma \ref{comparing u ell} the sequence $\{u^{f}_{\ell}(x)\}_{\ell\geq \ell_{0}}$ is increasing. We further show that the sequence is also locally uniformly bounded.
\begin{lemma}
  Assume \ref{F1} for $\Omega=\Omega_{\infty}$. Let $K\subset\subset\Omega_{\infty}$, then for all $x\in K$, $|u_{\ell}^{f}(x)|\leq C(K,n,s)$.
\end{lemma}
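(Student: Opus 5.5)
The bound will follow from the representation of $u^{f}_{\ell}$ on the bounded domain $\Omega_{\ell}$, namely $u^{f}_{\ell}(x)=\int_{\Omega_{\ell}}\gk{\Omega_{\ell}}(x,y)f(y)\,dy$. We then dominate $\gk{\Omega_{\ell}}$ by $\gk{\Omega_{\infty}}$ and apply the third part of Theorem \ref{theorem greens poteantial continuity on unbounded domains} on $\Omega_{\infty}$.

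First we fix the range of $\ell$. Choose $\ell_{0}$ such that $K\subset\subset\Omega_{\ell_{0}}$ and $d(K,\partial\Omega_{\ell_0})=d(K,\partial\Omega_\infty)$. For $\ell\geq \ell_{0}$ and $x\in K$, the triangle inequality gives
\begin{equation*}
|u^{f}_{\ell}(x)|\leq \int_{\Omega_{\ell}}\gk{\Omega_{\ell}}(x,y)|f(y)|\,dy .
\end{equation*}
Lemma \ref{lemma greens seq is decreasing} and Lemma \ref{lemma convergence of greens kernel} give $\gk{\Omega_{\ell}}(x,y)\uparrow\gk{\Omega_{\infty}}(x,y)$, so $\gk{\Omega_{\ell}}\leq \gk{\Omega_{\infty}}$ on $\Omega_\ell\times\Omega_\ell$. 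Hence
\begin{equation*}
|u^{f}_{\ell}(x)|\leq \int_{\Omega_{\infty}}\gk{\Omega_{\infty}}(x,y)|f(y)|\,dy=\g{\Omega_{\infty}}(|f|)(x).
\end{equation*}
Lemma \ref{lemma greens seq is decreasing} is stated for bounded domains, so for the comparison with $\Omega_\infty$ we use the limit statement of Lemma \ref{lemma convergence of greens kernel} instead.

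Since $|f|\in L^{1}(\Omega_\infty,\delta^{s}_{\Omega_\infty})\cap L^{q}_{loc}(\Omega_\infty)$ with $q>n/2s$, the third part of Theorem \ref{theorem greens poteantial continuity on unbounded domains} gives $\g{\Omega_\infty}(|f|)\in L^\infty_{loc}(\Omega_\infty)$. To get an explicit constant independent of $\ell$, we repeat that splitting of the integral. Fix $\rho$ and $r$ depending only on $K$. On $B_r(x)$ we use $\gk{\Omega_\infty}\leq\Gamma_s$ together with H\"older's inequality, which gives a bound by $\|f\|_{L^q(K_r)}$, where $K_r$ is the $r$-neighbourhood of $K$. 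On the rest of the interior region we use $|x-y|\geq r$ and $\delta_{\Omega_\infty}\geq\rho$. On the boundary strip $\Omega_{\infty,\rho}$ we use \eqref{eqn greens estimatte on unbounded domain} with the triangle inequality \eqref{eqn triangle inequality}. The resulting bound is
\begin{equation*}
|u^{f}_{\ell}(x)|\leq C(K,n,s)\left(\|f\|_{L^{q}(K_r)}+\|f\|_{L^{1}(\Omega_\infty,\delta^{s}_{\Omega_\infty})}\right).
\end{equation*}
This constant depends on $K$ and the data but not on $\ell$.

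The main obstacle is justifying the comparison $\gk{\Omega_\ell}\leq\gk{\Omega_\infty}$ and the estimates on the unbounded domain, because the uniform $C^{1,1}$ exterior ball radius is needed in \eqref{eqn triangle inequality}. $\Omega_\infty$ has a uniform exterior ball of radius $R$, which gives $|x-a_y|\leq\rho+R$. If one prefers to avoid the passage to $\Omega_\infty$, one can instead apply \eqref{eqn greens estimatte on unbounded domain} directly to each $\Omega_\ell$. In that case one must check that the $C^{2}$ caps in \eqref{dfn of omegaell} have exterior ball radii bounded below uniformly in $\ell$. This holds because every $\Omega_\ell$ is a translate-glued copy of the same profile.
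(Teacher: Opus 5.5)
Your proof is correct and is essentially the paper's argument: the same near/far splitting, with H\"older's inequality near $K$ (using $q>n/2s$) and the exterior-ball estimate \eqref{eqn greens estimatte on unbounded domain} together with \eqref{eqn triangle inequality} away from $K$. The only difference is that you first dominate $\gk{\Omega_\ell}\le\gk{\Omega_\infty}$ and then estimate $\g{\Omega_\infty}(|f|)$, whereas the paper applies \eqref{eqn greens estimatte on unbounded domain} directly on each $\Omega_\ell$ with $\delta_{\Omega_\ell}\le\delta_{\Omega_\infty}$; your variant has the small advantage of needing the exterior-ball condition only for $\Omega_\infty$, rather than uniformly in $\ell$ for the caps of $\Omega_\ell$.
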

\begin{proof}
    Let $K_{1}$ be a set such that $K\subset\subset K_{1}\subset\subset\Omega_{\infty}$, then, by \eqref{eqn greens estimatte on unbounded domain} and since $\delta_{\Omega_{\ell}}\leq \delta_{\Omega_{\infty}}$, we have
    \begin{equation*}
        |u^{f}_{\ell}(x)|\leq \int_{K_{1}}f(y)\frac{c(n,s)}{|x-y|^{n-2s}}\ dy+\int_{\Omega_{\infty}\setminus K_{1}}f(y)\delta_{\Omega_{\infty}}^{s}(y)\frac{|x-a_y|^{s}}{|x-y|^{n-s}} \ dy.
 \end{equation*}
 By the H\"older inequality, and choosing $R>0$ such that for all $x\in K$, $K_{1}\subset B_{R}(x)$, and also since $q>n/2s$, we obtain
 \begin{equation*}
     \int_{K_{1}}f(y)\frac{c(n,s)}{|x-y|^{n-2s}}\ dy\leq \|f\|_{L^{q}(K_{1})}\int_{B_{R}(x)}\frac{dy}{|x-y|^{q'(n-2s)}}\leq C(K,n,s)\|f\|_{L^{q}(K_{1})},
 \end{equation*}
 and by triangle inequality as in \eqref{eqn triangle inequality}, we get
 \begin{equation*}
     \int_{\Omega_{\infty}\setminus K_{1}}f(y)\delta_{\Omega_{\infty}}^{s}(y)\frac{|x-a_y|^{s}}{|x-y|^{n-s}} \ dy\leq \frac{C(K,n,s)}{d(K,\R^{n}\setminus K_{1})^{n-s}}\|f\|_{L^{1}(\Omega_{\infty},\delta_{\Omega_{\infty}}^{s})}.
 \end{equation*}
 By combining the above three equations, the result follows.
\end{proof}

We define
\begin{equation}\label{dfn of uf in linear case}
    u^{f}(x)=\lim_{\ell\to\infty}\ul^{f}(x) \quad \text{for all }x\in \Omega_{\infty}.
\end{equation}

\begin{lemma}
   Assume \ref{F1} on $\Omega=\Omega_{\infty}$, the limit $u$ defined above is in $L^{1}_{loc}(\Omega_{\infty})$.
\end{lemma}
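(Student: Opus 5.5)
Following the argument used for $u^{h}$ in Theorem \ref{thrm for operator blowup}, we combine the local uniform bound from the preceding lemma with a monotone convergence argument. Fix a compact set $K\subset\Omega_{\infty}$ and pick $\ell_{0}$ with $K\subset\subset\Omega_{\ell_{0}}$. By Lemma \ref{comparing u ell}, for every $x\in K$ the sequence $\{u^{f}_{\ell}(x)\}_{\ell\ge\ell_{0}}$ is nondecreasing. By the preceding lemma it satisfies $|u^{f}_{\ell}(x)|\le C(K,n,s)$ for all $\ell\ge\ell_{0}$ and $x\in K$. Hence the pointwise limit $u^{f}(x)$ in \eqref{dfn of uf in linear case} exists, is finite, and satisfies $|u^{f}(x)|\le C(K,n,s)$ on $K$.

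Measurability of $u^{f}$ is automatic, since it is a pointwise limit of the measurable functions $u^{f}_{\ell}$ given by the representation \eqref{eqn representation of a solution on omega}. Integrating the bound over $K$ then gives
\begin{equation*}
\int_{K}|u^{f}(x)|\,dx\le C(K,n,s)\,|K|<\infty .
\end{equation*}
If one prefers an integral argument to a pointwise one, note that $u^{f}_{\ell}-u^{f}_{\ell_{0}}\ge0$ increases in $\ell$. The monotone convergence theorem, together with $u^{f}_{\ell_{0}}\in L^{1}(K)$, then yields the same conclusion. Since $K$ is arbitrary, $u^{f}\in L^{1}_{loc}(\Omega_{\infty})$.

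The only delicate point is that $f$ carries no sign assumption. This could break the monotonicity from Lemma \ref{comparing u ell}, because passing from $\Omega_{\ell_{1}}$ to $\Omega_{\ell_{2}}$ adds a contribution of $f$ over $\Omega_{\ell_{2}}\setminus\Omega_{\ell_{1}}$. If monotonicity is not available, I would avoid it entirely. The bound of the preceding lemma is uniform in $\ell$ and is in fact a domination: we have $|\gk{\Omega_{\ell}}(x,y)f_{\ell}(y)|\le\gk{\Omega_{\infty}}(x,y)|f(y)|$ by Lemma \ref{lemma greens seq is decreasing}, and the right side is integrable in $y$ by the third part of Theorem \ref{theorem greens poteantial continuity on unbounded domains}. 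Combined with the pointwise convergence $\gk{\Omega_{\ell}}\to\gk{\Omega_{\infty}}$ of Lemma \ref{lemma convergence of greens kernel}, dominated convergence shows that $u^{f}=\g{\Omega_{\infty}}(f)$ pointwise. The local $L^{\infty}$, hence $L^{1}$, bound then follows again from Theorem \ref{theorem greens poteantial continuity on unbounded domains}.
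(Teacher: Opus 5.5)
Your proposal is correct, but your main argument takes a different route from the paper's. The paper does not use the preceding pointwise lemma at all. It proves an $\ell$-uniform bound on $\int_K u^{f}_{\ell}\,dx$ directly by Fubini. The $y$-integral is split into $K_\rho$, where $\gk{\Omega_{\ell}}\le\Gamma_s$ and $\int_K|x-y|^{2s-n}\,dx$ is bounded uniformly in $y$, and its complement, where \eqref{eqn greens estimatte on unbounded domain} and the triangle inequality give a factor $\rho^{2s-n}$. This yields $\int_K u^{f}_{\ell}\le C(n,s,K)\|f\|_{L^{1}(\Omega_{\infty},\delta^{s}_{\Omega_{\infty}})}$, and the paper then passes to the limit by monotone convergence.

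That route uses only the weighted $L^{1}$ part of \ref{F1}. It is the cylinder analogue of the continuity $L^{1}(\Omega,\delta^{s}_{\Omega})\to L^{1}_{loc}(\Omega)$ and would survive dropping $f\in L^{q}_{loc}$. Your argument instead relies on the $L^{q}_{loc}$, $q>n/2s$, hypothesis through the preceding lemma. In exchange it gives the stronger $L^{\infty}_{loc}$ conclusion in two lines, which is how the paper itself handles $u^{g}$ in Corollary \ref{prop v is l1 with g}.

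Your remark on the sign of $f$ is well taken. The paper's proof also silently uses $f\ge 0$: it asserts $u^{f}_{\ell}\ge0$ from the representation and then applies monotone convergence. Your dominated-convergence fallback, with majorant $\gk{\Omega_{\infty}}(x,\cdot)|f|$, removes this assumption. It also identifies $u^{f}=\g{\Omega_{\infty}}(f)$ pointwise, so the next theorem follows immediately from Theorem \ref{thrm rep of u on any domain}.

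Two small precision points:
\begin{itemize}
\item Lemma \ref{lemma greens seq is decreasing} is stated for bounded domains. The inequality $\gk{\Omega_{\ell}}\le\gk{\Omega_{\infty}}$ is better justified by the monotone convergence $\gk{\Omega_{\ell}}(x,\cdot)\uparrow\gk{\Omega_{\infty}}(x,\cdot)$ from Lemma \ref{lemma convergence of greens kernel}, as used in Lemma \ref{Lemma local uniform convergence of solution operator}.
\item Lemma \ref{comparing u ell} gives monotonicity only a.e.\ for each pair $\ell_1<\ell_2$. For $f\ge0$, pointwise monotonicity at every $x$ follows directly from the representation, since $\chi_{\Omega_{\ell}}\gk{\Omega_{\ell}}(x,\cdot)$ increases in $\ell$.
\end{itemize}
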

\begin{proof}
    Due to the representation of the solution through the Green's kernel, $\ul^{f}\geq0$ for all $\ell>0$ and hence $u^{f}\geq 0$. Let $K\subset \Omega_{\infty}$ be compact, choose $\ell_{0}$ such that $K\subset \Omega_{\ell_{0}}$ and $\delta_{\Omega_{\ell_{0}}}=\delta_{\Omega_{\infty}}$ on $K$, then for any $\ell\geq \ell_{0}$ 
    \begin{equation*}
        \int_{K}|\ul^{f}(x)|\ dx=\int_{K}\ul^{f}(x)\ dx \leq \int_{K} \int_{\Omega_{\ell}}\gk{\Omega_{\ell}}(x,y) f_{\ell}(y)\ dy dx .
    \end{equation*}n
        Consider $\rho=\rho(K)>0$ such that $K_{\rho}:=\{x\in \Omega_{\ell_{0}} ~|~ 0\leq d(x,K)\leq \rho\} \subset \Omega_{\ell_{0}}$. Rewriting the above integral , we obtain 
        \begin{equation*}
            \begin{split}
                 \int_{K} \int_{\Omega_{\ell}}\gk{\Omega_{\ell}}(x,y) f_{\ell}(y) \ dydx 
                &=\int_{K}\left\{\int_{K_{\rho}}+\int_{\Omega_{\ell}\setminus K_{\rho}}\right\}\gk{\Omega_{\ell}}(x,y) f_{\ell}(y) \ dydx\\
            \end{split}
        \end{equation*}
        First using $\gk{\Omega_{\ell}(x,y)}\leq \Gamma_{s}(x-y)$ in the first integral; then multiplying and dividing by $\delta_{\Omega_{\infty}}^{s}(y)$ to obtain
        \begin{equation*}
            \begin{split}
                \int_{K}\int_{K_{\rho}}\gk{\Omega_{\ell}}(x,y) f_{\ell}(y)\ dxdy &\leq \frac{C(n,s, K)}{d(K_{\rho},\partial\Omega_{\infty})^{s}}\int_{K_{\rho}}f(y)\delta_{\Omega_{\infty}}^{s}(y)\int_{K}\frac{1}{|x-y|^{n-2s}}\ dx dy\\
                &\leq C(n,s,K)\int_{K_{\rho}}f(y)\delta_{\Omega_{\infty}}^{s}(y)\ dy.
            \end{split}
        \end{equation*}
        Using \eqref{eqn greens estimatte on unbounded domain} and \eqref{eqn triangle inequality} in the second integral to obtain
        \begin{equation*}
            \begin{split}
              \int_{K}\int_{\Omega\setminus K_{\rho}}\gk{\Omega_{\ell}}(x,y) f_{\ell}(y)\ dxdy &\leq \int_{K}\int_{\Omega\setminus K_{\rho}} |x-a_y|^{s}\frac{\delta_{\Omega_{\ell}}^{s}(y)f(y)}{|x-y|^{n-s}}\ dxdy\\
              &\leq \frac{C(n,s,K)}{\rho^{n-2s}}\int_{\Omega\setminus K_{\rho}}f(y)\delta_{\Omega_{\infty}}^{s}(y)\ dy
            \end{split}
        \end{equation*}
       Thus,
       \begin{equation*}
           \int_{K}|\ul^{f}(x)|\ dx\leq C(n,s,K)\|f\|_{L^{1}(\Omega_{\infty},\delta^{s}_{\Omega_{\infty}})}
       \end{equation*}
         By the monotone convergence theorem, we deduce 
        \begin{equation*}
            \int_{K}|u^{f}(x)|\ dx \leq C(n,s,K)||f||_{L^{1}(\Omega_{\infty},\delta^{s}_{\Omega_{\infty}})}.\qedhere
        \end{equation*}
\end{proof}

\begin{theorem}
    Assume \ref{F1} on $\Omega=\Omega_{\infty}$, then $u^{f}$ defined in \eqref{dfn of uf in linear case} solves \eqref{linear eqn on infinite cylinder} with data $(f,0,0)$.
\end{theorem}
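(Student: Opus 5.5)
We show that $u^{f}$ satisfies the weak dual identity \eqref{weak dual dfn with f} on $\Omega_{\infty}$ with $g=0$ and $h=0$. That is, for every $\psi\in L^{\infty}_{c}(\Omega_{\infty})$ we prove
\begin{equation*}
\int_{\Omega_{\infty}}u^{f}\psi\,dx=\int_{\Omega_{\infty}}\g{\Omega_{\infty}}(\psi)(x)f(x)\,dx .
\end{equation*}
Fix such a $\psi$ and choose $\ell_{0}$ so that $\mathrm{Supp}(\psi)\subset\subset\Omega_{\ell}$ for all $\ell\geq\ell_{0}$. Since $\ul^{f}$ is the weak dual solution on $\Omega_{\ell}$ with data $(f_{\ell},0,0)$,
\begin{equation*}
\int_{\Omega_{\ell}}\ul^{f}\psi\,dx=\int_{\Omega_{\infty}}\chi_{\Omega_{\ell}}(x)\,\g{\Omega_{\ell}}(\psi)(x)f(x)\,dx .
\end{equation*}
The proof consists of passing to the limit $\ell\to\infty$ on both sides.

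\emph{Left-hand side.} The sequence $\ul^{f}$ is nondecreasing in $\ell$ by Lemma \ref{comparing u ell}, and it is nonnegative when $f\geq0$. The preceding lemma gives the local $L^{1}$ bound $\int_{K}|\ul^{f}|\le C(n,s,K)\|f\|_{L^{1}(\Omega_\infty,\delta^s_{\Omega_\infty})}$, and $u^{f}\in L^{1}_{loc}(\Omega_\infty)$. Since $|\ul^{f}\psi|\le u^{f}|\psi|$ is integrable, monotone or dominated convergence on the compact set $\mathrm{Supp}(\psi)$ gives $\int\ul^{f}\psi\to\int u^{f}\psi$.

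For $f$ of arbitrary sign, split $f=f^{+}-f^{-}$. By linearity and uniqueness (Theorem \ref{thrm rep of u on any domain} on each $\Omega_{\ell}$), $\ul^{f}=\ul^{f^{+}}-\ul^{f^{-}}$, and each part is monotone and locally $L^{1}$-bounded. So it suffices to treat $f\ge0$, which I assume from now on.

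\emph{Right-hand side.} Pointwise, $\chi_{\Omega_{\ell}}\g{\Omega_{\ell}}(\psi)\to\g{\Omega_{\infty}}(\psi)$ by Lemma \ref{Lemma local uniform convergence of solution operator}. This is monotone for $\psi\ge0$, because $\gk{\Omega_\ell}\uparrow\gk{\Omega_\infty}$ by Lemmas \ref{lemma greens seq is decreasing} and \ref{lemma convergence of greens kernel}. For a general $\psi$ we need a domination independent of $\ell$. Lemma \ref{lemma greens seq is decreasing} gives
\begin{equation*}
|\g{\Omega_{\ell}}(\psi)|\le\g{\Omega_{\ell}}(|\psi|)\le\g{\Omega_{\infty}}(|\psi|),
\end{equation*}
and by Theorem \ref{theorem greens poteantial continuity on unbounded domains}, $\g{\Omega_{\infty}}(|\psi|)\le C\,\delta_{\Omega_\infty}^{s}$ on $\Omega_\infty$. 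The dominating function is therefore $C\delta_{\Omega_\infty}^{s}f\in L^{1}(\Omega_\infty)$ by \ref{F1}, and dominated convergence yields the convergence of the right-hand side to $\int_{\Omega_\infty}\g{\Omega_\infty}(\psi)f\,dx$. Equating the two limits proves the identity.

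The step needing most care is this uniform-in-$\ell$ domination. The subtle point is the use of the unbounded-domain estimate $\g{\Omega_\infty}(|\psi|)\in\delta^{s}_{\Omega_\infty}L^\infty$ from Theorem \ref{theorem greens poteantial continuity on unbounded domains}, combined with the monotonicity of Green's kernels in the domain. Because the domination is by the $\Omega_\infty$ potential, we never need estimates on the finite cylinders that are uniform in $\ell$. Uniqueness follows as in Theorem \ref{thrm linear existence with g}.
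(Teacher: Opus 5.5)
Your proof is correct and follows the paper's argument: you pass to the limit $\ell\to\infty$ in the weak dual identity on $\Omega_\ell$, using the local $L^1$ bound on the left-hand side and dominated convergence with majorant $C\delta_{\Omega_\infty}^{s}|f|$ on the right-hand side. The differences are minor improvements. You get the majorant from $\gk{\Omega_\ell}\le\gk{\Omega_\infty}$ and the $L^\infty_c\to\delta^{s}L^\infty$ bound on the fixed domain $\Omega_\infty$, whereas the paper splits into $x\in K$ and $x\notin K$ and estimates on each $\Omega_\ell$. Your splitting $f=f^{+}-f^{-}$ also makes explicit the sign assumption the paper tacitly uses when it asserts that $u_\ell^{f}\ge 0$ is nondecreasing.
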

\begin{proof}
    For $\psi\in L^{\infty}_{c}(\Omega_{\infty})$ and $\ell_{0}>0$ such that Supp$(\psi)\subset \Omega_{\ell_{0}}$, by dominated convergence theorem and the compact support of $\psi$, we have
\begin{equation*}
\begin{split}
    \int_{\Omega_{\infty}}u^{f}\psi=\lim_{\ell\to\infty}\int_{\Omega_{\ell}}\ul^{f} \psi=\lim_{\ell\to\infty}\int_{\Omega_{\infty}}\g{\Omega_{\ell}}(\psi)(x)f_{\ell}(x)\chi_{\Omega_{\ell}}\ dx .
    \end{split}
\end{equation*}
To use the dominated convergence theorem again, consider
\begin{equation*}
        |\g{\Omega_{\ell}}(\psi)(x) f_{\ell}(x)\chi_{\Omega_{\ell}}(x)|\leq f(x)\int_{\text{Supp}(\psi)}\gk{\Omega_{\ell}}(x,y) \psi(y)\ dy .
   \end{equation*}
   Choose a $ K$ such that Supp$(\psi)\subset\subset K\subset\subset \Omega_{\infty}$. For $x\in K$, since $\delta_{\Omega_{\infty}}(x)\geq d(K,\partial\Omega_{\infty})$, using $\gk{\Omega_{\ell}(x,y)}\leq \Gamma_{s}(x-y)$, we infer
   \begin{equation*}
       \int_{\text{Supp}(\psi)}\gk{\Omega_{\ell}}(x,y)\ \psi(y) dy \leq a(n,s)\|\psi\|_{L^{\infty}(\Omega_{\infty})}\frac{\delta_{\Omega_{\infty}}^{s}(x)}{\delta_{\Omega_{\infty}}^{s}(x)}\int_{\text{Supp}(\psi)}\frac{dy}{|x-y|^{n-2s}}\leq C(n,s,\psi)||\psi||_{L^{\infty}(\Omega_{\infty})} \delta_{\Omega_{\infty}}^{s}(x).
   \end{equation*}
   For $x\notin K$, using \eqref{eqn greens estimatte on unbounded domain}, we have
   \begin{equation*}
       \int_{\text{Supp}(\psi)}\gk{\Omega_{\ell}}(x,y)\ \psi(y) dy \leq \delta_{\Omega_{\ell}}^{s}(x)||\psi||_{L^{\infty}}\int_{\text{Supp}(\psi)}\frac{1}{|x-y|^{n-2s}}\ dy \leq C(n,s,\psi)||\psi||_{L^{\infty}}\delta_{\Omega_{\infty}}^{s}(x).
   \end{equation*}
   Thus, 
   \begin{equation*}
        |\g{\Omega_{\ell}}(\psi)(x) f_{\ell}(x)\chi_{\Omega_{\ell}}(x)|\leq C(n,s,\psi) f(x)\delta_{\Omega_{\infty}}^{s}(x).
   \end{equation*}
   Here, the right hand side is an $L^{1}$ function as $f\in L^{1}(\Omega_{\infty},\delta^{s})$.  Thus, by Lemma \ref{Lemma local uniform convergence of solution operator}
\begin{equation*}
   \int_{\Omega_{\infty}}u^{f}(x) \psi(x)\ dx =\int_{\Omega_{\infty}} \g{\Omega_{\infty}}(\psi)(x) f(x)\ dx.\qedhere
\end{equation*}
\end{proof}

\subsubsection{\textbf{Blow up by the exterior data}}\label{subsection_by exterior data}

We construct large solutions on the infinite cylinder corresponding to the data $(0,g,0)$, where $g$ exhibits blow-up behaviour at the boundary. Prior to this, we establish that when $g$ blows up on an open connected subset of the boundary of a bounded domain $\Omega$, then the corresponding solution also blows up on the same subset. Accordingly we assume 

\begin{enumerate}
    \myitem{$\textbf{(G2)}$}\label{G2}  \textit{For an open connected subset $T$ of $\partial\Omega$
    \begin{equation*}\label{bdy blowup assumption on g}
    \lim_{\substack{x \to x_{0} \\ x\in\R^{n}\setminus\Omega}} g(x)=+\infty, \quad \text{for all } x_{0} \in T.
\end{equation*}}
\end{enumerate}

When $T=\partial\Omega$, the solution to \eqref{eqn main linear omega} with data $(0,g,0)$ belongs to $C(\overline{\Omega})$ as established in \cite[Theorem 1.2]{Abatangelo2015LargesHarmonic}. The argument given is local in nature, using which, have the following:

\begin{theorem}\label{thrm continuity up to the boundary}
    Let $\Omega$ be a bounded domain, $g$ satisfies \ref{G1}. Assume that there exists an open set $\mathcal{O}\subset\R^{n}$ such that $\mathcal{O}\cap \partial\Omega=T$ and $g \in C(\mathcal{O}\cap(\R^{n}\setminus\Omega))$. If $u$ solution of \eqref{eqn main linear omega} with the data $(0,g,0)$, then $u\in C(\Omega\cup T)$.
\end{theorem}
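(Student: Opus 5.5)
The plan is to localize the argument of \cite[Theorem 1.2]{Abatangelo2015LargesHarmonic}. By Theorem \ref{thrm linear existence with g}, the solution is
\begin{equation*}
u(x)=-\int_{\R^{n}\setminus\overline{\Omega}}\slap\gk{\Omega}(x,y)g(y)\,dy,\qquad x\in\Omega .
\end{equation*}
Continuity in the interior of $\Omega$ follows from the continuity of $\gk{\Omega}$ off the diagonal (Proposition \ref{prop greens kernal continuity}(i)). It also uses the local uniform Poisson bound of Theorem \ref{theorem poission behaviour}, which gives a dominating function $C(K)|g|\delta_{\Omega}^{-s}$ near $\partial\Omega$. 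Far from $\Omega$ we use the bound \eqref{asym of poison on bounded domain}, of order $|g|\delta_\Omega^{-n-2s}$, which is integrable by \ref{G1}. Dominated convergence then gives continuity in $\Omega$. So the real task is continuity at a point $x_{0}\in T$, where $u$ must take the value $g(x_{0})$.

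Fix $x_{0}\in T$ and $\varepsilon>0$. Choose $r>0$ small so that $B_{2r}(x_{0})\subset\mathcal{O}$ and $|g(y)-g(x_{0})|<\varepsilon$ for $y\in B_{2r}(x_{0})\setminus\overline{\Omega}$. This is possible by the assumed continuity of $g$ on $\mathcal{O}\cap(\R^{n}\setminus\Omega)$ (interpreting $g(x_0)$ as the boundary limit). Split $g=g_{1}+g_{2}$ with $g_{1}=g\chi_{B_{2r}(x_{0})}$ and $g_{2}=g\chi_{\R^{n}\setminus B_{2r}(x_{0})}$, and write $u=u_{1}+u_{2}$ for the corresponding solutions.

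For $u_{1}$, we use that the harmonic measure has total mass one:
\begin{equation*}
-\int_{\R^{n}\setminus\overline{\Omega}}\slap\gk{\Omega}(x,y)\,dy=1 .
\end{equation*}
This holds since the constant $1$ solves the problem with data $(0,1,0)$, by uniqueness on bounded domains. Then
\begin{equation*}
u_{1}(x)-g(x_{0}) = -\int \slap\gk{\Omega}(x,y)\,\big(g_{1}(y)-g(x_{0})\big)\,dy ,
\end{equation*}
and we split this integral over $B_{2r}(x_{0})$ and its complement. The part over $B_{2r}(x_0)$ is at most $\varepsilon$. The part over the complement is bounded by $|g(x_{0})|$ times the harmonic measure of $\R^{n}\setminus B_{2r}(x_{0})$ seen from $x$.

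For $u_{2}$, and for that harmonic measure term, the key estimate is the upper bound in \eqref{asym of poison on bounded domain}. For $x\in B_{r}(x_{0})\cap\Omega$ and $|y-x_{0}|\ge 2r$ we have $|x-y|\ge |y-x_0|/2$. Therefore
\begin{equation*}
|u_{2}(x)|\le C\,\delta_{\Omega}^{s}(x)\int_{\R^{n}\setminus(\overline{\Omega}\cup B_{2r}(x_{0}))}\frac{|g(y)|}{\delta_{\Omega}^{s}(y)(1+\delta_{\Omega}(y))^{s}|y-x_{0}|^{n}}\,dy .
\end{equation*}
The integral is finite by \ref{G1}: the factor $|y-x_0|^{-n}\le r^{-n}$ near $\Omega$, and it is comparable to $\delta_\Omega^{-n}$ at infinity. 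Hence $u_{2}(x)\to0$ as $x\to x_{0}$, and the same argument with $g\equiv1$ sends the leftover harmonic measure term to $0$. Taking the limsup gives $\limsup_{x\to x_{0}}|u(x)-g(x_{0})|\le\varepsilon$. Combined with interior continuity, this yields $u\in C(\Omega\cup T)$.

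The main obstacle is justifying the tools on a general bounded $\Omega$. The two-sided kernel bound \eqref{asym of poison on bounded domain} is stated for $C^{1,1}$ domains, so I would assume the standing $C^{1,1}$ hypothesis, which is needed anyway for the Green estimates. The unit-mass identity needs $g\equiv1$ to be admissible, and it is, since $\min\{\delta^{-s},\delta^{-n-2s}\}$ is integrable. One also has to control the boundary case $x_0\in\partial T$ relative to $\mathcal{O}$; this is harmless because $T$ is open in $\partial\Omega$ and $\mathcal{O}$ is open, so a small ball around $x_0$ stays inside $\mathcal{O}$.
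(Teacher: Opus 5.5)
Your proposal is correct and follows the paper's argument. The unit mass of the Poisson kernel reduces $u(x)-g(x_{0})$ to two pieces: a near-field integral bounded by $\varepsilon$, and a far-field integral which the upper bound \eqref{asym of poison on bounded domain}, together with \ref{G1}, controls by $C\delta_{\Omega}^{s}(x)\to 0$; your splitting $g=g_{1}+g_{2}$ is only different bookkeeping. Two of your details make precise points the paper passes over quickly: taking $x\in B_{r}(x_{0})$ and $|y-x_{0}|\ge 2r$, which gives the lower bound $|x-y|\ge |y-x_{0}|/2$ that the far-field estimate needs, and checking interior continuity separately.
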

\begin{proof}
    We show that as $x\to x_{0}\in T$, $u(x)\to g(x_{0})$, where $u$ is given in \eqref{representation with g}.
    Let $x\in \Omega$ and $x_{0}\in T$, given $\epsilon>0$, then there is a $\rho>0$ such that $B_{\rho}(x_{0})\subset \mathcal{O}$ and for all $y\in B_{\rho}(x_{0})\cap (\R^{n}\setminus \Omega)$ $|g(y)-g(x_{0})|<\epsilon/2$. Using the fact that the integral of the Poisson kernel is $1$, we can write
    \begin{equation*}
            |u(x)-g(x_{0})|\leq \int_{(\R^{n}\setminus\overline{\Omega}) \cap B_{\rho}(x_{0})} |g(y)-g(x_{0})| |\slap \gk{\Omega}(x,y)|\ dy+\int_{(\R^{n}\setminus\overline{\Omega})\setminus B_{\rho}(x_{0})} |g(y)-g(x_{0})| |\slap \gk{\Omega}(x,y)|\ dy.
    \end{equation*}
    The first integral is bounded by $\epsilon$ as
    \begin{equation*}
        \int_{(\R^{n}\setminus\overline{\Omega}) \cap B_{\rho}(x_{0})} |g(y)-g(x_{0})| |\slap \gk{\Omega}(x,y)|\ dy\leq \epsilon \int_{\R^{n}\setminus\overline{\Omega}}-\slap \gk{\Omega}(x,y)\ dy=\epsilon.
    \end{equation*}
    The above inequality holds for any $x\in \Omega$. Before dealing with the second integral choose $x\in B_{\rho}(x_{0})\cap \Omega$. Using \eqref{asym of poison on bounded domain} we write
    \begin{equation*}
        \int_{(\R^{n}\setminus\overline{\Omega})\setminus B_{\rho}(x_{0})} |g(y)-g(x_{0})| |\slap \gk{\Omega}(x,y)|\ dy\leq C\delta_{\Omega}^{s}(x)\int_{(\R^{n}\setminus\overline{\Omega})\setminus B_{\rho}(x_{0})}\frac{|g(y)-g(x_{0})|}{\delta_{\Omega}^{s}(y)(1+\delta_{\Omega}(y))^{s}|x-y|^{n}}\ dy \leq C\delta_{\Omega}^{s}(x).
    \end{equation*}
    Indeed, up to some constant we have $\delta_{\Omega}(y)\leq C |x-y|$, which implies
    \begin{equation*}
        \int_{(\R^{n}\setminus\overline{\Omega})\setminus B_{\rho}(x_{0})}\frac{|g(y)|}{\delta_{\Omega}^{s}(y)(1+\delta_{\Omega}(y))^{s}|x-y|^{n}}\ dy \leq \int_{(\R^{n}\setminus\overline{\Omega})\setminus B_{\rho}(x_{0})}g(y)\min\{\delta_{\Omega}^{-s},\delta_{\Omega}^{-n-2s}(y)\}\ dy <\infty,
    \end{equation*}
    and 
    \begin{equation*}
        \int_{(\R^{n}\setminus\overline{\Omega})\setminus B_{\rho}(x_{0})}\frac{|g(x_{0})|}{\delta_{\Omega}^{s}(y)(1+\delta_{\Omega}(y))^{s}|x-y|^{n}}\ dy\leq g(x_{0})\int_{(\R^{n}\setminus\overline{\Omega})\setminus B_{\rho}(x_{0})}\min\{\delta_{\Omega}^{-s},\delta_{\Omega}^{-n-2s}(y)\ dy<\infty.
    \end{equation*}
    Now we choose $r>0$ such that for all $x\in B_{r}(x_{0})\cap \Omega$, $\delta_{\Omega}^{s}(x)\leq \epsilon/2C$. Then choosing $\min\{\rho,r\}$ we have
    \begin{equation*}
        |u(x)-g(x_{0})|<\epsilon.
    \end{equation*}
    The conclusion follows as $\epsilon>0$ is arbitrary. 
\end{proof}

\begin{theorem}\label{thrm blow up of solution where g is}
    Let $\Omega$ be a bounded domain. Assume  \ref{G1} and \ref{G2}. 
    If $u$ is the solution of \eqref{eqn main linear omega} with the data $(0,g,0)$, then $u(x)\to \infty$ as $x\to x_{0}$, for all $x_{0}\in T$.
\end{theorem}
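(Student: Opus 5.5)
The plan is a truncation argument, so that we can invoke Theorem \ref{thrm continuity up to the boundary} together with the comparison principle, Lemma \ref{lemma comparison principle}. Fix $x_{0}\in T$ and a level $M>0$. By \ref{G2} there is $\rho>0$ such that $g(y)\geq M$ for all $y\in B_{\rho}(x_{0})\cap(\R^{n}\setminus\Omega)$. We may take $\rho$ so small that $B_{\rho}(x_{0})\cap\partial\Omega\subset T$.

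Next we build a truncated exterior datum
\begin{equation*}
g_{M}(y):=\min\{g(y),M\}\,\chi_{B_{\rho}(x_{0})}(y)-g^{-}(y)\,\chi_{\R^{n}\setminus B_{\rho}(x_{0})}(y),\qquad y\in\R^{n}\setminus\overline{\Omega}.
\end{equation*}
Then $g_{M}\leq g$ everywhere, and $g_{M}$ satisfies \ref{G1} because $|g_{M}|\leq|g|$. Moreover $g_{M}\equiv M$ on $B_{\rho}(x_{0})\cap(\R^{n}\setminus\Omega)$, so it is continuous on $\mathcal{O}\cap(\R^{n}\setminus\Omega)$ with $\mathcal{O}:=B_{\rho}(x_{0})$, and $\mathcal{O}\cap\partial\Omega$ is an open subset of $T$.

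Let $u_{M}$ be the solution with data $(0,g_{M},0)$ given by \eqref{representation with g}. The comparison principle, Lemma \ref{lemma comparison principle}, gives $u\geq u_{M}$ a.e. in $\Omega$. For a pointwise statement we use the representation directly instead: $u-u_{M}=-\int\slap\gk{\Omega}(x,y)(g-g_{M})(y)\,dy\geq 0$ for every $x\in\Omega$, since the Poisson kernel $-\slap\gk{\Omega}(x,\cdot)$ is nonnegative by Proposition \ref{prop greens kernal continuity}(iii). By Theorem \ref{thrm continuity up to the boundary} applied with $T$ replaced by $\mathcal{O}\cap\partial\Omega$, we get $u_{M}(x)\to g_{M}(x_{0})=M$ as $x\to x_{0}$. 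Therefore $\liminf_{x\to x_{0}}u(x)\geq M$, and since $M$ is arbitrary, $u(x)\to+\infty$.

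The main obstacle is applying Theorem \ref{thrm continuity up to the boundary} correctly. Its proof evaluates $g$ at $x_{0}$, so the datum must be continuous and finite near $x_{0}$, which the truncation ensures. The proof also uses that the Poisson kernel integrates to $1$ and the estimate \eqref{asym of poison on bounded domain}, so we must confirm that $\Omega$ is bounded and $C^{1,1}$, as assumed throughout. One further point: in that proof $g(x_{0})$ must be understood as the limit value $M$ from the exterior side, since $x_{0}\in\partial\Omega$. Our $g_{M}$ extends continuously to $x_{0}$ with value $M$, so the argument there applies verbatim.
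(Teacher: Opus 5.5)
Your proposal is correct and follows essentially the same route as the paper: truncate $g$ at a finite level, use Theorem \ref{thrm continuity up to the boundary} to see that the truncated solution attains that level at $x_{0}$, and compare with $u$ through the nonnegativity of the Poisson kernel. The differences are cosmetic. You localize to a ball around a single $x_{0}$ with one level $M$ and compare pointwise via the representation formula, while the paper uses the global truncations $\min\{k,g\}$ together with Lemma \ref{lemma comparison principle}; the paper's extra monotone-convergence step is not needed for the conclusion.
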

\begin{proof}
    Let $k$ be an integer and $u_{k}$ be the solution of \eqref{eqn main linear omega} with $(0,g_{k},0)$ where $g_{k}:=\min\{k,g\}$. By the above theorem, $u_{k}\in C(\Omega\cup T)$. Thus $u_{k}(x)=k$ for all $x\in T$. By Lemma \ref{lemma comparison principle}, $\{u_{k}(x)\}_{k}$ is an increasing sequence and $u_{k}(x)\leq u(x)$, for all $x\in \Omega$. By the monotone convergence theorem 
    \begin{equation*}
        \lim_{k\to \infty}u_{k}(x)=-\int_{\R^{n}\setminus\overline{\Omega}}\slap\gk{\Omega}(x,y) g(y)\ dy= u(x).
    \end{equation*}
    Further, for any $\theta\in \partial\Omega$, one has
    \begin{equation*}
        \liminf_{\substack {x\to \theta\\ x\in \Omega}} u_{k}(x)\leq \liminf_{\substack{x\to \theta\\ x\in \Omega}} u(x).
    \end{equation*}
    In particular, for $x_{0}\in T$, we get $u(x)\to \infty$ as $x\to x_{0}$.
\end{proof}
We turn to the construction of large solutions on an infinite cylinder generated by exterior data.
\begin{theorem}
    Let $g$ satisfies \ref{G} and \ref{G2} on $\Omega=\Omega_{\infty}$ and $T=\partial\Omega_{\infty}$. If $u$ is the solution of \eqref{linear eqn on infinite cylinder} with $(0,g,0)$, then $u(x)\to \infty$ as $x\to \theta\in \partial\Omega_{\infty}$. 
\end{theorem}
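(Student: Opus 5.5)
The plan is to reduce the statement to the bounded-domain result, Theorem \ref{thrm blow up of solution where g is}, via the finite cylinders $\Omega_\ell$ and the monotonicity already established. By Theorem \ref{thrm linear existence with g}, the solution on $\Omega_\infty$ is
\[
u(x)=-\int_{\R^n\setminus\overline{\Omega}_\infty}\slap\gk{\Omega_\infty}(x,y)\,g(y)\,dy .
\]
Let $u_\ell^g$ be the solution on $\Omega_\ell$ with data $(0,g_\ell,0)$, where $g_\ell$ is as in \eqref{dfn of g ell and h ell}. The first step is the pointwise comparison $u_\ell^g\le u$ on $\Omega_\ell$. One obtains it exactly as in Lemma \ref{comparing u ell}, with $\Omega_{\ell_2}$ replaced by $\Omega_\infty$: test with $\psi\ge 0$ supported in $\Omega_{\ell}$ and use $\g{\Omega_\ell}(\psi)\le\g{\Omega_\infty}(\psi)$, hence $-\slap\g{\Omega_\ell}(\psi)\le -\slap\g{\Omega_\infty}(\psi)$ on the exterior of $\Omega_\infty$. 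This requires $g\ge0$, at least near the boundary. In general, split $g=g^+-g^-$. The contribution of $g^-$ is harmless near $\partial\Omega_\infty$, since $g\to+\infty$ there forces $g^-$ to vanish in a neighbourhood of the boundary, and Lemma \ref{lemma bound of the sln ug} combined with the bound \eqref{asym of poison on bounded domain}-type decay away from the support keeps it bounded near $\theta$. Since both functions are continuous in $\Omega_\ell$ via the Poisson representation, the a.e. inequality upgrades to a pointwise one.

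The second step fixes $\theta\in\partial\Omega_\infty$ and chooses $\ell_0$ so large that $\theta$ lies in the interior of the flat lateral part $\partial\Omega_{\ell_0}\cap\partial\Omega_\infty$. Let $T$ be an open connected piece of $\partial\Omega_{\ell_0}\cap\partial\Omega_\infty$ containing $\theta$. Near $T$, the exterior of $\Omega_{\ell_0}$ coincides locally with the exterior of $\Omega_\infty$, because $\Omega_{\ell_0}$ and $\Omega_\infty$ agree in a neighbourhood of $\theta$. Hence $g_{\ell_0}=g\to+\infty$ at every point of $T$, and \ref{G2} holds for $\Omega_{\ell_0}$ with this $T$.

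One must also check \ref{G1} for $g_{\ell_0}$ on the bounded domain $\Omega_{\ell_0}$. The part of $g_{\ell_0}$ on $\Omega_\infty\setminus\Omega_{\ell_0}$ is zero. On $\R^n\setminus\overline{\Omega}_\infty$ we have $\delta_{\Omega_{\ell_0}}\ge\delta_{\Omega_\infty}$, so $\min\{\delta_{\Omega_{\ell_0}}^{-s},\delta_{\Omega_{\ell_0}}^{-n-2s}\}\le\delta_{\Omega_\infty}^{-s}$, and \ref{G} gives integrability.

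Theorem \ref{thrm blow up of solution where g is} then yields $u^g_{\ell_0}(x)\to\infty$ as $x\to\theta$. By the comparison of the first step, $u(x)\ge u^g_{\ell_0}(x)$ for $x\in\Omega_{\ell_0}$ near $\theta$, so $u(x)\to\infty$.

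The main obstacle is making the comparison $u_{\ell}^g\le u$ rigorous pointwise near the boundary when $g$ may change sign far away. My first attempt would be to take $g\ge0$ via the positive/negative splitting above. If that split is delicate, an alternative is to replace $g$ by $\max\{g,0\}$ only outside a neighbourhood of $\theta$, and control the difference by the local bound of the Poisson kernel from Theorem \ref{theorem poission behaviour}, which is uniform for $x$ near $\theta$ provided the modification is supported away from $\theta$.
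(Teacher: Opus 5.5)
Your proof is correct and is essentially the paper's argument. The paper also shows that $u$ is a supersolution of \eqref{solution on omega ell} on $\Omega_\ell$, using $-\slap\g{\Omega_\ell}(\psi)\le-\slap\g{\Omega_\infty}(\psi)$ on $\R^n\setminus\overline{\Omega}_\infty$, then deduces $u^g_\ell\le u$ from Lemma \ref{lemma comparison principle} and transfers the lateral blow-up of $u^g_\ell$ given by Theorem \ref{thrm blow up of solution where g is}.

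Your check of \ref{G1} for $g_\ell$ and your treatment of a sign-changing $g$ go beyond the paper, which tacitly assumes $g\ge 0$ in that inequality. For the $g^-$ part, note that Lemma \ref{lemma bound of the sln ug} and Theorem \ref{theorem poission behaviour} are only uniform on compact subsets of $\Omega_\infty$. The bound on $u^{g^-}$ near $\theta$ should instead come from a Poisson-kernel estimate for $x$ near $\theta$ and $y$ away from $\theta$, which follows from \eqref{eqn greens estimatte on unbounded domain} together with the symmetry of $\gk{\Omega_\infty}$.
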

\begin{proof}
    Let $u_{\ell}$ be the solution of \eqref{solution on omega ell} with $(0,g_{\ell},0)$. since $g_{\ell}(y)\to \infty$ as $y\to \theta\in \partial\Omega_{\ell}\cap \partial\Omega_{\infty}$ by Theorem \ref{thrm blow up of solution where g is}, $u_{\ell}(x)\to \infty$ as $x\to \theta\in \partial\Omega_{\ell}\cap \partial\Omega_{\infty}$. 
    Further, for any  $\psi
    \in L_{c}^{\infty} (\Omega_{\ell})$, one can write
    \begin{equation*}
        \begin{split}
    \int_{\Omega_{\ell}}u(x)\psi(x)\ dx=\int_{\Omega_{\infty}}u(x)\psi(x)\ dx&=- \! \int_{\rn\setminus\overline{\Omega}_{\infty}}g(y)\slap \g{\Omega_{\infty}}(\psi)(y)\ dy 
    \geq - \! \int_{\rn\setminus\overline{\Omega}_{\infty}}g(y)\slap \g{\Omega_{\ell}}(\psi)(y)\ dy.
    \end{split}
    \end{equation*}
    Since $g_{\ell}=0$ on $\Omega_{\infty}\setminus\Omega_{\ell}$, we infer that $u$ is a super solution of the equation \eqref{solution on omega ell}. By Lemma \ref{lemma comparison principle}, $\ul\leq u$ on $\Omega_{\ell}$, for all $\ell>0$. 

    Given $\theta\in \partial\Omega_{\infty}$, choose $\ell>0$ such that $\theta\in \partial\Omega_{\ell}$, then 
    \begin{equation*}
        \lim_{\substack{x\to\theta\\ x\in \Omega_{\ell}}}\ul(x)\leq \liminf_{\substack{x\to\theta\\ x\in \Omega_{\ell}}}u(x).
    \end{equation*}
    Hence the result follows.
\end{proof}

We now consider $g$ satisfying \ref{G1} on $\Omega=\Omega_{\infty}$. Let $\ul^{g}$ denote the solution of \eqref{solution on omega ell} with the data $(0,g,0)$. For any $x\in \Omega_{\infty}$, choose $\ell_{0}=\ell_{0}(x)>0$ such that $x\in \Omega_{\ell_{0}}$. By Lemma \ref{lemma comparison principle} the sequence $\{u^{g}_{\ell}(x)\}_{\ell\geq \ell_{0}}$ is increasing.  We further show that this sequence is locally uniformly bounded.
\begin{lemma}
    Assume \ref{G} on $\Omega=\Omega_{\infty}$. Let $\ul^{g}$ be the solution of \eqref{solution on omega ell} with the data $(0,g,0)$. For any compact set $K\subset \Omega_{\infty}$, there exists $\ell_{0}$ such that for all $\ell\geq \ell_{0}$ one has 
    \begin{equation}\label{eqn l infty bound for solution with g ell}
        \|\ul^{g}\|_{L^{\infty}(K)}\leq C(n,s,K)\int_{\R^{n}\setminus\overline{\Omega}_{\infty}}g(y) \delta_{\Omega_{\infty}}^{-s}(y)\ dy
    \end{equation}
\end{lemma}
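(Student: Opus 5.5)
We need a uniform-in-$\ell$ bound of $u_\ell^g$ on a compact $K\subset\Omega_\infty$. The plan is to combine the Poisson representation of Theorem \ref{thrm linear existence with g}, applied on the bounded domain $\Omega_\ell$, with the monotonicity of the Poisson kernels in $\ell$. The unbounded-domain barrier of Theorem \ref{theorem poission behaviour} for $\Omega_\infty$ then closes the estimate.

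\textbf{Representation and reduction to $\Omega_\infty$.} Choose $\ell_0$ so that $K\subset\subset\Omega_{\ell_0}$ and $d(K,\partial\Omega_{\ell_0})=d(K,\partial\Omega_\infty)$. For $\ell\ge\ell_0$ and $x\in K$, Theorem \ref{thrm linear existence with g} gives
\begin{equation*}
u_\ell^g(x)=-\int_{\R^n\setminus\overline{\Omega}_\ell}\slap\gk{\Omega_\ell}(x,y)\,g_\ell(y)\,dy=-\int_{\R^n\setminus\overline{\Omega}_\infty}\slap\gk{\Omega_\ell}(x,y)\,g(y)\,dy ,
\end{equation*}
because $g_\ell=0$ on $\Omega_\infty\setminus\Omega_\ell$. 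For $y\in\R^n\setminus\overline{\Omega}_\infty$, Proposition \ref{prop greens kernal continuity}(iii) gives
\begin{equation*}
-\slap\gk{\Omega_\ell}(x,y)=\mathcal{A}(n,s)\int_{\Omega_\ell}\frac{\gk{\Omega_\ell}(x,z)}{|y-z|^{n+2s}}\,dz .
\end{equation*}
By Lemma \ref{lemma greens seq is decreasing} together with Lemma \ref{lemma convergence of greens kernel} (the kernels increase to $\gk{\Omega_\infty}$), we have $0\le\gk{\Omega_\ell}(x,z)\le\gk{\Omega_\infty}(x,z)$. Hence
\begin{equation*}
0\le-\slap\gk{\Omega_\ell}(x,y)\le-\slap\gk{\Omega_\infty}(x,y)\qquad\text{for all } y\in\R^n\setminus\overline{\Omega}_\infty .
\end{equation*}
This is the pointwise analogue of the lemma comparing $-\slap\g{\Omega_1}(\psi)$ and $-\slap\g{\Omega_2}(\psi)$ for nested domains. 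The identical argument applies, since only nonnegativity of the integrand is used.

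\textbf{Uniform barrier.} Theorem \ref{theorem poission behaviour}, in its locally uniform form, applied on $\Omega_\infty$ (which has $C^{1,1}$ boundary and complement of positive measure) gives
\begin{equation*}
-\slap\gk{\Omega_\infty}(x,y)\le C(K,n,s)\,\delta_{\Omega_\infty}^{-s}(y)\qquad\text{for } x\in K .
\end{equation*}
Inserting this, and using $|g|$ in place of $g$ if the sign is not assumed, yields
\begin{equation*}
|u_\ell^g(x)|\le C(n,s,K)\int_{\R^n\setminus\overline{\Omega}_\infty}|g(y)|\,\delta_{\Omega_\infty}^{-s}(y)\,dy ,
\end{equation*}
which is finite by \ref{G}. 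Taking the supremum over $x\in K$ gives \eqref{eqn l infty bound for solution with g ell}, with the constant independent of $\ell$.

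\textbf{Main obstacle.} The delicate point is the kernel comparison, in particular verifying Lemma \ref{lemma greens seq is decreasing} when $\Omega_2=\Omega_\infty$ is unbounded. I would obtain it from the monotone limit in Lemma \ref{lemma convergence of greens kernel} rather than from the maximum principle. I would also check that the constant in Theorem \ref{theorem poission behaviour} depends only on $K$, via $r$ and $d(K,\partial\Omega_\infty)$, and not on the domain's diameter. This holds because that proof uses only the unbounded-domain estimate \eqref{eqn greens estimatte on unbounded domain} and the bound $|x-a_z|\le C|x-z|$, which is valid on $\Omega_\infty$ since $\delta_{\Omega_\infty}$ is bounded.
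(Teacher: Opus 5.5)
Your argument is correct, but it takes a different route from the paper's. The paper applies the barrier of Theorem \ref{theorem poission behaviour} directly on each finite cylinder $\Omega_\ell$, bounding $-\slap\gk{\Omega_\ell}(x,y)\le C(K,n,s)\,\delta_{\Omega_\ell}^{-s}(y)$. It then passes to $\delta_{\Omega_\infty}$ on the exterior. In doing so it tacitly asserts that the constant is independent of $\ell$; this rests on the uniform exterior-ball geometry of the $\Omega_\ell$ and on $d(K,\partial\Omega_\ell)$ being fixed for $\ell\ge\ell_0$. It also writes $\delta_{\Omega_\ell}=\delta_{\Omega_\infty}$, whereas on $\R^n\setminus\overline{\Omega}_\infty$ only $\delta_{\Omega_\ell}\ge\delta_{\Omega_\infty}$ holds; that inequality suffices. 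You instead first use domain monotonicity $\gk{\Omega_\ell}\le\gk{\Omega_\infty}$, obtained from Lemmas \ref{lemma greens seq is decreasing} and \ref{lemma convergence of greens kernel}. Combined with the formula in Proposition \ref{prop greens kernal continuity}(iii), this dominates the Poisson kernels of all $\Omega_\ell$ by that of $\Omega_\infty$, and you then invoke the barrier only once, on the fixed domain $\Omega_\infty$. This makes the $\ell$-independence of the constant automatic and avoids comparing distance functions. The only cost is the kernel comparison against the unbounded $\Omega_\infty$, which the monotone limit provides. In particular, the check in your last paragraph about how the constant of Theorem \ref{theorem poission behaviour} depends on the domain is not needed for uniformity in $\ell$: you only need that theorem on the single domain $\Omega_\infty$, where it applies as stated. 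Replacing $g$ by $|g|$ is also the right reading of the statement, since the paper's ``$0\le u_\ell^g$'' silently assumes $g\ge 0$.
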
 
\begin{proof}
    Choose $\ell_{0}>0$ such that $K\subset \Omega_{\ell_{0}}$ and also that $\delta_{\Omega_{\ell}}=\delta_{\Omega_{\infty}}$, then for any $x\in K$, using Theorem \ref{theorem poission behaviour}
\begin{equation*}
\begin{split}
    0\leq \ul^{g}(x)&= -\int_{\R^{n}\setminus\overline{\Omega}_{\ell}}\slap \gk{\Omega_{\ell}}(x,y) g_{\ell}(y)\ dy \leq C(K,n,s)\int_{\R^{n}\setminus\overline{\Omega}_{\ell}} \frac{g_{\ell}(y)}{\delta_{\Omega_{\ell}}^{s}(y)} \ dy= C(K,n,s)\int_{\R^{n}\setminus\overline{\Omega}_{\infty}}\frac{g(y)}{\delta_{\Omega_{\infty}}^{s}(y)}\ dy.\\
\end{split}
\end{equation*}
The last integral is finite in view of the assumption \ref{G}.
\end{proof}
Given the local uniform bound and the point wise  monotonicity of the sequence $\{\ul^{g}\}$, we define
\begin{equation}\label{dfn of ug with g}
    u^{g}(x):=\lim_{\ell\to\infty} \ul^{g}(x) \quad \text{for all } x\in \rn.
\end{equation}
As a consequence of the local uniform bound, we obtain 
\begin{corollary}\label{prop v is l1 with g}
     Assume \ref{G} on $\Omega=\Omega_{\infty}$, the function $u^{g}$ is in $L^{1}_{loc}(\Omega_{\infty})$.
\end{corollary}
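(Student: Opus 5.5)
The plan is to mirror the argument used for $u^{h}$ and $u^{f}$: combine the sign of the approximating solutions, their pointwise monotonicity in $\ell$, and the local uniform bound \eqref{eqn l infty bound for solution with g ell}, then pass to the limit by monotone convergence.

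First, I record that $u^{g}_{\ell}\geq 0$. Write $u^{g}_{\ell}$ through the representation \eqref{representation with g} on $\Omega_{\ell}$. Since $-\slap\gk{\Omega_{\ell}}(x,y)\geq 0$ by Proposition \ref{prop greens kernal continuity}(iii), nonnegativity holds whenever $g\geq 0$. This is implicit in \eqref{eqn l infty bound for solution with g ell}, whose proof starts from $0\leq \ul^{g}$. If $g$ is not assumed nonnegative, I would instead work with $|u^{g}_{\ell}|$ and bound it by the same representation with $|g|$ in place of $g$. This gives
\begin{equation*}
|u^{g}_{\ell}(x)|\leq C(K,n,s)\int_{\R^{n}\setminus\overline{\Omega}_{\infty}}|g(y)|\,\delta_{\Omega_{\infty}}^{-s}(y)\,dy,
\end{equation*}
using Theorem \ref{theorem poission behaviour} exactly as in the preceding lemma. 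The right-hand side is finite by \ref{G}.

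Next, fix a compact set $K\subset\Omega_{\infty}$ and choose $\ell_{0}$ as in the preceding lemma, so that $K\subset\Omega_{\ell_{0}}$ and $\delta_{\Omega_{\ell}}=\delta_{\Omega_{\infty}}$ on the relevant region. For all $\ell\geq\ell_{0}$, integrating the uniform bound over $K$ gives
\begin{equation*}
\int_{K}|u^{g}_{\ell}(x)|\,dx\leq |K|\,C(n,s,K)\int_{\R^{n}\setminus\overline{\Omega}_{\infty}}|g(y)|\,\delta_{\Omega_{\infty}}^{-s}(y)\,dy.
\end{equation*}

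Finally, the sequence $\{u^{g}_{\ell}(x)\}_{\ell\geq\ell_{0}}$ is nondecreasing by Lemma \ref{comparing u ell}, and it converges pointwise to $u^{g}$ by \eqref{dfn of ug with g}. In the nonnegative case, the monotone convergence theorem carries the bound over to $\int_{K}u^{g}$. In general, I would apply Fatou's lemma to $|u^{g}_{\ell}|$, which yields the same bound for $\int_{K}|u^{g}|$. Alternatively, the pointwise bound is preserved in the limit, so $u^{g}\in L^{\infty}(K)$. Since $K$ is arbitrary, $u^{g}\in L^{1}_{loc}(\Omega_{\infty})$.

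The only delicate point is keeping the constant uniform in $\ell$. This reduces to checking that the constant in Theorem \ref{theorem poission behaviour} depends only on $K$ (through $d(K,\partial\Omega_{\ell})=d(K,\partial\Omega_{\infty})$) and not on $\ell$. That is ensured by the choice of $\ell_{0}$ above.
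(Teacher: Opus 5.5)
Your proposal is correct and follows essentially the same route as the paper: integrate the $\ell$-uniform bound \eqref{eqn l infty bound for solution with g ell} over the compact set $K$, then pass to the limit $\ell\to\infty$. Your explicit use of monotone convergence (or Fatou's lemma for general-sign $g$) only spells out the limit step that the paper leaves implicit.
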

\begin{proof}
    Let $K$ be a compact subset of $\Omega_{\infty}$, from \eqref{eqn l infty bound for solution with g ell}, we get
    \begin{equation*}
        \int_{K}|\ul^{g}(x)|\ dx\leq C(K,n,s)\|g\|_{L^{1}(\Omega_{\infty},\delta_{\infty}^{-s})}|K|. \qedhere
    \end{equation*}
\end{proof}
Finally, we show that the function $u^{g}$ is the required solution.
\begin{theorem}\label{thrm gell convergeng to g}
    Assume \ref{G} on $\Omega=\Omega_{\infty}$. The function $u^{g}$ defined in \eqref{dfn of ug with g} solves \eqref{linear eqn on infinite cylinder} with the data $(0,g,0)$.
\end{theorem}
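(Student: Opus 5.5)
We follow the pattern of Theorem \ref{thrm for operator blowup} and the corresponding result for $u^{f}$. Fix $\psi\in L^{\infty}_{c}(\Omega_{\infty})$ and choose $\ell_{0}$ with $\text{Supp}(\psi)\subset\subset\Omega_{\ell_{0}}$. For $\ell\geq\ell_{0}$ the weak dual formulation of $\ul^{g}$ on $\Omega_{\ell}$, together with the fact that $g_{\ell}=0$ on $\Omega_{\infty}\setminus\Omega_{\ell}$, gives
\begin{equation*}
\int_{\Omega_{\infty}}\ul^{g}\psi\ dx=-\int_{\R^{n}\setminus\overline{\Omega}_{\infty}}\slap\g{\Omega_{\ell}}(\psi)(y)\,g(y)\ dy .
\end{equation*}
We then pass to the limit on both sides.

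On the left-hand side, the bound \eqref{eqn l infty bound for solution with g ell} gives $\ul^{g}$ a uniform $L^{\infty}$ bound on $\text{Supp}(\psi)$. Together with the pointwise convergence $\ul^{g}\to u^{g}$, dominated convergence yields $\int\ul^{g}\psi\to\int u^{g}\psi$.

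On the right-hand side, write
\begin{equation*}
-\slap\g{\Omega_{\ell}}(\psi)(y)=\mathcal{A}(n,s)\int_{\Omega_{\ell}}\frac{\g{\Omega_{\ell}}(\psi)(z)}{|y-z|^{n+2s}}\ dz, \qquad y\in\R^{n}\setminus\overline{\Omega}_{\infty}.
\end{equation*}
By splitting $\psi=\psi^{+}-\psi^{-}$ we may assume $\psi\geq0$. Lemma \ref{lemma greens seq is decreasing} shows that $\chi_{\Omega_{\ell}}\g{\Omega_{\ell}}(\psi)$ increases in $\ell$, and Lemma \ref{Lemma local uniform convergence of solution operator} shows it converges pointwise to $\g{\Omega_{\infty}}(\psi)$. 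Monotone convergence in $z$ therefore gives $-\slap\g{\Omega_{\ell}}(\psi)(y)\uparrow-\slap\g{\Omega_{\infty}}(\psi)(y)$ for every exterior $y$.

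To pass the limit through the $y$-integral we need a majorant. By Theorem \ref{theorem greens poteantial continuity on unbounded domains} applied on $\Omega_{\infty}$, $0\le\g{\Omega_{\infty}}(\psi)\le C\delta_{\Omega_{\infty}}^{s}$. Since $\delta_{\Omega_{\infty}}(z)\le|y-z|$, the polar-coordinate computation leading to \eqref{eqn complement integral behaviour} gives
\begin{equation*}
0\le-\slap\g{\Omega_{\ell}}(\psi)(y)\le-\slap\g{\Omega_{\infty}}(\psi)(y)\le C\delta_{\Omega_{\infty}}^{-s}(y).
\end{equation*}
By \ref{G}, $|g|\delta_{\Omega_{\infty}}^{-s}\in L^{1}(\R^{n}\setminus\overline{\Omega}_{\infty})$, so dominated convergence applies to $g^{+}$ and $g^{-}$ separately (or to $g$ directly). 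This gives
\begin{equation*}
\int_{\Omega_{\infty}}u^{g}\psi=-\int_{\R^{n}\setminus\overline{\Omega}_{\infty}}\slap\g{\Omega_{\infty}}(\psi)\,g,
\end{equation*}
so $u^{g}$, extended by $g$ outside $\Omega_{\infty}$, is the weak dual solution. Corollary \ref{prop v is l1 with g} supplies $u^{g}\in L^{1}_{loc}(\Omega_{\infty})$, and uniqueness follows from Theorem \ref{thrm linear existence with g}.

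The main obstacle is justifying the domination uniformly in $\ell$, that is, controlling $\g{\Omega_{\ell}}(\psi)$ by $C\delta_{\Omega_{\infty}}^{s}$ with a constant independent of $\ell$. The monotonicity $\g{\Omega_{\ell}}(\psi)\le\g{\Omega_{\infty}}(\psi)$ from Lemma \ref{lemma greens seq is decreasing} reduces this to a single bound on $\Omega_{\infty}$. The first part of Theorem \ref{theorem greens poteantial continuity on unbounded domains} provides that bound, since $\Omega_{\infty}$ satisfies a uniform exterior ball condition. If Lemma \ref{lemma greens seq is decreasing} (stated for bounded domains) must be extended to the unbounded $\Omega_{\infty}$, we would instead obtain the inequality as the limit of $\gk{\Omega_{\ell}}\uparrow\gk{\Omega_{\infty}}$ from Lemma \ref{lemma convergence of greens kernel}.
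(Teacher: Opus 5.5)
Your proposal is correct and follows essentially the same route as the paper: the weak dual identity on $\Omega_\ell$, dominated convergence on the left via the local uniform bound \eqref{eqn l infty bound for solution with g ell}, and dominated convergence on the exterior integral with majorant $C|g|\delta_{\Omega_\infty}^{-s}$, where the pointwise limit of $\slap\g{\Omega_\ell}(\psi)$ is identified through the monotone convergence $\gk{\Omega_\ell}\uparrow\gk{\Omega_\infty}$. Your use of $\g{\Omega_\ell}(\psi)\le\g{\Omega_\infty}(\psi)$ to get a single majorant whose constant does not depend on $\ell$ is a small but welcome tightening: the paper applies \eqref{eqn complement integral behaviour} on each $\Omega_\ell$ and leaves this uniformity implicit.
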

\begin{proof}
    Let $\psi\in L_{c}^{1}(\Omega_{\infty})$ and let $\ell_{0}>0$ such that, $K\subset \Omega_{\ell_{0}}$ and $\delta_{\Omega_{\ell}}=\delta_{\Omega_{\infty}}$. By Corollary \ref{prop v is l1 with g} and the dominated convergence theorem 
    \begin{equation*}
        \int_{\Omega_{\infty}}u\psi\ dx=\lim_{\ell\to 0} \int_{\Omega_{\ell}}\ul \psi \ dx =-\lim_{\ell\to 0} \int_{\R^{n}\setminus\overline{\Omega}_{\ell}}g_{\ell}(x) \slap \g{\Omega_{\ell}}(\psi)(x)\ dx.
    \end{equation*}
    Also, by \eqref{eqn complement integral behaviour} , one has 
    \begin{equation*}
        \int_{\R^{n}\setminus\overline{\Omega}_{\ell}}g_{\ell}(x) \slap \g{\Omega_{\ell}}(\psi)(x)\ dx\leq \int_{\R^{n}\setminus\overline{\Omega}_{\ell}}g_{\ell}(x)~\delta^{-s}_{\Omega_{\ell}}(x)\ dx= \int_{\R^{n}\setminus\overline{\Omega}_{\infty}}g(x)~\delta^{-s}_{\Omega_{\infty}}(x)\ dx<\infty.
    \end{equation*}
    Therefore, by the dominated convergence theorem 
    \begin{equation*}
        \int_{\Omega_{\infty}}u\psi\ dx=-\int_{\R^{n}\setminus\overline{\Omega}_{\infty}}g(x)\lim_{\ell\to \infty}\slap \g{\Omega_{\ell}}(\psi)(x)\ dx=-\int_{\R^{n}\setminus\overline{\Omega}_{\infty}}g(x) \lim_{\ell\to\infty}\int_{\Omega_{\ell}}\psi(z)\slap\gk{\Omega_{\ell}}(z,x)\ dz\ dx,
    \end{equation*}
    where the last equality is due to \cite[Lemma 3.7]{Abatangelo2015LargesHarmonic}. By Lemma \ref{lemma greens seq is decreasing} and Lemma \ref{lemma convergence of greens kernel} and the compact support of $\psi$, using the dominated convergence theorem again, we finally have
    \begin{equation*}
        \int_{\Omega_{\infty}}u \psi \ dx=-\int_{\R^{n}\setminus\overline{\Omega}_{\infty}}g(x)\slap \g{\Omega_{\infty}}(\psi)(x)\ dx.\qedhere
    \end{equation*}
\end{proof}
\begin{remark}
    As in \cite[Remark 9, Remark 10]{Abatangelo2015LargesHarmonic}, we have for some $\eta>0$ small, that if
    \begin{equation*}
        \frac{m}{\delta^{\tau}_{\Omega_{\infty}}(y)}\leq g(y)\leq \frac{M}{\delta^{\sigma}_{\Omega_{\infty}}(y)}\quad \text{for } 0<\tau\leq \sigma<1-s,\text{on } 0<\delta_{\Omega_{\infty}}(y)\leq \eta    \end{equation*}
        where $m$ and $M$ are some positive constants, then 
        \begin{equation*}
            \frac{\Tilde{m}}{\delta^{\tau}_{\Omega_{\infty}}(y)}\leq u^{g}(y)\leq \frac{\Tilde{M}}{\delta_{\Omega_{\infty}}^{\sigma}(y)}
        \end{equation*}
        for some positive constants $\Tilde{m}$ and $\Tilde{M}$.
\end{remark}

\section{Semi-linear Equation}\label{section semilinear}
\subsection{On a Bounded Domain}
The primary challenge in working with semi-linear equations is comparing solutions on two finite cylinders. To address this, we examine the equation satisfied when restricting a solution to a subdomain. With this result,
we prove a comparison principle for a semi-linear equation in view of the monotonicity of $f$ in the second variable. We note that this is the first place where the monotonicity of $f$ is indispensable.  Further in this section, we prove the existence of a weak dual solution to \eqref{eqn main semilinear omega} on a bounded domain. 

Building on the weak dual notion of solution introduced in \cite{BonforteFigalliVazquez2018} for the semi-linear equation with data $(u^{p},0,0)$, $0<p<1$, as well as linear equations discussed above,  we define a weak dual notion of solution for the semi-linear equation \eqref{eqn main semilinear omega}.

 \begin{definition}\label{dfn weak dual for semilinear}
     Let $\Omega$ be a domain in $\rn$. Assume that $f$ satisfies \ref{F1}, $h\in L^{1}(\partial\Omega)$, $g$ satisfies \ref{G1} if $\Omega$ is bounded and \ref{G} if $\Omega$ is unbounded. A function $u\in L_{loc}^{1}(\Omega)$ is a weak dual solution to \eqref{eqn main semilinear omega}
  if 
 \begin{equation*}
 \begin{split}
     \int_{\Omega}u(x)\psi(x)\ dx=-& \int_{\Omega}f(x,u(x)) \g{\Omega}(\psi)(x)\ dx-\int_{\R^{n}\setminus\overline{\Omega}}g(y) \slap \g{\Omega}(\psi)(y)\ dy\\
      &\hspace{2cm}+\int_{\partial\Omega}h(\theta)\Ds \g{\Omega}(\psi)(\theta) \ d\mathcal{H}(\theta) \hspace{1.2cm}  \text{for all }\psi\in L_{c}^{\infty}(\Omega).
     \end{split}
 \end{equation*}
 Here, $\slap \g{\Omega}(\psi)(y)$ for $y\in \R^{n}\setminus\overline{\Omega}$ is defined point wise. A function $u\in L^{1}_{loc}(\Omega)$ is a weak dual sub (or super) solution if 
 \begin{equation*}
 \begin{split}
     \int_{\Omega}u(x)\psi(x)\ dx\leq (\text{or } \geq) -&\int_{\Omega}f(x,u(x)) \g{\Omega}(\psi)(x)\ dx-\int_{\R^{n}\setminus\overline{\Omega}}g(y) \slap \g{\Omega}(\psi)(y)\ dy \\
     &\hspace{2cm} +\int_{\partial\Omega}h(\theta)\Ds \g{\Omega}(\psi)(\theta) \ d\mathcal{H}(\theta) \hspace{1.2cm} \text{for all }\psi\in L_{c}^{\infty}(\Omega), \psi\geq 0.
      \end{split}
 \end{equation*}
 
 \end{definition}
 \begin{remark}\label{Remark rep of solution for semilinear eqn}
  (i)  \,  If $u$ is a solution of \eqref{eqn main semilinear omega}, by Definition \ref{defining the operator G} and using Fubini's theorem (similar to the argument as in Theorem \ref{thrm rep of u on any domain}), for any $\psi\in L_{c}^{\infty}(\Omega)$ we write  
     \begin{equation*}
     \begin{split}
         \int_{\Omega}u(x)\psi(x)\ dx = &\int_{\Omega}\psi(x)\Big\{-\int_{\Omega}\gk{\Omega}(x,y)f(y,u(y))\ dy-\int_{\R^{n}\setminus\overline{\Omega}}\slap \gk{\Omega}(x,y)g(y)\ dy\Big\} \\
         & \hspace{2cm} +\int_{\Omega}\psi(x)\int_{\partial\Omega}\Ds\gk{\Omega}(\theta,x)h(\theta)\ d\mathcal{H}(\theta) \ dx. 
          \end{split}
     \end{equation*}
  With this, we infer that    
     \begin{equation*}
 u(x)=-\int_{\Omega}\gk{\Omega}(x,y)f(y,u(y))\ dy-\int_{\R^{n}\setminus\overline{\Omega}}\slap \gk{\Omega}(x,y)g(y)\ dy+\int_{\partial\Omega}\Ds\gk{\Omega}(\theta,x)h(\theta)\ d\mathcal{H}(\theta) \quad \text{a.e. in }\Omega.
     \end{equation*}
\smallskip 

\noindent (ii) \, Similarly, If $u$ is a sub (or, super) solution of \eqref{eqn main semilinear omega}, 
     \begin{equation*}
         u(x)\leq (\text{or,} \geq)-\int_{\Omega}\gk{\Omega}(x,y)f(y,u(y))\ dy-\int_{\R^{n}\setminus\overline{\Omega}}\slap \gk{\Omega}(x,y)g(y)\ dy+\int_{\partial\Omega}\Ds\gk{\Omega}(\theta,x)h(\theta)\ d\mathcal{H}(\theta), \, \, \text{a.e. in }\Omega.
     \end{equation*}
 \end{remark} 
Let $\Omega_{1}\subset\Omega_{2}$, and let $u_{i}$ be solutions of \eqref{eqn main semilinear omega} in $\Omega_{i}$ for $i=1,2$. To compare $u_{1}$ and $u_{2}$, we first need to determine the equation satisfied by $u_{2}$ in $\Omega_{1}$. The following lemma addresses this.
 \begin{lemma}\label{lemma restriction of a solution to subdomains}
 Assume \ref{F2} for $f$, and \ref{G1} for $g$. Let $\Omega'\subset \Omega$ be an open subset, $u$ is a solution (subsolution, supersolution) of \eqref{eqn main semilinear omega}. For any $\psi\in L^{\infty}_{c}(\Omega')$ $(\psi\geq 0)$, one has
 \begin{equation}\label{eqn ristriction to subdomain}
 \begin{split}
     \int_{\Omega'}u(x)\psi(x)\ dx=(\leq,\geq)&-\int_{\Omega'}\g{\Omega'}(\psi)(y) f(y,u(y))\ dy-\int_{\Omega\setminus\Omega'}u(y)\slap \g{\Omega'}(\psi)(y)\ dy\\
& \hspace{2cm}+\int_{\partial\Omega'\cap\partial\Omega}h(\theta)\Ds[\g{\Omega'}(\psi)](\theta)\ d\mathcal{H}(\theta).
 \end{split}
    \end{equation}
 \end{lemma}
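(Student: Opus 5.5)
The plan is to test the weak dual formulation on $\Omega$ with a well-chosen function built from $\psi$. Fix $\psi\in L^{\infty}_{c}(\Omega')$ and set $\phi':=\g{\Omega'}(\psi)$ and $\phi:=\g{\Omega}(\psi)$. The key identity relates the two potentials. The function $w:=\phi-\phi'$ vanishes outside $\Omega$, satisfies $\slap w=0$ in $\Omega'$, and equals $\phi$ on $\Omega\setminus\Omega'$. By the representation in Theorem \ref{thrm linear existence with g}, applied on $\Omega'$ with exterior data $\phi\chi_{\Omega}$, we therefore expect
\begin{equation*}
\phi(x)=\phi'(x)-\int_{\Omega\setminus\Omega'}\slap\gk{\Omega'}(x,y)\,\phi(y)\ dy,\qquad x\in\Omega'.
\end{equation*}
Equivalently, $\g{\Omega}(\psi)$ is obtained from $\g{\Omega'}(\psi)$ by adding the Poisson extension of its own values on $\Omega\setminus\Omega'$. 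Dually, for the test against $u$, the right object is the measure $\nu:=\psi\,dx+(-\slap\phi')\chi_{\Omega\setminus\Omega'}\,dy$. Its $\Omega$-potential should be $\g{\Omega}(\nu)=\phi'$ on $\Omega$ (extended by $0$ outside $\Omega'$). This holds because $\slap\phi'=\psi$ in $\Omega'$, $\slap\phi'$ equals the given negative density on $\Omega\setminus\Omega'$, and $\phi'=0$ off $\Omega'$.

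Accordingly, I would apply the definition of (sub/super)solution on $\Omega$ with the test function $\Psi:=\psi-\chi_{\Omega\setminus\Omega'}\slap\phi'\ge0$. This $\Psi$ is not compactly supported in $\Omega$, so the first step is an approximation. Take $\Psi_k:=\psi-\chi_{\Omega\setminus\Omega'}\chi_{D_k}\min\{k,-\slap\phi'\}$, where $D_k\uparrow\Omega$ are compact sets. Then $\Psi_k\in L^\infty_c(\Omega)$, $\Psi_k\geq 0$ when $\psi\ge0$, and $\g{\Omega}(\Psi_k)\uparrow\g{\Omega}(\Psi)=\phi'$ by monotone convergence on the Green kernel. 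Plugging $\Psi_k$ into Definition \ref{dfn weak dual for semilinear} gives
\begin{equation*}
\int_{\Omega'}u\psi+\int_{\Omega\setminus\Omega'}u\,\chi_{D_k}\min\{k,-\slap\phi'\}
\ \ (=,\le,\ge)\ \ -\int_\Omega f(x,u)\g{\Omega}(\Psi_k)-\int_{\rn\setminus\overline\Omega}g\,\slap\g{\Omega}(\Psi_k)+\int_{\partial\Omega}h\,\Ds\g{\Omega}(\Psi_k).
\end{equation*}

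The second step is to pass $k\to\infty$ term by term. For the forcing term, $\g{\Omega}(\Psi_k)\to\phi'$, which is supported in $\Omega'$, so the limit is $-\int_{\Omega'}\phi' f(y,u)$. For the exterior term, $\slap\g{\Omega}(\Psi_k)(y)\to\slap\phi'(y)$ for $y\notin\overline\Omega$, with domination by \eqref{eqn complement integral behaviour}. For the boundary term, $\Ds\g{\Omega}(\Psi_k)\to\Ds\phi'$, which vanishes on $\partial\Omega\setminus\partial\Omega'$ since $\phi'\equiv0$ near such points. Moving the $\Omega\setminus\Omega'$ integral of $u$ to the right-hand side produces $-\int_{\Omega\setminus\Omega'}u\,\slap\phi'$.

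What remains is to reconcile the exterior term. The term $-\int_{\rn\setminus\overline\Omega}g\slap\phi'$ should be combined with the $\Omega\setminus\Omega'$ term into $-\int_{\rn\setminus\Omega'}u\,\slap\phi'$, using $u=g$ there. I expect the statement's middle integral to be read this way, or that the exterior contribution is kept as written alongside it.

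The main obstacle is justifying these limits, and especially the term $\int_{\Omega\setminus\Omega'}u\,\slap\phi'$. Here $u$ is only $L^1_{loc}$ and may blow up at $\partial\Omega$, while $-\slap\phi'$ behaves like $\delta_{\Omega'}^{-s}$ near $\partial\Omega'$ by Theorem \ref{theorem poission behaviour}. I would handle this by monotone convergence after splitting $u=u^+-u^-$. Finiteness would come from the $\Omega$-representation of $u$ in Remark \ref{Remark rep of solution for semilinear eqn}, using Fubini and the kernel estimates \eqref{asym of poison on bounded domain} and \eqref{eqn behaviour of greens kernel}. For the forcing term, $f(\cdot,u)$ is controlled in $L^1(\Omega,\delta^s)$ via \ref{f.2}, and monotone convergence applies since $f\ge0$ for $u\ge0$.
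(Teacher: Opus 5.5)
There is a genuine gap: a sign error at the heart of the construction. Since $\phi'=\g{\Omega'}(\psi)\ge 0$ vanishes on $\Omega\setminus\overline{\Omega'}$, you have $\slap\phi'(y)=-\mathcal{A}(n,s)\int_{\Omega'}\phi'(z)|y-z|^{-n-2s}\,dz\le 0$ there. So the function whose $\Omega$-potential is $\phi'$ is the \emph{signed} function $\psi+\chi_{\Omega\setminus\Omega'}\slap\phi'$:
\begin{equation*}
\g{\Omega}\bigl(\psi+\chi_{\Omega\setminus\Omega'}\slap\phi'\bigr)=\phi'\quad\text{in }\Omega .
\end{equation*}
This is exactly \eqref{eqn rep of G 1} (for $x\in\Omega'$) and \eqref{eqn rep of G 2} (for $x\in\Omega\setminus\Omega'$) integrated against $\psi$.

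Your $\Psi=\psi-\chi_{\Omega\setminus\Omega'}\slap\phi'\ge\psi$ cannot have potential $\phi'$. Indeed $\g{\Omega}(\Psi)\ge\g{\Omega}(\psi)>0$ on $\Omega\setminus\Omega'$, where $\phi'=0$, and on $\Omega'$ the same identities give $\g{\Omega}(\Psi)=2\g{\Omega}(\psi)-\phi'$. Your $\Psi_k$, as literally written, approximates the correct function, but then it is not nonnegative. Also, moving $\int_{\Omega\setminus\Omega'}u\min\{k,-\slap\phi'\}$ to the right gives $+\int_{\Omega\setminus\Omega'}u\,\slap\phi'$, not the sign you state; the errors cancel only in the narrative.

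With the sign corrected, the equality case does work, and it is a genuinely dual version of the paper's proof. The paper defines $v$ on $\Omega'$ by the $\Omega'$-representation with exterior datum $u$, and checks $v=u$ pointwise separately for the $f$, $g$, $h$ parts using the same kernel identities. You get all three at once from one test-function identity plus Fubini. The limit $k\to\infty$ then needs dominated rather than monotone convergence, e.g.\ via $|\g{\Omega}(\Psi_k)|\le\g{\Omega}(\psi)+\g{\Omega}(\chi_{\Omega\setminus\Omega'}|\slap\phi'|)\le 2\g{\Omega}(\psi)\le C\delta_\Omega^{s}$. You are also right that the middle integral must run over $\R^{n}\setminus\overline{\Omega'}$, i.e.\ it must include $-\int_{\R^{n}\setminus\overline{\Omega}}g\,\slap\g{\Omega'}(\psi)$. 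That is what the paper's proof actually yields, and how the lemma is used later.

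The corrected test function, however, is negative on $\Omega\setminus\Omega'$. It is therefore not admissible in the sub/supersolution definitions, which only test against nonnegative functions, so your treatment of those cases collapses. It cannot be repaired. With $g=h=0$ and any $f$ satisfying \ref{F2}, every $u\ge 0$ is a weak dual supersolution in $\Omega$, since the left side is $\ge 0$ and the right side is $\le 0$. Take $\Omega'\subset\subset\Omega$ a ball, $u=\chi_{\Omega\setminus\Omega'}$ and $0\le\psi\not\equiv 0$. The claimed inequality then reads $0\ge\int_{\Omega\setminus\Omega'}|\slap\g{\Omega'}(\psi)|\,dy>0$, which is false. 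So only the equality clause holds in this generality.

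The paper's argument has the same defect in the sub/supersolution clause. Substituting the inequality $u\ge$ (representation) on $\Omega\setminus\Omega'$ into the Poisson integral defining $v$ only shows that $v$ and $u$ both lie above the same function. That gives no comparison between $v$ and $u$.
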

 \begin{proof}
     Let $v$ be the weak dual solution of 
     \begin{equation*}
         \begin{cases}
             \slap v= -f(x,u)\quad &\text{in }\Omega'\\
             v=u &\text{in }\R^{n}\setminus\overline{\Omega'}\\
             Ev=\Tilde{h} &\text{on }\partial\Omega'.
         \end{cases}
     \end{equation*}
     Here, $\Tilde{h}:=h$ on $\partial\Omega\cap\partial\Omega'$ and $\Tilde{h}=0$ on $\partial\Omega\setminus\partial\Omega'$. Whenever $\partial\Omega\cap\partial\Omega'=\phi$ then $\Tilde{h}\equiv0$. We divide the proof into three cases. 
     
     Case 1: Assume that $g=0$ and $h=0$. 
     For a.e. $x\in \Omega'$, by the representation of $u$, changing the order of integration and \eqref{eqn rep of G 1} and \eqref{eqn rep of G 2} below, we derive
     \begin{equation*}
     \begin{split}
    v(x)&=-\int_{\Omega'}f(y,u(y))\gk{\Omega'}(x,y)\ dy-\int_{\R^{n}\setminus\overline{\Omega'}}u(z)\slap \gk{\Omega'}(x,z)\ dz.\\
        &=-\int_{\Omega'}f(y,u(y))\gk{\Omega'}(x,y)\ dy-\int_{\Omega\setminus\Omega'}\slap \gk{\Omega'}(x,z) \left\{-\int_{\Omega}f(y,u(y))\gk{\Omega}(z,y)\ dy\right\}\ dz\\
        &=-\int_{\Omega'}f(y,u(y))\gk{\Omega'}(x,y)\ dy+\int_{\Omega}f(y,u(y))\int_{\Omega\setminus\Omega'}\slap \gk{\Omega'}(x,z)\gk{\Omega}(z,y)\ dz\ dy\\
        &=-\int_{\Omega'}f(y,u(y))\Big\{\gk{\Omega'}(x,y)-\int_{\Omega\setminus\Omega'}\slap \gk{\Omega'}(x,z)\gk{\Omega}(z,y)\ dz\Big\}\ dy\\
        &\hspace{3cm} + \int_{\Omega\setminus\Omega'}f(y,u(y))\int_{\Omega\setminus\Omega'}\slap \gk{\Omega'}(x,z)\gk{\Omega}(z,y)\ dz\ dy\\
        &=-\int_{\Omega}f(y,u(y))\gk{\Omega}(x,y)\ dy=(\leq,\geq)u(x). 
     \end{split}
     \end{equation*}
     The last equality follows since $\gk{\Omega}(x,\cdot)$ is a distributional solution with Dirac delta at $x$, hence $\gk{\Omega}(x,\cdot)$ solves 
     \begin{equation*}
        \begin{cases}
            \slap \gk{\Omega}(x,\cdot)=\delta_{x}\quad &\text{in }\Omega'\\
            \gk{\Omega}(x,y)=\gk{\Omega}(x,y) &\text{in }\R^{n}\setminus\overline{\Omega'}.
        \end{cases}
     \end{equation*}
     Then by \cite[Theorem 1.4]{Abatangelo2015LargesHarmonic}, for $x,y\in \Omega'$, 
     \begin{equation}\label{eqn rep of G 1}
         \gk{\Omega}(x,y)=\gk{\Omega}(y,x)=\gk{\Omega'}(x,y)-\int_{\Omega\setminus\Omega'}\slap \gk{\Omega'}(x,z)\gk{\Omega}(y,z)\ dz.
     \end{equation}
     For $x\in \Omega'$ and $y\in \Omega\setminus\Omega'$, since $H_{\Omega}(x,\cdot)$ is a s-harmonic function, one can write
     \begin{equation*}
        H_{\Omega}(x,y)=H_{\Omega}(y,x)=-\int_{\R^{n}\setminus\overline{\Omega'}}H_{\Omega}(y,z) \slap \gk{\Omega'}(x,z)\ dz.
     \end{equation*}
     Also, by the property of $\gk{\Omega'}$ and $H_{\Omega'}$, $\displaystyle \Gamma(x-y)=\Gamma(y-x)=-\int_{\R^{n}\setminus\overline{\Omega'}}\Gamma(y-z)\slap \gk{\Omega'}(x,z)\ dz.$
     Thus,
     \begin{equation}\label{eqn rep of G 2}
         \gk{\Omega}(x,y)=-\int_{\Omega\setminus\Omega'}\gk{\Omega}(y,z)\slap\gk{\Omega'}(x,z)\ dz.
     \end{equation}
    
     Case 2: Assume $f=0$ and $h=0$, then for a.e. $x\in \Omega'$, by the representation of $u$, \eqref{eqn rep of G 1}, and \eqref{eqn rep of G 2}, we obtain
     \begin{equation*}
         \begin{split}
             v(x)
             &=\int_{\Omega\setminus\Omega'}\left\{\int_{\R^{n}\setminus\overline{\Omega}}g(y)\slap \gk{\Omega}(z,y)\ dy\right\}\slap \gk{\Omega'}(x,z)\ dz-\int_{\R^{n}\setminus\overline{\Omega}}g(y)\slap \gk{\Omega'}(x,y)\ dy.\\
             &=-\int_{\R^{n}\setminus\overline{\Omega}}g(y)\left\{\slap\gk{\Omega'}(x,y)-\int_{\Omega\setminus\Omega'}\slap \gk{\Omega}(z,y) \slap \gk{\Omega'}(x,z)\ dz\right\}\ dy\\
             &=-\int_{\R^{n}\setminus\overline{\Omega}}g(y)\slap\gk{\Omega}(x,y)\ dy=(\leq,\geq)u(x).
         \end{split}
     \end{equation*}
     For the last identity, since for $y\in \R^{n}\setminus\overline{\Omega}$ and $x\in \Omega'$, we note that
     \begin{equation*}
         \begin{split}
              -\slap \gk{\Omega}(x,y)& =\mathcal{A}(n,s)\text{ p.v} \int_{\Omega}\frac{\gk{\Omega}(x,z)}{|y-z|^{n+2s}}\ dz\\ 
             &=\mathcal{A}(n,s)\text{ p.v} \int_{\Omega'}\frac{\gk{\Omega'}(x,z)-\int_{\Omega\setminus\Omega'}\slap \gk{\Omega'}(x,\zeta)\gk{\Omega}(z,\zeta)\ d\zeta}{|y-z|^{n+2s}}\ dz \\
              &\hspace{4.5cm}  +\mathcal{A}(n,s)\int_{\Omega\setminus\Omega'}\frac{-\int_{\Omega\setminus\Omega'}\gk{\Omega}(z,\zeta)\slap\gk{\Omega'}(x,\zeta)\ d\zeta}{|y-z|^{n+2s}}\ dz\\
             &=-\slap\gk{\Omega'}(x,y)-\int_{\Omega\setminus\Omega'}\slap\gk{\Omega'}(x,\zeta) ~\mathcal{A}(n,s)\text{ p.v} \int_{\Omega}\frac{\gk{\Omega}(z,\zeta)}{|y-z|^{n+2s}}\ dz\ d\zeta\\
             &=-\slap\gk{\Omega'}(x,y)+\int_{\Omega\setminus\Omega'}\slap\gk{\Omega'}(x,z) \slap\gk{\Omega}(z,y)\ dz.
             \end{split}
     \end{equation*}
Case 3: Let $f=0$ and $g=0$, then for a.e. $x\in\Omega'$, using the representation of $u$,  we get
\begin{equation*}
    \begin{split}
        v(x)&=-\int_{\R^{n}\setminus\overline{\Omega'}}u(z)\slap\gk{\Omega'}(x,z)\ dz+\int_{\partial\Omega'\cap\partial\Omega}h(\theta)\Ds \gk{\Omega'}(x,\theta)\ d\mathcal{H}(\theta)\\
        &=-\int_{\Omega\setminus\Omega'}\left\{\int_{\partial\Omega}h(\theta)\Ds\gk{\Omega}(z,\theta)\ d\mathcal{H}(\theta)\right\}\slap\gk{\Omega'}(x,z)\ dz+\int_{\partial\Omega'\cap\partial\Omega}h(\theta)\Ds \gk{\Omega'}(x,\theta)\ d\mathcal{H}(\theta)\\
        &=\int_{\partial\Omega}h(\theta)\left\{-\int_{\Omega\setminus\Omega'}\lim_{y\to\theta}\frac{\gk{\Omega}(z,y)}{\delta^{s}(y)}\slap\gk{\Omega'}(x,z)\ dz\right\}\ d\mathcal{H}(\theta)+\int_{\partial\Omega'\cap\partial\Omega}h(\theta)\Ds \gk{\Omega'}(x,\theta)\ d\mathcal{H}(\theta)
    \end{split}
\end{equation*}
 By interchanging the limit and the integral, dividing the integral over $\partial\Omega=(\partial\Omega\setminus\partial\Omega')\cup (\partial\Omega\cap\partial\Omega')$, using \eqref{eqn rep of G 2} in the integral over $\partial\Omega\setminus\partial\Omega'$ and using \eqref{eqn rep of G 1} in the integral over $\partial\Omega\cap \partial\Omega'$, we obtain
\begin{equation*}
    \begin{split}
        v(x)&=\int_{\partial\Omega\setminus\partial\Omega'}h(\theta)\lim_{y\to\theta}\frac{\gk{\Omega}(x,y)}{\delta^{s}(y)}\ d\mathcal{H}(\theta)+\int_{\partial\Omega\cap\partial\Omega'}h(\theta)\left(\Ds \gk{\Omega}(x,\theta)-\Ds \gk{\Omega'}(x,\theta)\right)\ d\mathcal{H}(\theta)\\
        &\hspace{4cm} +\int_{\partial\Omega'\cap\partial\Omega}h(\theta)\Ds \gk{\Omega'}(x,\theta)\ d\mathcal{H}(\theta)\\
        &=\int_{\partial\Omega}h(\theta)\lim_{y\to\theta}\frac{\gk{\Omega}(x,y)}{\delta^{s}(y)}\ d\mathcal{H}(\theta)=(\leq,\geq)u(x).
    \end{split}
\end{equation*}
      Finally, for any $\psi\in L_{c}^{\infty}(\Omega')$ $(\psi\geq 0)$, combining all cases, we get \eqref{eqn ristriction to subdomain}.
 \end{proof}
The existence of a solution is established using the sub and supersolution method presented in \cite{Abatangelo2015LargesHarmonic} for $L^{1}$ weak solutions. We employ the same approach with necessary modifications. Main objective of the proof is to use the fixed-point theorem.
\begin{theorem}\label{thrm sub and super solution method}
    Let $\Omega$ be a bounded domain, $f$ satisfies \ref{F2}, and $g$ be a bounded measurable function. Assume that $\underline{u},\overline{u}\in L^{\infty}(\Omega)\cap C(\Omega)$ be weak dual sub and super solution of \eqref{eqn main semilinear omega} with the data $(-f(\cdot,u),g,0)$
    such that  $\underline{u}\leq \overline{u}$. Then, the above equation admits a weak dual solution $u$ such that $\underline{u}\leq u\leq \overline{u}$. 
\end{theorem}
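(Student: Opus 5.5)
We will reduce the problem to a fixed point of a compact map on an order interval. Let $w:=-\int_{\R^{n}\setminus\overline{\Omega}}\slap\gk{\Omega}(\cdot,y)g(y)\,dy$ be the solution of the linear problem with data $(0,g,0)$ from Theorem \ref{thrm linear existence with g}. Since $g$ is bounded and the Poisson kernel integrates to one, $w\in L^{\infty}(\Omega)$. Define the truncation $T(v):=\max\{\underline{u},\min\{v,\overline{u}\}\}$ and the operator
\begin{equation*}
\mathcal{S}(v)(x):=-\int_{\Omega}\gk{\Omega}(x,y)f\big(y,T(v)(y)\big)\,dy+w(x),\qquad x\in\Omega .
\end{equation*}
By Remark \ref{Remark rep of solution for semilinear eqn}, a fixed point $u=\mathcal{S}(u)$ with $\underline{u}\leq u\leq\overline{u}$ is a weak dual solution. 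Indeed, in that case $T(u)=u$, and the representation coincides with the weak dual identity by Fubini's theorem, as in Theorem \ref{thrm rep of u on any domain}.

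First, we will check that $\mathcal{S}$ is a continuous compact map on $L^{\infty}(\Omega)$ (or $C(\overline\Omega)$). Since $\underline{u},\overline{u}\in L^{\infty}$ and $f$ is continuous with the growth bound \ref{f.2}, the composition $F(v):=f(\cdot,T(v))$ is bounded in $L^{\infty}(\Omega)$ by a constant $M$ independent of $v$. By \eqref{eqn continuity of G 1}, $\g{\Omega}(F(v))\in C(\overline{\Omega})$ with a uniform bound. Equicontinuity should follow from the uniform H\"older estimate for $\g{\Omega}$ of $L^{\infty}$ data (the Ros-Oton--Serra $C^{s}$ bound \cite{ROSOTON2014275}), so the image of $\mathcal{S}-w$ is precompact by Arzel\`a--Ascoli. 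Continuity of $v\mapsto F(v)$ in $L^{1}$ follows from the continuity of $f$ and dominated convergence. Since $\mathcal{S}$ has bounded range, Schauder's fixed point theorem yields a fixed point $u$.

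Second, and this is the main obstacle, we must show $\underline{u}\leq u\leq\overline{u}$, so that the truncation is inactive. We will argue via the comparison principle, using the monotonicity in \ref{f.1}. For the upper bound, set $A:=\{u>\overline{u}\}$. On $A$ we have $T(u)=\overline{u}$, so $f(\cdot,T(u))=f(\cdot,\overline{u})$ there, while on $A^{c}$ we have $T(u)\leq\overline{u}$, hence $f(\cdot,T(u))\leq f(\cdot,\overline{u})$ by monotonicity. In both cases $-f(\cdot,T(u))\geq -f(\cdot,\overline{u})$, which is the wrong direction for a direct comparison. So we will instead use a Kato-type argument in the weak dual form. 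Testing the difference of the identity for $u$ and the supersolution inequality for $\overline{u}$ with $\psi=\chi_{K\cap A}$ gives
\begin{equation*}
\int_{\Omega}(u-\overline{u})\psi\leq-\int_{\Omega}\big(f(\cdot,T(u))-f(\cdot,\overline{u})\big)\g{\Omega}(\psi).
\end{equation*}
The right side vanishes on $A$, where the two nonlinear terms agree, but is nonnegative off $A$, so this alone does not close. The alternative we would try first, and which we expect to be the cleanest route, is the approach of \cite{Abatangelo2015LargesHarmonic}. Replace $f$ by $\tilde f(x,t):=f(x,T(t))$, which for fixed $x$ is monotone and bounded in $t$. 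Then $\overline{u}$ is a supersolution for $\tilde f$ because $\tilde f(\cdot,\overline{u})=f(\cdot,\overline{u})$. Apply the semilinear comparison principle for monotone nonlinearities, namely the weak dual analogue of Lemma \ref{lemma comparison principle}, which is proved through the Kato inequality for $(u-\overline{u})^{+}$, to conclude $u\leq\overline{u}$. The lower bound $u\geq\underline{u}$ follows symmetrically. The continuity hypothesis on $\underline{u},\overline{u}$ is used to make $T(v)$ continuous, so that the compactness step in $C(\overline\Omega)$ goes through.
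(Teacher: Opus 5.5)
Your fixed-point construction is fine. Truncating the full unknown via $T$, with $w$ as the Poisson part, is even cleaner than the paper's bookkeeping. The gap is in the ordering step: you correctly call it the main obstacle, but you do not close it. Passing to the monotone nonlinearity $\tilde f(x,t)=f(x,T(t))$ only moves the difficulty into a semilinear comparison principle, and the one you invoke is not available.

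Lemma \ref{lemma comparison principle} is linear, and its proof (testing with $\chi_K$) does not extend to the semilinear case. There is also no Kato inequality for $(u-\overline u)^+$ in the weak dual framework. A weak dual supersolution is only tested against $\g{\Omega}(\psi)$ with $\psi\geq 0$ compactly supported, so it is not known to be a distributional supersolution to which Kato's inequality could apply. Your own first computation shows why no global test can work: $\g{\Omega}(\psi)>0$ on all of $\Omega$, including where $f(\cdot,T(u))<f(\cdot,\overline u)$.

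The missing idea is localization through Lemma \ref{lemma restriction of a solution to subdomains}. The fixed point $u=w-\g{\Omega}(f(\cdot,T(u)))$ is continuous in $\Omega$, and so is $\overline u$, so $\Omega':=\{u>\overline u\}$ is open. The lemma then lets you rewrite the identity for $u$ and the supersolution inequality for $\overline u$ on $\Omega'$. In both, $\g{\Omega'}(\psi)$ replaces $\g{\Omega}(\psi)$, and the values on $\Omega\setminus\Omega'$ act as exterior data. The contributions of $g$ outside $\Omega$ are the same for both and cancel, and the boundary terms vanish since $h=0$. Subtracting gives, for $\psi\in L^\infty_c(\Omega')$ with $\psi\geq 0$,
\begin{equation*}
\int_{\Omega'}(u-\overline u)\psi\,dx\leq-\int_{\Omega'}\big(f(\cdot,T(u))-f(\cdot,\overline u)\big)\g{\Omega'}(\psi)\,dx-\int_{\Omega\setminus\Omega'}(u-\overline u)\slap\g{\Omega'}(\psi)\,dy\leq 0 .
\end{equation*}
The first integrand vanishes identically because $T(u)=\overline u$ on $\Omega'$; no monotonicity is needed here. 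The second integral has the right sign because $u\leq\overline u$ and $\slap\g{\Omega'}(\psi)\leq 0$ on $\Omega\setminus\Omega'$. This forces $|\Omega'|=0$, and the lower bound $\underline u\leq u$ is symmetric.

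This is exactly the paper's argument. It is also the mechanism behind the paper's semilinear comparison Lemma \ref{lemma comparison principle for semilinear eqn}, so invoking that lemma for $\tilde f$ would amount to the same thing. It also shows where the continuity of $\underline u,\overline u$ is really used: to make $\Omega'$ open so the restriction lemma applies. It is not needed for compactness, since $\g{\Omega}$ maps bounded data into $C^{s}$ whether or not $T(v)$ is continuous.
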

The proof of the above theorem is present in the Appendix.
Before proving the existence result, we first need to establish the comparison principle
\begin{lemma}\label{lemma comparison principle for semilinear eqn}
     Let $\Omega$ be a domain in $\R^{n}$ such that $|\R^{n}\setminus \Omega|>0$. Assume that $f$ satisfies \ref{F2}, $g$ satisfies \ref{G1} if $\Omega$ is bounded or \ref{G} if $\Omega$ is unbounded\, and $h$ satisfies \ref{H}. If $u$ is a sub-solution and $v$ is a super-solution of \eqref{eqn main semilinear omega}.
Then $u\leq v$ a.e. in $\Omega$.
\end{lemma}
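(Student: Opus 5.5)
Since $g$ and $h$ are the same for $u$ and $v$, I would subtract the two integral inequalities so that the exterior and boundary terms cancel, and only the nonlinear term remains. By Definition \ref{dfn weak dual for semilinear}, for every $\psi\in L^{\infty}_{c}(\Omega)$ with $\psi\geq 0$,
\begin{equation*}
\int_{\Omega}(u-v)\psi\ dx\leq -\int_{\Omega}\big(f(x,u)-f(x,v)\big)\,\g{\Omega}(\psi)(x)\ dx .
\end{equation*}
Equivalently, by Remark \ref{Remark rep of solution for semilinear eqn}(ii), $w:=u-v$ satisfies pointwise a.e.
\begin{equation*}
w(x)\leq -\int_{\Omega}\gk{\Omega}(x,y)\big(f(y,u(y))-f(y,v(y))\big)\ dy .
\end{equation*}
Monotonicity of $f$ in \ref{f.1} is the key ingredient. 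On the set $\{w>0\}$ we have $f(y,u)-f(y,v)\geq 0$. We want to conclude that $\{w>0\}$ is null.

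The natural test function is $\psi=\chi_{K\cap\{w>0\}}$ for a compact $K\subset\Omega$. However, the right-hand side integrates $f(u)-f(v)$ over all of $\Omega$, including the set $\{w\leq 0\}$, where the sign is unfavourable. So a Kato-type inequality is needed rather than a direct substitution. I would proceed as follows.

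First, regard $w$ as a weak dual subsolution of the linear problem
\begin{equation*}
\slap w=-(f(x,u)-f(x,v))=: -c(x)w \quad \text{in }\Omega,
\end{equation*}
with zero exterior and boundary data. Here
\begin{equation*}
c(x)=\frac{f(x,u)-f(x,v)}{u-v}\geq 0
\end{equation*}
where $u\neq v$, and $c=0$ otherwise.

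Second, prove a weak dual Kato inequality stating that $w^{+}$ is a subsolution of the same problem with zero data:
\begin{equation*}
\int_{\Omega}w^{+}\psi\leq -\int_{\Omega}c\,w^{+}\,\g{\Omega}(\psi)\leq 0 .
\end{equation*}
The second inequality holds because $c\geq 0$, $w^{+}\geq 0$, and $\g{\Omega}(\psi)\geq 0$ by Proposition \ref{prop greens kernal continuity}(ii).

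To establish the Kato step, I would approximate the data by bounded truncations. Take $\psi_{k}\in L^{\infty}_{c}(\Omega)$, $\psi_k\ge 0$, and use the pointwise representation in Remark \ref{Remark rep of solution for semilinear eqn}(ii) together with positivity of the Green's kernel. From this one gets
\begin{equation*}
w(x)\leq \int_{\Omega}\gk{\Omega}(x,y)\big(f(y,v)-f(y,u)\big)^{+}\ dy ,
\end{equation*}
and the integrand $\big(f(y,v)-f(y,u)\big)^{+}$ is supported in $\{w<0\}$. So $w^{+}\le \g{\Omega}(F)$ with $F\geq 0$ supported where $w<0$.

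This bound alone does not yet give $w^{+}=0$. The remaining step is a maximum principle: a function dominated by a Green potential of a density supported on $\{w\le 0\}$ should be nonpositive where that potential does not "see" mass. I expect this to be the \emph{main obstacle}.

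A cleaner route for this step is a Stampacchia/duality argument. Choose the test function $\psi=\chi_{\{w>0\}\cap K}$ and subtract the pointwise identities integrated against $\psi$. Then write
\begin{equation*}
\int (f(u)-f(v))\g{\Omega}(\psi)=\int_{\{w>0\}}+\int_{\{w<0\}} .
\end{equation*}
Use the monotonicity of $f$ on the first piece. On the second piece, use $\g{\Omega}(\psi)\leq \g{\Omega}(\chi_{K})$ and let $K\uparrow\Omega$. If necessary, I would fall back on the $L^{1}$ contraction argument of \cite{Abatangelo2015LargesHarmonic}, which is the dual form of Kato's inequality for weak dual/very weak solutions, transported to this setting through Lemma \ref{lemma restriction of a solution to subdomains}.

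For unbounded $\Omega$, I would first apply the bounded-domain argument on exhausting subdomains $\Omega'$ via Lemma \ref{lemma restriction of a solution to subdomains}. There the exterior data in $\Omega\setminus\Omega'$ is $u$ for the subsolution and $v$ for the supersolution, so an ordering on $\Omega\setminus\Omega'$ is required. Closing this circularity, by letting $\Omega'$ exhaust $\Omega$ and using the decay bounds \eqref{eqn greens estimatte on unbounded domain} and Theorem \ref{theorem poission behaviour}, is the second delicate point.
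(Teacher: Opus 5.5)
Your first reduction is correct. You also diagnose the difficulty correctly: after subtracting on all of $\Omega$, the term $-\int_{\Omega}(f(x,u)-f(x,v))\,\g{\Omega}(\psi)$ picks up contributions from $\{u\le v\}$, where its sign is unfavourable. But none of your proposed fixes closes this gap.
\begin{itemize}
\item The Kato route ends, by your own account, at an unproved maximum principle. Knowing $w^{+}\le \g{\Omega}(F)$ with $F\ge 0$ supported in $\{w<0\}$ does not force $w^{+}=0$, since $\g{\Omega}(F)$ is positive throughout $\Omega$.
\item The ``Stampacchia'' split fails as written. On $\{w<0\}$ the contribution $-(f(x,u)-f(x,v))\,\g{\Omega}(\psi)$ is nonnegative. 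Bounding $\g{\Omega}(\psi)$ by the larger $\g{\Omega}(\chi_K)$ only enlarges this bad term, and letting $K\uparrow\Omega$ does not remove it.
\item The appeal to the $L^{1}$-contraction argument of Abatangelo is a pointer, not a proof.
\item Your treatment of unbounded $\Omega$ applies Lemma \ref{lemma restriction of a solution to subdomains} along an exhaustion. As you note, this needs an ordering of $u$ and $v$ on $\Omega\setminus\Omega'$ that you do not have.
\end{itemize}

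The missing idea is to apply Lemma \ref{lemma restriction of a solution to subdomains} to both $u$ and $v$ on the subdomain $\Omega'=\{u>v\}$ itself. On $\Omega'$ the Green potential is $\g{\Omega'}(\psi)$, which vanishes off $\Omega'$. So the nonlinear term only involves $\Omega'$, where $f(x,u)-f(x,v)\ge 0$ by monotonicity. The set $\Omega\setminus\Omega'$ enters only through the exterior term $-\int_{\Omega\setminus\Omega'}(u-v)\,\slap\g{\Omega'}(\psi)$. This term is $\le 0$, because $u-v\le 0$ there by the very definition of $\Omega'$ and $\slap\g{\Omega'}(\psi)\le 0$ off $\Omega'$. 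The $g$-terms and the $h$-terms on $\partial\Omega'\cap\partial\Omega$ are identical for $u$ and $v$ and cancel. Subtracting the two restricted inequalities then gives $0<\int_{\Omega'}(u-v)\psi\le 0$ for any $\psi\ge 0$ that is nontrivial on $\Omega'$, a contradiction unless $|\Omega'|=0$. This is the paper's proof. It needs no Kato inequality and no exhaustion, and the ordering on $\Omega\setminus\Omega'$ that you found circular is automatic.

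One point the paper leaves implicit: Lemma \ref{lemma restriction of a solution to subdomains} is stated for open $\Omega'$. So $\{u>v\}$ must be open, for instance via continuity of $u$ and $v$ as in the Appendix, or else an extra approximation step is needed.
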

\begin{proof}
    On the contrary, let $\Omega_{1}:=\{x\in\Omega~|~v(x)<u(x)\}$ be a set of positive measure. For any $\psi\in L_{c}^{\infty}(\Omega_{1})$ with $\psi\geq 0$, applying Lemma \ref{lemma restriction of a solution to subdomains} to both $u, v$ and subtracting the resulting inequalities, we obtain  
\begin{equation*}
       \int_{\Omega_{1}}w\psi\ dy\geq \int_{\Omega_{1}}\{f(y,u(y))-f(y,v(y))\}\g{\Omega_{1}}(\psi)(y)\ dy-\int_{\mathbb{\R}^{n}\setminus\overline{\Omega}_{1}}w(y)\slap\g{\Omega_{1}}(\psi)(y)\ dy.
    \end{equation*}
     By the definition of $\Omega_{1}$, the left-hand side is negative.  By monotonicity of $f$ and the definition of $\Omega_{1}$, the right hand side is positive. This gives a contradiction. 
\end{proof}

Next, we prove the existence of a large solution, adapting the argument used in \cite{Abatangelo2015LargesHarmonic} for the existence of a weak $L^{1}$ large solution. 
\begin{theorem}[\textbf{Existence}]\label{theorem existence}
    Let $\Omega$ be a bounded domain with $C^{1,1}$ boundary, $f,g,h$ satisfy \ref{F2}, \ref{G1} and \ref{H}, respectively. Then, there exists a weak dual solution of \eqref{eqn main linear omega}.
\end{theorem}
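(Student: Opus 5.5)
The statement refers to \eqref{eqn main linear omega}, but from the hypotheses it is clearly meant for the semi-linear problem \eqref{eqn main semilinear omega} with data $(f(x,\cdot),g,h)$. The approach is a truncation argument combined with the sub/super-solution method of Theorem \ref{thrm sub and super solution method}, followed by monotone passage to the limit. I first treat bounded data $g$ and zero boundary datum $h$, then add $h$, and finally remove the boundedness of $g$.

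\emph{Step 1 (bounded $g$, $h=0$).} For $g\in L^\infty$, let $\overline{u}$ be the solution of the linear problem with data $(0,g^{+},0)$ given by Theorem \ref{thrm linear existence with g}. It is bounded, since the Poisson kernel integrates to $1$, and continuous in $\Omega$. Because $f\ge 0$ on positive values, $\overline u\ge0$ is a supersolution. For the subsolution, let $\underline{u}$ solve the linear problem with data $(-f(x,\|g\|_\infty),\,-g^{-},0)$; more precisely, take $\underline u=-\g{\Omega}(a_1+a_2\|g\|_\infty^p)-(\text{Poisson part of }g^-)$. This is bounded because $\g{\Omega}$ maps $L^\infty\to C(\overline\Omega)$ by \eqref{eqn continuity of G 1}. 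One checks $\underline u\le \overline u$ and that $\underline u$ is a subsolution, using monotonicity of $f$ and the bound $\underline u\le \|g\|_\infty$. Theorem \ref{thrm sub and super solution method} then gives a solution $u$ between them.

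\emph{Step 2 (adding $h$).} Truncate the boundary datum as $h_k=\min(h,k)$, or better, approximate $h$ by compactly supported continuous functions increasing to $h$. Set $w_k=\int_{\partial\Omega}\Ds\gk{\Omega}(\cdot,\theta)h_k\,d\mathcal H$. Since $w_k$ blows up like $\delta^{s-1}$, I avoid inserting it into Theorem \ref{thrm sub and super solution method} directly. Instead I seek $u=w_k+z$, where $z$ solves a problem with data $(f(x,w_k+z),g,0)$. The nonlinearity $\tilde f(x,t)=f(x,w_k+t)$ is still increasing and continuous. Using \ref{f.2} together with $w_k\le C\delta^{s-1}$, I get $f(x,w_k)\le a_1+a_2C\delta^{-(1-s)p}$. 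The condition $p<\frac{1+s}{1-s}$ gives exactly $\delta^{-(1-s)p}\in L^1(\Omega,\delta^s)$, so $\g{\Omega}(f(\cdot,w_k))$ is finite. This yields the supersolution $w_k+(\text{Poisson part of }g^+)$ and the subsolution $w_k-\g{\Omega}(f(\cdot,w_k))-(\text{Poisson part of }g^-)$. A version of the sub/super method is needed that allows sub/supersolutions in $L^1_{loc}$ rather than $L^\infty$. I would run the fixed-point argument on the order interval $[\underline u,\overline u]$, using dominated convergence with the integrable majorant $f(x,\overline u)\,\delta^s$.

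\emph{Step 3 (general $g$, limits).} Truncate $g_j=\max(\min(g,j),-j)$, splitting the cases of increasing $g^+$ and decreasing $g^-$. The comparison principle, Lemma \ref{lemma comparison principle for semilinear eqn}, makes the solutions monotone in $j$ and $k$. They are bounded above by the linear solution with data $(0,g^+,h)$ and below by the subsolution built from $f(\cdot,\text{linear supersolution})$. Monotone convergence for the $g$ and $h$ terms, and dominated convergence for $\int f(x,u_j)\g{\Omega}(\psi)$ with majorant $f(x,\overline U)\,|\g{\Omega}(\psi)|\le C f(x,\overline U)\,\delta^s$, pass the weak dual formulation to the limit.

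The main obstacle is the integrability of $f(x,\overline U)\delta^s$ when $\overline U\sim\delta^{s-1}$ near the boundary. This is exactly where the exponent restriction $p<\frac{1+s}{1-s}$ in \ref{f.2} enters, and it also governs the extension of Theorem \ref{thrm sub and super solution method} to unbounded barriers. I would first verify that $\int_{\partial\Omega}\Ds\gk{\Omega}h\,d\mathcal H\le C\delta^{s-1}$ for $h\in L^\infty$ via Lemma \ref{lemma equivalent E}. For $h$ that is merely $L^1$, I would instead use the $L^1(\delta^s)$ bound on $u^p$ obtained by approximation.
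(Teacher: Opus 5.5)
You are right that the statement concerns the semi-linear problem \eqref{eqn main semilinear omega}. The gap is in the final limit of Step 3 when $h$ is only in $L^1(\partial\Omega)$.

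\textbf{Why the majorant fails.} You dominate $f(x,u_j)\,\g{\Omega}(\psi)$ by $C f(x,\overline U)\,\delta_\Omega^s$, where $\overline U$ is the linear solution for $(0,g^+,h)$. This majorant is integrable when $g$ and $h$ are bounded: then $\overline U\lesssim 1+\delta_\Omega^{s-1}$, and $p<\frac{1+s}{1-s}$ is exactly what is needed. It is not integrable in general.
\begin{itemize}
\item Since $\Ds\gk{\Omega}(x,\theta)\asymp\delta_\Omega^s(x)|x-\theta|^{-n}$, the datum $h(\theta)=|\theta-\theta_0|^{-a}$ with $0<a<n-1$ gives $\overline U(x)\gtrsim|x-\theta_0|^{s-1-a}$ in a nontangential cone at $\theta_0$.
\item Hence $\int_\Omega\overline U^{\,p}\delta_\Omega^s\,dx=\infty$ as soon as $p\geq\frac{n+s}{1-s+a}$. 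For every $p\in\bigl(\frac{n+s}{n-s},\frac{1+s}{1-s}\bigr)$ such an $a<n-1$ exists, so with $f(x,t)=t^p$ your majorant is not integrable.
\item For the same reason your lower barrier $\overline U-\g{\Omega}(f(\cdot,\overline U))$ is identically $-\infty$.
\item An unbounded $g$ satisfying only \ref{G1} can produce the same behaviour through its Poisson part.
\end{itemize}
Your fallback, an $L^1(\Omega,\delta_\Omega^s)$ bound on $u^p$, targets the wrong quantity. \ref{F2} bounds $f$ only from above, and the equation controls $f(x,u)$, not $u^p$. For instance, $f(x,t)=\min\{t^+,1\}$ satisfies \ref{F2}. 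The representation then gives $u\geq\overline U-\g{\Omega}(1)\geq\overline U-C$ when $g\geq0$, so $u^p\delta_\Omega^s\notin L^1(\Omega)$ for the $h$ above.

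\textbf{The repair.} It uses only what you already have. Assume $g\geq0$, as the paper's Step 1 tacitly does, and set $g_k=\min\{g,k\}$, $h_k=\min\{h,k\}$. Your solutions $u_k$ increase in $k$ by Lemma \ref{lemma comparison principle for semilinear eqn} and satisfy $0\le u_k\le\overline U\in L^1_{loc}(\Omega)$, so $u=\sup_k u_k\in L^1_{loc}(\Omega)$. For $\psi\geq0$ the weak dual identity reads $\int_\Omega f(x,u_k)\,\g{\Omega}(\psi)\,dx=L_k(\psi)-\int_\Omega u_k\psi\,dx$. Here $L_k(\psi)$, the sum of the $g$- and $h$-terms, increases to a finite $L(\psi)$. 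Since $f(x,\cdot)$ is nondecreasing and continuous, $f(x,u_k)\,\g{\Omega}(\psi)\uparrow f(x,u)\,\g{\Omega}(\psi)$. Monotone convergence then gives $\int_\Omega f(x,u)\,\g{\Omega}(\psi)\,dx=L(\psi)-\int_\Omega u\psi\,dx<\infty$ with no majorant at all; split $\psi=\psi^+-\psi^-$ for general $\psi$. The paper's own Step 3 also only invokes dominated convergence without naming a majorant, and this monotone argument is the natural way to close it too.

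In the unbounded sub/super lemma, which you still have to prove, take $0$ as the lower barrier. \ref{F2} says nothing about $f(x,t)$ for $t<0$, and your lower barriers can be negative. With barriers $0\le\overline u=w_k+(\text{Poisson part of }g)$:
\begin{itemize}
\item the truncated nonlinearity is dominated by $f(x,\overline u)\lesssim 1+\delta_\Omega^{-(1-s)p}\in L^1(\Omega,\delta_\Omega^s)$;
\item $\g{\Omega}$ maps such dominated sets compactly into $L^1(\Omega)$, so Schauder's theorem gives a fixed point;
\item the order property follows by comparison, as in the paper.
\end{itemize}

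\textbf{Comparison with the paper.} With these changes your route is a genuine alternative. The paper keeps every approximate problem within the scope of Theorem \ref{thrm sub and super solution method}. For continuous $h$ it replaces the boundary datum by interior forcings $f_i$ concentrating on $\partial\Omega$, then passes to the limit via equicontinuity, Arzel\`a--Ascoli and \eqref{interier estimate for h}. Your exact lift $w_k=\int_{\partial\Omega}\Ds\gk{\Omega}(\cdot,\theta)h_k\,d\mathcal{H}(\theta)$ removes both the concentration limit and the compactness step. It needs only $h_k\in L^\infty(\partial\Omega)$, so $\min\{h,k\}\uparrow h$ can be used directly. The price is proving the sub/super method with barriers of size $\delta_\Omega^{s-1}$. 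In both routes $p<\frac{1+s}{1-s}$ enters the same way, through $\delta_\Omega^{(s-1)p+s}\in L^1(\Omega)$.
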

We give the proof of this theorem in the Appendix as it is long and is not the main focus of this paper.
\begin{remark}\label{remark dropping monotonicity}
    It is possible to drop monotonicity assumption on $f$ for both Theorem \ref{thrm sub and super solution method} and Theorem \ref{theorem existence}. In Theorem \ref{thrm sub and super solution method}, it is possible to prove that the solution $u$ obtained is minimal in the sense that if $v$ is any super solution of \eqref{eqn main linear omega} for the data $(-f(\cdot,u),g,0)$ with $\underline{u}\leq v$, then $u\leq v$ (see, \cite{Abatangelo2015LargesHarmonic} and \cite[Corollary 2.2]{DumontDupaigneOlivierRuadulescu2007}). Further, with this modification, one can use minimality instead of the comparison principle given in Lemma \ref{lemma comparison principle for semilinear eqn} to prove Theorem \ref{theorem existence}. Since we finally need $f$ to be increasing in the second variable for the investigation on the infinite cylinder, we proved the existence result for the same.
\end{remark}
 
\begin{theorem}[Uniqueness]
Let  $f,g$ and $h$ be as in Theorem \ref{theorem existence}. Then the problem admits a unique positive solution.
\end{theorem}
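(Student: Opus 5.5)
Uniqueness follows from the comparison principle, Lemma \ref{lemma comparison principle for semilinear eqn}, applied twice. Existence is Theorem \ref{theorem existence}, so only uniqueness needs an argument. Let $u_1,u_2$ be two weak dual solutions of \eqref{eqn main semilinear omega} with the same data $(f,g,h)$ on the bounded $C^{1,1}$ domain $\Omega$. Every solution is in particular both a subsolution and a supersolution in the sense of Definition \ref{dfn weak dual for semilinear}. Applying Lemma \ref{lemma comparison principle for semilinear eqn} with $u=u_1$ as subsolution and $v=u_2$ as supersolution gives $u_1\le u_2$ a.e.; exchanging the roles gives $u_2\le u_1$ a.e. Hence $u_1=u_2$ a.e. in $\Omega$, and both equal $g$ outside $\overline\Omega$.

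Positivity is not automatic, since $g$ may change sign, so I read the statement with the sign hypotheses $g\ge 0$ and $h\ge 0$. Under these I would show $u\ge 0$ by comparing $u$ with $0$. Because $f(x,0)=0$ by \ref{f.1}, the function $w\equiv 0$ is a weak dual solution with data $(f,0,0)$. Since $g\ge 0$ and $h\ge 0$, the right-hand side of the subsolution inequality for $w$ with data $(f,g,h)$ only increases, using $-\slap\g{\Omega}(\psi)\ge 0$ on $\R^n\setminus\overline\Omega$ and $\Ds\g{\Omega}(\psi)\ge 0$ for $\psi\ge 0$. So $0$ is a subsolution for the data $(f,g,h)$, and Lemma \ref{lemma comparison principle for semilinear eqn} gives $0\le u$.

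The real content, and the step to scrutinise, is the comparison lemma itself. It uses Lemma \ref{lemma restriction of a solution to subdomains} on $\Omega_1=\{v<u\}$, which is only measurable, not open. I would handle this by restricting to an open set, or by working directly with the pointwise representations from Remark \ref{Remark rep of solution for semilinear eqn}(ii).

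The direct route goes as follows. Set $w=u-v$. Subtracting the two representations, the $g$ and $h$ terms cancel, giving
\[
w(x)\le \int_\Omega \gk{\Omega}(x,y)\bigl(f(y,v)-f(y,u)\bigr)\,dy .
\]
By monotonicity of $f$, the integrand is $\le 0$ on $\{u>v\}$. Off $\{u>v\}$ it may be positive, so this bound alone does not conclude, and one needs a Kato-type inequality for $w^+$. I would try testing with $\psi=\chi_K\,\mathrm{sign}^+(w)$ and using the restriction lemma on an open neighbourhood. Failing that, I would take the paper's Lemma \ref{lemma comparison principle for semilinear eqn} as given, which the uniqueness statement is evidently meant to rest on.

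Integrability of $f(\cdot,u)\,\g{\Omega}(\psi)$ for large solutions is guaranteed by the growth bound \ref{f.2} with $p<\frac{1+s}{1-s}$. Together with $u\lesssim\delta^{s-1}$ and $\g{\Omega}(\psi)\lesssim\delta^s$, it gives an integrand of order $\delta^{(s-1)p+s}$, which is integrable near $\partial\Omega$ exactly when $p<\frac{1+s}{1-s}$. So every term in the comparison argument is finite.
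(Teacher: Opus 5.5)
Your argument is essentially the paper's. The paper's uniqueness proof restricts to $\{u<v\}$ via Lemma \ref{lemma restriction of a solution to subdomains}, uses the monotonicity of $f$, and then applies the linear comparison principle; that is just the proof of Lemma \ref{lemma comparison principle for semilinear eqn} rerun inline, so invoking that lemma twice is the same argument, more cleanly packaged. Your positivity step via the subsolution $0$ is not in the paper but is correct once $g\ge 0$, which the paper's existence proof also tacitly assumes, and the openness issue you flag for $\{u<v\}$ affects the paper's proof in exactly the same way.
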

 \begin{proof}
    Let $u, v\in L^{1}_{loc}(\Omega)$ be two solutions. 
    If $\Omega_{1}:=\{x\in \Omega~|~u(x)<v(x)\}$. 
     For $\psi\in L_{c}^{\infty}(\Omega_{1})$ with $\psi\geq 0$, by Lemma \ref{lemma restriction of a solution to subdomains} we obtain
    \begin{equation*}
    \begin{split}
        \int_{\Omega_{1}}\left(u(x)-v(x)\right)\psi(x)\ dx=&\int_{\Omega_{1}}\left(f\left(x,v(x)\right)-f\left(x,u(x)\right)\right)\g{\Omega_{1}}(\psi)(x)\ dx\\
        &\hspace{2cm} -\int_{\Omega\setminus\Omega_{1}} \left(u(x)-v(x)\right)\slap\g{\Omega_{1}}(\psi)(x)\ dx\\
        &\geq \int_{\Omega_{1}}\left(f\left(x,v(x)\right)-f\left(x,u(x)\right)\right)\g{\Omega_{1}}(\psi)(x)\ dx\geq 0.
        \end{split}
    \end{equation*}
    Therefore, $u-v$ is a super solution to 
    \begin{equation*}
        \begin{cases}
            \slap w=0\quad &\text{in }\Omega_{1}\\
            w=0 &\text{in }\R^{n}\setminus\overline{\Omega}_{1}\\
            E_{\Omega_{1}}(w)=0 &\text{on }\partial\Omega_{1}.
        \end{cases}
    \end{equation*}
    Since zero is a solution, by the comparison principle given in Lemma \ref{lemma comparison principle}, $u-v\geq 0$ a.e. in $\Omega_{1}$. By a similar argument, we derive that $u=v$ on $\Omega$.
\end{proof} 
It is known that for $g=h=0$, the corresponding `potentials' of the semi-linear problem vanish on the boundary. This guarantees the nonexistence of large solutions in such cases.  We state the result for completeness.  
\begin{theorem}\label{thrm no large solution}
    Assume \ref{F2} but without the monotonicity assumption of $f(x,t)$ in the second variable, and let $\Omega$ be a bounded domain. Then the problem \eqref{eqn main semilinear omega} does not admit a large solution with the data $(f(x,\cdot),0,0)$.
\end{theorem}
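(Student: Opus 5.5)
Suppose, for contradiction, that $u\in L^{1}_{loc}(\Omega)$ is a weak dual solution of \eqref{eqn main semilinear omega} with data $(f(x,\cdot),0,0)$ and that $u(x)\to+\infty$ as $x\to\theta$ for some (or every) $\theta\in\partial\Omega$. The plan is to use the pointwise representation of Remark \ref{Remark rep of solution for semilinear eqn}(i). With $g=0$ and $h=0$ it reads
\begin{equation*}
u(x)=-\int_{\Omega}\gk{\Omega}(x,y)f(y,u(y))\ dy\quad\text{a.e. in }\Omega .
\end{equation*}
We will show that the right-hand side cannot tend to $+\infty$ at the boundary. Monotonicity of $f$ in $t$ is never used; only the sign condition and growth bound in \ref{f.1}, \ref{f.2} enter.

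First, the sign. Near the boundary $u$ is large, hence positive, so \ref{f.1} gives $f(y,u(y))\ge 0$ there. More precisely, set $\Omega^{+}:=\{u>0\}$ and split
\begin{equation*}
u(x)=-\int_{\Omega^{+}}\gk{\Omega}(x,y)f(y,u(y))\ dy-\int_{\Omega\setminus\Omega^{+}}\gk{\Omega}(x,y)f(y,u(y))\ dy .
\end{equation*}
The first integral has a nonpositive integrand contribution, since $\gk{\Omega}\ge 0$ by Proposition \ref{prop greens kernal continuity}(ii) and $f\ge0$ on $\Omega^{+}$. So $u(x)\le \int_{\Omega\setminus\Omega^{+}}\gk{\Omega}(x,y)f(y,u(y))^{-}\,dy$.

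Second, the bound on the negative part. The set $\Omega\setminus\Omega^{+}$ stays away from $\partial\Omega$ because $u\to+\infty$ there; call this compact set $K$. The weak dual definition requires $f(\cdot,u)\in L^{1}(\Omega,\delta^{s}_{\Omega})$, i.e. the potential is finite. So $F:=f(\cdot,u)^{-}\chi_{K}\in L^{1}_{c}(\Omega)$. By \eqref{bound near the boundary for f compactly supported}, for $x$ outside a neighbourhood of $K$,
\begin{equation*}
u(x)\le C\,\delta_{\Omega}^{s}(x)\,\text{dist}(x,K)^{s-n}\|F\delta^{s}_{\Omega}\|_{L^{1}}\longrightarrow 0\quad\text{as }x\to\partial\Omega .
\end{equation*}
This contradicts $u\to+\infty$. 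In fact it shows the stronger statement $\limsup_{x\to\partial\Omega}u\le 0$, so no blow-up to $+\infty$ occurs at any boundary point.

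The main obstacle is blow-up to $-\infty$, if "large" is meant in absolute value. Here the same splitting on $\{u<0\}$ gives $f\le$ something. Without the sign assumption for $t<0$, I would use \ref{f.2}, applied to $|t|$, to bound $|f(y,u)|\le a_1+a_2|u|^{p}$. I would then try to show $\int\gk{\Omega}(x,y)|f(y,u)|\,dy\to0$ using integrability of $f(\cdot,u)\delta^{s}_{\Omega}$ near the boundary. The cleanest route is to show that the Green potential $\g{\Omega}(F)$ of any $F\in L^{1}(\Omega,\delta^{s}_{\Omega})$ has zero weighted boundary trace, i.e. $E_{\Omega}\g{\Omega}(F)=0$, via Lemma \ref{lemma equivalent E} and \cite{AbatangeloVazquez2023}. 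A genuine large solution, however, must have either $h>0$ or $\delta^{1-s}u$ of nonzero order, contradicting $E_{\Omega}u=0$. I would present this trace argument as the general mechanism and the sign argument above as the concrete proof for positive blow-up.
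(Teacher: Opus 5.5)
Your sign-splitting argument is correct and proves the statement in the sense the paper uses the term (a large solution is one with $u\to+\infty$ at $\partial\Omega$). It takes a different route from the paper's.

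\emph{The paper's route.} It bounds the whole potential. Using \ref{f.2} together with the rate $u\le C\delta_{\Omega}^{s-1}$ (the bound used in the $I_{1}^{j}$ computation of the Appendix, obtained from $E_{\Omega}u=0$ via Lemma \ref{lemma equivalent E}), it dominates $f(y,u(y))\gk{\Omega}(x,y)$ by a majorant built from $\delta_{\Omega}^{(s-1)p+s}$. This majorant is integrable because $p<\frac{1+s}{1-s}$. Dominated convergence as $x\to\partial\Omega$ then gives $\int_{\Omega}\gk{\Omega}(x,y)f(y,u(y))\,dy\to0$, i.e.\ $u\to0$.

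\emph{Your route.} You use only the sign in \ref{f.1}. Where $u>0$ the nonlinearity can only lower $u$, and $\{u\le0\}$ is compactly contained in $\Omega$, so its contribution is $O(\delta_{\Omega}^{s})$ by \eqref{bound near the boundary for f compactly supported}. This needs neither \ref{f.2} nor the a priori rate $u\le C\delta_{\Omega}^{s-1}$. It needs only $f(\cdot,u)\in L^{1}(\Omega,\delta_{\Omega}^{s})$, which the weak dual formulation does force: test with $\psi=\chi_{B}$, $B\subset\subset\Omega$, and use the lower bound in \eqref{eqn behaviour of greens kernel} to get $\g{\Omega}(\chi_{B})\ge c\,\delta_{\Omega}^{s}$. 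The price is the one-sided conclusion $\limsup_{x\to\partial\Omega}u\le0$ instead of $u\to0$.

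One small repair: if blow-up is assumed only at a single $\theta$, then $\{u\le0\}$ need not be compactly contained, so localize. If $u>0$ a.e.\ on $B_{2r}(\theta)\cap\Omega$, then for $x\in B_{r}(\theta)\cap\Omega$ the upper bound in \eqref{eqn behaviour of greens kernel} gives $u(x)\le Cr^{-n}\delta_{\Omega}^{s}(x)\int_{\Omega}f(y,u(y))^{-}\delta_{\Omega}^{s}(y)\,dy\to0$ as $x\to\theta$.

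You should drop the trace-based ``general mechanism'', because it is false: zero weighted trace does not prevent blow-up. Take $2s<\beta<1+s$. By Proposition \ref{prop boundary behaviour of the solution on bounded domain}, the potential $u=\g{\Omega}(\delta_{\Omega}^{-\beta})$ of $\delta_{\Omega}^{-\beta}\in L^{1}(\Omega,\delta_{\Omega}^{s})$ satisfies $u\asymp\delta_{\Omega}^{2s-\beta}\to+\infty$. At the same time $\delta_{\Omega}^{1-s}u\asymp\delta_{\Omega}^{1+s-\beta}\to0$, so $E_{\Omega}u=0$. Hence a large solution need not have $h>0$ or $\delta_{\Omega}^{1-s}u$ of nonzero order, and $E_{\Omega}u=0$ gives no contradiction. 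What actually excludes blow-up is control of $f(\cdot,u)$ near the boundary: its sign in your argument, its growth in the paper's.

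Also leave out the $-\infty$ case. \ref{f.2} is assumed only for $t>0$, so it cannot be ``applied to $|t|$''. More generally, \ref{F2} imposes nothing on $f(x,t)$ for $t<0$, so no hypothesis is available to control blow-up to $-\infty$, and the theorem as the paper means it does not claim this.
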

\begin{proof}
   As in the proof of Theorem \ref{theorem existence} (see the calculations regarding $I_1^j$ in the Appendix)
   \begin{equation*}
       \int_{\Omega}f(y,u(y))\gk{\Omega}(x,y)\ dy\leq a_{1}\int_{B_{r}(x)}\frac{1}{|x-y|^{n-2s}}\ dy+C\int_{\Omega\setminus B_{r}(x)}\delta_{\Omega}^{(s-1)p+s}(y)\ dy<+\infty.
   \end{equation*}
  The first integral can be bounded independently of $x$ by bounding the integral by an integral over a ball of sufficiently large radius.  Thus, by the dominated convergence theorem 
   \begin{equation*}
       \int_{\Omega}f(y,u(y))\gk{\Omega}(x,y)\ dy\to 0 \quad\text{as }x\to \partial\Omega.\qedhere
   \end{equation*} 
\end{proof}
\begin{remark}
   If in addition, $f(x,t)$ is monotone with respect to the second variable, then the above result follows immediately from the fact that $f(x,0)=0$ and the uniqueness of the solution.
\end{remark}

\begin{lemma}\label{local uniform bound for semi-linear eqn on bounded domain}
    Let $\Omega$ be a bounded domain, $f$ and $g$ be functions satisfying \ref{F2} and \ref{G1} respectively. Also let $h\in L^{1}(\partial\Omega)$. If $u$ a solution of \eqref{eqn main semilinear omega}, then for any compact subset $K$ of $\Omega$, there exists a positive constant $C(=C(n,s,K,a_{i},g,h))$ such that 
    \begin{equation*}
        \|u\|_{L^{\infty}(K)}\leq C(n,s,K,a_{i},g,h)
    \end{equation*}
\end{lemma}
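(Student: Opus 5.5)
The plan is to start from the pointwise representation in Remark \ref{Remark rep of solution for semilinear eqn}(i), which holds for a.e. $x\in\Omega$:
\begin{equation*}
u(x)=-\int_{\Omega}\gk{\Omega}(x,y)f(y,u(y))\,dy-\int_{\R^{n}\setminus\overline{\Omega}}\slap\gk{\Omega}(x,y)g(y)\,dy+\int_{\partial\Omega}\Ds\gk{\Omega}(\theta,x)h(\theta)\,d\mathcal{H}(\theta).
\end{equation*}
I will bound each of the three terms separately and uniformly for $x\in K$.

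\emph{The exterior and boundary terms.} For the exterior term, Lemma \ref{lemma bound of the sln ug}, which is based on Theorem \ref{theorem poission behaviour}, gives
\begin{equation*}
\Big|\int_{\R^{n}\setminus\overline{\Omega}}\slap\gk{\Omega}(x,y)g(y)\,dy\Big|\leq C(n,s,K)\,\|g\delta_{\Omega}^{-s}\|_{L^{1}(\R^{n}\setminus\overline{\Omega})}.
\end{equation*}
The right-hand side is finite by \ref{G1}, because $\Omega$ is bounded and so $\delta_{\Omega}^{-s}\asymp\min\{\delta_{\Omega}^{-s},\delta_{\Omega}^{-n-2s}\}$ only near $\partial\Omega$. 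If this equivalence causes trouble far from $\Omega$, I will instead use the two-sided Poisson estimate \eqref{asym of poison on bounded domain} directly; it gives the weight $\delta_{\Omega}^{-s}(y)(1+\delta_{\Omega}(y))^{-s}|x-y|^{-n}$, and this is comparable to $\min\{\delta_{\Omega}^{-s},\delta_{\Omega}^{-n-2s}\}$ uniformly in $x\in K$.

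For the boundary term, \eqref{eqn martin behaviour} gives $\Ds\gk{\Omega}(x,\theta)\leq C\,d(K,\partial\Omega)^{2s-n}$. Hence this term is at most $C(K)\|h\|_{L^{1}(\partial\Omega)}$.

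\emph{The nonlinear term.} Since $f\ge 0$ for $u\geq 0$ and the term enters with a minus sign, it only helps the upper bound when $u\ge0$. In general, though, $u$ may take either sign, and I need a two-sided bound, so I estimate $|f(y,u)|\le a_1+a_2|u|^p$. To make this meaningful I first need a global a priori control on $u$. I will use comparison, Lemma \ref{lemma comparison principle for semilinear eqn}: since $f(x,t)\ge 0$ for $t>0$, $u$ is a subsolution of the linear problem with data $(0,g,h)$. Therefore $u\le U:=$ the linear solution from \eqref{eqn representation of a solution on omega}. By Lemma \ref{lemma equivalent E} and the $g$-bound, $U\lesssim 1+\delta_{\Omega}^{s-1}+(\text{the }g\text{-part})$.

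Then $f(y,u(y))\le a_1+a_2U_+^p$, and I bound
\begin{equation*}
\int_\Omega \gk{\Omega}(x,y)\bigl(a_1+a_2U_+^p\bigr)\,dy.
\end{equation*}
I split the domain into $B_r(x)$ and $\Omega\setminus B_r(x)$, exactly as for $I_1^j$ in the Appendix calculation invoked in Theorem \ref{thrm no large solution}. On $B_r(x)$, the bound $\gk{\Omega}\le\Gamma_s$ together with local boundedness of $U$ handles the contribution. On the remainder, \eqref{eqn behaviour of greens kernel} gives $\gk{\Omega}(x,y)\le C(K)\delta_{\Omega}^{s}(y)$, and the integrand becomes $\delta_{\Omega}^{s+(s-1)p}$. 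This is integrable precisely because $p<\frac{1+s}{1-s}$, which is where hypothesis \ref{f.2} is used.

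For the lower bound on $u$ I use the same representation. The nonlinear term is controlled above by what was just shown, and it only involves $f(y,u(y))$ where $u>0$, since $f(y,t)\le 0$ for $t<0$ by monotonicity and $f(x,0)=0$. So $-\int\gk{\Omega}f(y,u)\ge -\int\gk{\Omega}(a_1+a_2U_+^p)$, and the other two terms are bounded as above.

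\emph{Main obstacle.} The hardest step is the a priori comparison $u\le U$ in the weak dual sense, together with the contribution of the exterior datum $g$ to $U$ near the boundary. The latter may be unbounded there, so $U_+^p$ must still be integrable against $\delta_{\Omega}^s$. I will first treat the $h$-part as above. For the $g$-part I will use that the map $y\mapsto\int\gk{\Omega}(x,y)U_g^p$ is controlled via the Appendix estimates for $I_1^j$; if needed, I will restrict to $p\le1$ there and use Jensen's inequality with the Poisson kernel, which has total mass at most $1$.
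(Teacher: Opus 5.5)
\textbf{Verdict: the plan has a genuine gap in the nonlinear term.}

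\textbf{What is fine.} Your exterior term and boundary term are fine, and follow the paper's decomposition.
\begin{itemize}
\item For the exterior term, your fallback via \eqref{asym of poison on bounded domain} is the correct justification. \ref{G1} alone does not give $g\delta_\Omega^{-s}\in L^1$ far from $\Omega$.
\item For the boundary term, your direct bound from \eqref{eqn martin behaviour} is simpler than the paper's weak-$*$ approximation through the $f_i$.
\end{itemize}

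\textbf{First problem: the comparison $u\le U$ is false in general.} $u$ is a subsolution of the linear problem with data $(0,g,h)$ only if $f(\cdot,u)\ge 0$ a.e., i.e.\ $u\ge 0$. For sign-changing $g$ this fails, and so does the conclusion. Take $g<0$ and $h=0$. Then $0$ is a supersolution, so $u\le 0$ and $f(\cdot,u)\le 0$. Hence $u=U+\g{\Omega}(|f(\cdot,u)|)\ge U$, strictly for instance when $f(x,t)=t$.

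\textbf{Second problem: the nonlinear estimate needs integrability you do not have.} Even with a correct upper barrier, putting $U_+^p$ into $\int_\Omega\gk{\Omega}(x,y)(a_1+a_2U_+^p)\,dy$ requires $\int_\Omega\delta_\Omega^{s}U_+^p\,dy<\infty$. You get this from $U\lesssim\delta_\Omega^{s-1}$. However, $\int_{\partial\Omega}\Ds\gk{\Omega}(x,\theta)\,d\mathcal{H}(\theta)\asymp\delta_\Omega^{s-1}(x)$ gives that bound only for $h\in L^\infty(\partial\Omega)$.
\begin{itemize}
\item Take $h(\theta)=|\theta-\theta_0|^{-a}\in L^1(\partial\Omega)$ with $a<n-1$ close to $n-1$.
\item Then $U_h\gtrsim\delta_\Omega^{s-1-a}$ on a cone at $\theta_0$.
\item So $\int_\Omega\delta_\Omega^{s}U_h^p=\infty$ once $p\ge (n+s)/(1+a-s)$.
\item This threshold tends to $(n+s)/(n-s)$, which is strictly less than $(1+s)/(1-s)$.
\end{itemize}
Exterior data concentrating at a boundary point do the same to the $g$-part. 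Your fallback of restricting to $p\le 1$ proves a weaker statement. The paper's estimate of $I_f$, which writes $|f(y,u)|\le a_1+a_2\delta_\Omega^{(s-1)p}$, tacitly uses the same global bound $|u|\lesssim\delta_\Omega^{s-1}$. So you located the weak point correctly, but your plan does not close it.

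\textbf{The missing idea: you never need to estimate the nonlinear term.} Let $U_+$ and $U_-$ be the linear solutions with data $(0,g^+,h)$ and $(0,-g^-,0)$.
\begin{itemize}
\item $U_+\ge 0$, so $f(\cdot,U_+)\ge 0$. Together with $g^+\ge g$, this makes $U_+$ a supersolution of \eqref{eqn main semilinear omega}.
\item $U_-\le 0$, so $f(\cdot,U_-)\le 0$. Together with $-g^-\le g$ and $h\ge 0$, this makes $U_-$ a subsolution.
\end{itemize}
To avoid any integrability question for $f(\cdot,U_\pm)$, argue with $w=u-U_\pm$ directly. On $\{u>U_+\}$ you have $u>0$, hence $f(\cdot,u)\ge 0$ there. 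The restriction lemma plus linear comparison then give a contradiction. The argument on $\{u<U_-\}$ is symmetric, since there $u<0$.

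Hence $U_-\le u\le U_+$. Your bounds for the exterior and boundary terms then yield
$\|u\|_{L^\infty(K)}\le C(n,s,K,\Omega)\bigl(\int_{\R^n\setminus\overline\Omega}|g|\min\{\delta_\Omega^{-s},\delta_\Omega^{-n-2s}\}\,dy+\|h\|_{L^1(\partial\Omega)}\bigr)$.
This holds with no restriction on $p$ and no dependence on $a_i$. When $g\ge 0$, which is the setting the paper implicitly works in, this is simply $0\le u\le U$, which your comparison already gives.
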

\begin{proof}
     By the representation of $u$ given in Remark \ref{Remark rep of solution for semilinear eqn}
    \begin{equation*}
        |u(x)|\leq I_{f}+I_{g}+I_{h}.
    \end{equation*}
    by Lemma \ref{lemma bound of the sln ug}
    \begin{equation*}
        I_{g}:=-\int_{\R^{n}\setminus\overline{\Omega}}|g(y)|\slap\gk{\Omega}(x,y)\ dy\leq C(n,s,K)\int_{\R^{n}\setminus\overline{\Omega}}|g(y)| \text{ min}\{\delta^{-s}(y),\delta^{-n-2s}(y)\}\ dy.
    \end{equation*}
    \begin{equation*}
        I_{h}:=\int_{\partial\Omega}h(\theta)\Ds\gk{\Omega}(x,\theta)\ d\mathcal{H}(\theta).
    \end{equation*}
    Applying \eqref{bound near the boundary for f compactly supported} to $u_{i}$ solving \eqref{eqn main linear omega} with the data $(f_{i},0,0)$, where $f_{i}$ is defined in \eqref{eqn dfn of fi} (Case 2 of the proof of Theorem \ref{theorem existence} in the Appendix), we get 
    \begin{equation*}
        |u_{i}(x)|\leq C \delta_{\Omega}^{s}(x)\text{dist}(x,\text{supp}(f_{i}))^{s-n}\int_{\Omega}f_{i}(y)\delta_{\Omega}^{s}(y)\ dy\quad \text{for all }x\in K.
    \end{equation*}
    Therefore, up to a subsequence $u_{i}\overset{*}{\rightharpoonup}I_{h}$ in $L^{\infty}(K)$ in view of \eqref{interier estimate for h} in the Appendix. Thus, using the weak $*$ lower semi-continuity of the norm
    \begin{equation*}
    \begin{split}
        I_{h}\leq \liminf_{i}\|u_{i}\|_{L^{\infty}(K)}&\leq C \liminf_{i}\delta_{\Omega}^{s}(x)\text{dist}(x,\text{supp}(f_{i}))^{s-n}\int_{\Omega}f_{i}(y)\delta_{\Omega}^{s}(y)\ dy\\
        &\leq C \text{dist}(K,\partial\Omega)^{s-n}\|h\|_{L^{1}(\partial\Omega)}.
        \end{split}
    \end{equation*}
    This proof of the bound of $I_{h}$ can be found in \cite[Theorem 4.6]{AbatangeloVazquez2023}.
    Finally
    \begin{equation*}
        I_{f}:=\int_{\Omega}|f\left(y,u(y)\right)|\gk{\Omega}(x,y)\ dy\leq \int_{\Omega}\left(a_{1}+a_{2}\delta_{\Omega}^{(s-1)p}(y)\right)\gk{\Omega}(x,y)\ dy.
    \end{equation*}
    Consider a compact subset $K_{1}\subset\Omega$ such that dist$(K,K_{1}^{c})>\alpha>0$, then 
    \begin{equation*}
        \begin{split}
            I_{f}&\leq \left\{\int_{K_{1}}+\int_{\Omega\setminus K_{1}}\right\}\left(a_{1}+a_{2}\delta_{\Omega}^{(s-1)p}(y)\right)\gk{\Omega}(x,y)\ dy\\
            &\leq C(K,\Omega,a_{1},a_{2},\alpha)\left\{\int_{K_{1}}\frac{dy}{|x-y|^{n-2s}}+\int_{\Omega\setminus K_{1}}a_{1}+a_{2}\delta_{\Omega}^{(s-1)p+s}(y) \ dy\right\}.
        \end{split} 
    \end{equation*}
    Combining all three integrals, we get the bound for $\|u\|_{L^{\infty}(K)}$.
\end{proof}

\subsection{On the Infinite Cylinder}
Here we examine the solution of the semi-linear equation in the infinite cylinder $\Omega_{\infty}$ approximated by a sequence of solutions on finite cylinders $\Omega_{\ell}$, where the solutions on finite cylinders exhibit blow-up behaviour on the lateral boundary $\partial\Omega_{\infty} \cap \partial\Omega_{\ell}$. The primary step involves comparing solutions on different cylinders -- by Lemma \ref{lemma restriction of a solution to subdomains} we obtain,
\begin{lemma}
    Let $\ell_{1}\leq \ell_{2}$ and $u_{\ell_{i}}$ are solutions of \eqref{semi-linear prob on finite cylinder} for $i=1,2$ respectively. If $f$ is increasing in the second variable, then $u_{\ell_{1}}\leq u_{\ell_{2}}$ on $\Omega_{\ell_{1}}$.
\end{lemma}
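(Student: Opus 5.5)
The plan is to show that $u_{\ell_{2}}$, restricted to $\Omega_{\ell_{1}}$, is a weak dual supersolution of the problem \eqref{semi-linear prob on finite cylinder} posed on $\Omega_{\ell_{1}}$. Once this is established, the comparison principle for the semi-linear equation, Lemma \ref{lemma comparison principle for semilinear eqn}, applied on the bounded $C^{2}$ domain $\Omega_{\ell_{1}}$ with subsolution $u_{\ell_{1}}$, gives $u_{\ell_{1}}\le u_{\ell_{2}}$ a.e. in $\Omega_{\ell_{1}}$.

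\textbf{Step 1 (restriction identity).} I would apply Lemma \ref{lemma restriction of a solution to subdomains} with $\Omega=\Omega_{\ell_{2}}$, $\Omega'=\Omega_{\ell_{1}}$ and $u=u_{\ell_{2}}$. For every $\psi\in L^{\infty}_{c}(\Omega_{\ell_{1}})$ with $\psi\ge0$ this gives
\begin{equation*}
\int_{\Omega_{\ell_{1}}}u_{\ell_{2}}\psi=-\int_{\Omega_{\ell_{1}}}\g{\Omega_{\ell_{1}}}(\psi)f(\cdot,u_{\ell_{2}})-\int_{\R^{n}\setminus\overline{\Omega}_{\ell_{1}}}u_{\ell_{2}}\slap\g{\Omega_{\ell_{1}}}(\psi)+\int_{\partial\Omega_{\ell_{1}}\cap\partial\Omega_{\ell_{2}}}h\,\Ds[\g{\Omega_{\ell_{1}}}(\psi)]\,d\mathcal{H}.
\end{equation*}
Here $u_{\ell_{2}}$ is understood as $g_{\ell_{2}}$ outside $\Omega_{\ell_{2}}$.

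\textbf{Step 2 (compare the data term by term).} We have $-\slap\g{\Omega_{\ell_{1}}}(\psi)\ge0$ outside $\Omega_{\ell_{1}}$ and $\Ds[\g{\Omega_{\ell_{1}}}(\psi)]\ge0$, so it suffices to compare the data pointwise.

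On $\R^{n}\setminus\overline{\Omega}_{\infty}$ we have $u_{\ell_{2}}=g=g_{\ell_{1}}$. On $\Omega_{\infty}\setminus\Omega_{\ell_{2}}$ we have $u_{\ell_{2}}=0=g_{\ell_{1}}$. On $\Omega_{\ell_{2}}\setminus\Omega_{\ell_{1}}$ the datum is $g_{\ell_{1}}=0$, so we need $u_{\ell_{2}}\ge0$ there.

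For the boundary term, $h_{\ell_{1}}$ is supported on $\partial\Omega_{\ell_{1}}\cap\partial\Omega_{\infty}$. For the cylinders \eqref{dfn of omegaell}, this set is contained in $\partial\Omega_{\ell_{2}}\cap\partial\Omega_{\infty}$, the flat lateral part, where $h_{\ell_{2}}=h=h_{\ell_{1}}$. So the boundary terms agree. The same identity also needs $\Ds$ to be computed on $\Omega_{\ell_{1}}$, which is how Lemma \ref{lemma restriction of a solution to subdomains} is stated.

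The nonlinear term $-\int\g{\Omega_{\ell_{1}}}(\psi)f(\cdot,u_{\ell_{2}})$ is exactly the form required in Definition \ref{dfn weak dual for semilinear} on $\Omega_{\ell_{1}}$. Hence $u_{\ell_{2}}$ is a supersolution there, provided the sign condition holds.

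\textbf{Step 3 (main obstacle: sign of $u_{\ell_{2}}$ in the caps).} The delicate point is proving $u_{\ell_{2}}\ge0$ on $\Omega_{\ell_{2}}\setminus\Omega_{\ell_{1}}$. I would first try the case $g\ge0$, with $h\ge0$ from \ref{H}. There, $0$ is a subsolution of \eqref{semi-linear prob on finite cylinder} on $\Omega_{\ell_{2}}$ because $f(x,0)=0$, so Lemma \ref{lemma comparison principle for semilinear eqn} gives $u_{\ell_{2}}\ge0$.

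If $g$ may change sign, this route fails. I would then avoid the sign issue by comparing with the solution of the problem on $\Omega_{\ell_{1}}$ whose exterior datum is $u_{\ell_{2}}$ itself, and use monotonicity of $f$. Alternatively, I would note that the monotonicity claimed in the introduction implicitly assumes nonnegative data, and state that assumption explicitly.

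With the supersolution property in hand, Lemma \ref{lemma comparison principle for semilinear eqn} on $\Omega_{\ell_{1}}$ concludes the proof. This uses $f$ increasing, the only place monotonicity enters.
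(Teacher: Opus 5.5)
Your proposal follows the paper's proof. The paper likewise uses Lemma \ref{lemma restriction of a solution to subdomains} to see that $u_{\ell_2}|_{\Omega_{\ell_1}}$ is a supersolution on $\Omega_{\ell_1}$. It then runs the comparison argument of Lemma \ref{lemma comparison principle for semilinear eqn} inline on $\{u_{\ell_2}<u_{\ell_1}\}$ rather than citing that lemma. Its assertion that the right-hand side there is ``positive'' silently uses exactly the condition $u_{\ell_2}\ge 0$ on $\Omega_{\ell_2}\setminus\Omega_{\ell_1}$ that you isolate in Step 3.

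That condition cannot be bypassed, so drop your first workaround and keep the second. Take $f\equiv 0$, $h\equiv 0$ and $g<0$. Then $u_\ell(x)=-\int_{\R^n\setminus\overline{\Omega}_\infty}\slap\gk{\Omega_\ell}(x,y)\,g(y)\,dy$. By Proposition \ref{prop greens kernal continuity}(iii) and Lemma \ref{lemma greens seq is decreasing}, the kernel $-\slap\gk{\Omega_\ell}(x,y)$ is nondecreasing in $\ell$ for $y\notin\overline{\Omega}_\infty$. Hence $u_{\ell_2}\le u_{\ell_1}$, and the lemma as stated fails for data allowed by \ref{G}.

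The hypothesis $g\ge 0$ must therefore be added. Under it, your Step 3 is correct: $0$ is a subsolution on $\Omega_{\ell_2}$ because $f(x,0)=0$, $h\ge 0$ and $g_{\ell_2}\ge 0$. Comparison then gives $u_{\ell_2}\ge 0$ on the caps, which completes the proof.
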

\begin{proof}
    By Lemma \ref{lemma restriction of a solution to subdomains}, $u_{\ell_{2}}|_{\Omega_{\ell_{1}}}$ is a super solution on $\Omega_{\ell_{1}}$. Indeed, for any $\psi\in L^{\infty}_{c}(\Omega_{\ell_{1}})$ with $\psi\geq 0$, one has
\begin{equation*}
\begin{split}
    \int_{\Omega_{\ell_{1}}}u_{\ell_{2}}(x)\psi(x)\ dx=-&\int_{\Omega_{\ell_{1}}}\g{\Omega_{\ell_{1}}}(\psi)(y) f(y,u_{\ell_{2}}(y))\ dy-\int_{\mathbb{R}^{n}\setminus\Omega_{\ell_{1}}}u_{\ell_{2}}(y)\slap \g{\Omega_{\ell_{1}}}(\psi)(y)\ dy\\
    &\hspace{2cm}+\int_{\partial\Omega_{\ell_{1}}\cap\partial\Omega_{\infty}}h(\theta)\Ds\g{\Omega_{\ell_{1}}}(x,\theta)\ d\mathcal{H}(\theta).
\end{split}
\end{equation*}
Define $w(x)=u_{\ell_{2}}-u_{\ell_{1}}$. Then 
\begin{equation*}
    \int_{\Omega_{\ell_{1}}}w(x)\psi(x)\ dx=\int_{\Omega_{\ell_{1}}}\g{\Omega_{\ell_{1}}}(\psi)(y) \{f(y,u_{\ell_{1}}(y))-f(y,u_{\ell_{2}}(y))\}\ dy-\int_{\Omega_{\ell_{2}}\setminus\Omega_{\ell_{1}}}u_{\ell_{2}}(y)\slap \g{\Omega\ell_{1}}(\psi)(y)\ dy
\end{equation*}
Let $\Omega':=\{x\in \Omega_{\ell_{1}}  ~|~u_{\ell_{2}}(x)<u_{\ell_{1}}(x)\}$. Choose $\psi\in L_{c}^{\infty}(\Omega')$, $\psi\geq 0$. Then
    \begin{equation*}
       \int_{\Omega'}w\psi\ dy= \int_{\Omega'}\{f(y,u_{\ell_{1}}(y))-f(y,u_{\ell_{2}}(y))\}\g{\Omega'}(\psi)\ dy-\int_{\mathbb{\R}^{n}\setminus\overline{\Omega}'}u_{\ell_{2}}(y)\slap\g{\Omega'}(\psi)(y)\ dy.
    \end{equation*}
    But $LHS$ is negative, and $RHS$ is positive. Thus $|\Omega'|=0$. 
\end{proof}
Next, let $x\in \Omega_{\infty}$, choose $\ell_{0}(x)>0$ such that for all $\ell\geq\ell_{0}(x)$, $x\in \Omega_{\ell}$. Since they are locally bounded by Lemma \ref{local uniform bound for semi-linear eqn on bounded domain}, we define
 \begin{equation*}
     u(x):=\sup_{\ell\geq\ell_{0}(x)}\ul(x) \quad \text{for all }x\in \Omega_{\infty}.
 \end{equation*}
 \begin{theorem}
     Assume \ref{F2}, \ref{G}, and \ref{H} on $\Omega=\Omega_{\infty}$. In addition $h\in  L^{\infty}(\partial\Omega_{\infty})$ 
     . Then $u\in L_{loc}^{1}(\Omega)$ and $u$ is a solution of \eqref{semi-linear prob on infinite cylinder}.
 \end{theorem}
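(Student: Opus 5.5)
We follow the linear case (Theorem \ref{thrm for operator blowup} and Theorem \ref{thrm gell convergeng to g}), now with the nonlinear term treated by monotone convergence. By the preceding lemma, $\{u_\ell(x)\}_{\ell\ge\ell_0(x)}$ is nondecreasing, so $u=\lim_\ell u_\ell$ pointwise.

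The first step, and the one we expect to be the main obstacle, is a local $L^\infty$ bound independent of $\ell$. Lemma \ref{local uniform bound for semi-linear eqn on bounded domain} is stated with a constant that may depend on $\Omega_\ell$, so it cannot be used directly. Instead we bound $u_\ell$ from above by a linear solution. Since $f\ge 0$ on $u_\ell\ge0$, and since $u_\ell$ is a subsolution of \eqref{solution on omega ell} with data $(0,g_\ell,h_\ell)$ (this requires $u_\ell\ge0$, which should follow from comparison with $0$ when $g\ge0$; otherwise split $f(x,u_\ell)$ by sign), Lemma \ref{lemma comparison principle} gives
\[
u_\ell\le u^g_\ell+u^h_\ell .
\]
The local uniform bounds already proved for $u^g_\ell$ and $u^h_\ell$ in Section \ref{section linear} are independent of $\ell$. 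For the lower bound we use monotonicity: $u_\ell\ge u_{\ell_0}$ on $K$, and $u_{\ell_0}$ is bounded on $K$. Hence $\sup_{\ell\ge\ell_0}\|u_\ell\|_{L^\infty(K)}\le C(K)$, so $u\in L^\infty_{loc}\subset L^1_{loc}(\Omega_\infty)$.

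Next we pass to the limit in the weak dual formulation. Fix $\psi\in L^\infty_c(\Omega_\infty)$ with $\operatorname{supp}\psi\subset\Omega_{\ell_0}$. Then
\[
\int u_\ell\psi=-\int_{\Omega_\ell} f(y,u_\ell)\,\g{\Omega_\ell}(\psi)\,dy-\int g_\ell\,\slap\g{\Omega_\ell}(\psi)+\int_{\partial\Omega_\ell} h_\ell\,\Ds\g{\Omega_\ell}(\psi).
\]
The left side converges by dominated convergence, using the local bound. The $g$ and $h$ terms converge exactly as in Theorem \ref{thrm gell convergeng to g} and Theorem \ref{thrm for operator blowup}, via Lemma \ref{lemma convergence of greens kernel}, Lemma \ref{Lemma local uniform convergence of solution operator}, and the dominations \eqref{eqn complement integral behaviour} and \eqref{eqn well definedness of frac normal derivative on unbounded domain}.

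For the nonlinear term, $f(y,u_\ell(y))\to f(y,u(y))$ pointwise by continuity of $f$, and $\g{\Omega_\ell}(\psi)\to\g{\Omega_\infty}(\psi)$ pointwise. Domination uses \ref{f.2}:
\[
|f(y,u_\ell)|\,\g{\Omega_\ell}(|\psi|)\le (a_1+a_2 U^p)\,C\delta_{\Omega_\infty}^s ,
\]
where $U=u^g+u^h$ comes from the upper bound of the first step, and the estimate $\g{\Omega_\ell}(|\psi|)\le C\delta^s_{\Omega_\infty}$ comes from the proof of the theorem for data $(f,0,0)$.

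The domination step is the delicate one, because $U$ blows up at $\partial\Omega_\infty$ and $\Omega_\infty$ is unbounded. Here we use $h\in L^\infty$. Near the lateral boundary we need $u^h\lesssim\delta^{s-1}$ uniformly along the cylinder, obtained by comparison on unit pieces using \eqref{eqn martin behaviour} and Lemma \ref{lemma equivalent E}. Then $U^p\delta^s\lesssim\delta^{(s-1)p+s}$, which is integrable in the cross-section since $p<\frac{1+s}{1-s}$. For decay in the unbounded directions we use $\g{\Omega_\infty}(\psi)(y)\lesssim\delta^s|y|^{2s-n}$ (Remark after Proposition \ref{prop greens kernal continuity}). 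If the $u^g$ part is not controlled, we first treat $g$ bounded and then approximate.

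Finally, uniqueness follows from Lemma \ref{lemma comparison principle for semilinear eqn}, which holds on unbounded $\Omega$ with \ref{G}: two solutions are each both sub- and supersolutions, so they coincide a.e.
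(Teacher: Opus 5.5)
Your first step, the $\ell$-uniform bound $u_\ell\le u^g_\ell+u^h_\ell$, is sound for $g\ge 0$. It is more careful than the paper, which invokes Lemma~\ref{local uniform bound for semi-linear eqn on bounded domain} even though that constant depends on $\Omega_\ell$.

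The gap is exactly the step you flag: the dominated-convergence majorant for $\int_{\Omega_\ell} f(y,u_\ell)\,\g{\Omega_\ell}(\psi)\,dy$ on the unbounded cylinder. Write $y=(y',y'')\in\R^m\times\R^{n-m}$. Your majorant is at best $(a_1+a_2U^p)\,\delta_{\Omega_\infty}^s(y)\,(1+|y''|)^{2s-n}$. Its integral over $\Omega_\infty=B^m_1(0)\times\R^{n-m}$ contains the factor $\int_{\R^{n-m}}(1+|y''|)^{2s-n}\,dy''$, which is finite only if $m>2s$. For $m=1$ and $s\ge\frac12$ (a slab), already the $a_1$-part is not integrable, so DCT cannot be applied.

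The same obstruction affects your claim $u^h\lesssim\delta^{s-1}$ uniformly along the cylinder, and the tools you cite do not give it:
\begin{itemize}
\item Lemma~\ref{lemma equivalent E} is for bounded domains, with domain-dependent constants.
\item Comparing with Green functions of bounded pieces $D\subset\Omega_\infty$ goes the wrong way, since $\gk{D}\le\gk{\Omega_\infty}$.
\item \eqref{eqn martin behaviour} gives the rate $\delta^{2s-n}$, not $\delta^{s-1}$.
\item The far-field tail $|x-\theta|^{2s-n}$ from \eqref{eqn well definedness of frac normal derivative on unbounded domain} is integrable over $\partial\Omega_\infty=S^{m-1}\times\R^{n-m}$ again only when $m>2s$.
\end{itemize}
Finally, under \ref{G} alone $u^g$ need not be $O(\delta^{s-1})$. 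Your fallback ``treat $g$ bounded and approximate'' just relocates the same limit passage in the nonlinear term. The paper's own majorant $a_2\delta_{\Omega_\infty}^{(s-1)p+s}$ has no decay along the axis at all. You were right that decay is needed, but the decay available from these estimates is too weak.

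The missing idea is the one you announce in your first sentence and then do not use: monotone convergence, which needs no majorant. With $g\ge 0$, $0$ is a subsolution, so $u_\ell\ge 0$ by Lemma~\ref{lemma comparison principle for semilinear eqn}. For $\psi\in L^\infty_c(\Omega_\infty)$ with $\psi\ge 0$, rewrite the identity on $\Omega_\ell$ as
\[
\int_{\Omega_\infty} u_\ell\psi\,dx+\int_{\Omega_\infty}\chi_{\Omega_\ell}f(x,u_\ell)\,\g{\Omega_\ell}(\psi)\,dx=-\int_{\R^n\setminus\overline{\Omega}_\infty} g\,\slap\g{\Omega_\ell}(\psi)\,dy+\int_{\partial\Omega_\ell\cap\partial\Omega_\infty} h\,\Ds[\g{\Omega_\ell}(\psi)]\,d\mathcal{H}.
\]
The right-hand side has a finite limit exactly as in the linear case.

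On the left, both integrands are nonnegative and nondecreasing in $\ell$, for three reasons:
\begin{itemize}
\item $u_\ell\uparrow u$;
\item $f(x,\cdot)$ is nondecreasing and nonnegative on $[0,\infty)$;
\item $\g{\Omega_\ell}(\psi)\uparrow\g{\Omega_\infty}(\psi)$ by Lemmas~\ref{lemma greens seq is decreasing} and~\ref{Lemma local uniform convergence of solution operator}.
\end{itemize}
By the monotone convergence theorem the two terms converge to $\int u\psi$ and $\int f(x,u)\,\g{\Omega_\infty}(\psi)$. Their sum is finite, so each is finite. This yields $u\in L^1_{loc}(\Omega_\infty)$ (take $\psi=\chi_K$), the integrability of $f(\cdot,u)\,\g{\Omega_\infty}(|\psi|)$, and the weak dual identity after splitting $\psi=\psi^+-\psi^-$. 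The argument needs no $\delta^{s-1}$ bound and not even $h\in L^\infty(\partial\Omega_\infty)$.
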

 \begin{proof}
     By Lemma \ref{local uniform bound for semi-linear eqn on bounded domain}, $u\in L_{loc}^{1}(\Omega)$.

     Let $\psi\in L_{c}^{\infty}(\Omega_{\infty})$, and let $K$ be a compact set such that Supp$(\psi)\subset K\subset \Omega_{\infty}$, denote $\ell(K)>0$ such that $K\subset \Omega_{\ell}$ for all $\ell\geq \ell(K)$. Then for all $\ell\geq \ell(K)$ we have 
 \begin{equation*}
 \begin{split}
     \int_{\Omega_{\ell}}u_{\ell}\psi\ dx =-&\int_{\Omega_{\ell}}f\left(x,u_{\ell}(x)\right)\g{\Omega_{\ell}}(\psi)(x)\ dx-\int_{\R^{n}\setminus\overline{\Omega}_{\infty}}g(y)\slap\g{\Omega_{\ell}}(\psi)(y)\ dy\\
     &\hspace{2cm}+\int_{\partial\Omega_{\ell}\cap \partial\Omega_{\infty}}h(\theta) \Ds[\g{\Omega_{\ell}}(\psi)](\theta)\ d\mathcal{H}(\theta).
     \end{split}
 \end{equation*}
 By the dominated convergence theorem 
 \begin{equation}\label{eqn operator part convergence}
     \lim_{\ell\to\infty}\int_{\Omega_{\ell}}\ul\psi\ dx=\lim_{\ell\to \infty}\int_{K}\ul\psi\ dx=\int_{K}u\psi\ dx=\int_{\Omega_{\infty}}u\psi\ dx.
 \end{equation}
 As in Theorem \ref{thrm gell convergeng to g} and Theorem \ref{thrm for operator blowup}, we have respectively,
 \begin{equation}\label{eqn complementry part convergence}
    \lim_{\ell\to\infty} \int_{\R^{n}\setminus\overline{\Omega}_{\infty}}g(y)\slap\g{\Omega_{\ell}}(\psi)(y)\ dy= \int_{\R^{n}\setminus\overline{\Omega}_{\infty}}g(y)\slap\g{\Omega_{\infty}}(\psi)(y)\ dy
 \end{equation}
 and 
 \begin{equation}\label{eqn boundary part convergence}
     \lim_{\ell\to\infty} \int_{\partial\Omega_{\ell}\cap \partial\Omega_{\infty}}h(\theta) \Ds[\g{\Omega_{\ell}}(\psi)](\theta)\ d\mathcal{H}(\theta)=\int_{ \partial\Omega_{\infty}}h(\theta) \Ds[\g{\Omega_{\infty}}(\psi)](\theta)\ d\mathcal{H}(\theta).
 \end{equation}
 Finally, to deal with the first integral, we have by \ref{f.2}
 \begin{equation*}
     |f\left(x,u_{\ell}(x)\right)\g{\Omega_{\ell}}(\psi)(x)|\leq \left(a_{1}+a_{2}u_{\ell}^{p}(x)\right)|\g{\Omega_{\ell}}(\psi)(x)|.
 \end{equation*}
 Since $h\in L^{\infty}(\partial\Omega_{\ell})$, by Lemma \ref{lemma equivalent E} one has $u_{\ell}\leq C \delta_{\ell}^{s-1}$ on $\Omega_{\ell}$, for some positive constant $C$. Thus we have 
\begin{equation*}
\begin{split}
    |f\left(x,u_{\ell}(x)\right)\g{\Omega_{\ell}}(x)|&\leq a_{1}|\g{\Omega_{\ell}}(\psi)(x)|+a_{2}\delta_{\ell}^{(s-1)p+s}(x).\\
     &\leq a_{1}\int_{\text{Supp}(\psi)}|\psi(y)|\gk{\Omega_{\infty}}(x,y)\ dy+a_{2}\delta_{\ell}^{(s-1)p+s}(x).
     \end{split}
 \end{equation*}
Then for large $\ell$,
\begin{equation*}
    |f\left(x,u_{\ell}(x)\right)\g{\Omega_{\ell}}(x)|\leq a_{1}\int_{\text{Supp}(\psi)}|\psi(y)|\gk{\Omega_{\infty}}(x,y)\ dy+a_{2}\delta_{\infty}^{(s-1)p+s}(x).
\end{equation*}
 As $\g{\Omega_{\ell}}(\psi)(x)\to \g{\Omega_{\infty}}(\psi)(x)$, by Dominated convergence theorem 
 \begin{equation}\label{eqn u is sub sln on infinite cylinder}
     \int_{\Omega_{\infty}} f\left(x,u(x)\right) \g{\Omega_{\infty}}(\psi)(x)\ dx = \lim_{\ell\to\infty}\int_{\Omega_{\ell}}f\left(x,\ul(x)\right) \g{\Omega_{\ell}}(\psi)(x)\ dx.
 \end{equation}
 Combining \eqref{eqn operator part convergence}, \eqref{eqn complementry part convergence}, \eqref{eqn boundary part convergence} and \eqref{eqn u is sub sln on infinite cylinder} we have
\begin{equation*}
\begin{split}
    \int_{\Omega_{\infty}}u(x)\psi(x)\ dx=-&\int_{\Omega_{\infty}}f\left(x,u(x)\right)\g{\Omega_{\infty}}(\psi)(x)\ dx-\int_{\R^{n}\setminus\overline{\Omega}_{\infty}}g(y)\slap\g{\Omega_{\infty}}(\psi)(y)\ dy\\
    &\hspace{2cm}+\int_{\partial\Omega_{\infty}}h(\theta) \Ds\gk{\Omega_{\infty}}(x,\theta)\ d\mathcal{H}(\theta).\qedhere
    \end{split}
\end{equation*}
\end{proof}
\subsection{Uniqueness}
If $u$ and $v$ are two solutions, choose $\psi\in L_{c}^{\infty}(\Omega_{\infty})$, then one can write
\begin{equation*}
    \int_{\Omega_{\infty}}(u(x)-v(x))\psi(x)\ dx=\int_{\Omega_{\infty}}\left(-f(y,u(y))+f(y,v(y))\right)\g{\Omega_{\infty}}(\psi)(y)\ dy. 
\end{equation*}
 Specifically, in weak dual sense $u-v$ solves 
\begin{equation*}
    \begin{cases}
        \slap (u-v)=-f(x,u)+f(x,v) \quad &\text{in } \Omega_{\infty}\\
        u-v=0 &\text{in } \R^{n}\setminus\overline{\Omega}_{\infty}\\
        E_{\Omega_{\infty}}(u-v)=0 &\text{on } \partial\Omega_{\infty}. 
    \end{cases}
\end{equation*}
Let $\Omega':=\{x\in \C~|~v(x)<u(x)\}$, then by Lemma \ref{lemma restriction of a solution to subdomains}, $u-v$ solves 
\begin{equation*}
    \begin{cases}
        \slap (u-v)=-f(x,u)+f(x,v)\leq 0 \quad &\text{in } \Omega'\\
        u-v\leq 0 &\text{in } \R^{n}\setminus\overline{\Omega'}\\
        E_{\Omega'}(u-v)=0 &\text{on } \partial\Omega' .
    \end{cases}
\end{equation*}
 As $w=0$ is a super solution, using comparison principle we have $u\leq v$ in $\Omega'$. Thus $u\leq v$ in $\Omega_{\infty}$. Similarly, working with $v-u$, we conclude that the solution must be unique.

\section{Appendix}\label{section appendix}
\begin{proof}[\textbf{Proof of the Theorem \ref{thrm sub and super solution method}}]
Let $v$ be an s-harmonic function such that $v=g$ in $\R^{n}\setminus\overline{\Omega}$ and $Ev=0$ on $\partial\Omega$. Further, consider $w$ as a solution of \eqref{eqn main semilinear omega} with data $(-f(\cdot,w+v), 0,0)$. 
 Then, $u=w+v$ solves \eqref{eqn main semilinear omega} with the data $(-f(\cdot,u),g,0)$. Since $g$ is bounded, $v$ is continuous (Poisson representation) and thus the function $\Tilde{f}(x,t)=f(x,v(x)+t)\in C(\Omega\times \R)$.
 Define $F:\Omega\times\R\to\R$ as 
 \begin{equation*}
     F(x,t)=\begin{cases}
         \Tilde{f}(x,\overline{u}(x)) \quad &\text{if }t >\overline{u}(x) \\
         \Tilde{f}(x,t) &\text{if } \underline{u}(x)\leq t\leq \overline{u}(x)\\
         \Tilde{f}(x,\underline{u}(x)) &\text{if } t<\underline{u}(x).
     \end{cases}
 \end{equation*}
 Clearly $F\in C(\Omega\times \R)\cap L^{\infty}(\Omega\times \R)$ by \eqref{f.1} and boundedness of $\underline{u},\overline{u}$ respectively.

Consider the map $\mathcal{T}:L^{\infty}(\Omega)\to L^{\infty}(\Omega)$ given by $\mathcal{T}u=\g{\Omega}\left(-F(x,u(x))\right)$, which is well-defined by \eqref{eqn continuity of G 1} as $F$ is bounded. 
 Further,  $\g{\Omega}\left(-F(x,u(x))\right)\in C^{s}(\rn)$. On the other hand, one can write $\mathcal{T}=\mathcal{T}_{2}\circ\mathcal{T}_{1}$ where $\mathcal{T}_{1}:L^{\infty}(\Omega)\to L^{\infty}(\Omega)$ and $\mathcal{T}_{2}:L^{\infty}(\Omega)\to L^{\infty}(\Omega)$, defined by 
 \begin{equation*}
     \mathcal{T}_{1}u=-F\left(x,u(x)\right) \quad\quad \text{and } \quad \quad\mathcal{T}_{2}v=\g{\Omega}(v).
 \end{equation*}
 Clearly, $\mathcal{T}_{1}$ takes a bounded set to a bounded set. In view of the regularity $C^{s}(\rn)$ the map $\mathcal{T}_{2}$ is compact to $C^{b}(\Omega)$, and so to $L^{\infty}(\Omega)$. Therefore, $\mathcal{T}$ is a compact map.
Existence follows from Schaefer's fixed point theorem (see \cite{EvansPDE2010}).

We next prove that $ u\leq \overline{u}$ on $\Omega$. Suppose $\Omega':=\{x\in \Omega ~|~u(x)>\overline{u}(x)\}$, which is an open set by the continuity of both $u$ and $\overline{u}$. Let $\psi\in L_{c}^{\infty}(\Omega')$, then by Lemma \ref{lemma restriction of a solution to subdomains}
\begin{equation*}
    \begin{split}
        \int_{\Omega'}u(x)\psi(x)\ dx&= -\int_{\Omega'}F\left(x,u(x)\right) \g{\Omega'}(\psi)(y) \ dy-\int_{\Omega\setminus\Omega'}u(y)\slap \g{\Omega'}(\psi)(y)\ dy\\
        &=-\int_{\Omega'}F\left(x,\overline{u}(x)\right) \g{\Omega'}(\psi)(y) \ dy-\int_{\Omega\setminus\Omega'}u(y)\slap \g{\Omega'}(\psi)(y)\ dy\\
        &\leq \int_{\Omega'}\overline{u}(x)\psi(x)\ dx.
    \end{split}
\end{equation*}
Therefore $u\leq \overline{u}$ on $\Omega'$. Which is a contradiction. Thus, $\Omega'=\emptyset$.

Similarly, one can prove that $\underline{u}\leq u$.
\end{proof}

 \begin{proof}[\textbf{Proof of the Theorem \ref{theorem existence}}]
The existence will be proved in three steps: $h\equiv0$, $h\in C(\partial\Omega)$ and $h\in L^{1}(\partial\Omega)$. When $h\equiv 0$, the result is a consequence of Theorem \ref{thrm sub and super solution method}. The second case is by approximating the boundary data by a sequence of interior data whose weights concentrate towards the boundary. The last case is by approximation of $L^{1}$ function from continuous functions.

    \textbf{Step 1:} Let $h\equiv 0$ and define $g_{n}(x):=min\{g(x),n\}$. Consider 
    \begin{equation}\label{eqn with gn}
        \begin{cases}
         \slap u=-f(x,u)\quad &\text{in } \Omega\\
         u=g_{n} &\text{in }\R^{n}\setminus\overline{\Omega}\\
         E_{\Omega}(u)=0 &\text{on }\partial\Omega,
    \end{cases}
    \end{equation}
    Clearly $v=0$ is a subsolution and $v=n$ is a super solution for all $n\in \mathbb{N}$. By Theorem \ref{thrm sub and super solution method}, let $u_{1}$ be the solution corresponding to the data $g_{1}$ with $v=0$ and $v=1$ as subsolution and supersolution, respectively.  Then $0\leq u_{1}\leq 1$. By \cite{RosSerra2016}, the function $u_{1}\in C(\Omega)$. As $g_{1}\leq g_{2}$, and for any $\psi\in L_{c}^{\infty}(\Omega)$ $\psi\geq 0$, since $\slap \g{\Omega}(\psi)\leq 0$, $u_{1}$ is a subsolution to \eqref{eqn with gn} for $n=2$. employing the above theorem, let $u_{2}$ be a solution corresponding to the data $g_{2}$ such that $u_{1}\leq u_{2}\leq 2$. We construct an increasing sequence $\{u_{n}\}$ such that $u_{n}$ is a solution of \eqref{eqn with gn} and $u_{n-1}\leq u_{n}\leq n$. 
    
    By Theorem \ref{thrm linear existence with g}, let $u^{0}$ be a solution of \eqref{eqn main linear omega} with data $(0,g,0)$. As $f(x,t)\geq 0$, $u^{0}$ is a super solution of \eqref{eqn with gn} for all $n$, and so by the comparison principle given in Lemma \ref{lemma comparison principle for semilinear eqn}, the sequence $\{u_{n}\}$ is bounded by $u^{0}$. Define $u(x):=\sup_{k}u_{k}(x)$. Then $0\leq u\leq u^{0}$, and so $u\in L_{loc}^{1}(\Omega)$. Also, $u=g$ in $\R^{n}\setminus\overline{\Omega}$. 
    
    Next, we show that $u$ is a solution of \eqref{eqn main linear omega} with $h=0$.
    Let $\psi\in L^{\infty}_{c}(\Omega)$, then, by the dominated convergence theorem 
    \begin{equation*}
        \begin{split}
            \int_{\Omega}u(x)\psi(x)\ dx
            &=\lim_{k\to \infty}\int_{\Omega}u_{k}(x)\psi(x)\ dx \\
           & =\lim_{k\to\infty}\left\{-\int_{\Omega}f\left(x,u_{k}(x)\right)\g{\Omega}(\psi)(x)\ dx-\int_{\R^{n}\setminus\overline{\Omega}}g_{k}(y) \slap \g{\Omega}(\psi)(y)\ dy\right\}.
        \end{split}
    \end{equation*}
    Given \ref{f.2}, $f(x,u_{k}(x))\leq a_{1}+a_{2}(u^{0})^{p}$. Further, $Eu^{0}=0$ and Lemma \ref{lemma equivalent E} along with Lemma \ref{lemma bound of the sln ug} implies that $u^{0}(x)\leq C \delta_{\Omega}^{s-1}(x)$ in $\Omega$. Using this with \eqref{eqn continuity of G 2}, we have 
    \begin{equation*}
        f(x,u_{k}(x))\g{\Omega}(\psi)(x)\leq C(a_{1}+a_{2}\delta_{\Omega}^{(s-1)p}(x))\delta^{s}_{\Omega}(x),
    \end{equation*}
    where the first term is integrable since $\Omega$ is bounded, and second term since $p<(1+s)/(1-s)$. Therefore, by the dominated convergence theorem, we obtain
    \begin{equation*}
         \int_{\Omega}u(x)\psi(x)\ dx= -\int_{\Omega}f\left(x,u(x)\right) \g{\Omega}(\psi)(x)\ dx -\int_{\R^{n}\setminus\overline{\Omega}}g(y) \slap \g{\Omega}(\psi)(y)\ dy.
    \end{equation*}
      The function $u$ is the required solution in this case.
\smallskip 

    \textbf{Step 2:}
    Let $h\in C(\partial\Omega)$. In view of the $C^{1,1}$ boundary of $\Omega$, we consider a continuous extension $H$ of $h$ to a neighbourhood of the boundary in the interior of $\Omega$, as given in \cite[Theorem 4.6 and Remark 3.1]{AbatangeloVazquez2023}. Let $\epsilon_{0}>0$ be sufficiently small, define $\Omega_{\epsilon_{0}}:=\{x\in \Omega~|~ \delta_{\Omega}(x)<\epsilon_{0}\}$, and consider 
    \begin{equation*}
        H(x):=h(z(x))\quad  \text{ where, } z(x)\in \partial\Omega \text{ such that } \delta_{\Omega}(x)=|x-z(x)|, \text{ for all } x\in \Omega_{\epsilon_{0}}.
    \end{equation*}
    Note that $H\in C(\Omega_{\epsilon_{0}}\cup \partial\Omega)$. For all $i>1/\epsilon_{0}$, define $\Omega_{i}:=\{x\in \Omega~|~ 1/i<\delta_{\Omega}(x)<2/i\}$ and 
    \begin{equation}\label{eqn dfn of fi}
        f_{i}(x):=\frac{\mathcal{H}(\partial\Omega)H(x)\chi_{\Omega_{i}}}{|\Omega_{i}|\delta_{\Omega}^{s}(x)}, \quad \text{for all } x\in \Omega.
    \end{equation}
    Clearly, $f_{i}\in L^{1}(\Omega,\delta_{\Omega}^{s})$. Consider the problem 
    \begin{equation}\label{eqn interior perturbation}
         \begin{cases}
         \slap u_{i}=-f(x,u_{i})+f_{i}(x)\quad &\text{in } \Omega\\
         u_{i}=g &\text{in }\R^{n}\setminus\overline{\Omega}\\
         E_{\Omega}(u_{i})=0 &\text{on }\partial\Omega.
    \end{cases}
    \end{equation}
    As $f_{i}\geq 0$, $v=0$ is a subsolution of \eqref{eqn interior perturbation}. Let $u_{i}$ be the solution obtained in the first case for each $i$. 
    
     To obtain a convergent subsequence, it suffices to achieve local equicontinuity and a local uniform bound. Similar to the previous case, let $v_{i}$ be the solution of \eqref{eqn main linear omega} with data $(f_i,g,0)$.
    Clearly $v_{i}$ is a super solution of \eqref{eqn interior perturbation} ( $v_{i}\geq 0$). Therefore, by Lemma \ref{lemma comparison principle for semilinear eqn}, $0\leq u_{i}\leq v_{i}$. Further, for any $x\in K$
    \begin{equation*}
        \begin{split}
            |v_{i}(x)|\leq \int_{\Omega}|f_{i}(y)|\gk{\Omega}(x,y)\ dy+\Big|\int_{\R^{n}\setminus\overline{\Omega}}g(y)\slap \gk{\Omega}(x,y)\ dy\Big|. 
        \end{split}
    \end{equation*}
    We note that 
    \begin{equation*}
         \int_{\Omega}|f_{i}(y)|\gk{\Omega}(x,y)\ dy\leq \frac{\displaystyle\sup_{\theta\in \partial\Omega}h(\theta) \mathcal{H}(\partial\Omega)}{|\Omega_{i}|}\int_{\Omega_{i}}\frac{\gk{\Omega}(x,y)}{\delta_{\Omega}^{s}(y)}\ dy
    \end{equation*}
    For all $i$ such that $K\cap \Omega_{i}=\emptyset$, and $d(K,\Omega_{i})>\alpha>0$,  one can get 
      \begin{equation*}
      \begin{split}
                \int_{\Omega}|f_{i}(y)|\gk{\Omega}(x,y)\ dy &\leq \frac{\displaystyle\sup_{\theta\in \partial\Omega}h(\theta) \mathcal{H}(\partial\Omega)}{|\Omega_{i}|}\int_{\Omega_{i}}\frac{\delta_{\Omega}^{s}(x)}{|x-y|^{n}}\ dy 
                \leq \frac{1}{\alpha^{n}}\displaystyle\sup_{\theta\in \partial\Omega}h(\theta) \mathcal{H}(\partial\Omega)\delta_{\Omega}^{s}(x).
      \end{split}
 \end{equation*}
 Using Lemma \ref{lemma bound of the sln ug} with the above calculation, the sequence $\{v_{i}\}$ is uniformly bounded on $K$. See also \cite[Lemma 2.9]{Abatangelo2015LargesHarmonic}. Therefore, $\{u_{i}\}$ is a bounded sequence in $L^{1}(K)$.

 For equicontinuity, let $z\in K$, choose $\rho>0$ such that $B_{\rho}(z)\subset \Omega$, then for every $x\in B_{\rho}(z)\cap K$, by \ref{f.2} 
 \begin{equation*}\label{eqn equacontinuity}
 \begin{split}
     |u_{i}(x)-u_{i}(z)|&\leq \int_{\Omega}|\gk{\Omega}(x,y)-\gk{\Omega}(z,y)|\left(f(y,u_{i}(y))+f_{i}(y)\right)\ dy\\
     &\leq \left\{\int_{B_{\rho}(z)}+\int_{\Omega\setminus B_{\rho}(z)}\right\} |\gk{\Omega}(x,y)-\gk{\Omega}(z,y)|\left((a_{1}+a_{2}v_{i}^{p})+\frac{C\chi_{\Omega_{i}}(y)}{|\Omega_{i}|} \delta_{\Omega}^{-s}(y)\right)\ dy.
 \end{split}
 \end{equation*}
  Using the boundedness of $v_{i}$ in $B_{\rho}(z)$ (c.f. Lemma \ref{lemma bound of the sln ug}) and \eqref{eqn behaviour of greens kernel}, for $i$ large such that $B_{\rho}(z)\cap \Omega_{i}=\emptyset$ we obtain
 \begin{equation*}
     \int_{B_{\rho}(z)}|\gk{\Omega}(x,y)-\gk{\Omega}(z,y)|(a_{1}+a_{2}v_{i}^{p})\ dy\leq C(\Omega, K,n,s)\int_{B_{\rho}(z)}\left\{\frac{1}{|x-y|^{n-2s}}+\frac{1}{|z-y|^{n-2s}}\right\} dy <+\infty.
 \end{equation*}
 Therefore, given $\epsilon>0$, choose $\rho>0$ independently of $i$ such that 
 \begin{equation*}
     \int_{B_{\rho}(z)}|\gk{\Omega}(x,y)-\gk{\Omega}(z,y)|(a_{1}+a_{2}v_{i}^{p})\ dy\leq \epsilon/2.
 \end{equation*}
 For $y\in \Omega\setminus B_{\rho}(z)$, $|x-y|>\alpha$ and $|z-y|>\alpha$ for some $\alpha>0$, using \eqref{eqn behaviour of greens kernel}, since $\Omega$ is bounded we have
 \begin{equation*}
        \int_{\Omega\setminus B_{\rho}(z)}|\gk{\Omega}(x,y)-\gk{\Omega}(z,y)|(a_{1}+a_{2}v_{i}^{p})\ dy\leq \frac{C(\Omega,K,n,s)}{\alpha^{n}}\int_{\Omega\setminus B_{\rho}(z)}\delta_{\Omega}^{s}(y)\left((a_{1}+a_{2}v_{i}^{p})\right)\ dy.
 \end{equation*}
By Lemma \ref{lemma equivalent E}, $Ev_{i}=0$ implies 
$v_{i}(x)\leq C \delta^{s-1}(x)$ in  $\Omega$. Thus 
\begin{equation*}
    I_{i}:=\int_{\Omega\setminus B_{\rho}(z)}|\gk{\Omega}(x,y)-\gk{\Omega}(z,y)|(a_{1}+a_{2}v_{i}^{p})\ dy\leq C\int_{\Omega\setminus B_{\rho}(z)} a_{1}\delta_{\Omega}^{s}(y)+a_{2}\delta_{\Omega}^{p(s-1)+s}(y)\ dy.
\end{equation*}
 The integral on the right-hand side is finite as $\Omega$ is bounded and $p<(1+s)/(1-s)$. By the dominated convergence theorem
 \begin{equation*}
   \sup_{i} I_{i}\leq C\int_{\Omega\setminus B_{\rho}} |\gk{\Omega}(x,y)-\gk{\Omega}(z,y)|(a_{1}+a_{2}\delta_{\Omega}^{p(s-1)}(y))\ dy\to 0 \quad\text{as }|x-y|\to 0.
 \end{equation*}
 By choosing $\delta_{1}>0$ such that whenever $|x-z|<\delta_1$ we obtain $\sup_{i} I_{i}<\epsilon/4$. Next, since $|\Omega_{i}|=\mathcal{H}(\partial\Omega)/i$ and by the definition of $\Omega_{i}$ we have $i<2/\delta_{\Omega}(y)$ for all $y\in \Omega_{i}$, we obtain using \eqref{eqn behaviour of greens kernel} that
 \begin{equation*}
    J_{i}:= \int_{\Omega\setminus B_{\rho}(z)}|\gk{\Omega}(x,y)-\gk{\Omega}(z,y)|\frac{C\chi_{\Omega_{i}}}{|\Omega_{i}|\delta_{\Omega}^{s}(y)}\ dy< C\int_{\Omega_{i}} \frac{dy}{\delta_{\Omega}(y)}<+\infty.
 \end{equation*}
 Indeed, we have $n\geq 2$. By dominated convergence theorem, we obtain
 \begin{equation*}
     \sup_{i}J_{i}\leq C\int_{\Omega\setminus B_{\rho}} |\gk{\Omega}(x,y)-\gk{\Omega}(z,y)| \delta_{\Omega}^{-1-s}(y)\ dy\to 0\quad \text{as } |x-z|\to 0.
 \end{equation*}
By choosing $\delta_{2}>0$ such that whenever $|x-z|<\delta_2$ we obtain $\sup_{i} J_{i}<\epsilon/4$.
 Finally, given $\epsilon>0$ choose $\rho>0$ small enough $(\rho=\min\{\rho,\delta_{i}\})$ such that $|u_{i}(x)-u_{i}(z)|\leq \epsilon$ for all $i$ large whenever $|x-z|<\rho$.

 Thus, by compact exhaustion of $\Omega$, and the fact that $u_{i}\leq C\delta_{\Omega}^{s-1}$, there exists a function $u\in L_{loc}^{\infty}(\Omega)$ such that up to a subsequence  $u_{i_{j}}\to u$ in $L^{\infty}(K)$ for every $K$ compact in $\Omega$. Then for every $\psi\in L_{c}^{\infty}(\Omega)$, by uniform convergence $ \displaystyle \int_{\Omega}u_{i_{j}}\psi\to \int_{\Omega}u\psi, \text{as } j\to \infty.$ 
Indeed, by the definition of the weak solution
 \begin{align*}
     \int_{\Omega}u_{i_{j}}\psi =  \int_{\Omega}f_{i_{j}}(y)\g{\Omega}(\psi)(y)\ dy - \int_{\R^{n}\setminus\overline{\Omega}}g(y) \slap \g{\Omega}(\psi)(y)\ dy - \int_{\Omega}f\left(y,u_{i_{j}}(y)\right)\g{\Omega}(\psi)(y)\ dy
      : =I_{1}^{j}+I_{2}+I_{3}^{j}.
 \end{align*}
Using \cite[Proposition 3.18]{Abatangelo2015LargesHarmonic} we obtain
 \begin{equation}\label{interier estimate for h}
     I_{3}^{j}:=\int_{\Omega}f_{i_{j}}(y)\g{\Omega}(\psi)(y)\ dy\to  \int_{\partial\Omega}h(\theta) \Ds[\g{\Omega}(\psi)](\theta) \ d\mathcal{H}(\theta).
 \end{equation}
 Since $u_{i}\leq v_{i}$ and $\psi\in L^{\infty}_{c}(\Omega)$, we have by Proposition \ref{prop greens kernal continuity}, $\g{\Omega}(\psi)=\delta_{\Omega}^{s}\phi$ for some $\phi\in L^{\infty}(\Omega)$. Then
 \begin{equation*}
    |f\left(y,u_{i_{j}}(y)\right)\g{\Omega}(\psi)(y) |\leq \left(a_{1}+a_{2} v_{i_{j}(y)}^{p}\right)C \delta^{s}(y).\quad \text{for }y\in \Omega. 
 \end{equation*}
As $E_{\Omega}v_{i}=0$, we have $v_{i}(x)\leq C \delta^{s-1}(x)$ in $\Omega$, which implies $ v_{i}(x)^{p}\delta_{\Omega}^{s}(x)\leq C\delta^{(s-1)p}(x)$. 

The majorant is integrable if $(s-1)p+s\geq -1$, which holds as $p\leq (1+s)/(1-s)$. Therefore, we will have by the dominated convergence theorem 
\begin{equation*}
    I_{1}^{j}\to -\int_{\Omega}f\left(y,u(y)\right)\g{\Omega}(\psi)(y)\ dy.
\end{equation*}
Thus 
\begin{equation*}
    \int_{\Omega}u\psi=-\int_{\Omega}f\left(y,u(y)\right)\g{\Omega}(\psi)(y)\ dy-\int_{\R^{n}\setminus\overline{\Omega}}g(y) \slap \g{\Omega}(\psi)(y)\ dy \int_{\partial\Omega}h(\theta) \Ds[\g{\Omega}(\psi)](\theta) \ d\theta.
\end{equation*}
which completes the proof for $h\in C(\partial\Omega)$. 
\smallskip 

\textbf{Step 3:} Let $h\in L^{1}(\partial\Omega)$. Choose $h_{k}\in C(\partial\Omega)$ such that $h_{k}\uparrow h$ in $L^{1}(\partial\Omega)$. Let $u^{k}$ be the corresponding solution to data $h_{k}$ obtained in the previous case. Since the approximating sequence $u_{i}^{k}$ of $u^{k}$ is such that $u_{i}^{k}\leq v_{i}^{k}$ and that $v_{i}^{k}$ is locally uniformly bounded, for any compact subset $K$ of $\Omega$, $u^{k}_{i}(x)\leq C(K)$ for all $x\in K$. Thus, $u^{k}\leq C(K)$ on $K$. Further, $\{u^{k}\}_{k}$ is equicontinuous. Thus, there exists a function $u\in L^{1}_{loc}(\Omega)$  such that, up to a subsequence $u^{k}\to u$ uniformly in any compact set in $\Omega$. 
The result follows from the dominated convergence theorem.
\end{proof} 
\section{Acknowledgment}
I.C. acknowledges the grant of  ANRF PMECRG (ANRF/ECRG/2024/004743/PMS).
N.N.D. is supported by PMRF grant (2302262). 

\printbibliography
\end{document}